\numberwithin{equation}{section}
\newcommand{\N}{\mathbb{N}}
\newcommand{\Q}{\mathbb{Q}}
\newcommand{\R}{\mathbb{R}}
\newcommand{\mm}{{\mbox{\boldmath$m$}}}
\newcommand{\ppi}{{\mbox{\boldmath$\pi$}}}
\newcommand{\sppi}{{\mbox{\scriptsize\boldmath$\pi$}}}
\newcommand{\sfd}{{\sf d}}
\newcommand{\Id}{{\rm Id}}                          
\newcommand{\Kliminf}{K\kern-3pt-\kern-2pt\mathop{\rm lim\,inf}\limits}  
\newcommand{\supp}{\mathop{\rm supp}\nolimits}   
\newcommand{\Lip}{\mathop{\rm Lip}\nolimits}          
\renewcommand{\d}{{\mathrm d}}
\newcommand{\restr}[1]{\lower3pt\hbox{$|_{#1}$}}
\newcommand{\la}{{\langle}}                  
\newcommand{\ra}{{\rangle}}
\newcommand{\eps}{\varepsilon}  
\newcommand{\nchi}{{\raise.3ex\hbox{$\chi$}}}
\newcommand{\weakto}{\rightharpoonup}
\newcommand{\limi}{\varliminf}
\newcommand{\lims}{\varlimsup}
\DeclareMathOperator{\aplip}{{\rm ap\,-lip}}
\DeclareMathOperator*{\aplimi}{{\rm ap\,-}\limi}
\DeclareMathOperator*{\aplims}{{\rm ap\,-}\lims}
\newcommand{\fr}{\penalty-20\null\hfill$\blacksquare$}                      
\newcommand{\e}{{\rm{e}}}                          
 \newcommand{\X}{{\rm X}}
 \newcommand{\Y}{{\rm Y}}
\newcommand{\LIP}{{\rm LIP}}
\newcommand{\lip}{{\rm lip}}
\renewcommand{\div}{{\rm div}}
\newcommand{\testv}{{\rm TestV}}
\renewcommand{\mm}{\mathfrak m}                                
\newcommand{\test}[1]{{\rm Test}(#1)}
\newtheorem{theorem}{Theorem}[section]
\newtheorem{corollary}[theorem]{Corollary}
\newtheorem{lemma}[theorem]{Lemma}
\newtheorem{proposition}[theorem]{Proposition}
\newtheorem{definition}[theorem]{Definition}
\newcounter{Counter}
\newtheorem{remark}[theorem]{Remark}
\newcommand{\beq}{\begin{equation}}
\newcommand{\eeq}{\end{equation}}
\renewcommand{\H}{\mathscr H}
\title{Parallel transport on non-collapsed ${\sf RCD}(K,N)$ spaces}
\author[Emanuele Caputo]{Emanuele Caputo}\address[Emanuele Caputo]{SISSA, Via Bonomea 265, 34136 Trieste, Italy.}\email{ecaputo@sissa.it}
\author[Nicola Gigli]{Nicola Gigli}\address[Nicola Gigli]{SISSA, Via Bonomea 265, 34136 Trieste, Italy.}\email{ngigli@sissa.it}
\author[Enrico Pasqualetto]{Enrico Pasqualetto}\address[Enrico Pasqualetto]{Scuola Normale Superiore, Piazza dei Cavalieri, 7, 56126 Pisa, Italy.}\email{enrico.pasqualetto@sns.it}
\begin{document}

\keywords{Parallel transport, $\sf RCD$ space, Regular Lagrangian flow}
\subjclass[2020]{53C23, 49J52, 46G12}

\begin{abstract} We provide a general theory for parallel transport on non-collapsed ${\sf RCD}$ spaces obtaining both existence and uniqueness results. Our theory covers the case of geodesics and, more generally, of curves obtained via the flow of sufficiently regular time dependent vector fields: the price that we pay for this generality is that we cannot study parallel transport along a single such curve, but only along almost all of these (in a sense related to the notions of Sobolev vector calculus and Regular Lagrangian Flow in the nonsmooth setting).

The class of ${\sf ncRCD}$ spaces contains finite dimensional Alexandrov spaces with curvature bounded from below, thus our construction provides a way of speaking about parallel transport in this latter setting alternative to the one proposed by Petrunin in \cite{Petrunin98}. The precise relation between the two approaches is yet to be understood.

\end{abstract}
\maketitle
\tableofcontents
\section{Introduction}
A basic and fundamental notion in Riemannian geometry is that of parallel transport. In the classical smooth framework existence and uniqueness of such object are typically obtained working in coordinates and using standard results in ODE's with smooth coefficients. When working in non-smooth environment, one of the challenges one faces is that of understanding up to what extent standard objects existing in the smooth category have a natural counterpart: in this paper we are concerned with parallel transport on non-collapsed ${\sf RCD}$ spaces. 

To put things into context,  let us mention that in the setting of finite dimensional spaces with sectional curvature bounded from below, an existence theory for parallel transport has been provided in \cite{Petrunin98}  - we shall comment on the relation between such notion and ours toward the end of the introduction. Another non-smooth setting where  parallel transport has been investigated is  Otto's `manifold' made of probability measures equipped with the Wasserstein $W_2$ distance, see \cite{Giglimemo} and \cite{Lott17}.

On ${\sf RCD}$ spaces (\cite{AmbrosioGigliSavare11-2}, \cite{Gigli12}) it has been shown in \cite{Gigli14} that there is a well defined notion of covariant derivative: this should be intended in an appropriate Sobolev sense, i.e.\ it makes sense to define the concept of vector field with covariant derivative in $L^2$. `Covariant derivative' and `parallel transport' are two extremely close concepts, and thus it is natural to expect that the latter also exists in the same generality,  provided one pays due attention to the way to formulate the concept in relation with distributional notions of covariant differentiation. An attempt in this direction has been made in \cite{GP20}: there the problem of parallel transport is formulated not along a single given curve - as it is customary in the smooth setting - but rather along a `test plan'. These are probability measures on the space of curves (that should be thought of as measures on `well distributed smooth curves') that are tightly linked to Sobolev calculus on metric measure spaces (\cite{AmbrosioGigliSavare11}, \cite{Gigli14}). Perhaps surprisingly, the well-posedness of covariant differentiation on ${\sf RCD}$ spaces seems not sufficient to derive well-posedness of parallel transport and the main result in \cite{GP20} can be roughly summarized as follows: 
\begin{itemize}
\item[i)] it makes sense to define what is `covariant differentiation along a test plan $\ppi$',
\item[ii)] to such differentiation operator, one can associate the `W' and `H' Sobolev spaces $\mathscr W^{1,2}(\ppi)$ and $\mathscr H^{1,2}(\ppi)$, defined respectively in duality or as closure of  `smooth' vector fields,
\item[iii)] in the Sobolev space $\mathscr H^{1,2}(\ppi)$ uniqueness of parallel transport can be proved (but not existence).
\end{itemize}
On top of these results, we shall see in the appendix of this manuscript that  in $\mathscr W^{1,2}(\ppi)$, by a general argument based on approximation by viscosity, existence of parallel transport can be established (but not uniqueness).

\bigskip

It is therefore natural to look for some sort of intermediate space between $\mathscr H^{1,2}(\ppi)$ and $\mathscr W^{1,2}(\ppi)$ where both existence and uniqueness of parallel transport can be obtained. Alternatively, one might try to impose additional regularity on either the space or the test plan involved to get a better theory. In this manuscript we pursue both directions: we shall in fact work with only certain types of test plans on a relevant subclass of ${\sf RCD}$ spaces, obtain 
\[
\text{existence and uniqueness of parallel transport}
\] 
in this setting, and finally (in Section \ref{se:comparison}) compare our construction to those in  \cite{GP20} and prove that, in a very natural sense, the relevant space involved sits between $\mathscr H^{1,2}(\ppi)$ and $\mathscr W^{1,2}(\ppi)$. The actual statement of our main result requires a bit of terminology, so we postpone it to Theorem \ref{thm:exuni}.

\bigskip

Let us give more details about our setting. Rather than investigating `generic collections of smooth curves', as test plans can be thought of, we focus on flows of `smooth' vector fields. Specifically, in the ${\sf RCD}$ setting the concept of Regular Lagrangian Flow (\cite{Ambrosio-Trevisan14}, see also \cite{Ambrosio04}, \cite{DiPerna-Lions89}) provides a reasonable counterpart for the classical Cauchy--Lipschitz theory and it gives meaning to the flow $(F_t^s)$ of a family $(w_t)$ of Sobolev vector fields with uniformly bounded divergence (see Theorem \ref{thm:AT} for the precise statement). A non-trivial regularity result - established in \cite{BS18} (see also \cite{BDS21}) - concerning such flows in the finite-dimensional case is that they are uniformly Lusin--Lipschitz, i.e.\ there is a Borel partition $(E_i)$ of a.a.\ the space such that the restriction of $F_0^t$ to $E_i$ is uniformly Lipschitz in $t$.  In the smooth setting a vector field $t\mapsto V_t\in T_{F_0^t(x)}M$ is a parallel transport along $t\mapsto  F_0^t(x)$ if and only if for $v_t:=\d F_t^0(V_t)\in T_xM$ we have 
\begin{equation}
\label{eq:dotvt}
\dot v_t=-\d F_t^0(\nabla_{\d F_0^t(v_t)}w_t),\qquad\text{ for a.e.\ }t.
\end{equation} In other words, the quantity  $\d F_t^0(\nabla_{\d F_0^t(v_t)}w_t)$ takes into account the time derivative of the distortion given by the differential of the flow. The advantage of working with the vectors $v_t$ is that they  all belong to the same tangent space, so that it is clear what the derivative $\dot v_t$ is and this advantage is particularly felt in our non-smooth setting, where it is unclear how to define the covariant derivative of a vector field of the form $t\mapsto  V_t\in T_{F_0^t(x)}M$.

We thus choose this approach and are able to make it work in the class of non-collapsed ${\sf RCD}$ spaces (introduced in \cite{GDP17} as a counterpart to non-collapsed Ricci-limit spaces introduced in \cite{Cheeger-Colding97I}, \cite{Cheeger-Colding97II}, \cite{Cheeger-Colding97III}). The reason for this further restriction is in the better regularity estimates available in this setting, that in particular allow to say that the norm of the differential of $F_t^s$ is of order $|s-t|$ (see Proposition \ref{prop:eliadanieleregularity} for the precise statement - in general ${\sf RCD}(K,N)$ spaces we only know that such differential is uniformly bounded in $t,s$). This key regularity property, obtained in the recent \cite{BDS21}, is needed if one wants to differentiate in time the differential of the flow and recover the covariant derivative of the underlying vector field $w_t$ as in  \eqref{eq:dotvt}.

\bigskip

To actually make this plan work a few non-trivial technical obstacles have to be dealt with, in particular in relation with the need of interchanging differentiation in time and differentiation of functions/vector fields. Usually, this sort of issues are managed through closure properties of the differentiation operator considered in conjunction with Hille's theorem about swapping (Bochner) integration and a closed operator. We do not deviate from this general plan, but have to rework some crucial steps. 

The first is about the definition of differentiation and the closure of such operator. We shall indeed need to work with functions of the kind $f\circ F_t^s$ for $f\in W^{1,2}(\X)$ and $(F_t^s)$ Regular Lagrangian Flow as discussed above. It is well known that one should not expect Sobolev regularity for this kind of functions, but at least a notion of differentiation can naturally be given using the Lusin--Lipschitz property of $F_t^s$, that in turn ensures that also $f\circ F_t^s$ is Lusin--Lipschitz. The problem with the concept of differential defined by locality for Lusin--Lipschitz maps is that is by no means a closed operator: what we need to do to get closure is to properly restrict the domain of the operator. The idea we use takes inspiration from the original work of Haj\l asz \cite{H96} about Sobolev maps in metric measure spaces and the estimates by Crippa--De Lellis \cite{CDL08}, revisited by Bru\'e--Semola \cite{BS18}: for given $\phi\in L^0(\X)$ non-negative and $R>0$, we can quantify Lusin--Lipschitz regularity by considering the space $H_{\phi,R}(\X)\subset L^0(\X)$ of functions $f$ such that for some non-negative $G\in L^2(\X)$ we have
\[
|f(y)-f(x)|\leq \sfd(x,y) (G(x)+G(y)) e^{\phi(x)+\phi(y)}
\]
for every $x,y$ outside some negligible subset of $\X$
and whose mutual distance does not exceed \(R\). We shall denote by $A_{\phi,R}(f)\subset L^2(\X)$ the set of $G$'s as above and endow $H_{\phi,R}(\X)$ with the natural norm
\[
\|f\|_{H_{\phi,R}(\X)}\coloneqq\sqrt{\|f\|_{L^2(\mm)}^2
+\inf_{G\in A_{\phi,R}(f)}\|G\|_{L^2(\mm)}^2}\quad\text{ for every }f\in H_{\phi,R}(\X)
\]
(see Definition \ref{def:hsob} for the precise definition). It it clear that functions in $H_{\phi,R}(\X)$ are Lusin--Lipschitz, moreover,  crucially, the differential restricted to bounded subsets of $H_{\phi,R}(\X)$ is a closed operator on $L^2$ (see Proposition \ref{prop:closure_tilde_d}). Here the role of the estimates in \cite{BDS21} is to ensure that for $f\in W^{1,2}(\X)$ the functions $f\circ F_0^t$ are uniformly bounded in  $H_{\phi,R}(\X)$  for some properly chosen $\phi$, see Lemma \ref{prop:fF_tbelongstoHphi}.

The second technical ingredient we need is about the very formulation of Hille's theorem and the underlying concept of integration used. Due to the low integrability properties of the differential of the flow, it is not natural to work with vector fields in $L^p$ for $p\geq 1$, but rather it is better to deal with $L^0$ vector fields.  In turn, since $L^0(T\X)$ is not a Banach space (in fact, it is not even locally convex) this creates the need of defining what the integral of a map $t\mapsto v_t\in L^0(T\X)$ is, or more generally of a map $t\mapsto v_t\in\H$ with $\H$ some $L^0$-Hilbert module: we do this in Section \ref{se:intmod} and the approach that we choose might be described as a sort of `pointwise Pettis integral'. More in detail, we first declare $t\mapsto f_t\in L^0(\X)$ to be integrable provided so is the map $t\mapsto f_t(x)\in\R$ for $\mm$-a.e.\ $x$, and then we say that $t\mapsto v_t\in \H$ is integrable provided so is the function $t\mapsto |v_t|\in L^0(\X)$. In this case, its integral can be defined noticing that for any $z\in \H$ the function $t\mapsto \la z,v_t\ra\in L^0(\X)$ is integrable and its integral depends $L^0$-linearly and continuously on $z$, so that Riesz's theorem for $L^0$-Hilbert modules gives the desired notion. 

To this concept of integration we need to attach a form of Hille's theorem. Its standard proof for closed operators $L:E\to F$, with $E,F$ Banach spaces, uses integration in the product space and concludes using the fact that Bochner integral commutes with projections. In turn, this latter property follows from the more general fact that Bochner integration commutes with linear and continuous operators, a fact that trivially follows from the definition of Bochner integral. This exact line of thought does not work in our setting, partly because we cannot work with `pointwise integration' as we are not assuming the closed operator $L$ to be $L^0$-linear. Still, the general approach does, our idea being to first prove that our notion of integration can be realized as limit of Riemann sums in analogy with a classical statement by Hahn, so that its commutation with projections into factors can trivially be established. To the best of our knowledge, previous results in this direction required the additional assumptions on the operator $L$ to be continuous, $L^0$-linear, and with uniformly bounded pointwise norm (see \cite{GM13}).

\bigskip

It is worth to point out that our assumptions cover the case of geodesics in the following sense. Let $(\mu_t)$ be a $W_2$-geodesic so that $\mu_0,\mu_1$ have bounded support and bounded density. Also, let $\eps\in(0,\frac12)$ and consider the restricted geodesic $\nu_t:=\mu_{(1-t)\eps+t(1-\eps)}$. Then there are vector fields $(w_t)$ satisfying our regularity assumptions (see Proposition \ref{prop:eliadanieleregularity} for the precise set of those) such that the associated flow $(F_t^s)$ satisfy $(F_t^s)_*\nu_t=\nu_s$ for every $t,s\in[0,1]$. Moreover, for $\nu_0$-a.e.\ $x\in\X$ the curve $t\mapsto F_0^t(x)$ is a geodesic and the map $F_0^1$ is the only optimal map from $\nu_0$ to $\nu_1$. The fact that these $(w_t)$ exist is a consequence of  the abstract Lewy--Stampacchia inequality \cite{Gigli-Mosconi14} and the estimates in \cite{Gigli14} (just let $w_t:=\nabla \eta_t$ with $\eta_t$ obtained from Kantorovich potentials via the double obstacle problem, see \cite[Theorem 3.13]{Gigli-Mosconi14}). In particular, considering the (only, by \cite{Ambrosio-Trevisan14}) lifting $\ppi$ of $(\nu_t)$, we have that $\ppi=(F_t^\cdot)_*\nu_t$ for every $t\in[0,1]$, where $F_t^\cdot:\X\to C([0,1],\X)$ is the map given by $x\mapsto\big(s\mapsto F_t^s(x)\big)$.
Therefore our results can be read as existence and uniqueness of parallel transport along $\ppi$-a.e.\ geodesic. It is unclear to us whether the initial restriction from $(\mu_t)$ to $(\nu_t)$ is truly necessary, but let us point out that it is well known in geometric analysis and metric geometry that this sort of restriction of geodesics are much more regular than `full' geodesics (see e.g.\ \cite{ColdingNaber12} and \cite{Petrunin98}).

Let us conclude this introduction noticing that the class of non-collapsed ${\sf RCD}$ spaces contains that of finite dimensional Alexandrov spaces with curvature bounded from below equipped with the appropriate Hausdorff measure, thus our construction provides a notion of parallel transport alternative to that in \cite{Petrunin98}. We obtain existence and uniqueness - in place of `only' existence (plus a related second order differentiation formula that we do not explore) - at the price of describing parallel transport not along a single geodesic, but rather along a.e.\ geodesic, in a sense. It is certainly natural to try to compare the two notions, and while we expect them to agree, we do not investigate in this direction. Let us just point out that a key difficulty in doing so is in the different set of techniques involved in the process, which can be summarized in Sobolev vs convex calculus.
\bigskip

\noindent\textbf{Acknowledgements.} The second named author thanks Prof.\ G.R.\ Mingione for some interesting conversations on topics relevant for this work. The third named author acknowledges the support by the Academy of Finland
(project number 314789) and by the Balzan project led by
Prof.\ L.\ Ambrosio.
\section{Preliminaries}
\subsection{Differential calculus on metric measure spaces}
We assume the reader to be familiar with first order Sobolev calculus on metric measure spaces (see e.g.\ \cite{Cheeger00}, \cite{Heinonen07}, \cite{HKST15}, \cite{AmbrosioGigliSavare11}) and second order calculus on ${\sf RCD}$ spaces (see \ \cite{Gigli14}, \cite{Gigli17}, \cite{GP19}) as well as with the notion of non-collapsed ${\sf RCD}$ space, ${\sf ncRCD}$ space in short (see \cite{GDP17} and the earlier \cite{Gigli12}, \cite{AmbrosioGigliSavare11-2}, \cite{Lott-Villani09}, \cite{Sturm06I,Sturm06II}).

Here we only recall those notions that will most frequently be used in the manuscript.

\bigskip

All metric measure spaces $(\X,\sfd,\mm)$ are assumed to be complete and separable as metric spaces and with the measure $\mm$ to be Borel, non-negative, non-zero, and finite on balls. Such a space is called locally doubling provided for any $x\in\X$ and $R>0$ there is a constant $C>0$ such that
\begin{equation}
\label{eq:defdoubl}
\mm(B_{2r}(x))\leq C\mm(B_r(x))\qquad\forall r\in(0,R)
\end{equation}
and locally uniformly doubling provided $C$ can be chosen independent of $x$.

Locally uniformly doubling spaces are regular enough to allow for estimates for the (restricted) maximal functions similar to those available in the Euclidean setting. We recall that given $f \in L^1_{loc}(\X,\mm)$ and  $\lambda > 0$ one  defines
\[
M_{\lambda} f(x):= \sup_{0 < r < \lambda} \frac1{\mm(B_r(x))}\int_{B_r(x)} |f|\, \d \mm.
\]
Then on a locally uniformly doubling space we have that:  for every $1<p\le \infty$ and $\lambda>0$ there exists a constant $C_{p,\lambda}$ such that
\begin{equation}
\label{eq:estimate_L^p-strong_locmaxfunct}
\|M_{\lambda} f \|_{L^p} \le C_{p,\lambda} \| f \|_{L^p} \text{ for every }f \in L^p(\X,\mm).
\end{equation}
See e.g.\ \cite[Theorem 3.5.6]{HKST15} for a proof (notice that in such reference the measure is assumed to be doubling, i.e.\ with the constant in \eqref{eq:defdoubl} to be independent of both $x$ and $R$, however as the argument in the proof shows, since we are considering the restricted maximal function, this is not really an issue).

We shall denote by $\lip(f):\X\to[0,\infty]$ the `local Lipschitz constant' of the function $f:\X\to\R$ defined as
\[
\lip(f)(x):=\lims_{y\to x}\frac{|f(y)-f(x)|}{\sfd(y,x)} 
\]
if $x$ is not isolated, $0$ otherwise. We say that $(\X,\sfd,\mm)$ satisfies a weak local $(1,1)$ Poincar\'{e} inequality, provided for every $R>0$ there exists $C_P(R)$ and $\lambda \ge 1$ such that for every $f:\X\to\R$ Lipschitz, $x \in \X$, $0<r <R$ we have
\begin{equation*}
\frac{1}{\mm(B_r(x))}\int_{B_r(x)} \left| f - f_{B_r(x)}  \right|\, \d \mm \le C_P r \frac{1}{\mm(B_{\lambda r}(x))}\int_{B_{\lambda r}(x)} \lip (f)\,\d \mm,
\end{equation*}
where $f_{B_r(x)}:=\frac{1}{\mm(B_r(x))}\int_{B_r(x)}f\,\d\mm$. 

We say that $(\X,\sfd,\mm)$ is a PI space if it is locally uniformly doubling and satisfies a weak local $(1,1)$ Poincar\'{e} inequality.  For us, this class of spaces is relevant because the results in \cite{Cheeger00} apply and because finite dimensional ${\sf RCD}$ spaces are PI spaces (see  \cite{Rajala12}). On PI spaces, the following maximal estimate holds (see \cite[Theorem 3.2]{HK00}): given $f \in W^{1,2}(\X)$, there exists a $\mm$-null set $N$ such that 
\begin{equation}
\label{eq:local_max_est_sobolev}
|f(x)-f(y)| \le C \big( M_{4R} (|Df|)(x) + M_{4R} (|Df|)(y) \big) \sfd(x,y) \qquad\forall x,y \in \X \setminus N,\,\sfd(x,y)\le R.
\end{equation}

As for what concerns Sobolev differential calculus, we shall rely on the theory of $L^0$-normed modules as developed in \cite{Gigli14}. A crucial feature of the differentiation operator $\d:W^{1,2}(\X)\to L^0(T^*\X)$ that will be frequently used is the locality, i.e.
\[
\d f=\d g,\qquad\mm-a.e.\ on\ \{f=g\},\qquad\forall f,g\in W^{1,2}(\X).
\]

\bigskip

On ${\sf RCD}$ spaces, we shall consider the following notion of 'test functions' (first introduced in \cite{Savare13}):
\[
\test\X:=\big\{f\in  W^{1,2}(\X):\ f,|\d f|\in L^{\infty}(\X),\ f \in D(\Delta),\  \Delta f\in  W^{1,2}(\X)\big\}
\]
A crucial regularity result concerning these functions is
\begin{equation}
\label{eq:leibgrad}
\begin{split}
f\in\test\X,\ w\in L^\infty\cap W^{1,2}_C(T\X)\quad\Rightarrow\quad \d f(w)&\in W^{1,2}(\X)\quad{\rm with }\\
\quad \d(\d f(w))&={\rm Hess}(f)(w,\cdot)+\la\nabla_\cdot w,\nabla f\ra,
\end{split}
\end{equation}
where in the above, following \cite{Gigli14}, we say that $w\in W^{1,2}_C(T\X)$ provided there is $\nabla w \in L^2(T^{\otimes 2}\X)$, which is easily seen to be unique, such that:
\[
\int h\, \nabla w:  (\nabla g_1\otimes \nabla g_2)\,\d\mm=\int-\la w,\nabla g_2\ra\,{\rm div}(h\nabla g_1)-h\,{\rm Hess}(g_2)(w,\nabla g_1)\,\d\mm,
\]
for every $g_1,g_2,h \in \test\X$.

\subsection{Regular Lagrangian flows on \texorpdfstring{\({\sf RCD}(K,\infty)\)}{RCD} spaces}
\label{sec:rlf} We recall in this section the basics of the theory of Regular Lagrangian Flows on ${\sf RCD}$ spaces as developed in \cite{Ambrosio-Trevisan14} (and modelled upon the analogous theory on Euclidean setting proposed in  \cite{Ambrosio04}, see also the original work \cite{DiPerna-Lions89}). The concept of Regular Lagrangian Flow provides a counterpart to the one of `flow of a vector field', as made rigorous in the following definition:

\begin{definition}[Regular Lagrangian Flow]
\label{def:rlf}
Let $(\X,\sfd,\mm)$ be a ${\sf RCD}(K,\infty)$ space, for some $K\in\R$ and $(w_t)\in L^1([0,1], L^2(T\X))$.  Then $F:[0,1]^2\times \X\to \X $  is said to be a \emph{Regular Lagrangian Flow} for $w$ provided it is Borel and the following properties are verified:
\begin{itemize}
\item[i)] For some  $C>0$  we have
\begin{equation}
\label{eq:boundcompr}
(F_t^s)_*\mm\leq C  \mm\qquad\forall t,s\in[0,1].
\end{equation}
\item[$\rm ii)$] For every $x\in\X$, $t\in[0,1]$ the curve $[0,1]\ni s \mapsto F_t^s(x)\in\X$
is continuous and satisfies $F_t^t(x)=x$.
\item[$\rm iii)$] Given $f\in W^{1,2}(\X)$ and $t\in[0,1]$, one has that for $\mm$-a.e.\ $x\in\X$
the map $[0,1]\ni s\mapsto(f\circ F_t^s)(x)\in\R$ belongs to $W^{1,1}(0,1)$ and satisfies
\begin{equation}\label{eq:RLF}
\frac{\d}{\d s}\,(f\circ F_t^s)(x)=\d f(w_s)\big(F_t^s(x)\big)
\quad\text{ for }\mathcal{L}^1\text{-a.e.\ }s\in[0,1].
\end{equation}
\end{itemize}
\end{definition}
We shall typically write $(F_t^s)$ in place of $F$ for Regular Lagrangian Flows.
The following crucial result establishes existence and uniqueness of RLF's (the assumptions are a bit suboptimal, but sufficient for our purposes):
\begin{theorem}\label{thm:AT}
Let $(\X,\sfd,\mm)$ be a ${\sf RCD}(K,\infty)$ space. Let $(w_t)\in L^1([0,1],W^{1,2}_{C}(T\X))$ be such that $w_t\in  D(\div)$ for a.e.\ $t\in[0,1]$ and $|w_t|,\div (w_t)\in L^\infty([0,1]\times\X)$.  

Then there exists a unique regular Lagrangian flow ${(F_t^s)}$ of  $w$. Uniqueness has to be intended as: if $(\tilde F_t^s)$ is another
Regular Lagrangian Flow, then for every $t\in[0,1]$ we have that for  $\mm$-a.e.\ point $x\in\X$ it holds  $\tilde {F}_t^s(x)=F_t^s(x)$ for every $s\in[0,1]$.
\end{theorem}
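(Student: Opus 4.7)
The plan is to treat this as the central result of \cite{Ambrosio-Trevisan14}, reducing it to two independent ingredients: the Eulerian theory of the continuity equation on ${\sf RCD}(K,\infty)$ spaces and a superposition/disintegration argument that moves from measure-valued solutions to pointwise trajectories.

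\textbf{Existence.} First, I would construct a weakly continuous curve of measures $\mu_t=\rho_t\mm$ with $\rho_0\equiv 1$ (so that $\mu_0=\mm$; one can localize by cutoff since the problem is posed up to enlarging the ambient set) that solves the continuity equation
\[
\partial_t\rho_t+\div(\rho_t w_t)=0
\]
in the distributional sense against test functions in $\test\X$. The assumption $\div(w_t)\in L^\infty$ gives, via the formal identity $\tfrac{\d}{\dt}\rho_t=-\la\nabla\rho_t,w_t\ra-\rho_t\div(w_t)$, a Gr\"{o}nwall estimate $\|\rho_t\|_\infty\le e^{t\|\div(w_t)\|_\infty}$; making this rigorous requires an approximation procedure (heat-flow regularization of $w_t$ or of $\rho_t$) plus the commutator lemma, which is available because $w_t\in W^{1,2}_C(T\X)$. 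Second, I would invoke the superposition principle of Lisini (in the form used in \cite{Ambrosio-Trevisan14}) to lift $(\mu_t)$ to a probability measure $\ppi$ on $C([0,1],\X)$ concentrated on absolutely continuous curves $\gamma$ whose metric speed is a.e.\ $|w_t|(\gamma_t)$ and such that $(e_t)_*\ppi=\mu_t$. Third, I would disintegrate $\ppi$ with respect to $e_0$ and use uniqueness at the ODE level (see next paragraph) to show that the disintegration is a Dirac mass for $\mm$-a.e.\ $x$; this produces the map $x\mapsto F_0^\cdot(x)\in C([0,1],\X)$. The bound $(F_0^t)_*\mm\le C\mm$ is exactly the bound $\rho_t\le C$ obtained above, and iterating the construction starting from any $t\in[0,1]$ produces the two-parameter family $(F_t^s)$ with the semigroup/continuity properties in Definition \ref{def:rlf}.

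\textbf{Uniqueness.} The key point, and the main obstacle, is proving that two solutions of the continuity equation with the same initial datum coincide, which by the superposition principle translates into almost-sure uniqueness for the ODE. The renormalization argument of DiPerna--Lions--Ambrosio has to be carried out in the metric setting: given two bounded solutions $\rho_t^1,\rho_t^2$ of the continuity equation with the same initial datum, one convolves their difference with the heat semigroup $h_\varepsilon$ and controls the commutator
\[
h_\varepsilon\big(\div(\rho\, w)\big)-\div\big(h_\varepsilon(\rho)\,w\big)
\]
by exploiting the $W^{1,2}_C$-regularity of $w_t$ together with Bakry--\'Emery estimates on the heat flow. This yields renormalization: for every $\beta\in C^1\cap {\rm Lip}$ the composition $\beta(\rho_t^1-\rho_t^2)$ satisfies the continuity equation in the distributional sense, which forces $\rho^1\equiv\rho^2$. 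The passage from uniqueness at the Eulerian level to pointwise uniqueness of the Lagrangian flow is then the standard disintegration argument: any two RLF's $F,\tilde F$ induce via $x\mapsto\tfrac12(\delta_{F_t^\cdot(x)}+\delta_{\tilde F_t^\cdot(x)})$ a plan $\ppi$ whose time-marginals solve the continuity equation with the same initial datum, hence the two Diracs must coincide $\mm$-a.e.

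I would expect the technically heaviest piece to be the commutator estimate in the renormalization step, since in the non-smooth setting it requires a careful interplay between heat-semigroup regularization, the Bochner identity in $W^{1,2}_C(T\X)$, and convergence as $\varepsilon\downarrow 0$; everything else (superposition, disintegration, Gr\"{o}nwall on densities) is comparatively soft once that ingredient is in place. Since the statement is being quoted verbatim from \cite{Ambrosio-Trevisan14} I would not reprove it in detail, but would only point out the two or three places where the $L^\infty$-divergence and $W^{1,2}_C$-regularity assumptions enter, and cite the original paper for the commutator and superposition lemmas.
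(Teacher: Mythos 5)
The paper does not prove this theorem: it is quoted directly from \cite{Ambrosio-Trevisan14}, and your outline is an accurate summary of the strategy of that reference (well-posedness of the continuity equation via heat-semigroup regularization and commutator estimates, the superposition principle, and disintegration to pass from Eulerian uniqueness to almost-sure uniqueness of trajectories). Your decision to indicate where the $L^\infty$-divergence and $W^{1,2}_C$ hypotheses enter and otherwise cite the original is exactly what the paper itself does, so there is nothing to add.
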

Notice that the uniqueness statement and the very definition of RLF, imply the following group(oid) property:
\begin{equation}
\label{eq:groupRLF}
\text{for every $t,s,r\in[0,1]$ we have}\qquad F_r^s\circ F_t^r=F_{t}^s\qquad\mm-a.e..
\end{equation}
It can be proved that for $\mm$-a.e.\ $x$ the metric speed  of the curve $s\mapsto F^s_t(x)$ is exactly $|w_s|(F_t^s(x))$, in particular, we have the uniform Lipschitz estimate: for every $t,s_1,s_2\in[0,1]$ we have
\begin{equation}
\label{eq:unifspFl}
\sfd(F_t^{s_2}(x),F_t^{s_1}(x))\leq |s_2-s_1|\|w\|_{L^\infty},\qquad\mm-a.e.\ x\in\X.
\end{equation}
A consequence of the continuity of $s\mapsto F_t^s(x)$ and of the bounded compression property \eqref{eq:boundcompr} is that
\begin{equation}
\label{eq:contlp}
f\in L^p(\X)\qquad\Rightarrow\qquad [0,1]^2\ni (t,s)\mapsto f\circ F_t^s\in L^p(\X)\text{ is continuous,}\qquad\forall p\in[1,\infty).
\end{equation}
Indeed, the bounded compression property gives the uniform estimate
\begin{equation}
\label{eq:dabc}
\|f\circ F_t^s\|_{L^p}^p\leq C\|f\|^p_{L^p}\qquad\forall t,s\in[0,1],
\end{equation}
and the continuity of $s\mapsto F_t^s(x)$ ensures that for $f$ Lipschitz with bounded support $s\mapsto f\circ F_t^s\in L^p$ is continuous for every $t\in[0,1]$. From the density of such Lipschitz functions in $L^p$ and \eqref{eq:dabc} we deduce that  $s\mapsto f\circ F_t^s\in L^p$ is continuous for every $t\in[0,1]$ and $f\in L^p(\X)$. Now notice that
\[
\begin{split}
\| f\circ F_{t'}^{s'}-f\circ F_t^s\|_{L^p}&\leq\|f\circ F_{t'}^{s'}- f\circ F_{t'}^s\|_{L^p}+\| f\circ F_{t'}^s- f\circ F_{t}^{s}\|_{L^p}\\
\text{(by \eqref{eq:dabc}, \eqref{eq:groupRLF})}\qquad&\leq C^{\frac1p} \|f\circ F_{s}^{s'}- f\|_{L^p}+\| (f\circ F_{t}^s)\circ F_{t'}^t- f\circ F_{t}^{s}\|_{L^p}\\
\text{(by \eqref{eq:dabc}, \eqref{eq:groupRLF})}\qquad&\leq C^{\frac1p} \|f\circ F_{s}^{s'}- f\|_{L^p}+C^{\frac{1}{p}}\| f\circ F_{t}^s- (f\circ F_{t}^{s})\circ F_{t}^{t'}\|_{L^p}
\end{split}
\]
so that the claim \eqref{eq:contlp} follows from what already proved.

In a similar way, if for some bounded set $B\subset\X$ we have that $\supp(w_t)\subset B$ for a.e.\ $t$ we have that
\begin{equation}
\label{eq:contl0}
f\in L^0(\X)\qquad\Rightarrow\qquad [0,1]^2\ni (t,s)\mapsto f\circ F_t^s\in L^0(\X)\text{ is continuous.}
\end{equation}
Indeed, recall that the topology of $L^0(\X)$ is metrized by the distance 
\[
\sfd_{L^0}(f,g):=\int1\wedge |f-g|\,\d\mm',
\]
where $\mm'$ is any Borel probability measure with $\mm\ll\mm'\ll\mm$. We thus pick $\mm'$ so that $\mm'\restr B=c\mm\restr B$ for some $c>0$ and notice that the assumption on $w_t$ ensures that $F_t^s$ is the identity outside $B$ for every $t,s\in[0,1]$. Hence the bounded compression property gives $(F_t^s)_*\mm'\leq C\mm'$ for any $t,s$ and therefore, in analogy with \eqref{eq:dabc}, we have
\[
\sfd_{L^0}(f\circ F_t^s,g\circ F_t^s)=\int1\wedge |f-g|\circ F_t^s\,\d\mm'=\int1\wedge |f-g|\,\d(F_t^s)_*\mm'\leq C\int1\wedge |f-g|\,\d\mm'=C\sfd_{L^0}(f,g)
\]
and the claim \eqref{eq:contl0} follows along the same lines as \eqref{eq:contlp}.

From \eqref{eq:contlp} for $p=2$ and by integrating \eqref{eq:RLF} we see that
\begin{equation}
\label{eq:int RLF}
f\circ F_t^{s_2}-f\circ F_t^{s_1}=\int_{s_1}^{s_2}\d f(w_r)\circ F_t^r\,\d r\qquad \mm-a.e.,\qquad\forall f\in W^{1,2}(\X)
\end{equation}
for any $t,s_1,s_2\in[0,1]$ with $s_1<s_2$, where the integral is intended in the pointwise a.e.\ sense (or, equivalently, in the Bochner sense). It is then also clear that for a.e.\ $s\in[0,1]$ we have
\begin{equation}
\label{eq:c1RLF}
\lim_{h\to 0}\frac{f\circ F_t^{s+h}-f\circ F^s_t}{h}=\d f(w_s)\circ F_t^s,\qquad\mm-a.e.,\qquad\forall f\in W^{1,2}(\X),
\end{equation}
the limit being in $L^2(\X)$ and by usual maximal-type arguments it is not hard to see that the exceptional set of times may be chosen independent of $f\in W^{1,2}(\X)$ (see e.g.\ the arguments in Proposition \ref{prop:mainreg}).

\bigskip

We now turn to the main regularity estimates on Regular Lagrangian Flows as established in \cite{BDS21} (see also the earlier \cite{BS18}). In order to state the result, let us recall a few definitions: given $f:\X\to\R\cup\{\pm\infty\}$ Borel and $x\in\X$ we define the approximate limsup of $f$ at $x$ as
\[
\aplims_{y\to x}f(y):=\inf\big\{\lambda\in\R\cup\{\pm\infty\}\ :\ x\text{ is a density point of }\{f\leq\lambda\}\big\}
\]
and analogously we define the approximate liminf $\aplimi_{y\to x}f(y)$. Then the approximate Lipschitz constant at $x$ of a Borel function $F:\X\to\Y$ with $(\Y,\sfd_\Y)$ metric space is defined as
\[
\aplip F(x):=\aplims_{y\to x}\frac{\sfd_\Y(F(y),F(x))}{\sfd(y,x)}.
\]
Finally, we recall that a Borel map $F:\X\to\Y$ is said Lusin--Lipschitz provided  there exists a sequence \((E_i)\)
of Borel sets in \(\X\) such that \(\mm\big(\X\setminus\bigcup_i E_i\big)=0\)
and \(F\restr{E_i}\) is Lipschitz for all \(i\in\N\).

\bigskip
We then have the following result (extracted mainly from  \cite[Theorem 1.6, Proposition 3.3]{BDS21}):
\begin{proposition}
\label{prop:eliadanieleregularity}
Let $(\X,\d,\mm)$ be a ${\sf ncRCD}(K,N)$ space with $K \in \mathbb{R}$,
$N<\infty$, and  $(w_t) \in L^2([0,1], W^{1,2}_{C}(T\X))$ be such that $|w_t|,\div (w_t)\in L^\infty([0,1]\times\X)$ and for some $\bar x\in \X$ and $R>0$ we have $\supp(w_t)\subset B_R(\bar x)$ for a.e.\ $t\in[0,1]$.

Then there exists a nonnegative function $(g_t)\in L^2([0,1], L^2(\X,\mm))$ such that for any $t,s\in[0,1]$ we have
\begin{equation}
\label{eq:estimate3_factor}
\begin{aligned}
& \aplip F_t^s(x) \le e^{ \int_{t\wedge s}^{t\vee s} g_r(F_t^{r}(x))\, \d r}  \qquad \mm-a.e.\ x\in\X
\end{aligned}
\end{equation}
and, for every $t\in[0,1]$, a nonnegative function $\bar g_t\in L^2_{loc}(\X,\mm)$ such that for any $s\in[0,1]$ we have
\begin{equation}
\label{eq:estimate4_factor}
\frac{\sfd(F_{t}^s(x), F_{t}^s(y))}{\sfd(x,y)} \le e^{{\bar g_t}(x)+{\bar g_t}(y)},\qquad\forall x,y\in\X.
\end{equation}
\end{proposition}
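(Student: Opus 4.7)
The proof relies on the regularity theory for Regular Lagrangian Flows on ${\sf ncRCD}$ spaces developed in \cite{BS18, BDS21}: in the smooth setting the identity $\partial_s|\d F_t^s(v)|\le|\nabla w_s|(F_t^s(\cdot))\cdot|\d F_t^s(v)|$ integrates to an exponential bound, and the strategy is to reproduce this in the metric framework via maximal-function estimates available thanks to the PI structure of finite-dimensional ${\sf RCD}$ spaces.

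For \eqref{eq:estimate3_factor}, I would follow the Crippa--De Lellis argument as adapted to PI spaces. Fix $t\in[0,1]$ and pick $\mm$-a.e.\ $x,y$ close to one another. Testing \eqref{eq:int RLF} against functions $f\in\test\X$ with $|\d f|\le 1$ gives
\[
|f(F_t^s(x))-f(F_t^s(y))|\le|f(x)-f(y)|+\int_t^s\big|\d f(w_r)(F_t^r(x))-\d f(w_r)(F_t^r(y))\big|\,\d r.
\]
By \eqref{eq:leibgrad}, $\d f(w_r)\in W^{1,2}(\X)$, so \eqref{eq:local_max_est_sobolev} bounds the integrand by $\sfd(F_t^r(x),F_t^r(y))$ times a sum of restricted maximal functions of $|\d(\d f(w_r))|$. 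Taking the supremum over such $f$ (via a Kantorovich--Rubinstein/Lipschitz-envelope argument to recover $\sfd$ on the left and a pointwise quantity dominated by $|\nabla w_r|$ on the right) leads to
\[
\sfd(F_t^s(x),F_t^s(y))\le\sfd(x,y)+C\!\int_t^s\!\sfd(F_t^r(x),F_t^r(y))\big(M_\lambda|\nabla w_r|(F_t^r(x))+M_\lambda|\nabla w_r|(F_t^r(y))\big)\d r.
\]
Gronwall's lemma yields the exponential bound, and taking the approximate limsup $y\to x$ produces \eqref{eq:estimate3_factor} with $g_r:=2CM_\lambda|\nabla w_r|$. The joint $L^2$ integrability of $g_r$ follows from \eqref{eq:estimate_L^p-strong_locmaxfunct} applied fiberwise in $r$ together with the hypothesis $(w_r)\in L^2([0,1],W^{1,2}_C(T\X))$.

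For \eqref{eq:estimate4_factor}, rather than passing to the limit $y\to x$ I would keep the symmetric Gronwall bound and set
\[
\bar g_t(x):=C\int_0^1 M_\lambda|\nabla w_r|(F_t^r(x))\,\d r.
\]
Fubini together with the bounded compression \eqref{eq:boundcompr} then gives $\bar g_t\in L^2(\X,\mm)\subset L^2_{loc}(\X,\mm)$, since $(r,x)\mapsto M_\lambda|\nabla w_r|(F_t^r(x))$ lies in $L^2([0,1]\times\X)$. Redefining $\bar g_t$ to be $+\infty$ on the $\mm$-negligible exceptional set arising from \eqref{eq:local_max_est_sobolev} makes \eqref{eq:estimate4_factor} hold for every $x,y\in\X$.

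The main obstacle is the passage from the $W^{1,2}$ difference-bound for $\d f(w_r)$ to a clean control in terms of $|\nabla w_r|$, uniformly in $f$ with $|\d f|\le 1$: the Leibniz formula \eqref{eq:leibgrad} produces both a $\mathrm{Hess}(f)$ and a $\nabla w_r$ term, and the Hessian contribution is not uniformly controlled by $|\d f|$. This is exactly where the ${\sf ncRCD}(K,N)$ hypothesis is indispensable: the improved Bakry--\'Emery type estimates in the non-collapsed regime provide the sharp $L^2$ control of the relevant maximal functions after composition with the flow, a conclusion not available on general ${\sf RCD}(K,N)$ spaces, where only uniform-in-$t,s$ boundedness of the differential is known. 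It is precisely this finer $L^2$ control, rather than mere uniform boundedness, that the later parallel-transport arguments require in order to differentiate in time and recover $\nabla_\cdot w_t$ from the flow.
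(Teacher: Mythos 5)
Your proposal attempts to re-derive the estimates from scratch via a Crippa--De Lellis/Gronwall scheme, whereas the paper does not: it quotes \eqref{eq:estimate4_factor} and the forward-in-time case of \eqref{eq:estimate3_factor} directly from \cite[Proposition 3.3, Theorem 1.6]{BDS21} (see \eqref{eq:thm16}), and the only new content of its proof is the backward-in-time case $t\geq s$ of \eqref{eq:estimate3_factor}, which is reduced to the forward case through a change-of-variables formula for approximate limits under the flow, established in Lemma \ref{le:changevar} via a Lebesgue-point argument for Lusin--Lipschitz, essentially invertible maps of bounded compression. Your sketch does not engage with this reduction at all, but since a symmetric Gronwall bound would treat both time directions at once, that omission would be forgivable if the core of your argument worked.

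It does not, and you have in fact located the failure yourself. The step ``taking the supremum over $f$ with $|\d f|\le 1$'' requires a bound on $|\d f(w_r)(F_t^r(x))-\d f(w_r)(F_t^r(y))|$ that is uniform over this class of $f$, but by \eqref{eq:leibgrad} one has $\d(\d f(w_r))={\rm Hess}(f)(w_r,\cdot)+\la\nabla_\cdot w_r,\nabla f\ra$, and the Hessian term admits no control in terms of $|\d f|$ alone; so the maximal function appearing in \eqref{eq:local_max_est_sobolev} is not dominated by $M_\lambda|\nabla w_r|$ uniformly in $f$, and the Gronwall inequality with a kernel independent of $f$ never materializes. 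Invoking ``improved Bakry--\'Emery type estimates in the non-collapsed regime'' at exactly this point is not a proof but a placeholder for the entire content of \cite[Theorem 1.6]{BDS21}, whose actual argument is not the duality-with-Lipschitz-functions scheme you describe (even the Euclidean Crippa--De Lellis method proceeds by differentiating the averaged functional $\log(1+\sfd(F(x),F(y))/r)$ in time, not by testing against $1$-Lipschitz observables, precisely to avoid second derivatives of test functions). Two further points would need attention even if the main step were repaired: the exceptional null set in \eqref{eq:local_max_est_sobolev} depends on $r$ and must be handled by a Fubini/bounded-compression argument before Gronwall can be applied along a fixed pair of trajectories, and passing from the two-point bound to \eqref{eq:estimate3_factor} by taking $\aplims_{y\to x}$ requires an approximate-continuity statement for $y\mapsto\int_t^s M_\lambda|\nabla w_r|(F_t^r(y))\,\d r$ at $x$, which you do not supply.
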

\begin{proof} The estimate \eqref{eq:estimate4_factor} for $t\leq s$ is the content of  \cite[Proposition 3.3]{BDS21}, the case $t\geq s$ then also follows noticing that $s\mapsto F_t^{1-s}$ is the Regular Lagrangian Flow of $(-w_t)$.

We pass to \eqref{eq:estimate3_factor} and notice that
 \cite[Theorem 1.6]{BDS21} states that for $t,s\in[0,1]$, $t\leq s$, for $\mm$-a.e.\ $x\in\X$ we have
\begin{equation}
\label{eq:thm16}
e^{-\int_t^s g_r( F_t^r(x))\,\d r}\leq 
\aplimi_{y\to x}\frac{\sfd(F_t^s(y),F_t^s(x))}{\sfd(y,x)}\leq \aplims_{y\to x}\frac{\sfd(F_t^s(y),F_t^s(x))}{\sfd(y,x)}\le e^{\int_t^s g_r( F_t^r(x))\,\d r},
\end{equation}
where the functions $g_r$ satisfy the bound $\|g_r\|_{L^2}\leq C(K,N,R)(\||\nabla w_r| \|_{L^2}+\|\div(w_r)\|_{L^\infty})$
(thus from the integrability assumptions on $(w_t)$ the integrability of $(g_t)$ as in the statement follows).  In particular, this gives \eqref{eq:estimate3_factor} for $t\leq s$. For the case $t\geq s$ we assume for the moment that for $\mm$-a.e.\ $x$ we can establish the `change of variable formula' marked with a star in the following computation
\[
\aplimi_{y\to x}\frac{\sfd(F_s^t(y),F_s^t(x))}{\sfd(y,x)}\stackrel*=\aplimi_{w\to F_s^t(x)}\frac{\sfd(w,F_s^t(x))}{\sfd(F_t^s(w),x)}=\frac1{\aplims_{w\to F_s^t(x)}\frac{\sfd(F_t^s(w),x)}{\sfd(w,F_s^t(x))}}.
\]
Then \eqref{eq:thm16} (that we apply with $t,s$ swapped) gives
\[
\aplims_{w\to F_s^t(x)}\frac{\sfd(F_t^s(w),x)}{\sfd(w,F_s^t(x))}\leq e^{\int_s^t g_r( F_s^r(x))\,\d r}\qquad\mm-a.e.\ x.
\]
Writing $x=F_t^s(z)$ and keeping in mind the  bounded compression property \eqref{eq:boundcompr} and the group property  \eqref{eq:groupRLF} we deduce that
\[
\aplims_{w\to z}\frac{\sfd(F_t^s(w),F_t^s(z))}{\sfd(w,z)}\leq e^{\int_s^t g_r( F_t^r(z))\,\d r}\qquad\mm-a.e.\ z,
\]
which is \eqref{eq:estimate3_factor} in the case $t\geq s$. Thus it remains to prove that the starred identity in the above holds for $\mm$-a.e.\ $x\in\X$. By the very definition of $\aplimi$, this will follow if we show that for $\mm$-a.e.\ $x\in\X$ we have that: for any $A\subset \X$ Borel we have that $F_s^t(x)$ is a Lebesgue point of $A$ if and only if $x$ is a Lebesgue point of $(F_s^t)^{-1}(A)$. Since \eqref{eq:estimate4_factor} ensures that $F_t^s$ and $F_s^t$ are Lusin--Lipschitz, \eqref{eq:groupRLF} that they are one the essential inverse of the other and recalling \eqref{eq:boundcompr}, the conclusion follows from Lemma \ref{le:changevar} below.
\end{proof}

\begin{lemma}\label{le:changevar} Let $(\X,\sfd,\mm)$ be a locally doubling space, $T,S:\X\to\X$ Borel maps, Lusin--Lipschitz, such that $T\circ S=\Id$ and $S\circ T=\Id$ $\mm$-a.e.\ and finally so that $T_*\mm\leq C\mm$ and $S_*\mm\leq C\mm$ for some $C>0$.

Then for $\mm$-a.e.\ $x\in\X$ the following holds: for any $A\subset\X$ Borel we have that $T(x)$ is a Lebesgue point of $A$ if and only if $x$ is a Lebesgue point of $T^{-1}(A)$.
\end{lemma}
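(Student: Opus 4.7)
The plan is to combine the Lusin--Lipschitz decompositions of $T$ and $S$ with the Lebesgue differentiation theorem, which holds on locally doubling metric measure spaces. First, write $\X=N\cup\bigcup_{i\in\N}E_i$ with $\mm(N)=0$ and $T\restr{E_i}$ Lipschitz of constant $L_i$, and similarly $\X=M\cup\bigcup_{j\in\N}F_j$ with $S\restr{F_j}$ Lipschitz of constant $L_j'$. Let $N_1,N_2\subseteq\X$ be the $\mm$-null sets outside of which $S\circ T=\Id$ and $T\circ S=\Id$ respectively, and define the good set $G\subseteq\X$ to consist of those $x$ such that $x\notin N_1$, $x\in E_i$ is a density point of $E_i$ for some $i$, $T(x)\notin N_2$, and $T(x)\in F_j$ is a density point of $F_j$ for some $j$. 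Local doubling ensures that $\mm$-a.e.\ point of each $E_i$ is a density point of it (and likewise for $F_j$), while the bounded compression $T_*\mm\leq C\mm$ transfers the conditions on $T(x)$ back to $\mm$-null conditions on $x$; hence $\mm(\X\setminus G)=0$.

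Second, for $x\in G$ I would establish the quantitative comparability of $\mm(B_r(x))$ and $\mm(B_r(T(x)))$ at small scales. Fix indices $i,j$ with $x\in E_i$ and $T(x)\in F_j$, and set $L=L_i$, $L'=L_j'$. The inclusions $T(E_i\cap B_r(x))\subseteq B_{Lr}(T(x))$ and $S(F_j\cap B_r(T(x)))\subseteq B_{L'r}(x)$, together with $T_*\mm\leq C\mm$ and $S_*\mm\leq C\mm$, give
\[
\mm(E_i\cap B_r(x))\leq C\,\mm(B_{Lr}(T(x))),\qquad\mm(F_j\cap B_r(T(x)))\leq C\,\mm(B_{L'r}(x)).
\]
Using the density of $E_i$ at $x$ and of $F_j$ at $T(x)$ to absorb the intersections with $E_i,F_j$, and local doubling to swallow the factors $L,L'$, one obtains
\[
C_0^{-1}\,\mm(B_r(T(x)))\leq\mm(B_r(x))\leq C_0\,\mm(B_r(T(x)))
\]
for some constant $C_0>0$ and all $0<r<r_0(x)$.

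With this comparability in hand, the forward direction is direct. Suppose $T(x)$ is a Lebesgue point of $A$, and consider first the case $T(x)\in A$. The decomposition
\[
B_r(x)\setminus T^{-1}(A)\,\subseteq\,(B_r(x)\setminus E_i)\,\cup\,T^{-1}\!\bigl(B_{Lr}(T(x))\setminus A\bigr)
\]
(valid because $y\in E_i\cap T^{-1}(A^c)$ entails $T(y)\in B_{Lr}(T(x))\setminus A$) combined with $T_*\mm\leq C\mm$ gives
\[
\mm\bigl(B_r(x)\setminus T^{-1}(A)\bigr)\leq\mm(B_r(x)\setminus E_i)+C\,\mm\bigl(B_{Lr}(T(x))\setminus A\bigr).
\]
Dividing by $\mm(B_r(x))$, the first summand tends to $0$ by density of $E_i$ at $x$, while the second tends to $0$ by density of $A$ at $T(x)$, local doubling, and the comparability. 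The case $T(x)\notin A$ is identical after swapping $A$ and $A^c$. The converse implication follows by running the very same argument with the roles of $T$ and $S$ interchanged, applied to the set $B:=T^{-1}(A)$: since $T\circ S=\Id$ $\mm$-a.e., $S^{-1}(T^{-1}(A))$ coincides with $A$ up to an $\mm$-null set that, by the definition of $G$, avoids $T(x)$, so the already proved implication (for $S$) applied to $y:=T(x)$ and $B$ forces $x=S(T(x))$ being a Lebesgue point of $T^{-1}(A)$ to imply $T(x)$ is a Lebesgue point of $A$.

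The principal technical point is the ball-measure comparability at $x$ and $T(x)$: the bounded compression $T_*\mm\leq C\mm$ by itself controls measures only in one direction, and one genuinely needs to combine the Lusin--Lipschitz and compression bounds for \emph{both} $T$ and $S$ with local doubling in order to close the loop. Once this step is in place, the remainder of the proof consists of standard density estimates.
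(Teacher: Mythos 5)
Your proof is correct and follows essentially the same route as the paper's: Lusin--Lipschitz decompositions for both $T$ and $S$, the bounded compression bounds to push measures forward, and local doubling to compare $\mm(B_r(x))$ with $\mm(B_r(T(x)))$, followed by standard density estimates. The only cosmetic differences are that you isolate the two-sided ball-measure comparability as a separate step and work with two independent decompositions (and prove the opposite implication first), whereas the paper works directly with sets $B$ on which $T\restr{B}$ and $S\restr{T(B)}$ are simultaneously Lipschitz.
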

\begin{proof}
The assumptions about essential invertibility and bounded compression give the existence of $X_1,X_2\subset \X$ Borel of full measure such that $T,S$ are invertible bijections from $X_1$ to $X_2$ and vice versa, respectively. Also, from the Lusin--Lipschitz regularity we see that $\mm$-a.a.\ $X_1$ can be covered by a countable number of Borel sets $B$ such that $T\restr B$ is Lipschitz and $S\restr{T(B)}$ is also Lipschitz. Fix such $B$, let $L$ be a bound on the Lipschitz constants of $T\restr B$  and $S\restr{T(B)}$ and let $B'\subset B$ be the set of $x$'s such that $x$ is a Lebesgue point of $B$ and $T(x)$ is a Lebesgue point of $T(B)=S^{-1}(B)\cap X_2$. Clearly, $\mm(B\setminus B')=0$, so the conclusion will follow if we show that any $x\in B'$ satisfies the required conditions. Thus fix $x\in B'$, let $A\subset \X$ be Borel and $r\in(0,1)$.  Assume that $x$ is a Lebesgue point of $T^{-1}(A)$. Since $T\restr B$ is $L$-Lipschitz, we have that $T(B_{r/L}(x)\cap B)\subset B_r(T(x))\cap T(B)$ and therefore
\begin{equation}
\label{eq:palla1}
\begin{split}
\mm\big(B_r(T(x))\big)&\geq\mm\big(B_r(T(x))\cap T(B)\big)\geq\mm\big(T(B_{r/L}(x)\cap B)\big)=\mm\big(S^{-1}(B_{r/L}(x)\cap B)\big)\\
&=S_*\mm\big(B_{r/L}(x)\cap B\big)\geq C^{-1}\mm\big(B_{r/L}(x)\cap B\big)\geq C^{-1} D^{-(2\log_2(L)+1)}\mm\big(B_{rL}(x)\cap B\big) ,
\end{split}
\end{equation}
where $D$ is the local doubling constant at $x$  and we used the bound $S_*\mm\geq C^{-1}\mm$ (which follows taking $S_*$ on both sides of $T_*\mm\leq C\mm$). On the other hand we have
\[
\begin{split}
\mm\big(B_r(T(x))\setminus A\big)\leq \mm\big(B_r(T(x))\setminus(A\cap T(B))\big)=\mm\big(B_r(T(x))\setminus T(B)\big)+\mm\restr{T(B)}\big(B_r(T(x))\setminus A\big)
\end{split}
\]
and since we assumed $T(x)$ to be a Lebesgue point for $T(B)$, when we divide the first addend in the rightmost side by $\mm(B_r(T(x)))$ and let $r\downarrow0$ it converges to 0. For the other addend we have the estimate
\[
\begin{split}
\mm\restr{T(B)}\big(B_r(T(x))\setminus A\big)&\leq C(T_*\mm)\restr{T(B)}\big(B_r(T(x))\setminus A\big)\\
&=C\mm\big(T^{-1}\big(T(B)\cap B_r(T(x))\big)\setminus T^{-1}(A)\big)\\
&=C\mm\big(S\big(T(B)\cap B_r(T(x))\big)\setminus T^{-1}(A)\big)\\
&\leq C\mm\big( B_{Lr}(x)\setminus T^{-1}(A)\big).
\end{split}
\]
Thus recalling \eqref{eq:palla1} we get
\[
\begin{split}
\lims_{r\downarrow0}\frac{\mm\big(B_r(T(x))\setminus A\big)}{\mm\big(B_r(T(x))\big)}\leq  C^2 D^{2\log_2(L)+1}\lims_{r\downarrow0}\frac{\mm\big( B_{Lr}(x)\setminus T^{-1}(A)\big)}{\mm(B_{Lr}(x))}\frac{\mm(B_{Lr}(x))}{\mm\big(B_{Lr}(x)\cap B\big)}=0 ,
\end{split}
\]
having used the assumption that $x$ is a Lebesgue point of $B$ and $T^{-1}(A)$. This proves that $T(x)$ is a Lebesgue point of $A$. The converse implication is proved analogously.
\end{proof}
\begin{remark}{\rm
The results in \cite{BDS21} are based on the slightly less stringent assumption that there is an $L^2$ control over only the symmetric part of the covariant derivative. We phrased the result in this weaker formulation because in any case we will need a control on the full covariant derivative later on (when discussing the properties of the convective derivative introduced in Definition \ref{def:Dt}).

Notice also that in Proposition \ref{prop:eliadanieleregularity} one needs to assume $L^2$ integrability in time, rather than the $L^1$ integrability which is sufficient in Theorem \ref{thm:AT}. We shall therefore make this assumption throughout the manuscript. In any case, notice that by a simple reparametrization argument one can always reduce to the case of vector fields bounded in $W^{1,2}_C$ at the only price of reparametrizing the flow in time.
}\fr\end{remark}

It is clear from \eqref{eq:estimate4_factor} that 
\begin{equation}
\label{eq:unifLip2}
\text{for any $c>0$ and $t\in[0,1]$ the restriction of $F^s_t$ to $\{\bar g_t\leq c\}$ is Lipschitz, uniformly in $s\in[0,1]$.}
\end{equation}

\section{Functional analytic tools}
\subsection{Differential of Lusin--Lipschitz maps}
\label{sec:diff}
In this section, inspired by some discussions in \cite{Gigli14}, \cite{GP16}  we develop a language for the differential of a Lusin--Lipschitz and invertible map $\varphi \colon \X \rightarrow \X$ of  bounded compression. The results will be applied to the flow maps $({F_t^s})$.

We start noticing that if $f:\X\to\R$ is Lusin--Lipschitz and $(E_j)$ is a Borel partition of $\mm_\X$-a.a.\ $\X$ such that $f\restr{E_j}$ is Lipschitz
for every $j$, then the formula
\[
\d f:=\sum_{j\in\N}\nchi_{E_j}\d g_j,\qquad\text{where $g_j\in\LIP(\X)$ is equal to $f$ on $E_j$ for every $j$}
\]
defines an element of $L^0(T\X)$ that, by locality of the differential, is independent of the particular functions $g_j$ and Borel sets $E_j$ as above. We shall refer to $\d f$ as the differential of $f$ and notice that this definition poses no ambiguity as, again by locality, for $f$ Sobolev and Lusin--Lipschitz the definition above produces the same differential of $f$ as defined in \cite{Gigli14}.

We then notice the following simple lemma:
\begin{lemma}
Let $(\X,\sfd,\mm)$ be locally uniformly doubling. Let $(\Y,\sfd_\Y)$ be a complete space and $\varphi:\X\to\Y$ a Lusin--Lipschitz map.
Let $(E_i)_{i\in\N}$ be a Borel partition of $\X$ up to $\mm$-null sets such that $\varphi\restr{E_i}$ is a Lipschitz map for every $i\in\N$. Then
\begin{equation}
\label{eq:characterization_aplip2}
\aplip \varphi=\sum_{i=1}^\infty \nchi_{E_i} {\rm lip}(\varphi \restr{E_i})\qquad\mm-a.e..
\end{equation}
\end{lemma}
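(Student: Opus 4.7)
My plan is to fix $i\in\N$ and prove the identity at $\mm$-a.e.\ $x\in E_i$. The two key facts I would use are: (a) by the Lebesgue differentiation theorem (valid on locally uniformly doubling spaces), $\mm$-a.a.\ points of $E_i$ are density points of $E_i$; and (b) local uniform doubling gives, for any $\eps\in(0,1)$, a constant $c(\eps)>0$ such that $\mm(B_{\eps r}(y))\ge c(\eps)\mm(B_r(x))$ whenever $\sfd(x,y)\le r$ and $r$ is small. So I fix $x\in E_i$ which is a density point of $E_i$, and show both inequalities.

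For the inequality $\aplip\varphi(x)\le\lip(\varphi\restr{E_i})(x)$: set $L:=\lip(\varphi\restr{E_i})(x)$ and pick $\lambda>L$. Then for some $r_0>0$ one has $\sfd_\Y(\varphi(y),\varphi(x))\le\lambda\,\sfd(y,x)$ for every $y\in E_i\cap B_{r_0}(x)$, so the set $A_\lambda:=\{y:\sfd_\Y(\varphi(y),\varphi(x))\le\lambda\,\sfd(y,x)\}$ contains $E_i\cap B_{r_0}(x)$. Since $x$ is a density point of $E_i$, it is a density point of $A_\lambda$, hence $\aplip\varphi(x)\le\lambda$; letting $\lambda\downarrow L$ concludes this part.

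For the reverse inequality $\aplip\varphi(x)\ge L$: argue by contradiction and pick $\lambda<L$ with $x$ a density point of $A_\lambda$. By definition of $L$ there is a sequence $y_n\in E_i$ with $r_n:=\sfd(y_n,x)\to 0$ and $\sfd_\Y(\varphi(y_n),\varphi(x))/r_n\to L$. Let $L_i$ be a Lipschitz constant for $\varphi\restr{E_i}$. Choosing $\eps>0$ small enough that $(L-L_i\eps)/(1+\eps)>\lambda$, triangle inequality gives, for every $z\in E_i\cap B_{\eps r_n}(y_n)$ and $n$ large,
\[
\frac{\sfd_\Y(\varphi(z),\varphi(x))}{\sfd(z,x)}\ge\frac{L\,r_n(1-o(1))-L_i\eps\, r_n}{(1+\eps)r_n}>\lambda,
\]
so $E_i\cap B_{\eps r_n}(y_n)\subset\X\setminus A_\lambda$. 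Using the doubling estimate $\mm(B_{\eps r_n}(y_n))\ge c(\eps)\mm(B_{(1+\eps)r_n}(x))$ together with the density point property $\mm((\X\setminus E_i)\cap B_{(1+\eps)r_n}(x))=o(\mm(B_{(1+\eps)r_n}(x)))$, we conclude
\[
\frac{\mm((\X\setminus A_\lambda)\cap B_{(1+\eps)r_n}(x))}{\mm(B_{(1+\eps)r_n}(x))}\ge c(\eps)-o(1),
\]
contradicting the fact that $x$ is a density point of $A_\lambda$.

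The main technical point to be careful about is the reverse inequality: one must transfer a sequence $(y_n)$ realizing $\lip(\varphi\restr{E_i})(x)$ into a set of positive density around $x$, and this is exactly where the local uniform doubling assumption is essential, both to compare $\mm(B_{\eps r_n}(y_n))$ with $\mm(B_{(1+\eps)r_n}(x))$ and to exploit Lebesgue density points. The first inequality, in contrast, is essentially a direct unwinding of the definition of $\aplims$.
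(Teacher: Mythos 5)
Your proof is correct. It is worth noting, though, that the paper does not actually prove this lemma from scratch: its ``proof'' consists of two citations to Gigli--Tyulenev (the observation that, at a density point of $E_i$, the quantity $\aplip\varphi(x)$ does not see the behaviour of $\varphi$ outside $E_i$, combined with their Proposition~2.5 identifying $\aplip$ with $\lip$ of the restriction). Your argument is a self-contained substitute for those references, and both halves are sound: the inequality $\aplip\varphi(x)\le\lip(\varphi\restr{E_i})(x)$ is indeed just the observation that $E_i\cap B_{r_0}(x)$ sits inside the sublevel set $A_\lambda$ and inherits the density of $E_i$ at $x$; and the reverse inequality correctly uses uniform doubling to show that the balls $B_{\eps r_n}(y_n)$, which you prove lie in $\X\setminus A_\lambda$ up to the small set $B_{(1+\eps)r_n}(x)\setminus E_i$, carry a uniformly positive fraction of the measure of $B_{(1+\eps)r_n}(x)$ -- exactly the point where ``locally doubling'' would not suffice and ``locally uniformly doubling'' is needed, as you say. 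Two harmless edge cases you implicitly skip: when $L=\lip(\varphi\restr{E_i})(x)=0$ the reverse inequality is trivial since $\aplip\varphi\ge 0$ (so the choice of $\eps$ with $(L-L_i\eps)/(1+\eps)>\lambda$ should be read only for $L>0$, or one should take $\lambda\in(\aplip\varphi(x),L)$ nonnegative), and the finiteness of $L$, which your contradiction argument uses via the sequence $y_n$, is guaranteed by the global Lipschitz bound $L_i$ on $E_i$; it would be worth saying both explicitly.
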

\begin{proof}
Let $i\in\N$ be fixed. Let $x\in E_i$ be a density point of $E_i$; recall that $\mm$-a.e.\ point of $E_i$ has this property.
As observed, e.g., in the paragraph following \cite[Eq.\ (2.6)]{GT20}, the quantity $\aplip \varphi(x)$ is independent of the
behaviour of $\varphi$ outside $E_i$. Therefore, \cite[Proposition 2.5]{GT20} grants that $\aplip \varphi(x)=\lip(\varphi\restr{E_i})(x)$,
whence \eqref{eq:characterization_aplip2} follows.
\end{proof}
We come to the definition of differential of a Lusin--Lipschitz map. In what follows, it will be useful to notice that if $\varphi:\X\to\Y$ is Lusin--Lipschitz with $\varphi_*\mm_\X\ll\mm_\Y$ and $f:\Y\to\R$ is also Lusin--Lipschitz, then $f\circ\varphi:\X\to\R$ is  Lusin--Lipschitz as well. Indeed, let  $(F_i)$ (resp.\ $(E_j)$) be a Borel partition of $\mm_\Y$-a.a.\ $\Y$ (resp.\ $\mm_\X$-a.a.\ $\X$) such that $f\restr{F_i}$ (resp.\ $\varphi\restr{E_j}$) is Lipschitz for every $i$ (resp. $j$). Then since $\varphi_*\mm_\X\ll\mm_\Y$, we have that $(E_j\cap\varphi^{-1}(F_i))$ is a Borel partition of $\mm_\X$-a.a.\ $\X$ such that $f\circ\varphi\restr{E_j\cap\varphi^{-1}(F_i)}$ is Lipschitz for every $i,j$.

In particular, the right hand side in formula \eqref{eq:char_diff_LL} below makes sense.
\begin{theorem}[Differential of a Lusin--Lipschitz map]\label{thm:diff_LL}
Let \((\X,\sfd_\X,\mm_\X)\), \((\Y,\sfd_\Y,\mm_\Y)\) be metric measure spaces.
Suppose \((\X,\sfd_\X) \) is geodesic, \(\mm_\X\) is locally uniformly doubling, and \((\Y,\sfd_\Y,\mm_\Y)\) is a PI space.
Let \(\varphi\colon\X\to\Y\) be an essentially invertible Lusin--Lipschitz map such that
\(\varphi_*\mm_\X\ll\mm_\Y\) and \(\varphi^{-1}_*\mm_\Y\ll\mm_\X\), where  $\varphi^{-1}$ is an essential inverse of $\varphi$ (that is uniquely defined up to $\mm_\Y$-negligible sets).

Then there exists a unique
linear and continuous operator \(\d\varphi\colon L^0(T\X)\to L^0(T\Y)\) such that
\begin{equation}\label{eq:char_diff_LL}
\d f\big(\d\varphi(v)\big)\circ\varphi=\d (f\circ\varphi)(v)\quad\text{ holds }\mm_\X\text{-a.e.\ on }\X,
\end{equation}
for every  \(v\in L^0(T\X)\) and $f:\Y\to\R$ Lusin--Lipschitz.

Moreover, it holds that
\begin{equation}\label{eq:pn_diff_LL}
\big|\d\varphi(v)\big|\circ \varphi \leq\aplip \varphi\,|v|\;\;\;
\mm_\X\text{-a.e.\ on }\X,\quad\text{ for every }v\in L^0(T\X).
\end{equation}
\end{theorem}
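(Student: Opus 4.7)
The natural plan is to build $\d\varphi$ as the ``transpose'' of a pullback operator on one-forms. First, I would construct an additive map $D\colon L^0(T^*\Y)\to L^0(T^*\X)$ playing the role of $\varphi^*$. On differentials of Lusin--Lipschitz functions we are forced to set
\[
D(\d f):=\d(f\circ\varphi),
\]
and then extend by the rule $D(g\,\omega):=(g\circ\varphi)\,D(\omega)$ for $g\in L^0(\Y)$. To show this is well-posed, I would fix Borel partitions $(E_i)$ of $\X$ and $(F_j)$ of $\Y$ with $\varphi\restr{E_i}$ and $\varphi^{-1}\restr{F_j}$ Lipschitz, refine them into $A_{ij}:=E_i\cap\varphi^{-1}(F_j)$, and work on each $A_{ij}$. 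By the lemma just proved, $\aplip \varphi=\lip(\varphi\restr{A_{ij}})$ on $A_{ij}$, and extending $\varphi\restr{A_{ij}}$ to a globally Lipschitz map $\tilde\varphi_{ij}\colon\X\to\Y$ the standard chain rule for compositions with Lipschitz maps (see \cite{Gigli14}) together with locality of the differential yields, on $A_{ij}$, the pointwise estimate $|D(\omega)|\le(\aplip \varphi)(|\omega|\circ\varphi)$. Summing over $i,j$, this bound is valid $\mm_\X$-a.e.\ on $\X$ on the submodule $V\subseteq L^0(T^*\Y)$ generated by exact Lusin--Lipschitz differentials; since $\Y$ is a PI space, $V$ is dense in $L^0(T^*\Y)$, so $D$ extends uniquely to the whole cotangent module while preserving the pointwise bound.

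Next, for fixed $v\in L^0(T\X)$, I would consider the map
\[
L^0(T^*\Y)\ni\omega\;\longmapsto\;D(\omega)(v)\circ\varphi^{-1}\in L^0(\Y),
\]
which makes sense since $\varphi^{-1}_*\mm_\Y\ll\mm_\X$, is $L^0(\Y)$-linear by construction, continuous in the $L^0$-topology, and satisfies the pointwise norm bound $(\aplip \varphi\cdot|v|)\circ\varphi^{-1}$. Riesz representation for $L^0$-Hilbert modules then produces a unique $\d\varphi(v)\in L^0(T\Y)$ with
\[
\d f\big(\d\varphi(v)\big)=\big[\d(f\circ\varphi)(v)\big]\circ\varphi^{-1}\quad\mm_\Y\text{-a.e.},
\]
for every Lusin--Lipschitz $f\colon\Y\to\R$. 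Composing both sides with $\varphi$ yields the characterization \eqref{eq:char_diff_LL}, and the pointwise estimate \eqref{eq:pn_diff_LL} transfers directly from the functional bound. Additivity, $L^0(\X)$-linearity (using the ring isomorphism $L^0(\Y)\cong L^0(\X)$ induced by $\varphi$) and continuity of $v\mapsto\d\varphi(v)$ are automatic from the construction.

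Uniqueness is then immediate: if two operators satisfy \eqref{eq:char_diff_LL}, their difference $u=\d\varphi_1(v)-\d\varphi_2(v)$ pairs to zero with $\d f$ $\mm_\Y$-a.e.\ for every Lusin--Lipschitz $f$, and the PI-space hypothesis on $\Y$ ensures such exact differentials generate $L^0(T^*\Y)$ as an $L^0(\Y)$-module, forcing $u=0$. The main technical obstacle I anticipate lies in Step 1: verifying that any module relation $\sum_k g_k\d f_k=0$ in $L^0(T^*\Y)$ pulls back to $\sum_k(g_k\circ\varphi)\,\d(f_k\circ\varphi)=0$ in $L^0(T^*\X)$. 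This cannot be checked globally but only locally, after restricting to each bi-Lipschitz piece $A_{ij}$; locality of the differential, together with the identification $\aplip \varphi=\lip(\varphi\restr{A_{ij}})$ furnished by the preceding lemma, is the essential ingredient that makes the local argument go through.
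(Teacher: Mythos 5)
Your overall architecture is the paper's proof transposed: where you first build the pullback $D=\varphi^*$ on one-forms and then pair with $v$, the paper fixes $v$ and builds the functional $\omega\mapsto L_v(\omega)$ directly; the essential ingredients (pointwise bound, locality, density of exact differentials, module duality) are the same. However, two steps as written do not work. First, in the proof of the key estimate you ``extend $\varphi\restr{A_{ij}}$ to a globally Lipschitz map $\tilde\varphi_{ij}\colon\X\to\Y$'': a Lipschitz map defined on a subset with values in a metric space $\Y$ in general admits no Lipschitz (or even continuous) extension (think of the identity of the boundary circle viewed as a map from a subset of the disk to the circle), and the chain rule of \cite{Gigli14} you invoke is for maps of bounded deformation, a hypothesis you do not have. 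The paper never extends $\varphi$: it extends only the real-valued functions ($h_i$ equal to $f$ on $F_i$, and $g_{i,j}$ equal to $f\circ\varphi$ on $A_{ij}=E_j\cap\varphi^{-1}(F_i)$) and runs the elementary estimate $\lip\big((f\circ\varphi)\restr{A_{ij}}\big)\le\lip(\varphi\restr{A_{ij}})\,\lip\big(f\restr{\varphi(A_{ij})}\big)\circ\varphi$; this is exactly where the structural hypotheses enter: local uniform doubling of $\mm_\X$ gives $\lip(g_{i,j})=\lip(g_{i,j}\restr{A_{ij}})$ at density points (\cite{GT20}), and the PI assumption on $\Y$ gives $\lip(h_i)=|\d h_i|$ (\cite{Cheeger00}), which converts the Lipschitz-constant bound into a bound by $|\d f|\circ\varphi$, i.e.\ into \eqref{eq:pertvf} and ultimately \eqref{eq:pn_diff_LL}. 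Your sketch uses neither fact, and without them the bound in terms of $|\d f|$ (rather than in terms of Lipschitz constants of extensions) is not justified.

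Second, the representation step: you invoke ``Riesz representation for $L^0$-Hilbert modules'', but $\Y$ is only assumed to be a PI space, so $L^0(T^*\Y)$ need not be a Hilbert module. What is actually used (and suffices) is that $L^0(T\Y)$ is canonically the module dual of $L^0(T^*\Y)$, so an $L^0(\Y)$-linear continuous functional with the stated pointwise bound \emph{is} an element of $L^0(T\Y)$; no Hilbertianity is required. A smaller remark: the well-definedness issue you flag at the end (pulling back relations $\sum_k g_k\,\d f_k=0$) can be sidestepped entirely, as the paper does, by noting that $\nchi_E$-combinations of Lusin--Lipschitz differentials are again differentials of Lusin--Lipschitz functions (patching along a Borel partition), so the set of exact differentials is itself dense in $L^0(T^*\Y)$ and one only needs locality for a single $f$, which follows from the bound applied to differences.
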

\begin{proof} For $f:\Y\to\R$ Lusin--Lipschitz and $v\in L^0(T\X)$ we define $T_v(f)\in L^0(T\Y)$ as
\[
T_v(f):=\big(\d(f\circ \varphi)(v)\big)\circ\varphi^{-1}
\]
(notice that $\d(f\circ \varphi)(v)$ is in $L^0(\X)$, so the above defines a function in $L^0(\Y)$ thanks to the assumption $\varphi^{-1}_*\mm_\Y\ll\mm_\X$).
Notice that since, as previously remarked, $f\circ\varphi$ is Lusin--Lipschitz, the right hand side of the above is well defined. We claim that for any Borel partition \((E_j)_j\) of $\mm_\X$-a.a.\ \(\X\) such that \(\varphi\restr{E_j}\) is Lipschitz for all \(j\in\N\) we have
\begin{equation}
\label{eq:claimtvf}
|T_v(f)|\leq \Big(|v|\sum_{j\in\N}{\lip}(\varphi\restr{E_j})\nchi_{E_j}\Big)\circ\varphi^{-1}|\d f|\qquad\mm_\X-a.e..
\end{equation}
To see this we start noticing that $|T_v(f)|\leq |v|\circ\varphi^{-1}|\d(f\circ\varphi)|\circ\varphi^{-1}$, so the claim will follow if we show that
\begin{equation}
\label{eq:pertvf}
|\d(f\circ \varphi)|\leq |\d f|\circ\varphi\Big(\sum_{j\in\N}{ \lip}(\varphi\restr{E_j})\nchi_{E_j}\Big).
\end{equation}
To see this, let $(F_i)$ be a Borel partition of $\mm_\Y$-a.a.\ $\Y$ such that $f\restr{F_i}$ is Lipschitz for every $i$ and recall that $(E_j\cap\varphi^{-1}(F_i))$ is a Borel partition of $\mm_\X$-a.a.\ $\X$ such that $f\circ\varphi\restr{E_j\cap\varphi^{-1}(F_i)}$ is Lipschitz for every $i,j$. For every $i,j\in\N$ let $h_i:\Y\to\R$ be Lipschitz and equal to $f$ on $F_i$ and $g_{i,j}:\X\to\R$ be Lipschitz and equal to $f\circ\varphi$ on $E_j\cap\varphi^{-1}(F_i)$. 

Note that since $\mm_\X$ is locally doubling we have \(\lip(g_{i,j})=\lip(g_{i,j}\restr{E_{j}\cap \varphi^{-1}(F_i)})\) \(\mm_\X\)-a.e.\ on \(E_j\cap\varphi^{-1}(F_i)\) 
(see e.g.\ \cite{GT20}) and since \((\Y,\sfd_\Y,\mm_\Y)\) is PI we have  \(\lip(h_i)=|\d h_i|\) \(\mm_\Y\)-a.e.\ on \(\Y\) (see \cite{Cheeger00}). Then $\mm_\X$-a.e.\ on $E_j\cap\varphi^{-1}(F_i)$ we have
\[
\begin{split}
|\d(f\circ \varphi)|&=|\d g_{i,j}|\leq\lip(g_{i,j})=\lip(g_{i,j}\restr{E_j\cap\varphi^{-1}(F_i)})=\lip\big((f\circ\varphi)\restr{E_j\cap\varphi^{-1}(F_i)}\big)\\
&\leq \lip(\varphi\restr{E_j\cap\varphi^{-1}(F_i)})\lip (f\restr{\varphi(E_j\cap\varphi^{-1}(F_i))}) \circ \varphi\leq \lip(\varphi\restr{E_j})\lip (f\restr{F_i}) \circ \varphi\\
&\leq \lip(\varphi\restr{E_j})\lip(h_i) \circ \varphi=\lip(\varphi\restr{E_j})|\d h_i| \circ \varphi =\lip(\varphi\restr{E_j})|\d f| \circ \varphi
\end{split}
\]
whence \eqref{eq:pertvf} - and thus also \eqref{eq:claimtvf} - follows. From \eqref{eq:claimtvf} and the linearity of $T_v$ it follows that if $\d f=\d f'$ on some Borel set $E\subset \X$, then $T_v(f)=T_v(f')$ on $E$ as well. Therefore the operator $L_v:\{\text{differentials of Lusin--Lipschitz functions on $\Y$}\}\to L^0(\Y)$ defined by
\[
L_v(\d f):=T_v(f)
\]
is well defined and satisfies $L_v(\nchi_E\d f)=\nchi_EL_v(\d f)$. Also, since \eqref{eq:claimtvf} holds for any partition $(E_j)$ as above, from \eqref{eq:characterization_aplip2} we see that
\begin{equation}
\label{eq:lvdf}
|L_v(\d f)|\leq  \big(|v|\aplip\varphi\big)\circ\varphi^{-1}\,|\d f|.
\end{equation}
Now notice that   \(W^{1,2}(\Y)\) is reflexive (see  \cite{Cheeger00}), thus  \(\LIP(\Y)\cap W^{1,2}(\Y)\) is strongly dense in \(W^{1,2}(\Y)\) (see \cite{ACM14}), therefore  \(L^0(T^*\Y)\) is generated by \(\big\{\d f\,:\,f\in\LIP(\Y)\big\}\) and so the set $\{\text{differentials of Lusin--Lipschitz functions on $\Y$}\}$ is dense in  \(L^0(T^*\Y)\). Also, from \eqref{eq:lvdf} it is easy to see (see also the arguments in Section \ref{se:setting} that lead to \eqref{eq:unifO}) that $L_v$ is uniformly continuous, hence it can be uniquely extended to a continuous map, still denoted $L_v$, from $L^0(T^*\Y)$ to $L^0(\Y)$ and this extension satisfies
\begin{equation}
\label{eq:lvw}
|L_v(\omega)|\leq  \big(|v|\aplip\varphi\big)\circ\varphi^{-1}\,|\omega|\qquad\mm_\Y-a.e.,\ \forall\omega\in L^0(T^*\Y).
\end{equation}
It is clear from the previous considerations that $L_v$ is also $L^0(\Y)$-linear, and thus an element of $L^0(T^*\Y)^*$. We denote by \(\d\varphi(v)\) the element of \(L^0(T\Y)\) corresponding to \(L_v\in L^0(T^*\Y)^*\). 

It is clear that the map $L^0(T\X)\ni v\mapsto  \d\varphi(v)\in L^0(T\Y)$ is linear and that this assignment is the only one   satisfying \eqref{eq:char_diff_LL}. Finally,   \eqref{eq:pn_diff_LL} follows from  \eqref{eq:lvw}.
\end{proof}

The pointwise norm \(|\d\varphi|\in L^0(\X)\) of the
differential \(\d\varphi\) introduced in Theorem \ref{thm:diff_LL}
is defined as follows:
\begin{equation}\label{eq:def_ptse_norm_bar_d}
|\d\varphi|\coloneqq
\underset{\substack{v\in L^0(T\X): \\ |v|\leq 1\;\mm_\X\text{-a.e.}}}{\mm-{\rm ess\,sup}}
\big|\d\varphi(v)\big|\circ\varphi.
\end{equation}
\begin{proposition}[Basic properties of the differential]
With the same assumptions on $\X,\Y,\varphi$ as in Theorem \ref{thm:diff_LL} the following holds.

We have
\begin{equation}
\label{eq:estimate_differential_aplip}
|{\d}\varphi|\leq \aplip\, \varphi\quad \mm_{\X}-a.e.
\end{equation}
and   for $v \in L^0(T\X)$ the bound 
\begin{equation}
\label{eq:pointwise_norm_estimate}
|{\d} \varphi(v)| \circ \varphi \le |{\d} \varphi| |v| \quad \mm_\X-a.e..
\end{equation}
Moreover, for $f:\Y\to\R$ Lusin--Lipschitz, the function $f\circ \varphi$ is also Lusin--Lipschitz and we have
\begin{equation}\label{eq:techn_comp_cl}
\big|\d(f\circ \varphi)\big|\leq|\d f|\circ  \varphi\,|\d  \varphi|,\qquad \mm_\X-a.e..
\end{equation}
Also, for $v \in L^0(T\X)$ and $h\in L^0(\X)$ we have the identity
\begin{equation}
\label{eq:l0lindf}
\d \varphi (hv)=h\circ \varphi^{-1}\,\d \varphi(v).
\end{equation}
Finally, if $({\rm Z},\sfd_{\rm Z},\mm_{\rm Z})$ is another PI space and $\psi:\Y\to{\rm Z}$ is Lusin--Lipschitz, essentially invertible and such that $\psi_*\mm_\Y\ll\mm_{\rm Z}$, $\psi^{-1}_*\mm_{\rm Z}\ll\mm_\Y$, then  \(\psi\circ\varphi\colon\X\to{\rm Z}\)
is Lusin--Lipschitz and satisfies the identity
\begin{equation}\label{eq:chain_rule_diff_LL}
\d(\psi\circ\varphi)=\d\psi\circ\d\varphi
\end{equation}
and the bound
\begin{equation}\label{eq:ineq_chain_rule_diff_LL}
\big|\d(\psi\circ\varphi)\big|\leq|\d\psi|\circ\varphi\,|\d\varphi|
\quad\mm_\X-a.e..
\end{equation}
\end{proposition}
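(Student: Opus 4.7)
The plan is to establish the six stated identities/inequalities in cascade: first bootstrap from the characterization \eqref{eq:char_diff_LL} to the $L^0$-linearity \eqref{eq:l0lindf}, then use the latter together with the definition \eqref{eq:def_ptse_norm_bar_d} to derive the pointwise norm estimates, and finally deduce the two chain rules. Most of the argument amounts to bookkeeping with the $L^0$-module duality and the density of differentials of Lusin--Lipschitz functions already exploited in the construction of $\d\varphi$.

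To prove \eqref{eq:l0lindf}, I test both $\d\varphi(hv)$ and $h\circ\varphi^{-1}\,\d\varphi(v)$ against $\d f$ for $f\colon\Y\to\R$ Lusin--Lipschitz. Exploiting the $L^0(\X)$-linearity of $\d(f\circ\varphi)$ and two applications of \eqref{eq:char_diff_LL},
\[
\d f(\d\varphi(hv))\circ\varphi=\d(f\circ\varphi)(hv)=h\,\d(f\circ\varphi)(v)=h\,\d f(\d\varphi(v))\circ\varphi=\d f\big(h\circ\varphi^{-1}\,\d\varphi(v)\big)\circ\varphi.
\]
Since the $\d f$'s generate $L^0(T^*\Y)$, this forces \eqref{eq:l0lindf}.

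For \eqref{eq:pointwise_norm_estimate}, given $v\in L^0(T\X)$ I set $h_k\coloneqq |v|+1/k\in L^0(\X)$, note that $|v/h_k|\leq 1$ and apply \eqref{eq:l0lindf} to $v=h_k\cdot(v/h_k)$ together with the definition \eqref{eq:def_ptse_norm_bar_d}:
\[
|\d\varphi(v)|\circ\varphi=h_k\,|\d\varphi(v/h_k)|\circ\varphi\leq h_k\,|\d\varphi|\quad\mm_\X\text{-a.e.},
\]
and letting $k\to\infty$ yields \eqref{eq:pointwise_norm_estimate}; estimate \eqref{eq:estimate_differential_aplip} then follows by taking the $\mm_\X$-essential supremum over $|v|\leq 1$ in \eqref{eq:pn_diff_LL}. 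The chain inequality \eqref{eq:techn_comp_cl} is obtained analogously: Lusin--Lipschitzianity of $f\circ\varphi$ was noted before Theorem \ref{thm:diff_LL}, and for every $v\in L^0(T\X)$ with $|v|\leq 1$ the combination of \eqref{eq:char_diff_LL} and \eqref{eq:pointwise_norm_estimate} gives $|\d(f\circ\varphi)(v)|\leq(|\d f|\circ\varphi)\,|\d\varphi|$. Taking the essential supremum on the left and invoking the standard duality $|\omega|=\esssup_{|v|\leq 1}|\omega(v)|$ on $L^0(T^*\X)$ produces \eqref{eq:techn_comp_cl}.

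For the composition, Lusin--Lipschitzianity of $\psi\circ\varphi$ and the mutual absolute continuity of $(\psi\circ\varphi)_*\mm_\X$ and $\mm_{\rm Z}$ follow directly from the hypotheses combined with the fact that $\varphi^{-1}\circ\psi^{-1}$ is an essential inverse of $\psi\circ\varphi$ of bounded compression. Identity \eqref{eq:chain_rule_diff_LL} is then proved by applying \eqref{eq:char_diff_LL} three times to a Lusin--Lipschitz $f\colon{\rm Z}\to\R$ and any $v\in L^0(T\X)$:
\[
\d f\big(\d(\psi\circ\varphi)(v)\big)\circ\psi\circ\varphi=\d\big((f\circ\psi)\circ\varphi\big)(v)=\d(f\circ\psi)(\d\varphi(v))\circ\varphi=\d f\big(\d\psi(\d\varphi(v))\big)\circ\psi\circ\varphi,
\]
after which the density of the $\d f$'s in $L^0(T^*{\rm Z})$ and the absolute continuity just recalled (which lets one cancel the post-composition $\mm_{\rm Z}$-a.e.) give \eqref{eq:chain_rule_diff_LL}. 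Finally, \eqref{eq:ineq_chain_rule_diff_LL} is a twofold application of \eqref{eq:pointwise_norm_estimate} followed by an essential supremum over $|v|\leq 1$. The only step where I expect any genuine subtlety is the first one, where one must use that pairing against differentials of Lusin--Lipschitz functions really pins down an element of $L^0(T\Y)$; all subsequent steps are routine $L^0$-module calculus.
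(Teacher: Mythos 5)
Your proposal is correct and follows essentially the same route as the paper: \eqref{eq:l0lindf} from the defining property \eqref{eq:char_diff_LL} by testing against differentials, \eqref{eq:pointwise_norm_estimate} by factoring $v$ into its norm times a unit vector (your $h_k=|v|+1/k$ regularization replaces the paper's $|v|^{-1}$ with the convention $0$ on $\{v=0\}$, to the same effect), and the chain rules by iterating \eqref{eq:char_diff_LL} and taking essential suprema. The only cosmetic difference is that for \eqref{eq:chain_rule_diff_LL} the paper simply invokes the uniqueness clause of Theorem \ref{thm:diff_LL}, whereas you re-run the density/cancellation argument that underlies it.
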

\begin{proof} The bound \eqref{eq:estimate_differential_aplip} follows directly from the definition and the estimate \eqref{eq:pn_diff_LL}, while \eqref{eq:l0lindf} is a direct consequence of the defining property  \eqref{eq:char_diff_LL}. For  \eqref{eq:pointwise_norm_estimate} we put $v':=|v|^{-1}v$, where $|v|^{-1}$ is intended to be 0 on $\{v=0\}$, notice that $|v'|\leq 1$ $\mm_\X$-a.e.\ and $v=|v|v'$, thus taking \eqref{eq:l0lindf} into account we get
\[
|{\d} \varphi(v)|\circ \varphi=\big|{\d} \varphi(|v|v')\big|\circ\varphi= |v| \,|\d\varphi(v')|\circ \varphi \leq|v| \,|\d\varphi|\qquad\mm_\X-a.e.,
\] 
having used the definition of $|\d \varphi|$ in the last inequality.

To prove  \eqref{eq:techn_comp_cl} we notice that the Lusin--Lipschitz regularity of $f\circ\varphi$ has already been established before Theorem \ref{thm:diff_LL}, then for every $v\in L^0(T\X)$ we have
\[
\begin{split}
|\d(f\circ\varphi)(v)|\stackrel{\eqref{eq:char_diff_LL}}=|\d f(\d\varphi(v))|\circ\varphi\leq |\d f|\circ\varphi |\d\varphi(v)|\circ\varphi\stackrel{\eqref{eq:pointwise_norm_estimate}}\leq  |\d f|\circ\varphi\, |\d\varphi|\,|v|
\end{split}
\]
and \eqref{eq:techn_comp_cl} follows from the arbitrariness of $v$. 

For the last claim, we notice that the fact that $\psi\circ\varphi$ is Lusin--Lipschitz can be proved as we did for the function $f\circ\varphi$ before Theorem \ref{thm:diff_LL}. Now notice that for $f:{\rm Z}\to\R$ Lusin--Lipschitz, the maps $f\circ\psi$ and $f\circ\psi\circ\varphi$ are Lusin--Lipschitz, therefore for any $v\in L^0(T\X)$ we have
\[
\d f(\d\psi(\d\varphi(v)))\circ(\psi\circ\varphi)\stackrel{\eqref{eq:char_diff_LL}}=\d (f\circ\psi)(\d\varphi(v))\circ\varphi\stackrel{\eqref{eq:char_diff_LL}}= \d (f\circ\psi\circ\varphi)(v).
\]
According to Theorem \ref{thm:diff_LL}, this is sufficient to prove \eqref{eq:chain_rule_diff_LL}. Finally, let $v\in L^0(T\X)$  and notice that
\[
\begin{split}
|\d(\psi\circ\varphi)(v)|\circ\psi\circ\varphi\stackrel{\eqref{eq:chain_rule_diff_LL}}=|\d\psi(\d\varphi(v))|\circ\psi\circ\varphi\stackrel{\eqref{eq:pointwise_norm_estimate}}\leq \big(|\d\psi||\d\varphi(v)|\big)\circ\varphi\stackrel{\eqref{eq:pointwise_norm_estimate}}\leq |\d\psi|\circ\varphi|\d\varphi||v|,
\end{split}
\]
thus \eqref{eq:ineq_chain_rule_diff_LL} follows from the very definition \eqref{eq:def_ptse_norm_bar_d}.
\end{proof}
\begin{remark}{\rm
In \cite{Gigli14} the concept of differential for a map $\varphi$ between metric measure spaces has been introduced under the assumptions that $\varphi$ is Lipschitz, essentially invertible and such that $\varphi_*\mm_\X\leq C\mm_\Y$ and $\varphi^{-1}_*\mm_\Y\leq C\mm_\X$ for some $C>0$ (in fact, the existence of the inverse was not really needed in \cite{Gigli14},
but in the general case one has to work with pullback modules
). In this case, $\d \varphi:L^2(T\X)\to L^2(T\Y)$ was defined as the only linear continuous operator such that
\begin{equation}
\label{eq:chardiffl2}
\d f\big(\d\varphi(v)\big)=\d(f\circ\varphi)(v)\circ\varphi^{-1}\;\;\;\mm_\Y\text{-a.e.,}
\quad\text{ for every }f\in W^{1,2}(\Y)\text{ and }v\in L^2(T\X).
\end{equation}
Then under the assumptions of Theorem \ref{thm:diff_LL} it is clear that the above defines the same object as the one given by \eqref{eq:char_diff_LL}. Indeed, from \eqref{eq:chardiffl2} and the fact that Lipschitz functions are locally Sobolev we see that  \eqref{eq:char_diff_LL} holds for $f$ Lipschitz, and then by locality for $f$ Lusin--Lipschitz. Conversely, once we know  \eqref{eq:char_diff_LL}  we have that \eqref{eq:chardiffl2} holds at least for $f$ Lipschitz, and the fact that it holds for $f$ Sobolev follows from the same density arguments used in proving Theorem \ref{thm:diff_LL}.

Finally, the fact that under the current assumptions $\d \varphi$ maps $L^2(T\X)$ to $L^2(T\Y)$ follows from the bound \eqref{eq:pn_diff_LL} and $\varphi_*\mm_\X\leq C\mm_\Y$.
}\fr\end{remark}%

Notice that under the assumptions of Proposition \ref{prop:eliadanieleregularity}, we know from \eqref{eq:unifLip2} that $F_t^s$ is Lusin--Lipschitz, from \eqref{eq:groupRLF} that it is essentially invertible, so that keeping into account the bounded compression property \eqref{eq:boundcompr} we see by  Theorem \ref{thm:diff_LL} that the differential $\d F_t^s$ of $F_t^s$ is a well defined map from $L^0(T\X)$ into itself. 

For us, the following estimate is of crucial importance:
\begin{proposition}
\label{coro:est_diff_flowmaps} With the same assumptions and notation as in  Proposition \ref{prop:eliadanieleregularity}, we have
\begin{equation}
\label{eq:estimate_differential_eliadaniele}
|\d F_{t}^s|\circ F_{ t'}^t \le \exp\Big(\int_{t\wedge s}^{t\vee s} g_r\circ F_{ t'}^{r}\, \d r\Big),\qquad\mm-a.e.\quad\forall  t', t,s\in[0,1].
\end{equation}
\end{proposition}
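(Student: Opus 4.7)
The plan is to chain together three earlier ingredients: the pointwise bound \eqref{eq:estimate_differential_aplip} on \(|\d F_t^s|\) by \(\aplip F_t^s\), the exponential bound \eqref{eq:estimate3_factor} on \(\aplip F_t^s\) in terms of the auxiliary function \((g_r)\), and finally the group property \eqref{eq:groupRLF} together with the bounded compression property \eqref{eq:boundcompr} to transport the resulting inequality back through \(F_{t'}^t\).

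Concretely, my first step is to combine \eqref{eq:estimate_differential_aplip} with \eqref{eq:estimate3_factor} to obtain, for every fixed \(t,s\in[0,1]\), the pointwise bound
\[
|\d F_t^s|(y)\le \exp\Big(\int_{t\wedge s}^{t\vee s} g_r(F_t^r(y))\,\d r\Big)\qquad\text{for }\mm\text{-a.e.\ }y\in\X.
\]
Here the left-hand side is defined via Theorem \ref{thm:diff_LL}, which applies to \(F_t^s\) thanks to \eqref{eq:unifLip2}, \eqref{eq:groupRLF}, and \eqref{eq:boundcompr}. The second step is to precompose with \(F_{t'}^t\): since \((F_{t'}^t)_*\mm\leq C\mm\) by \eqref{eq:boundcompr}, any \(\mm\)-negligible exceptional set for \(y\) pulls back to an \(\mm\)-negligible set of \(x\)'s, giving
\[
|\d F_t^s|\circ F_{t'}^t(x)\le \exp\Big(\int_{t\wedge s}^{t\vee s} g_r\big(F_t^r\circ F_{t'}^t(x)\big)\,\d r\Big)\qquad\text{for }\mm\text{-a.e.\ }x\in\X.
\]

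The only nonroutine step, and the one I expect to be the main obstacle, is converting \(F_t^r\circ F_{t'}^t\) into \(F_{t'}^r\) inside the integral: the group property \eqref{eq:groupRLF} is stated as an \(\mm\)-a.e.\ identity for each fixed triple \((t',t,r)\), so one needs a Fubini argument to turn this into a statement holding for \(\mm\)-a.e.\ \(x\) and \(\mathcal L^1\)-a.e.\ \(r\) simultaneously. Explicitly, the set
\[
N:=\big\{(r,x)\in[t\wedge s,t\vee s]\times\X \,:\, F_t^r\circ F_{t'}^t(x)\neq F_{t'}^r(x)\big\}
\]
is \((\mathcal L^1\otimes\mm)\)-negligible by \eqref{eq:groupRLF} together with joint Borel measurability of \((r,x)\mapsto F_\cdot^\cdot(x)\) (guaranteed by Borel measurability of the RLF); hence for \(\mm\)-a.e.\ \(x\) the \(r\)-section of \(N\) is \(\mathcal L^1\)-null. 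Substituting \(F_t^r\circ F_{t'}^t(x)=F_{t'}^r(x)\) inside the integral for such \(x\) yields exactly \eqref{eq:estimate_differential_eliadaniele}, completing the proof.
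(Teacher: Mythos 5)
Your proposal is correct and follows essentially the same route as the paper's proof: bound \(|\d F_t^s|\) by \(\aplip F_t^s\) via \eqref{eq:pn_diff_LL}/\eqref{eq:estimate_differential_aplip}, apply \eqref{eq:estimate3_factor}, post-compose with \(F_{t'}^t\) (justified by \((F_{t'}^t)_*\mm\ll\mm\)), and use the group property \eqref{eq:groupRLF} to replace \(F_t^r\circ F_{t'}^t\) by \(F_{t'}^r\). The only difference is that you spell out the Fubini argument needed to apply the \(\mm\)-a.e.\ identity \eqref{eq:groupRLF} for a.e.\ \(r\) simultaneously, a measure-theoretic detail the paper leaves implicit in its one-line computation.
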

\begin{proof} We have
\[
\begin{split}
|\d F_{t}^s| (F_{ t'}^t(x)) \stackrel{\eqref{eq:pn_diff_LL}}\le (\aplip F_t^s)( F_{t'}^t(x))\stackrel{\eqref{eq:estimate3_factor}}\le  e^{ \int_{t\wedge s}^{t\vee s} g_r\circ F_t^{r}\, \d r} ( F_{t'}^t(x))\stackrel{\eqref{eq:groupRLF}}= e^{ \int_{t\wedge s}^{t\vee s} g_r(F_{t'}^{r}(x))\, \d r} 
\end{split}
\]for $\mm$-a.e.\ $x\in\X$, having used the fact that $(F_{t'}^t)_*\mm\ll\mm$ to justify the  post-composition with $F_{t'}^t$ in the above.
\end{proof}

\subsection{Weighted Haj{\l}asz--Sobolev space}
\label{sec:weight_hajlasz}
In \cite{H96}, a notion of Sobolev function defined over a metric measure space (in fact the first one) was studied. We study here, with a similar approach, a space of functions which satisfy a weaker condition.
\begin{definition}[The space \(H_{\phi,R}(\X)\)]\label{def:hsob}
Let \((\X,\sfd,\mm)\) be a metric measure space, $R>0$ and $\phi\in L^0(\X,\mm)$ non-negative be fixed. Put for brevity $F_{\phi}(x,y)\coloneqq e^{\phi(x)+\phi(y)}$.
Given any \(f\in L^2(\X)\), we say that a function
\(G\in L^2(\X)\), $G\geq 0$ is \emph{admissible for \(f\)} provided
there exists a \(\mm\)-negligible Borel set \(N\subseteq \X \)
such that
\begin{equation}\label{eq:def_H_phi}
\big|f(x)-f(y)\big|\leq F_{\phi}(x,y)\big(G(x)+G(y)\big)\sfd(x,y)
\quad\text{ for every }x,y\in \X \setminus N,\ \sfd(x,y)\leq R.
\end{equation}
We call \(A_{\phi,R}(f)\) the family of all admissible functions
for \(f\). Then the space \(H_{\phi,R}(\X)\) is given by
\[
H_{\phi,R}(\X)\coloneqq\big\{f\in L^2(\X)\;\big|\;A_{\phi,R}(f)\neq\emptyset\big\}.
\]
We say that \(H_{\phi,R}(\X)\) is the \emph{\(\phi\)-weighted Haj{\l}asz--Sobolev space} on \(\X\) at scale $R$ and  we endow it with the norm
\begin{equation}
\label{eq:normhphi}
\|f\|_{H_{\phi,R}(\X)}\coloneqq\sqrt{\|f\|_{L^2}^2
+\inf_{G\in A_{\phi,R}(f)}\|G\|_{L^2}^2}\quad\text{ for every }f\in H_{\phi,R}(\X).
\end{equation}
\end{definition}
Given any function \(f\in H_{\phi,R}(\X)\), it is easy to see that   \(A_{\phi,R}(f)\) is convex
and closed in \(L^2(\X)\), thus it admits a unique element of minimal
\(L^2\)-norm, that we call the optimal function for $f$.
\begin{proposition}\label{prop:basehphi}
$(H_{\phi,R}(\X),\|\cdot \|_{H_{\phi,R}(\X)})$ is a Banach space. Moreover,  if $f_n\weakto f$, $G_n\weakto G$ in $L^2(\X)$ and $G_n\in A_{\phi,R}(f_n)$ for every $n\in\N$, then $G\in A_{\phi,R}(f)$.
Finally, we have
\begin{equation}
\label{eq:hl2lsc}
f_n\stackrel{L^2}\weakto f\qquad\Rightarrow\qquad \|f\|_{H_{\phi,R}}\leq \limi_{n\to\infty}\|f_n\|_{H_{\phi,R}}
\end{equation}
(where as customary $\|f\|_{H_{\phi,R}}$ is set to be $+\infty$ if $f\notin {H_{\phi,R}}(\X)$).
\end{proposition}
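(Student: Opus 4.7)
The plan is to prove the three claims in the reverse of the order they are stated, since the Banach property will follow rather directly from the lower semicontinuity \eqref{eq:hl2lsc}, and \eqref{eq:hl2lsc} itself is essentially a packaging of the weak stability of admissible pairs stated in the middle claim. The key observation driving everything is that the set
\[
\mathcal V\coloneqq\big\{(f,G)\in L^2(\X)\times L^2(\X)\,:\,G\geq 0\text{ and }G\in A_{\phi,R}(f)\big\}
\]
is convex (checking pointwise via the triangle inequality on the left and linearity on the right of \eqref{eq:def_H_phi}).

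For the weak stability claim, I would fix $f_n\weakto f$, $G_n\weakto G$ in $L^2(\X)$ with $G_n\in A_{\phi,R}(f_n)$. By Mazur's lemma there exist convex combinations $\tilde f_k=\sum_{i\geq k}\lambda_i^{(k)}f_i$ and (using the same scalars) $\tilde G_k=\sum_{i\geq k}\lambda_i^{(k)}G_i$ converging strongly in $L^2$ to $f$ and $G$ respectively; passing to a subsequence I may assume convergence $\mm$-a.e.\ as well. Convexity of $\mathcal V$ yields $\tilde G_k\in A_{\phi,R}(\tilde f_k)$, so that the defining inequality holds for $x,y$ outside a null set $N_k$. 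Taking $N\coloneqq\bigcup_k N_k\cup N_f\cup N_G$, where $N_f,N_G$ are the null sets outside of which the pointwise convergence holds, the inequality passes to the limit pointwise for $x,y\in\X\setminus N$ with $\sfd(x,y)\leq R$, giving $G\in A_{\phi,R}(f)$.

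To deduce \eqref{eq:hl2lsc} I may assume $\liminf_n\|f_n\|_{H_{\phi,R}(\X)}<\infty$ and, after passing to a subsequence achieving the liminf, extract optimal admissible functions $G_n^{\rm opt}$, which are then bounded in $L^2(\X)$. Banach--Alaoglu gives a further subsequence with $G_n^{\rm opt}\weakto G^*$ in $L^2(\X)$; by the previous paragraph $G^*\in A_{\phi,R}(f)$, hence
\[
\|f\|_{H_{\phi,R}(\X)}^2\leq \|f\|_{L^2}^2+\|G^*\|_{L^2}^2\leq\liminf_n\big(\|f_n\|_{L^2}^2+\|G_n^{\rm opt}\|_{L^2}^2\big)=\liminf_n\|f_n\|_{H_{\phi,R}(\X)}^2,
\]
using weak lower semicontinuity of the $L^2$ norm on each summand.

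Finally, for the Banach property, the norm axioms are direct: homogeneity follows from $A_{\phi,R}(\lambda f)=|\lambda|A_{\phi,R}(f)$ for $\lambda\neq 0$, definiteness from the presence of the $L^2$ term, and the triangle inequality reduces to Minkowski's inequality in $\R^2$ after noting that $G_f+G_g\in A_{\phi,R}(f+g)$ whenever $G_f\in A_{\phi,R}(f)$ and $G_g\in A_{\phi,R}(g)$. For completeness, let $(f_n)$ be Cauchy in $H_{\phi,R}(\X)$; then $(f_n)$ is Cauchy in $L^2(\X)$ and converges strongly to some $f\in L^2(\X)$. Applying \eqref{eq:hl2lsc} to the sequence $f_m-f_n\weakto f-f_n$ (with $m\to\infty$, $n$ fixed large), one gets $\|f-f_n\|_{H_{\phi,R}(\X)}\leq\liminf_m\|f_m-f_n\|_{H_{\phi,R}(\X)}$, which becomes arbitrarily small by the Cauchy property, concluding that $f\in H_{\phi,R}(\X)$ and $f_n\to f$ in $H_{\phi,R}(\X)$. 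I expect the only subtle step to be the stability assertion, and specifically the coupling of the same convex combinations in the $f$- and $G$-variables via Mazur, so that the pointwise inequality is preserved along the approximating sequence.
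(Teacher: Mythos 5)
Your proposal is correct and follows essentially the same route as the paper: Mazur's lemma with coupled convex combinations for the weak stability of admissible pairs, weak compactness of (near-)optimal admissible functions to get \eqref{eq:hl2lsc}, and completeness deduced from \eqref{eq:hl2lsc} applied to $f_m-f_n$. The only cosmetic differences are that you package the additivity of admissibility as convexity of the set of pairs and use the optimal function where the paper uses $1/n$-almost-optimal ones; both are immaterial.
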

\begin{proof}
The triangle inequality follows from the implication `$F\in A_{\phi,R}(f)$, $G\in A_{\phi,R}(g)$ imply $F+G\in A_{\phi,R}(f+g)$', which is easy to prove. The other properties of the norm are trivial.

We turn to the second statement and start noticing that by Mazur's lemma we can find, for every $n\in\N$, non-negative coefficients  $\{ \alpha^n_k \}_{k =n}^{N(n)}$ with $\sum_{k=n}^{N(n)} \alpha^n_k = 1$ such that \(\tilde{f}_n:=\sum_{k=n}^{N(n)} \alpha^n_k f_{k}\to f\) and \( \tilde{G}_n:=\sum_{k=n}^{N(n)} \alpha^n_k G_{k} \to G\)
strongly in \(L^2(\X)\). It is clear from our first claim that 
\(\sum_{k=n}^{N(n)} \alpha^n_k G_{k}\in
A_{\phi,R}\big(\sum_{k=n}^{N(n)} \alpha^n_k f_{k}\big)\) for all \(n\in\N\). Possibly taking a further subsequence,
we  also have that \(\tilde f_n\to f\) and \(\tilde G_n\to G\) pointwise
\(\mm\)-a.e.\ respectively, whence by letting \(n\to\infty\) for every $x,y$ outside of a $\mm$-null set in
the inequality \(\big|\tilde f_n(x)-\tilde f_n(y)\big|\leq
F_{\phi}(x,y)\big(\tilde G_n(x)+\tilde G_n(y)\big)\sfd(x,y)\), we get
\(G\in A_{\phi,R}(f)\), as claimed.

Now the $L^2$-lower semicontinuity of the $H_{\phi,R}$-norm stated in \eqref{eq:hl2lsc} is clear: we can assume that the $\limi$ is a finite limit, then we pick $G_n\in A_{\phi,R}(f_n)$ such that $\|f_n\|_{L^2}^2+\|G_n\|^2_{L^2}\leq \|f_n\|_{H_{\phi,R}}^2+\frac1n$ and observe that up to passing to a subsequence, we have $G_n\weakto G$ in $L^2$ for some $G\in L^2$. Then \eqref{eq:hl2lsc} follows by what already proved.

Finally, the completeness of $H_{\phi,R}(\X)$ is now a standard consequence of \eqref{eq:hl2lsc}: let $(f_n)$ be $H_{\phi,R}(\X)$-Cauchy and $f$ its $L^2$-limit (which exists because the $H_{\phi,R}$-norm is bigger than the $L^2$-norm and $L^2(\mm)$ is complete). Then we have
\[
\lims_{n\to\infty}\|f-f_n\|_{H_{\phi,R}(\X)}\stackrel{\eqref{eq:hl2lsc}}\leq \lims_{n\to\infty}\limi_{m\to\infty}\|f_m-f_n\|_{H_{\phi,R}(\X)}=0,
\]
where in the last step we used the fact that  $(f_n)$ is $H_{\phi,R}(\X)$-Cauchy.
\end{proof}
The following is easily verified:
\begin{proposition}\label{prop:HLL}
Every \(f \in H_{\phi,R}(\X)\) has the Lusin--Lipschitz property and 
\begin{equation}
\label{eq:bounddH}
|\d f|\leq 2e^{2\phi} G\qquad\mm-a.e..
\end{equation}
\end{proposition}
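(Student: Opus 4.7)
The overall strategy is classical in the Haj{\l}asz--Sobolev setting: split $\X$ into countably many Borel pieces on which admissibility turns into a quantitative Lipschitz estimate for $f$, then refine and pass to the limit to recover the pointwise bound on $|\d f|$.

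Fix $G\in A_{\phi,R}(f)$ with exceptional $\mm$-negligible set $N$, and a countable dense sequence $(x_\ell)\subset \X$. For the \emph{Lusin--Lipschitz property}, for each $n\in\N$ I would introduce the Borel sets
\[
E_{n,\ell}\coloneqq B_{R/4}(x_\ell)\cap\{\phi\leq n\}\cap\{G\leq n\}\setminus N,
\]
and then take a disjoint refinement. Since $\phi$ is finite $\mm$-a.e.\ and $G\in L^2$, the family $\{E_{n,\ell}\}_{n,\ell}$ covers $\mm$-a.a.\ $\X$. Any two points $x,y\in E_{n,\ell}$ satisfy $\sfd(x,y)<R/2<R$, hence admissibility \eqref{eq:def_H_phi} yields $|f(x)-f(y)|\leq 2ne^{2n}\sfd(x,y)$, so $f\restr{E_{n,\ell}}$ is Lipschitz.

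For the pointwise bound, I would refine this partition by localising $\phi$ and $G$ in finer strata. For each $n\in\N$ and $\ell,j,k\in\N$ consider
\[
E^n_{\ell,j,k}\coloneqq B_{R/4}(x_\ell)\cap\bigl\{\tfrac{j-1}{n}\leq\phi<\tfrac{j}{n}\bigr\}\cap\bigl\{\tfrac{k-1}{n}\leq G<\tfrac{k}{n}\bigr\}\setminus N,
\]
again taking a disjoint Borel refinement. For $x,y\in E^n_{\ell,j,k}$ (so $\sfd(x,y)\leq R$ automatically) admissibility gives
\[
|f(x)-f(y)|\leq e^{\phi(x)+\phi(y)}\bigl(G(x)+G(y)\bigr)\sfd(x,y)\leq 2\,e^{2j/n}\,\tfrac{k}{n}\,\sfd(x,y),
\]
so $f\restr{E^n_{\ell,j,k}}$ is $L^n_{j,k}$-Lipschitz with $L^n_{j,k}\coloneqq 2e^{2j/n}k/n$. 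Using the McShane extension produces $g^n_{\ell,j,k}\in\LIP(\X)$ equal to $f$ on $E^n_{\ell,j,k}$ with $\lip(g^n_{\ell,j,k})\leq L^n_{j,k}$. By the recalled formula defining $\d f$ for Lusin--Lipschitz functions, together with the standard bound $|\d g|\leq\lip(g)$ for Lipschitz $g$, I obtain
\[
|\d f|\leq L^n_{j,k}\qquad \mm\text{-a.e.\ on }E^n_{\ell,j,k}.
\]

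To conclude, I would fix an $x$ outside the (still $\mm$-null) exceptional set and with $\phi(x),G(x)<\infty$: for every $n$ there is a unique pair $(j_n(x),k_n(x))$ with $x\in E^n_{\ell,j_n(x),k_n(x)}$ for some $\ell$, and by construction $j_n(x)/n\to\phi(x)$ and $k_n(x)/n\to G(x)$. Passing $n\to\infty$ in $|\d f|(x)\leq L^n_{j_n(x),k_n(x)}$ yields $|\d f|(x)\leq 2e^{2\phi(x)}G(x)$, which is \eqref{eq:bounddH}. The only mildly delicate point, which is really the heart of the argument, is that we use a \emph{different} partition at every scale $n$ while talking about a single object $\d f$; this is exactly what the well-posedness of the differential of a Lusin--Lipschitz function (by locality, as recalled before Theorem \ref{thm:diff_LL}) is designed to handle, so no honest obstacle remains.
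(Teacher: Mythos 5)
Your proof is correct and follows essentially the same route as the paper: truncate $\phi$ and $G$ to obtain Borel pieces on which \eqref{eq:def_H_phi} becomes a genuine Lipschitz bound, then use locality of the differential of Lusin--Lipschitz functions and pass to the limit in the truncation parameters to get \eqref{eq:bounddH}. The paper works with the nested sublevel sets $\{\phi\leq a\}\cap\{G\leq b\}$ for rational $a,b$ rather than your $1/n$-width strata intersected with $R/4$-balls, but this is only a cosmetic difference.
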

\begin{proof}
Let \(f\in H_{\phi,R}(\X)\) and \(G\in A_{\phi,R}(f)\) be fixed. Pick any \(\mm\)-null set
\(N\) satisfying \eqref{eq:def_H_phi}. Given any \(a,b\in \Q\cap(0,\infty)\), we define
\(E_{a,b}:=\{\phi\leq a\}\cap \{G\leq  b\}\setminus N \). Then  from \eqref{eq:def_H_phi} we see that 
\[
\big|f(x)-f(y)\big|\leq 2b e^{2a}\sfd(x,y)\quad\text{ for every }x,y\in E_{a,b}\text{ with }\sfd(x,y)\leq R,
\]
proving that \(f\restr{E_{a,b}}\) 
is locally Lipschitz and that $|\d f|\leq 2be^{2a}$ $\mm$-a.e.\ on $E_{a,b}$. The conclusion follows by the arbitrariness of $a,b$.
\end{proof}
\begin{remark}[Weighted normed modules]{\rm
Fix a Radon measure \(\mu\) on \((\X,\sfd)\) such that \(\mu\ll\mm\). Denote by
\(\pi_\mu\colon L^0(\mm)\to L^0(\mu)\) the the canonical projection map sending the $\mm$-a.e.\ equivalence class of a
Borel function to its $\mu$-a.e.\ equivalence class. Given a \(L^0(\mm)\)-normed \(L^0(\mm)\)-module \(\mathscr M^0\), we define
\begin{equation}\label{eq:def_M0_mu}
\mathscr M^0_\mu\coloneqq\mathscr M^0/\sim_\mu,
\quad\text{ where }v\sim_\mu w\text{ if and only if }\pi_\mu(|v-w|)=0
\text{ holds }\mu\text{-a.e.\ on }\X.
\end{equation}
The resulting set $\mathscr M^0_\mu$ can be endowed with a natural structure of $L^0(\mu)$-normed $L^0(\mu)$-module.

Moreover, given a $L^2(\mm)$-normed $L^\infty(\mm)$-module $\mathscr M$, we define
\[
\mathscr M_\mu:=\big\{v\in\mathscr M^0_\mu\,:\,|v|\in L^2(\mu)\big\},
\]
where \(\mathscr M^0\) stands for the $L^0(\mm)$-completion of $\mathscr M$. The space $\mathscr M_\mu$ inherits a natural
structure of $L^2(\mu)$-normed $L^\infty(\mu)$-module. One can readily check that
\begin{equation}\label{eq:Hilb_pass_to_quotient}
\mathscr M\text{ is Hilbert}\qquad\Longrightarrow\qquad\mathscr M_\mu\text{ is Hilbert.}
\end{equation}
When $\mathscr M$ is the cotangent module $L^2(T^*\X)$, we write $L^2(T^*\X,\mu)$ in place of $L^2(T^*\X)_\mu$.
\fr}\end{remark}
The following technical proposition will be of crucial importance in the study of the regularity properties of $s\mapsto f\circ F_t^s$ that will be performed in Section \ref{sec:reg_fcircF_t}:
\begin{proposition}[Closure of \(\d\) on bounded subsets of $H_{\phi,R}(\X)$]\label{prop:closure_tilde_d}
Let \((\X,\sfd,\mm)\) be an infinitesimally Hilbertian metric measure space,
\(\phi\in L^0(\X)\) non-negative, and $R>0$. Let \((f_n)_n\subset H_{\phi,R}(\X)\) be a bounded sequence such that $f_n\to f$
in \(L^2(\X)\) and \(\d f_n\to\omega\) in \(L^0(T^*\X)\), for some \(f\in L^2(\X)\) and \(\omega\in L^0(T^*\X)\) respectively. 

Then  \(f\in H_{\phi,R}(\X)\)  and \(\omega=\d f\).
\end{proposition}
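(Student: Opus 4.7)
The plan is to first show $f\in H_{\phi,R}(\X)$ by a weak-compactness argument and then identify $\omega$ with $\d f$ via a localization that reduces to the standard closure of $\d$ on $W^{1,2}(\X)$.

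For the first part, by boundedness of $(f_n)$ in $H_{\phi,R}(\X)$ I would pick $G_n\in A_{\phi,R}(f_n)$ with $\|G_n\|_{L^2}$ uniformly bounded; up to a subsequence $G_n\weakto G$ in $L^2$ for some nonnegative $G$. Since $f_n\to f$ strongly (hence weakly) in $L^2$, the second assertion of Proposition \ref{prop:basehphi} yields $G\in A_{\phi,R}(f)$, so that $f\in H_{\phi,R}(\X)$. Next, applying Mazur's lemma, I extract convex combinations $\tilde f_k:=\sum_{n\geq k}\alpha_n^k f_n$ and $\tilde G_k:=\sum_{n\geq k}\alpha_n^k G_n$ with $\tilde G_k\to G$ strongly in $L^2$; by linearity $\tilde f_k\to f$ in $L^2$ and $\d\tilde f_k\to\omega$ in $L^0(T^*\X)$, and after a further subsequence extraction these convergences are pointwise $\mm$-a.e. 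By Proposition \ref{prop:HLL} applied to $\tilde f_k$ with admissible $\tilde G_k$, the bound $|\d\tilde f_k|\leq 2e^{2\phi}\tilde G_k$ passes to the limit, giving $|\omega|\leq 2e^{2\phi}G$ $\mm$-a.e.

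The core step is then to show $\omega=\d f$ $\mm$-a.e.\ on each set $B_M\cap B_r(x_0)$, where
\[
B_M:=\big\{\phi\leq M,\ G\leq M,\ \sup_k\tilde G_k\leq M\big\},
\]
$r<R/2$, and the $B_M$ exhaust $\X$ up to a $\mm$-null set as $M\to\infty$. On $B_M$, admissibility gives $|\tilde f_k(x)-\tilde f_k(y)|\leq 2Me^{2M}\sfd(x,y)$ for $x,y\in B_M$ with $\sfd(x,y)\leq R$, uniformly in $k$, and the same bound holds for $f$. Fixing a Lipschitz cutoff $\eta$ with $\eta\equiv 1$ on $B_r(x_0)$ and compact support in $B_{2r}(x_0)$, I would consider the truncated McShane-type extensions
\[
H_k(x):=\eta(x)\inf_{y\in B_M\cap B_r(x_0)}\big(\tilde f_k(y)+2Me^{2M}\sfd(x,y)\big),
\]
and $H$ defined analogously in terms of $f$. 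Each $H_k$ is Lipschitz with compact support and therefore in $W^{1,2}(\X)$ with $\|\d H_k\|_{L^2}$ uniformly bounded in $k$. By locality of $\d$ one has $\d H_k=\d\tilde f_k$ and $\d H=\d f$ on $B_M\cap B_r(x_0)$. Once one establishes $H_k\to H$ in $L^2(\X)$ — which follows from the uniform equi-Lipschitz estimate combined with the pointwise a.e.\ convergence $\tilde f_k\to f$ on $B_M\cap B_r(x_0)$ — the standard closure of $\d$ on the reflexive Hilbert space $W^{1,2}(\X)$ (here the infinitesimal Hilbertianity is used) gives $\d H_k\weakto\d H$ in $L^2(T^*\X)$. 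A Mazur-plus-subsequence argument then produces strong $L^2$ and pointwise $\mm$-a.e.\ convergence of suitable convex combinations of $\d H_k$ to $\d H$; combining this with $\d H_k=\d\tilde f_k$ and $\d\tilde f_k\to\omega$ in $L^0$, uniqueness of the $L^0$ limit forces $\omega=\d f$ on $B_M\cap B_r(x_0)$. Letting $r$, $x_0$ and $M$ vary concludes the proof.

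The main obstacle I foresee is the verification of $H_k\to H$ in $L^2$: the McShane infimum is continuous under uniform convergence but in general only lower semicontinuous under pointwise a.e.\ convergence. This can probably be addressed by using Egorov's theorem to upgrade $\tilde f_k\to f$ to uniform convergence on large subsets of $B_M\cap B_r(x_0)$, or by replacing the infimum over the whole set by a minimum over a finite net approximation, which is stable under pointwise convergence of the integrand.
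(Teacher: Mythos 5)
Your overall architecture coincides with the paper's: weak compactness plus Proposition \ref{prop:basehphi} for membership, Mazur's lemma on the admissible functions, localization to sets where \(\phi\) and the admissible functions are bounded so that the \(\tilde f_k\) become uniformly Lipschitz, Lipschitz extension, weak \(W^{1,2}\)-compactness, and locality of \(\d\). Two points deserve attention. The genuine gap is the assertion that ``by linearity'' \(\d\tilde f_k\to\omega\) in \(L^0(T^*\X)\): the space \(L^0\) is not locally convex, and tail convex combinations of an \(L^0\)-convergent sequence need not converge to the same limit. Your argument uses this convergence essentially at the very end (``uniqueness of the \(L^0\) limit forces \(\omega=\d f\)''), so it must be justified. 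The cheapest repair is to pass, \emph{before} applying Mazur, to a subsequence of the original \((f_n,G_n)\) along which \(|\d f_n-\omega|\to0\) \(\mm\)-a.e.; then \(|\sum_{n\ge k}\alpha^k_n(\d f_n-\omega)|\le\sup_{n\ge k}|\d f_n-\omega|\to0\) \(\mm\)-a.e., which gives the needed \(L^0\)-convergence of the convex combinations. The paper avoids the issue differently: using \eqref{eq:bounddH} it observes that \((\d f_n)\) is bounded in the Hilbert module \(L^2(T^*\X,e^{-4\phi}\mm)\), hence converges weakly there to \(\omega\), and weak convergence \emph{is} stable under convex combinations. Relatedly, your set \(B_M\) involves \(\sup_k\tilde G_k\), whose a.e.\ finiteness requires a further subsequence with \(\sum_k\|\tilde G_{k+1}-\tilde G_k\|_{L^2}<\infty\) (this is exactly the paper's dominating function \(H\)); as written the exhaustion claim for the \(B_M\) is not justified.

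The obstacle you flag at the end — continuity of the McShane infimum under a.e.\ convergence — is self-inflicted and disappears if you reorganize as the paper does: there is no need to identify the \(L^2\)-limit of the extensions \(H_k\) with the McShane extension of \(f\). Since \((H_k)\) is bounded in the reflexive space \(W^{1,2}(\X)\), a subsequence converges weakly to some \(g\in W^{1,2}(\X)\); because \(H_k=\tilde f_k\) on \(B_M\cap B_r(x_0)\) and \(\tilde f_k\to f\) in \(L^2\), the weak limit satisfies \(g=f\) \(\mm\)-a.e.\ on that set, whence \(\d g=\d f\) there by locality, which is all you use. With that adjustment, and the \(L^0\)/a.e.\ repair above, your proof closes and is essentially the paper's.
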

\begin{proof} The fact that  \(f\in H_{\phi,R}(\X)\)  (with  \(\|f\|_{H_{\phi,R}(\X)}\leq\limi_n\|f_n\|_{H_{\phi,R}(\X)}\))  follows from Proposition \ref{prop:basehphi}. To prove that $\omega=\d f$, we start picking \((G_n)_n\) in \(L^2(\X)\) such
that \(G_n\in A_{\phi,R}(f_n)\) and \(\|f_n\|_{L^2(\X)}^2+\|G_n\|_{L^2(\X)}^2\leq\|f_n\|_{H_{\phi,R}(\X)}^2+1/n\) for every \(n\in\N\).  Then up to a not
relabeled subsequence, we have that \(G_n\rightharpoonup G\) weakly
in \(L^2(\mm)\), for some \(G\in L^2(\X)\). We apply Mazur's lemma once again to find, for every $n\in\N$, non-negative coefficients  $\{ \alpha^n_k \}_{k =n}^{N(n)}$ with $\sum_{k=n}^{N(n)} \alpha^n_k = 1$ such that \( \tilde{G}_n:=\sum_{k=n}^{N(n)} \alpha^n_k G_{k} \to G\)
strongly in \(L^2(\X)\). Putting 
\(\tilde{f}_n:=\sum_{k=n}^{N(n)} \alpha^n_k f_{k}\) it is clear that $\tilde f_n\to f$ in \(L^2(\X)\) and
\(\tilde G_n\in
A_{\phi,R}(\tilde f_n)\) for all \(n\in\N\). We claim that   \(\d\tilde f_n\weakto\omega\) in
\(L^2(T^*\X,\tilde\mm)\),
where $\tilde\mm:=e^{-4\phi}\mm$. To see this observe that \eqref{eq:bounddH} tells that 
\[
\|\d f\|_{L^2(T^*\X,\tilde \mm)}\leq  2\|G\|_{L^2(\X,\mm)}\qquad \text{ if }G\in A_{\phi,R}(f).
\]
Thus from our assumptions it follows that $(\d f_n)$ is a bounded sequence in $L^2(T^*\X,\tilde \mm)$, hence up to a non-relabeled subsequence it converges weakly in such space to some $\tilde\omega$;
here, we are using the fact that $L^2(T^*\X,\tilde\mm)$ is Hilbert and thus reflexive, cf.\ \eqref{eq:Hilb_pass_to_quotient}.

Since we also have $\d f_n\to \omega$ in $L^0(T^*\X)$ it is clear that $\tilde\omega=\omega$, showing in particular that the weak limit $\tilde\omega$ does not depend on the subsequence chosen. To get the claim notice that  since the sequence $(\d\tilde f_n)$ is made of convex combinations of the $\d f_n$'s, we also have that 
\begin{equation}
\label{eq:weakdconv}
\d\tilde f_n\weakto \omega\qquad\text{ in }L^2(T^*\X,\tilde \mm). 
\end{equation}
Possibly taking a further subsequence, we also have that \(\sum_{n=1}^\infty\|\tilde G_{n+1}-\tilde G_n\|_{L^2(\X)}<\infty\), whence \(H\coloneqq \tilde G_1+\sum_{n=1}^\infty|\tilde G_{n+1}-\tilde G_n|\) belongs
to \(L^2(\X,\mm)\). Since clearly $\tilde G_n\leq H$ $\mm$-a.e.\ for any \(n\in\N\), we deduce that  \(H\in A_{\phi,R}(\tilde f_n)\) for every \(n\in\N\) and thus we can find a \(\mm\)-null Borel set \(N\subset \X\)
such that
\begin{equation}\label{eq:closure_tilde_d_aux}
\big|\tilde f_n(x)-\tilde f_n(y)\big|\leq F_{\phi}(x,y)\big(H(x)+H(y)\big)\sfd(x,y)
\quad\text{ for all }n\in\N\text{ and }x,y\in \X\setminus N,\ \text{with }\sfd(x,y)\leq R.
\end{equation}
Let $(x_j)\subset\X$ be countable and dense and for $j,k\in\N$ let
\[
E_{j,k}:=\big(B_{R/2}(x_j)\cap\{H\leq k\}\cap\{\phi\leq k\}\big)\setminus N.
\]
Fix $j,k\in\N$ and notice that the bound   \eqref{eq:closure_tilde_d_aux} ensures that the functions $\tilde f_n$ are uniformly  Lipschitz on the bounded set $E_{j,k}$. Therefore, we can find a sequence $(g^{j,k}_n)\subset {\rm LIP}(\X)$ made of functions with uniformly bounded support such that $g^{j,k}_n=\tilde f_n\restr{E_{j,k}}$ for every \(n\in\N\), and $\sup_n{\rm Lip}(g^{j,k}_n)<+\infty$. This grants that
\((g^{j,k}_n)\) is bounded in \(W^{1,2}(\X,\sfd,\mm)\), so
that (up to a not relabeled subsequence) by the continuity of $\d:W^{1,2}(\X)\to L^2(T^*\X,\mm) \hookrightarrow L^2(T^*\X,\tilde\mm)$  we have
\( g^{j,k}_n\weakto  g^{j,k}\) weakly in \(L^2(\X,\mm)\) and
\(\d g^{j,k}_n\weakto \d g^{j,k}\) weakly in \(L^2(T^*\X,\tilde\mm)\),
for some \(g^{j,k}\in W^{1,2}(\X,\sfd,\mm)\). In particular, $\nchi_{E_{j,k}}\d g^{j,k}_n\weakto \nchi_{E_{j,k}}\d g^{j,k}$ weakly in \(L^2(T^*\X,\tilde\mm)\) and since the construction ensures   that
\(g^{j,k}= f\) \(\mm\)-a.e.\ on \(E_{j,k}\), we also know that
\(\d g^{j,k}=\d  f\) on $E_{j,k}$ and, similarly, that   \(\d g^{j,k}_n=\d \tilde f_n\) on $E_{j,k}$  for
every \(n\in\N\). We thus proved that $\nchi_{E_{j,k}}\d \tilde f_n\weakto \nchi_{E_{j,k}}\d  f$ weakly in \(L^2(T^*\X,\tilde\mm)\), which coupled with \eqref{eq:weakdconv} implies $\nchi_{E_{j,k}}\d  f=\nchi_{E_{j,k}}\omega$. Since the sets $E_{j,k}$ cover $\mm$-a.a.\ $\X$, by the arbitrariness of $j,k$ this is sufficient to conclude that $\d f=\omega$, as desired.
\end{proof}

\subsection{Integration of module-valued maps and related topics}\label{se:intmod}
The goal of this section is to study integration (and differentiation) of maps with values in a Hilbert module $\H$. We shall mostly apply this theory to the case $\H=L^0(T\X)$ in order to study the module $W^{1,2}_{fix}([0,1],L^0(T\X))$ (see Definition \ref{def:fixedspace}). From the conceptual point of view, the most important result here is perhaps the Hille-like Theorem \ref{thm:HilleH} below, that we will use in conjunction with the closure result  for the differential of functions in $H_{\phi,R}(\X)$ in Proposition \ref{prop:closure_tilde_d}.

We shall work with maps on $[0,1]$ with values in Hilbert modules, but several parts of the discussion below can be adapted to more general settings.

\bigskip

Before coming to general module-valued maps, we consider the case of maps taking values into $L^0(\X)$. Recall that the topology of $L^0(\X)$ is metrized by the complete and separable distance
\[
\sfd_{L^0}(f,g):=\int 1\wedge |f-g|\,\d \mm',
\]
where $\mm'\in \mathscr P(\X)$ is any Borel probability measure having the same negligible sets of $\mm$. Let us fix such $\mm'$, and thus the distance $\sfd_{L^0}$: the actual choice of $\mm'$ does not matter, but in establishing some inequalities it is useful to have it fixed (and for convenience we shall add some further requirement to $\mm'$ in Section \ref{se:setting}).

\begin{definition}[Some spaces of functions]\label{def:spacesf}
We shall consider:
\begin{itemize}
\item[i)] For $p\in[1,\infty]$ the space $L^p([0,1],L^0(\X))\subset L^0([0,1],L^0(\X))$ is the collection of functions $(f_t)$ such that for $\mm$-a.e.\ $x$ the function $t\mapsto f_t(x)$ is in $L^p(0,1)$.
\item[ii)] The space $W^{1,2}([0,1],L^0(\X))\subset L^0([0,1],L^0(\X))$ is the collection of  functions $(f_t)$ such that for $\mm$-a.e.\ $x$ the function $t\mapsto f_t(x)$ is in $W^{1,2}(0,1)$.
\item[iii)] The space $AC^2([0,1],L^0(\X))\subset C([0,1],L^0(\X))$ is the collection of  functions $(f_t)$ such that for $\mm$-a.e.\ $x$ the function $t\mapsto f_t(x)$ is in $W^{1,2}(0,1)$. 
\end{itemize}
\end{definition}
\begin{remark}[Comments on the notation]{\rm
The notation $L^0(\X,L^p([0,1]))$ and $L^0(\X,W^{1,2}([0,1]))$ would be more in line, as opposed to  $L^p([0,1],L^0(\X))$ and $W^{1,2}([0,1],L^0(\X))$, with the standard notation for Banach-valued maps: our choice is motivated by convenience in dealing with module-valued curves, where we will speak of $L^p([0,1],\H)$ and $W^{1,2}([0,1],\H)$.

Also, notice that the way we defined it makes $AC^2([0,1],L^0(\X))$ different from the usual space of absolutely continuous curves with values in the metric space $L^0(\X)$ (and the same holds for the space $AC^2([0,1],\H)$ defined below). 

Finally, let us stress that by $C([0,1],L^0(\X))$ (and $C([0,1],\H)$ below) we intend the standard space of continuous curves from $[0,1]$ to $L^0(\X)$ equipped with the usual `sup' distance. In particular, $C([0,1],L^0(\X))$ has nothing to do with Borel collections of continuous functions $t\mapsto f_t(x)$ and is not contained in $L^\infty([0,1],L^0(\X))$.
}\fr\end{remark}
For $(f_t)\in  L^0([0,1],L^0(\X))$ and $p\in[1,\infty]$ we define (up to equality for $\mm$-a.e.\ $x$) the map
\[
|(f_t)|_{L^p}(x):=\|f_\cdot(x)\|_{L^p(0,1)}
\]
and similarly
\[
|(f_t)|_{W^{1,2}}(x):=\|f_\cdot(x)\|_{W^{1,2}(0,1)}.
\]
Then clearly $L^p([0,1],L^0(\X))$ (resp.\ $W^{1,2}([0,1],L^0(\X))$) is the subspace of $L^0([0,1],L^0(\X))$ made of those functions for which $|(f_t)|_{L^p}$ (resp.\ $|(f_t)|_{W^{1,2}}$) is finite $\mm$-a.e.. In particular, the distances
\[
\sfd_{L^p}\big((f_t),(g_t)\big):=\sfd_{L^0}\big(|(f_t-g_t)|_{L^p},0\big)\qquad\text{and}\qquad\sfd_{W^{1,2}}\big((f_t),(g_t)\big):=\sfd_{L^0}\big(|(f_t-g_t)|_{W^{1,2}},0\big)
\]
are well defined on $L^p([0,1],L^0(\X))$ and  $W^{1,2}([0,1],L^0(\X))$ respectively. It is then easy to see that $L^p([0,1],L^0(\X))$ and  $W^{1,2}([0,1],L^0(\X))$ are $L^0(\X)$-normed modules when equipped with the above pointwise norms and with the product $g(t\mapsto f_t):=(t\mapsto gf_t)$. Here the only possibly non-trivial claim is  completeness: this follows from the completeness of $L^0([0,1],L^0(\X))\sim L^0([0,1]\times \X)$ and the lower semicontinuity of $L^p/W^{1,2}$-norms w.r.t.\ convergence a.e.. Indeed, the inequality
\[
\iint_0^1 |f_t-g_t|(x)\wedge 1\,\d t\,\d\mm(x)=\int 1\wedge\big(\int_0^11\wedge|f_t-g_t|(x)\,\d t\big)\,\d\mm(x)\leq\int 1\wedge |(f_t-g_t)|_{L^p}\,\d\mm
\]
shows that if $(f^n_t)\subset L^p([0,1],L^0(\X))$ is $\sfd_{L^p}$-Cauchy, then it is also Cauchy in $L^0([0,1]\times\X)$ and thus converges to some $(f_t)$ in such space. Thus some subsequence $(f^{n_k}_t)$ converges $(\mm\times\mathcal L^1)$-a.e., and thus for $\mm$-a.e.\ $x\in\X$ we have that $f^{n_k}_t(x)\to f_t(x)$ for a.e.\ $t\in[0,1]$. Then Fatou's theorem implies that $|(f^{m}-f)|_{L^p}\leq\limi_k|(f^{m}-f^{n_k})|_{L^p}$ $\mm$-a.e.\ and thus
\[
\sfd_{L^p}\big((f_t),(f^m_t)\big)=\int1\wedge|(f_t-f^m_t)|_{L^p}\,\d\mm\leq \limi_{k\to\infty}\int1\wedge|(f^{n_k}_t-f^m_t)|_{L^p}\,\d\mm=\limi_{k\to\infty}\sfd_{L^p}\big((f^{n_k}_t),(f^m_t)\big),
\]
so that completeness follows letting $m\to\infty$ and recalling that $(f^n_t)$ is $\sfd_{L^p}$-Cauchy. The argument for $W^{1,2}([0,1],L^0(\X))$ is analogous.

The space $AC^2([0,1],L^0(\X))$ is complete w.r.t.\ the distance
\[
\sfd_{AC^2}\big((f_t),(g_t)\big):=\sfd_{W^{1,2}}\big((f_t),(g_t)\big)+\sup_{t\in[0,1]}\sfd_{L^0}(f_t,g_t),
\]
as it is trivial to check. It is an algebraic module over $L^0(\X)$ with respect to the operation $g(t \mapsto f_t) = t \mapsto g f_t$, but it does not have the structure of a normed $L^0(\X)$ module. The same holds for $C([0,1],L^0(\X))$.
\bigskip

For $(f_t)\in L^1([0,1],L^0(\X))$ and $A\subset [0,1]$ Borel, Fubini's theorem ensures that the function
\[
\big(\int_Af_t\,\d t\big)(x):=\int_Af_t(x)\,\d t
\]
is a well-defined element of $L^0(\X)$ and it is clear that
\[
\big|\int_Af_t\,\d t\big|\leq\int_A|f_t|\,\d t\qquad\mm-a.e..
\]
In particular, for any $(f_t)\in L^1([0,1],L^0(\X))$ and $t,s\in[0,1]$, $t<s$ we have
\[
\big|\int_0^s f_r\,\d r-\int_0^t f_r\,\d r\big |\leq \int_t^s|f_r|\,\d r,\qquad\mm-a.e.,
\]
showing that $t\mapsto\int_0^t f_r\,\d r$ is continuous w.r.t.\ $\mm$-a.e.\ convergence and thus also w.r.t.\ $L^0(\X)$-convergence. 

Now notice that since the map assigning to a function in the classical space $W^{1,2}(0,1)$ its distributional derivative in $L^2([0,1])$ is continuous, we have that for  $(f_t)\in W^{1,2}([0,1],L^0(\X))$ the pointwisely defined distributional derivative, that we shall denote by $(\dot f_t)$, is an element of $L^2([0,1],L^0(\X))$. It is also clear by comparison with the classical case  that
\begin{equation}
\label{eq:charw12}
\begin{split}
&(f_t)\in W^{1,2}([0,1],L^0(\X))\quad\Leftrightarrow\quad\exists (g_t)\in L^2([0,1],L^0(\X))\text{ such that }\forall h\in(0,1)\text{ we have}\\
&f_{t+h}-f_t=\int_t^{t+h} g_r\,\d r\quad for\ a.e.\  t\in[0,1-h]\text{ and in this case }\dot f_t=g_t,\quad a.e.\ t,
\end{split}
\end{equation}
where the identity between functions are intended $\mm$-a.e.. Similarly, the continuity in $t,s$ of $\int_t^s g_r\,\d r$  gives
\begin{equation}
\label{eq:charAC}
\begin{split}
&(f_t)\in AC^2([0,1],L^0(\X))\quad\Leftrightarrow\quad\exists (g_t)\in L^2([0,1],L^0(\X))\text{ such that }\\
&f_s-f_t=\int_t^s g_r\,\d r\qquad\forall t,s\in[0,1],\ t<s\quad \text{and in this case }\dot f_t=g_t,\quad a.e.\ t.
\end{split}
\end{equation}
Also, still by looking at the classical one dimensional case, we have the following characterization of functions in $W^{1,2}([0,1],L^0(\X))$:
\begin{equation}
\label{eq:charw12bound}
\begin{split}
&(f_t)\in W^{1,2}([0,1],L^0(\X))\quad\Leftrightarrow\quad\exists (g_t)\in L^2([0,1],L^0(\X))\text{ such that}\\
&|f_{s}-f_t|\leq \int_t^{s} g_r\,\d r\quad for\ a.e.\  t,s\in[0,1],\ t<s\ \text{ and in this case }|\dot f_t|\leq g_t,\quad a.e.\ t,
\end{split}
\end{equation}
where again the inequalities between functions are intended $\mm$-a.e.. We also notice  the existence of a unique \emph{continuous representative} of elements in $W^{1,2}([0,1],L^0(\X))$:
\begin{equation}\label{eq:ex_cont_repr}
\begin{split}
&\text{for any }(f_t)\in W^{1,2}([0,1],L^0(\X))\text{ there is a unique }(\bar f_t)\in AC^2([0,1],L^0(\X))\\
&\text{such that }f_t=\bar f_t\text{ for a.e. }t,\ \mm-a.e..
\end{split}
\end{equation}
Indeed, uniqueness is clear. For existence, we simply pick for $\mm$-a.e.\ $x$ the continuous representative $\bar f_\cdot(x)$ of  $f_\cdot(x)\in W^{1,2}(0,1)$: the fact that $x\mapsto\bar f_t(x)$ is Borel can be proved by building upon the fact that the map from $W^{1,2}(0,1)$ to $C([0,1])$ sending a Sobolev function to its continuous representative is continuous. Then it is clear that $t\mapsto \bar f_t\in L^0(\X)$ is continuous w.r.t.\ a.e.\ convergence, and thus w.r.t.\ the $L^0$-topology. 

The existence of such representatives that are absolutely continuous for $\mm$-a.e.\ $x$ can also be used to prove that
\begin{equation}
\label{eq:charder}
(f_t)\in AC^2([0,1],L^0(\X))\quad\Rightarrow\quad\lim_{h\to 0}\frac{f_{t+h}-f_t}h=\dot f_t,\qquad in\ L^0(\X),\ for\ a.e.\ t\in[0,1].
\end{equation}
Indeed, for $\bar f_t$ as in the proof of \eqref{eq:ex_cont_repr}, the differentiability of functions in $AC^2([0,1],\R)$ and Fubini's theorem ensure that $\frac{\bar f_{t+h}-\bar f_t}h\to \dot f_t$ $\mm$-a.e.\ for a.e.\ $t$. Also, again Fubini's theorem grants that $\bar f_t=f_t$ $\mm$-a.e.\ for a.e.\ $t$, and since $(f_t),(\bar f_t)$ are both in $C([0,1],L^0(\X))$ (the first by assumption, the second because by construction it is continuous w.r.t.\ $\mm$-a.e.\ convergence), we deduce that  $\bar f_t=f_t$ $\mm$-a.e.\ for every $t\in[0,1]$, thus \eqref{eq:charder} follows.

We conclude the discussion about the space $W^{1,2}([0,1],L^0(\X))$ with two simple results: the first concerns stability property and the second is a sort of density criterion.
\begin{proposition}\label{prop:stabl0w12}
Let $(f^n_t)\in W^{1,2}([0,1],L^0(\X))$, $n\in\N$, be such that $(f^n_t)\to (f_t)$ and $(\dot f_t^n)\to (g_t)$ in $L^0([0,1],L^0(\X))$ for some $(f_t),(g_t)\in L^0([0,1],L^0(\X))$. Assume that
\begin{equation}
\label{eq:bw1201}
\lim_{C\to+\infty}m(C)=0,\qquad\text{ where }\qquad m(C):=\sup_{n\in\N}\mm'\big(\{|(f^n_t)|_{W^{1,2}}\geq C\}\big).
\end{equation}
Then $(f_t)\in W^{1,2}([0,1],L^0(\X))$ and $\dot f_t=g_t$ $\mm$-a.e.\ for a.e.\ $t$. 

Notice that in particular, condition \eqref{eq:bw1201} holds provided  $ |(f^n_t)|_{W^{1,2}}\leq g_n$ $\mm$-a.e.\ for some sequence $(g_n)$ having  a limit in $L^0(\X)$.
\end{proposition}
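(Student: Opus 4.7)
The plan is to reduce the verification to the pointwise-in-$x$ characterization of $W^{1,2}([0,1],L^0(\X))$ from Definition \ref{def:spacesf}, namely to show that for $\mm$-a.e.\ $x$ the scalar function $t\mapsto f_t(x)$ lies in $W^{1,2}(0,1)$ with derivative $g_\cdot(x)$.

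Since convergence in $L^0([0,1],L^0(\X))$ coincides with convergence in $(\mathcal{L}^1\otimes\mm)$-measure on $[0,1]\times\X$, I would first pass to a (not relabelled) subsequence along which both $f^n_t(x)\to f_t(x)$ and $\dot f^n_t(x)\to g_t(x)$ hold for $(\mathcal{L}^1\otimes\mm)$-a.e.\ $(t,x)$; by Fubini, for $\mm$-a.e.\ $x$ these convergences then hold for a.e.\ $t$. The crucial use of \eqref{eq:bw1201} is to establish that $\liminf_n |(f^n_t)|_{W^{1,2}}(x)<\infty$ for $\mm$-a.e.\ $x$. Indeed, setting $h_n\coloneqq|(f^n_t)|_{W^{1,2}}$, the inclusion $\{\liminf_n h_n\geq C\}\subseteq\liminf_n\{h_n\geq C-1\}$ combined with Fatou's lemma for sets yields
\[
\mm'\big(\{\liminf_n h_n\geq C\}\big)\leq\liminf_n\mm'\big(\{h_n\geq C-1\}\big)\leq m(C-1),
\]
which tends to $0$ as $C\to\infty$ by hypothesis.

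For each such good $x$, I would then pick an $x$-dependent subsequence $(n_k)$ with $\sup_k h_{n_k}(x)<\infty$. By reflexivity of $W^{1,2}(0,1)$ and Rellich's compactness theorem in dimension one, a further subsequence converges to some $\tilde h_x$ weakly in $W^{1,2}(0,1)$ and strongly in $L^2(0,1)$, hence pointwise a.e.\ along yet another subsequence; matching this with the pointwise convergence already extracted identifies $\tilde h_x$ with $f_\cdot(x)$ and $\dot{\tilde h}_x$ with $g_\cdot(x)$ a.e.\ in $t$. Thus $f_\cdot(x)\in W^{1,2}(0,1)$ with derivative $g_\cdot(x)$, which is the desired conclusion.

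Finally, the remark about the sufficient condition follows from the fact that any $L^0$-convergent sequence $(g_n)$ is uniformly tight: from $\mm'(\{|g_n|\geq C\})\leq\mm'(\{|g|\geq C/2\})+\mm'(\{|g_n-g|\geq C/2\})$ one sees that $\sup_n\mm'(\{|g_n|\geq C\})\to 0$ as $C\to\infty$, and the assumed bound $|(f^n_t)|_{W^{1,2}}\leq g_n$ then transfers this to \eqref{eq:bw1201}. \textbf{The main subtlety} I foresee is that the weak compactness extraction above depends on $x$, but this is harmless because the defining condition of $W^{1,2}([0,1],L^0(\X))$ is itself pointwise in $x$ and the candidate limits $f_\cdot(x)$, $g_\cdot(x)$ are already identified from the initial subsequence, so no measurable selection issue intervenes.
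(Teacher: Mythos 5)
Your proof is correct and follows essentially the same route as the paper: pass to a subsequence with $(\mathcal L^1\otimes\mm)$-a.e.\ convergence, use the uniform tail bound \eqref{eq:bw1201} to get $\liminf_n|(f^n_t)|_{W^{1,2}}(x)<\infty$ for $\mm$-a.e.\ $x$ (your Fatou-for-sets argument is the complementary form of the paper's Borel--Cantelli argument), and then conclude by the standard one-dimensional weak-compactness fact, which the paper invokes without proof and you spell out.
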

\begin{proof} Up to pass to a non-relabeled subsequence we have that for $\mm$-a.e.\ $x\in\X$ the functions $t\mapsto f_t^n(x), \dot  f_t^n(x)$ converge to $t\mapsto f_t(x),  g_t(x)$ for a.e.\ $t\in[0,1]$. By standard, and easy, results about Sobolev functions on $(0,1)$, to conclude that $t\mapsto f_t(x)$ belongs to $W^{1,2}(0,1)$ with derivative $t\mapsto g_t(x)$ it is sufficient to prove that $\limi_{n\to\infty}|(f_t^n)|_{W^{1,2}}(x)<\infty$. The fact that this holds for $\mm$-a.e.\ $x$ is a consequence of Borel--Cantelli's lemma and the assumption \eqref{eq:bw1201}. Indeed $\limi_{n\to\infty}|(f_t^n)|_{W^{1,2}}(x)<C$ if and only if $x\in \cap_{n\in\N}\cup_{i\geq n}\{|(f^i_t)|_{W^{1,2}}< C\}$ and since the sequence of sets $n\mapsto \cup_{i\geq n}\{|(f^n_t)|_{W^{1,2}}< C\}$ is decreasing we have
\[
\begin{split}
\mm'\Big(\bigcap_{n\in\N}\bigcup_{i\geq n}\big\{|(f^i_t)|_{W^{1,2}}< C\big\}\Big)&=\inf_{n\in\N}\mm'\Big(\bigcup_{i\geq n}\big\{|(f^i_t)|_{W^{1,2}}< C\big\}\Big)\\
&\geq \inf_{n\in\N}\mm'\big(\big\{|(f^n_t)|_{W^{1,2}}< C\big\}\big)\geq 1-m(C).
\end{split}
\]
Thus \eqref{eq:bw1201} ensures a.e.\ finiteness of the $\limi$, as claimed. 

For the last statement observe that condition \eqref{eq:bw1201} is satisfied by the sequence $(g_n)$ (rather trivially by the definition of local convergence in measure and/or  of distance $\sfd_{L^0}$).
\end{proof}
\begin{proposition}\label{prop:Afunc}
Let $\mathcal A\subset W^{1,2}([0,1],L^0(\X))$. Assume that $\mathcal A$:
\begin{itemize}
\item[o)] is a vector space,
\item[i)] is stable by the `restriction' operation, i.e.\  $(f_t)\in \mathcal A$ and $E\subset\X$ Borel  imply that $t\mapsto\nchi_Ef_t$ is in $\mathcal A$,
\item[ii)] is stable by multiplication by a $C^1$ function, i.e.\ $(f_t)\in \mathcal A$ and $\varphi\in C^1([0,1])$ imply that $t\mapsto\varphi(t)f_t$ is in $\mathcal A$,
\item[iii)] is closed in  the $W^{1,2}([0,1],L^0(\X))$-topology,
\item[iv)] contains the constant functions, i.e.\ for any $f\in L^0(\X)$ the map $t\mapsto f$ is in $\mathcal A$.
\end{itemize}
Then $\mathcal A=W^{1,2}([0,1],L^0(\X))$.
\end{proposition}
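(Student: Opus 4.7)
The strategy is to approximate an arbitrary $(f_t)\in W^{1,2}([0,1],L^0(\X))$ by piecewise linear interpolants of its continuous representative, reducing the problem to ``elementary tensor'' functions of the form $t\mapsto\varphi(t)f$ with $\varphi\in W^{1,2}(0,1)$ and $f\in L^0(\X)$. Properties (o), (ii)--(iv) will suffice; in particular the restriction property (i) will play no explicit role.

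\textbf{Step 1: elementary tensors.} By (iv), every constant map $t\mapsto f$ with $f\in L^0(\X)$ lies in $\mathcal A$; applying (ii) gives $t\mapsto\varphi(t)f\in\mathcal A$ for any $\varphi\in C^1([0,1])$. Given $\psi\in W^{1,2}(0,1)$, standard mollification produces $(\varphi_n)\subset C^1([0,1])$ with $\varphi_n\to\psi$ in $W^{1,2}(0,1)$; since for each fixed $f$
\[
\big|\varphi_n f-\psi f\big|_{W^{1,2}}(x)=\|\varphi_n-\psi\|_{W^{1,2}(0,1)}\,|f(x)|\to 0\quad\text{for $\mm$-a.e.\ }x,
\]
the sequence $(\varphi_n f)$ converges to $\psi f$ in $W^{1,2}([0,1],L^0(\X))$ (dominated convergence in the definition of $\sfd_{W^{1,2}}$). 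Closure (iii) then yields $t\mapsto\psi(t)f\in\mathcal A$, and linearity (o) gives $t\mapsto\sum_{i=1}^N\psi_i(t)f_i\in\mathcal A$ for $\psi_i\in W^{1,2}(0,1)$ and $f_i\in L^0(\X)$.

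\textbf{Steps 2--3: piecewise linear interpolation of the continuous representative.} Fix $(f_t)\in W^{1,2}([0,1],L^0(\X))$ and let $(\bar f_t)$ be its continuous representative provided by \eqref{eq:ex_cont_repr}. For $n\in\N$ set $t^n_i\coloneqq i/n$ ($i=0,\ldots,n$) and let $\psi^n_i\in W^{1,2}(0,1)$ be the standard hat functions, $\psi^n_i(t^n_j)=\delta_{ij}$. Define
\[
g^n_t\coloneqq\sum_{i=0}^n\psi^n_i(t)\,\bar f_{t^n_i}.
\]
By Step 1, $(g^n_t)\in\mathcal A$. For $\mm$-a.e.\ $x$, $t\mapsto\bar f_t(x)$ is the continuous representative of $f_\cdot(x)\in W^{1,2}(0,1)$ and $g^n_\cdot(x)$ is its piecewise linear interpolation on equispaced nodes. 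By classical one-dimensional theory (the derivative of the interpolant is the average of $\bar f'_\cdot(x)$ over each subinterval, hence converges a.e.\ by Lebesgue differentiation and in $L^2(0,1)$ by dominated convergence against the Hardy--Littlewood maximal function of $\bar f'_\cdot(x)$), we have $g^n_\cdot(x)\to f_\cdot(x)$ in $W^{1,2}(0,1)$. Therefore $|g^n_t-f_t|_{W^{1,2}}(x)\to 0$ for $\mm$-a.e.\ $x$, so $\sfd_{W^{1,2}}((g^n_t),(f_t))\to 0$, and (iii) gives $(f_t)\in\mathcal A$. Arbitrariness of $(f_t)$ concludes the proof.

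\textbf{Main obstacle.} The delicate point is the last step: upgrading pointwise-in-$x$ classical convergence of piecewise linear interpolants in $W^{1,2}(0,1)$ to convergence in the $L^0(\X)$-valued $W^{1,2}$-topology. This relies crucially on the existence and Borel measurability of the continuous representative $\bar f_t$ from \eqref{eq:ex_cont_repr}, so that $(x,t)\mapsto\bar f_t(x)$ produces, for $\mm$-a.e.\ $x$, a genuine element of $W^{1,2}(0,1)$ to interpolate. Once this is in place, the lift to $\sfd_{W^{1,2}}$-convergence is just dominated convergence with integrand bounded by $1$.
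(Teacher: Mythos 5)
Your proof is correct, but it takes a genuinely different route from the paper's. The paper discretizes in the \emph{profile} variable: it fixes a countable dense family $(\varphi_n)\subset C^1([0,1])$ in $W^{1,2}(0,1)$, uses a Borel ``nearest-profile'' selection $T$ to produce, for each $\eps>0$, an approximant of the form $\sum_n\nchi_{E_n}\varphi_n(t)$ (piecewise constant in $x$, $C^1$ in $t$), and then feeds each piece through (iv), (i), (ii), finite sums through (o), and the countable sum and the limit $\eps\downarrow0$ through (iii); the restriction property (i) is essential there to localize the constants to the sets $E_n$. You instead discretize in \emph{time}: you first upgrade (iv)+(ii)+(iii) to elementary tensors $t\mapsto\psi(t)f$ with arbitrary $\psi\in W^{1,2}(0,1)$, and then approximate a general $(f_t)$ by piecewise-linear-in-time interpolants of its continuous representative from \eqref{eq:ex_cont_repr}, with the pointwise-in-$x$ convergence in $W^{1,2}(0,1)$ coming from classical interpolation theory (averaged derivatives, Lebesgue differentiation, domination by the Hardy--Littlewood maximal function) and the lift to $\sfd_{W^{1,2}}$ by bounded convergence. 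Your argument is complete as written: the measurability issue you flag is indeed settled by \eqref{eq:ex_cont_repr}, which guarantees $\bar f_{t^n_i}\in L^0(\X)$, and your hat functions need only be in $W^{1,2}(0,1)$ thanks to your Step 1. What each approach buys: yours shows that hypothesis (i) is actually redundant in the scalar statement, at the price of invoking the continuous representative and some one-dimensional interpolation/maximal-function analysis; the paper's selection argument avoids any interpolation estimates (the approximation error is built into the definition of $T$) but uses the full set of hypotheses, including (i).
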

\begin{proof}
Let $(\varphi_n)\subset C^1([0,1])$ be countable and dense in $W^{1,2}(0,1)$. Fix $\eps>0$ and let $T:W^{1,2}(0,1)\to W^{1,2}(0,1)$ be the map sending $f$ to $\varphi_n$, where $n\in\N$ is the least index $j\in\N$ such that $\|f-\varphi_j\|_{W^{1,2}}\leq\eps$. It is clear that $T$ is well defined and Borel. Thus for  $(f_t)\in W^{1,2}([0,1],L^0(\X))$ the curve $t\mapsto T(f)_t$ defined by $T(f)_t(x):=T(f_\cdot(x))(t)$ is in $W^{1,2}([0,1],L^0(\X))$ and, by construction, its $\sfd_{W^{1,2}}$-distance from $(f_t)$ is $\leq \eps$. As $\eps>0$ is arbitrary, by the closure property $(iii)$ to conclude it is sufficient to show that $(T(f)_t)\in \mathcal A$. To see this, for every $n\in\N$ let $E_n\subset\X$ be the set of $x$'s such that $T(f_\cdot)(x)=\varphi_n$. Then we have $T(f)_t=\sum_n\nchi_{E_n}\varphi_n(t)$ (where the convergence of the partial sums is intended in the $W^{1,2}([0,1],L^0(\X))$-topology) and the properties $(o),(i),(ii),(iv)$ trivially ensure that $(T(f)_t)\in\mathcal A $, as desired.
\end{proof}

We pass to the vector case. Fix a separable, Hilbertian $L^0$-module $\H$. Recall that the space $L^0([0,1],\H)$, that we shall sometimes abbreviate in $L^0_\H$, is the space of Borel maps from $[0,1]$ to $\H$ identified up to equality for a.e.\ $t$. The space $L^0_\H$ is complete w.r.t.\ the distance 
\[
\sfd_{L^0_\H}\big((v_t),(z_t)\big):=\int_0^11\wedge\sfd_{L^0}(|v_t-z_t|,0)\,\d t.
\]
The space $C([0,1],\H)$ denotes, as usual, the space of continuous curves with values in $\H$ equipped with the `sup' distance.

We pass to the `vector versions' of the spaces introduced in Definition \ref{def:spacesf}:
\begin{definition}[Some spaces of vectors]\label{def:spacesv} We shall denote by:
\begin{itemize}
\item[i)] For $p\in[1,\infty]$ the space $L^p([0,1],\H)\subset L^0([0,1],\H)$, that we shall abbreviate in $L^p_\H$,  is the collection of vector fields $(v_t)$ such that the quantity 
\[
|(v_t)|_{L^p_\H}(x):=|(|v_t|)|_{L^p}(x)=\||v_\cdot|(x)\|_{L^p(0,1)}
\]
(which is well defined up to equality $\mm$-a.e.) is finite $\mm$-a.e..
\item[ii)] The space $W^{1,2}([0,1],\H)\subset L^0([0,1],\H)$ is the collection of  vector fields $(v_t)$ for which there is $(\dot v_t)\in L^2([0,1],\H)$ such that for any $z\in \H$ the curve $t\mapsto \la v_t,z\ra$ is in $W^{1,2}([0,1],L^0(\X))$ with 
\[
\partial_t  \la v_t,z\ra=  \la \dot v_t,z\ra\qquad \mm-a.e.,\ a.e.\ t.
\]
For $(v_t)\in W^{1,2}([0,1],\H)$ we define
\[
|(v_t)|_{W^{1,2}_\H}^2:= |(v_t)|_{L^2_\H}^2+ |(\dot v_t)|_{L^2_\H}^2\in L^0(\X).
\]
\item[iii)] The space $AC^2([0,1],\H)\subset C([0,1],\H)$ is the collection of    vector fields $(v_t)$ for which there is $(\dot v_t)\in L^2([0,1],\H)$ such that for any $z\in \H$ the curve $t\mapsto \la v_t,z\ra$ is in $W^{1,2}([0,1],L^0(\X))$ (and thus in $AC^2([0,1],L^0(\X))$)  with 
\[
\partial_t  \la v_t,z\ra=  \la \dot v_t,z\ra\qquad \mm-a.e.,\ a.e.\ t.
\]
\end{itemize}
\end{definition}
The three spaces defined above are naturally endowed with the respective distances
\[
\begin{split}
\sfd_{L^p_\H}\big((v_t),(z_t)\big)&:=\sfd_{L^0}(|(v_t-z_t)|_{L^p_\H},0),\\
\sfd_{W^{1,2}_\H}\big((v_t),(z_t)\big)&:=\sfd_{L^0}(|(v_t-z_t)|_{W^{1,2}_\H},0),\\
\sfd_{AC^2_\H}\big((v_t),(z_t)\big)&:=\sfd_{L^0}(|(v_t-z_t)|_{W^{1,2}_\H},0)+\sup_{t\in[0,1]}\sfd_{\H}(v_t,z_t).
\end{split}
\]
These are complete distances, as can be seen arguing as for the respective spaces of functions (to show that the derivative of the limit is the limit of the derivative in considering the spaces $W^{1,2}_\H$ and $AC^2_\H$ we use Proposition \ref{prop:stabl0w12}).

The spaces $L^p_\H,W^{1,2}_\H,AC^2_\H$ are also endowed with the product with $L^0(\X)$ functions defined as $g(t\mapsto v_t):=(t\mapsto gv_t)$ and it is clear that $L^p_\H,W^{1,2}_\H$ are $L^0$-normed modules.

\bigskip

Let us turn to the definition of integral of a vector field $(v_t)$ in $L^1_\H$: for $A\subset[0,1]$ Borel we want to define $\int_Av_t\,\d t$ as element of $\H$.  
To this aim, notice that for any $z\in\H$ the function $\la z,v_t\ra$ satisfies $|\la z,v_t\ra|\leq |z||v_t|$ $(\mm\times\mathcal L^1)$-a.e., thus  for $\mm$-a.e.\ $x\in\X$ we have that $t\mapsto\la z,v_t\ra(x)$ is in $L^1(0,1)$. Hence $\int_A\la z,v_t\ra\,\d t$ is a well defined function in $L^0(\X)$ and it is  clear that the assignment $z\mapsto \int_A\la z,v_t\ra\,\d t$ is linear and satisfies
\begin{equation}
\label{eq:perintvt}
\big|\int_A\la z,v_t\ra\,\d t\big|\leq \int_A|\la z,v_t\ra |\,\d t\leq |z|\int_A|v_t|\,\d t\leq|z||(v_t)|_{L^1_{\H}}.
\end{equation}
This is sufficient to establish that $\H\ni z\mapsto \int_A\la z,v_t\ra\,\d t\in L^0(\X)$ is $L^0(\X)$-linear and continuous and thus represented by - thanks to Riesz's theorem for modules -  an element of $\H$ that we shall denote by $\int_Av_t\,\d t$.   Notice that the bound \eqref{eq:perintvt} gives
\begin{equation}
\label{eq:normint}
\big|\int_Av_t\,\d t\big|\leq \int_A|v_t|\,\d t,\qquad\mm-a.e..
\end{equation}
This bound is sufficient to prove that
\begin{equation}
\label{eq:contint}
\text{for }(v_t)\in L^1_\H\text{ the map }t\mapsto\int_0^tv_s\,\d s\in \mathscr{H} \text{ is in }C([0,1],\H)
\end{equation}
(because $|\int_0^s v_r\,\d r-\int_0^tv_r\,\d r|\leq \int_t^s|v_r|\,\d r\to0$ $\mm$-a.e.\ as $s\to t)$.

Observe that a direct consequence of the definitions and of \eqref{eq:charw12} is that for $(v_t)\in W^{1,2}_\H$ and $h\in(0,1)$ it holds
\begin{equation}
\label{eq:intvt}
v_{t+h}-v_t=\int_t^{t+h}\dot v_r\,\d r,\qquad\mm-a.e.,\ a.e.\ t\in[0,1-h],
\end{equation}
and thus by \eqref{eq:normint} that $|v_t|\leq |v_s|+\int_0^1|\dot v_r|\,\d r$ is valid $\mm$-a.e.\ for a.e.\ $t,s$. Integrating this in $s$ we deduce that 
\begin{equation}
\label{eq:perlinfty1}
|(v_t)|_{L^\infty_\H}\leq |(v_t)|_{L^2_\H}+|(\dot v_t)|_{L^1_\H}\leq\sqrt 2|(v_t)|_{W^{1,2}_\H}\qquad\mm-a.e.,
\end{equation}
 showing in particular that $W^{1,2}_\H\subset L^\infty_\H$. Notice that by Fubini's theorem, an equivalent way of stating this bound is by saying that for a.e.\ $s\in[0,1]$ we have $|v_s|\leq  2|(v_t)|_{W^{1,2}_\H}$ $\mm$-a.e.. Now observe that if $(v_t)\in AC^2_\H$, from the continuity of the pointwise norm as map from $\H$ to $L^0(\X)$ we see that $t\mapsto |v_t|\in L^0(\X)$ is continuous, and thus
 \begin{equation}
\label{eq:perlinfty2}
(v_t)\in AC^2_\H\qquad\Rightarrow\qquad|v_t|\leq \sqrt 2|(v_s)|_{W^{1,2}_\H}\qquad\mm-a.e.,\ \forall t\in[0,1].
\end{equation}
Another direct consequence of the definitions and of \eqref{eq:charAC} is
\begin{equation}
\label{eq:charACvec}
\begin{split}
&(v_t)\in AC^2([0,1],\H)\quad\Leftrightarrow\quad\exists (z_t)\in L^2([0,1],\H)\text{ such that }\\
&v_s-v_t=\int_t^s z_r\,\d r\qquad\forall t,s\in[0,1],\ t<s\quad \text{and in this case }\dot v_t=z_t,\quad\mm-a.e.,\ a.e.\ t.
\end{split}
\end{equation}
To further investigate the properties of $W^{1,2}_\H$ it will be convenient to notice the following fact (reminiscent of the classical statement `weak convergence+convergence of norms $\Rightarrow$ strong convergence'):
\begin{equation}
\label{eq:l2dal0}
\left.
\begin{array}{l}
\la v^n_t,z\ra \to \la v_t,z\ra ,\quad\text{ in }L^0([0,1],L^0(\X)),\quad\forall z\in\H\\
|(v^n_t)|_{L^2_\H}
\leq |(v_t)|_{L^2_\H}<\infty,\quad\mm-a.e.
\end{array}
\right\}
\qquad\Rightarrow\qquad (v^n_t)\stackrel{L^2_\H}\to (v_t).
\end{equation}
To see this, notice that for $t\mapsto z_t\in \H$ piecewise constant,
i.e.\ of the form $z_t=\sum_{i=1}^N\nchi_{A_i}(t)z^i$ with $A_i\subseteq[0,1]$ Borel and $z^i\in\mathscr H$,
the first in \eqref{eq:l2dal0} easily gives $\la v^n_t,z_t\ra \to \la v_t,z_t\ra$ in $L^0([0,1],L^0(\X))$. Moreover,  the second in \eqref{eq:l2dal0} gives $|\la v_t^n,z_t\ra|_{L^2}\leq  |(v_t)|_{L^2_\H} |(z_t)|_{L^\infty_\H}$ $\mm$-a.e.\ for every $n\in\N$.
Notice that if $f^n(t)\to f(t)$  for a.e.\ $t$ and $\sup_n\|f^n\|_{L^2}<\infty$, then we have $\int_0^1 f^n(t)\,\d t\to\int_0^1 f(t)\,\d t$:
thanks to the reflexivity of $L^2(0,1)$, we have that $(f^n)_n$ has a $L^2(0,1)$-weakly converging subsequence, with limit $g$; the pointwise a.e.\ convergence
$f^n\to f$ ensures that $g=f$, thus the original sequence converges to $f$ weakly in $L^2(0,1)$; in particular (by testing against the constant function $1\in L^2(0,1)$)
we conclude that $\int_0^1 f^n(t)\,\d t\to\int_0^1 f(t)\,\d t$, as desired.

Using this fact in conjunction with what already mentioned and recalling that a sequence converges in $L^0$ if and only if any subsequence has a further sub-subsequence converging a.e., we deduce that $\int_0^1\la v_t^n,z_t\ra \,\d t\to\int_0^1\la v_t,z_t\ra\,\d t$ in $L^0(\X)$. Hence
\[
\begin{split}
\lims_{n\to\infty}\sfd_{L^2_\H}\big((v^n_t),(z_t)\big)&=
\lims_{n\to\infty}\sfd_{L^0}\bigg(\sqrt{|(v^n_t)|_{L^2_{\mathscr H}}^2+|(z_t)|_{L^2_{\mathscr H}}^2-2\int_0^1\la v^n_t,z_t\ra\,\d t},0\bigg)\\
&\leq\lims_{n\to\infty} \sfd_{L^0}\bigg(\sqrt{|(v_t)|_{L^2_{\mathscr H}}^2+|(z_t)|_{L^2_{\mathscr H}}^2-2\int_0^1\la v^n_t,z_t\ra\,\d t},0\bigg)\\
&= \sfd_{L^0}\bigg(\sqrt{|(v_t)|_{L^2_{\mathscr H}}^2+|(z_t)|_{L^2_{\mathscr H}}^2-2\int_0^1\la v_t,z_t\ra\,\d t},0\bigg)\\
&=\sfd_{L^2_\H}\big((v_t),(z_t)\big).
\end{split}
\]
Now notice that the set of $(z_t)$'s considered is dense in $L^2_\H$ (as it is easy to establish from the definition), thus from
\[
\lims_{n\to\infty}\sfd_{L^2_\H}\big((v^n_t),(v_t)\big)\leq \lims_{n\to\infty}\sfd_{L^2_\H}\big((v^n_t),(z_t)\big)+\sfd_{L^2_\H}\big((v_t),(z_t)\big)\leq 2\sfd_{L^2_\H}\big((v_t),(z_t)\big)
\]
we conclude letting $(z_t)\to (v_t)$ in $L^2_\H$.

Now let $(v_t)\in L^2_\H$ and for $t,\eps\in(0,1)$ define $v^\eps_t:=\eps^{-1}\int_t^{t+\eps}v_s\,\d s$, where $v_s$ is intended to be 0 if $s>1$. We claim that 
\begin{equation}
\label{eq:vepsv0}
(v^\eps_t)\quad\to\quad(v_t)\qquad\text{ in }L^2_\H,\qquad\text{as } \eps\downarrow0
\end{equation}
and we shall prove this using \eqref{eq:l2dal0}. Let $z\in\H$ and for $\mm$-a.e.\ $x\in\X$ apply Lebesgue's theorem to the $L^1(0,1)$-function $t\mapsto \la z,v_t\ra(x)$ to conclude that for a.e.\ $t$ we have $\la z,v^\eps_t\ra (x)\to \la z,v_t\ra(x)$: this proves the first condition in \eqref{eq:l2dal0}. The second follows from the inequality $|v^\eps_t|\leq \eps^{-1}\int_t^{t+\eps}|v_s|\,\d s$ as it implies  $|(v^\eps_t)|_{L^2_\H}\leq|(v_t)|_{L^2_\H}$  $\mm$-a.e.\ for every $\eps$, so the conclusion follows from  \eqref{eq:l2dal0}.

We also claim that
\begin{equation}
\label{eq:vepsreg}
(v^\eps_t)\in W^{1,2}_\H\qquad\text{ with }\qquad \dot v^\eps_t=\frac{v_{t+\eps}-v_t}\eps,\quad a.e.\ t,
\end{equation}
where again $v_{t+\eps}$ is intended to be 0 if $t+\eps>1$. To prove this, just notice that by the very definition of $W^{1,2}_\H$ and of the corresponding notion of derivative, it is sufficient to consider the scalar case $\H=L^0(\X)$. In turn, in this setting the conclusion is a direct consequence of the analogous well-known result for real valued functions on $[0,1]$. Notice that, in particular, from \eqref{eq:vepsreg} and \eqref{eq:vepsv0} we deduce that
\begin{equation}
\label{eq:w12densel2}
W^{1,2}_\H\quad\text{ is dense in }\quad L^2_\H.
\end{equation}
Applying \eqref{eq:vepsv0} to the derivative of a vector field in $W^{1,2}_\H$  by keeping \eqref{eq:intvt} in mind we obtain that
\begin{equation}
\label{eq:diffquot}
\frac{v_{t+h}-v_t}h\quad\to\quad \dot v_t\qquad in\ L^2_\H \qquad\text{as }h\to0,
\end{equation}
where $\frac{v_{t+h}-v_t}h$ is intended to be 0 if $t+h\notin [0,1]$.

A further consequence  is that
\begin{equation}
\label{eq:leibdifftime}
\begin{split}
(v_t),(z_t)\in W^{1,2}_\H\ (resp.\ AC^2_\H)\quad\Rightarrow\quad
\begin{array}{l} 
(\la v_t,z_t\ra)\in W^{1,2}([0,1],L^0(\X))\\ 
\qquad\qquad(resp.\ AC^2([0,1],L^0(\X)))\\
\text{ with }
\frac{\d}{\d t}\la v_t,z_t\ra =\la \dot v_t,z_t\ra +\la v_t,\dot z_t\ra.
\end{array}
\end{split}
\end{equation}
Indeed, the inequality
\[
\begin{split}
\big|\la v_{t+h},z_{t+h}\ra-\la v_t,z_t\ra\big|&\leq \big|\la v_{t+h},z_{t+h}-z_t\ra\big|-\big|\la v_{t+h}-v_t,z_t\ra\big|\\
\text{(by \eqref{eq:normint}, \eqref{eq:intvt}, \eqref{eq:perlinfty1})}\qquad&\leq |(v_s)|_{L^\infty_\H}\int_t^{t+h}|\dot z_r|\,\d r+ |(z_s)|_{L^\infty_\H}\int_t^{t+h}|\dot v_r|\,\d r\qquad\mm-a.e.,
\end{split}
\]
valid for every $h\in(0,1)$ and a.e.\ $t\in[0,1-h]$, shows that $(\la v_t,z_t\ra)\in W^{1,2}([0,1],L^0(\X))$ (recall \eqref{eq:charw12bound}). Then letting $h\to 0$ in
\[
\frac{\la v_{t+h},z_{t+h}\ra-\la v_t,z_t\ra}h=\la v_t,\tfrac{z_{t+h}-z_t}h\ra+\la \tfrac{v_{t+h}-v_t}h,z_t\ra+h\la \tfrac{v_{t+h}-v_t}h,\tfrac{z_{t+h}-z_t}h\ra
\]
and using  \eqref{eq:diffquot} we see that the right-hand side goes to $(\la \dot v_t,z_t\ra +\la v_t,\dot z_t\ra)$ in $L^0([0,1],L^0(\X))$. On the other hand, applying  \eqref{eq:diffquot} with $\H:=L^0(\X)$ and $(\la v_t,z_t\ra)$ in place of $(v_t)$ we see that the left-hand side converges to $(\partial_t\la v_t,z_t\ra)$ in $L^2([0,1],L^0(\X))$, thus also in $L^0([0,1],L^0(\X))$, so that \eqref{eq:leibdifftime} is proved.

Arguing along the same lines one uses to prove that $W^{1,2}(0,1)\hookrightarrow C([0,1])$, we can get existence of continuous representatives of elements of $W^{1,2}_\H$:
\begin{proposition}
Let $(v_t)\in W^{1,2}_\H$. Then there is a unique $(\bar v_t)\in C([0,1],\H)$ with $\bar v_t=v_t$ for a.e.\ $t\in[0,1]$.
\end{proposition}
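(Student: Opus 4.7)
The plan is to follow the classical one-dimensional argument: represent $(v_t)$ as an indefinite integral of its derivative starting from a well-chosen basepoint, and verify that such indefinite integral is automatically continuous into $\H$ by \eqref{eq:contint}.

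First, I would choose a basepoint $t_0 \in [0,1]$ as follows. Equation \eqref{eq:intvt} tells us that for every $h \in (0,1)$ there is a $\mathcal L^1$-null set $N_h \subset [0,1-h]$ such that
\[
v_{t+h} - v_t = \int_t^{t+h} \dot v_r \, \d r \qquad \text{in } \H, \quad \forall t \in [0,1-h]\setminus N_h.
\]
A Fubini argument on $\{(t,h) \in [0,1]^2 : t+h \in [0,1]\}$ shows that there exists a full-measure set $E \subset [0,1]$ such that for every $t_0 \in E$ the identity $v_{t_0 + h} = v_{t_0} + \int_{t_0}^{t_0+h} \dot v_r \, \d r$ holds for a.e.\ $h$ with $t_0 + h \in [0,1]$ (and symmetrically for negative increments, using the analogous identity for $(v_{-\cdot})$). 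Fix such a $t_0 \in E$.

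Next, define
\[
\bar v_t \coloneqq v_{t_0} + \int_{t_0}^t \dot v_r \, \d r \in \H \qquad \forall t \in [0,1],
\]
where the integral is interpreted via the $L^0$-module-valued Pettis-type integral defined before \eqref{eq:normint} (with the usual sign convention when $t < t_0$). By \eqref{eq:contint} applied to the $L^1_\H$-vector field $(\dot v_r)$, the map $t \mapsto \int_{t_0}^t \dot v_r \, \d r$ belongs to $C([0,1], \H)$; hence so does $(\bar v_t)$. By the choice of $t_0$, we have $\bar v_t = v_t$ in $\H$ for a.e.\ $t \in [0,1]$, which yields the existence claim.

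Finally, uniqueness follows from the fact that $\H$, equipped with the distance $\sfd_\H(v,w) \coloneqq \sfd_{L^0}(|v-w|, 0)$, is a Hausdorff metric space: if $(\bar v_t^1), (\bar v_t^2) \in C([0,1], \H)$ both coincide with $(v_t)$ for a.e.\ $t$, then the continuous $\H$-valued map $t \mapsto \bar v_t^1 - \bar v_t^2$ vanishes on a dense subset of $[0,1]$, and hence vanishes identically. The main (minor) subtlety is the Fubini-type extraction of the basepoint $t_0$, which is needed because \eqref{eq:intvt} is stated for each fixed $h$ only up to a null set of $t$'s; everything else is a direct consequence of the integration and continuity machinery already developed in this section.
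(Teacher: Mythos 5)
Your proof is correct, but it takes a genuinely different route from the paper. The paper mollifies: it sets $v^\eps_t:=\eps^{-1}\int_t^{t+\eps}v_s\,\d s$, shows each $(v^\eps_t)$ is a continuous curve converging to $(v_t)$ in $W^{1,2}_\H$ (via \eqref{eq:vepsv0}, \eqref{eq:vepsreg}), and then uses the embedding estimate \eqref{eq:perlinfty2} to show the family is Cauchy in $C([0,1],\H)$, so the continuous representative appears as a uniform limit of continuous curves. You instead run the classical one-dimensional argument directly: a Fubini selection of a Lebesgue basepoint $t_0$ from the $h$-dependent null sets in \eqref{eq:intvt}, followed by the observation that $t\mapsto v_{t_0}+\int_{t_0}^t\dot v_r\,\d r$ is continuous by \eqref{eq:contint} (using $(\dot v_t)\in L^2_\H\subset L^1_\H$ and additivity of the integral), with uniqueness from the fact that two continuous curves agreeing a.e.\ agree everywhere. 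Your approach is shorter and more elementary, but the Fubini step does require, and you should make explicit, that the ``failure set'' $\{(t,h):v_{t+h}-v_t\neq\int_t^{t+h}\dot v_r\,\d r\}$ is jointly measurable: this follows by fixing Borel representatives of $(v_t)$ and $(\dot v_t)$ and noting that $(t,h)\mapsto\int_t^{t+h}\dot v_r\,\d r\in\H$ is continuous, so the set is Borel and Tonelli applies; the symmetric treatment of $s<t_0$ (either by time reversal or by applying Fubini on the triangle $\{t<s\}$ in both orders) is likewise routine. The paper's mollification route sidesteps this measurability bookkeeping entirely and recycles estimates it needs anyway (e.g.\ for \eqref{eq:diffquot}), at the cost of the sup-norm Cauchy argument; yours buys a more direct construction of the representative as an indefinite integral.
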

\begin{proof} For $\eps\in(0,1)$ we define $v^\eps_t:=\eps^{-1}\int_t^{t+\eps}v_s\,\d s$ as before (here $v_s$ is intended to be 0 if $s>1$). Notice that 
\[
\dot v^\eps_t\stackrel{\eqref{eq:vepsreg}}=\frac{v_{t+\eps}-v_t}{\eps}\stackrel{\eqref{eq:intvt}}=\eps^{-1}\int_t^{t+\eps}\dot v_s\,\d s,
\]
therefore by \eqref{eq:vepsv0} we deduce that $(v^\eps_t),(\dot v^\eps_t)$ converge to $(v_t),(\dot v_t)$ respectively in $L^2_\H$ as $\eps\downarrow0$. In other words, $(v_t^\eps)\to (v_t)$ in $W^{1,2}_{\H}$ as $\eps\downarrow0$.

Now we claim that for every $\eps\in(0,1)$ the curve $t\mapsto v^\eps_t\in \H $ is continuous. Indeed, for any $t,s\in[0,1]$ with $0\leq s-t\leq \eps$, it is clear that we have
\[
|v^\eps_s-v^\eps_t|=\eps^{-1}\big|\int_{t+\eps}^{s+\eps}v_r\,\d r+\int_{t}^{s}v_r\,\d r\big|\leq\eps^{-1}\big(\int_{t+\eps}^{s+\eps}|v_r|\,\d r+\int_{t}^{s}|v_r|\,\d r\big)\qquad\mm-a.e.,
\]
and that the right-hand side goes to 0 $\mm$-a.e.\ both as $t\uparrow s$ and as $s\downarrow t$. Hence $|v_s^\eps- v^\eps_t|\to 0$ in the $\mm$-a.e.\ sense as $s\to t$, and thus a fortiori $v_s^\eps \to v_t^\eps$ in $\H$. 

We now claim  that the family of curves $\{(v^\eps_t)\}_{\eps\in(0,1)}$ is Cauchy in $C([0,1],\H)$ as $\eps\downarrow0$. Indeed, for $\eps,\eta\in(0,1)$ we have
\[
\begin{split}
\sup_{t\in[0,1]}\sfd_{\mathscr H}(v^\eps_t,v^\eta_t)=\sup_{t\in[0,1]}\sfd_{L^0}(|v^\eps_t-v^\eta_t|,0)\stackrel{\eqref{eq:perlinfty2}}\leq\sfd_{L^0}\big(|(v^\eps_t-v^\eta_t)|_{W^{1,2}_{\H}},0\big)=\sfd_{W^{1,2}_{\H}}\big((v^\eps_t),(v^\eta_t)\big).
\end{split}
\]
Since we proved that $(v^\eps_t)\stackrel{W^{1,2}_{\H}}\to(v_t)$, we know that the right-hand side of the above goes to 0 as $\eps,\eta\downarrow0$, hence our claim is proved. Let $(\bar v_t)$ be the limit of $(v^\eps_t)$ in $C([0,1],\H)$ as $\eps\downarrow0$: since we also know that $(v^\eps_t)\stackrel{L^2_{\H}}\to(v_t)$ and it is clear that $C([0,1],\H)$ continuously embeds in $L^2_{\H}$, we conclude that $(\bar v_t)$ and $(v_t)$ agree as elements of $L^2_\H$, i.e.\ that  $\bar v_t=v_t$ for a.e.\ $t$, as desired. 
\end{proof}
We shall make use of the following simple density-like result:
\begin{proposition}\label{prop:Atutto}
Let $\mathcal A\subset W^{1,2}([0,1],\H)$. Assume that $\mathcal A$:
\begin{itemize}
\item[o)] is a vector space,
\item[i)] is stable under  `restriction', i.e.\ if $(v_t)\in\mathcal A$  and $E\subset\X$ is Borel, then  $t\mapsto \nchi_{E}v_t$ belongs to $\mathcal A$,
\item[ii)] is stable under multiplication by functions in $C^1([0,1])$, i.e.\ if $(v_t)\in\mathcal A$ and $\varphi\in C^1([0,1])$ the map $t\mapsto \varphi(t)v_t$ belongs to $\mathcal A$,
\item[iii)] is closed in the $W^{1,2}_\H$ topology,
\item[iv)] contains the constant vector fields $t\mapsto \bar v$ for any given $\bar v\in \H$.
\end{itemize}
Then $\mathcal A=W^{1,2}([0,1],\H)$.
\end{proposition}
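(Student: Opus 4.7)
The plan is to mirror the strategy of the scalar Proposition \ref{prop:Afunc} in two steps: first, reduce to a scalar statement by showing that $\mathcal A$ contains every ``rank-one'' curve $t\mapsto f_t\bar v$ with $f\in W^{1,2}([0,1],L^0(\X))$ and $\bar v\in\H$; then verify that finite sums of such curves are $W^{1,2}_\H$-dense, so that the closure property (iii) concludes.

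For the first step, fix $\bar v\in\H$ and set
\[
\mathcal A_{\bar v}\coloneqq\big\{(f_t)\in W^{1,2}([0,1],L^0(\X))\;:\;(t\mapsto f_t\bar v)\in\mathcal A\big\}.
\]
I will verify that $\mathcal A_{\bar v}$ satisfies the hypotheses of Proposition \ref{prop:Afunc}. Properties (o), (i), (ii), (iv) follow directly from the matching properties of $\mathcal A$ via the module identities $\nchi_E(f_t\bar v)=(\nchi_E f_t)\bar v$ and $\varphi(t)(f_t\bar v)=(\varphi(t)f_t)\bar v$, together with the fact that a constant scalar $f\in L^0(\X)$ produces a constant $\H$-valued curve $f\bar v$. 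The closure property (iii) rests on the identity
\[
\big|(f^n_t-f_t)\bar v\big|_{W^{1,2}_\H}=|\bar v|\cdot\big|(f^n_t-f_t)\big|_{W^{1,2}}\qquad\mm\text{-a.e.,}
\]
which is a consequence of the defining relation $|g\bar v|_\H=|g|\,|\bar v|$ of the $L^0$-module norm and immediately transfers scalar $W^{1,2}$-convergence to $W^{1,2}_\H$-convergence. Thus Proposition \ref{prop:Afunc} yields $\mathcal A_{\bar v}=W^{1,2}([0,1],L^0(\X))$, and property (o) of $\mathcal A$ then implies that $\mathcal A$ contains every finite sum $\sum_{i=1}^N f^i_t\bar v_i$ with $f^i\in W^{1,2}([0,1],L^0(\X))$ and $\bar v_i\in\H$.

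For the second step, given $(v_t)\in W^{1,2}([0,1],\H)$, I first use the mollifications $v^\eps_t:=\eps^{-1}\int_t^{t+\eps}v_s\,\d s$, which by \eqref{eq:vepsreg}--\eqref{eq:vepsv0} converge to $(v_t)$ in $W^{1,2}_\H$ and admit continuous representatives in $AC^2_\H$, to reduce to the case of a continuous-in-time curve. I then approximate $(v^\eps_t)$ by the piecewise-linear-in-time interpolation
\[
v^{\eps,n}_t:=\sum_{i=0}^n\lambda^n_i(t)\,v^\eps_{i/n},\qquad\text{with }\lambda^n_i(t)\coloneqq(1-|nt-i|)_+.
\]
Each $v^{\eps,n}$ is a finite sum of rank-one curves with scalar coefficients $\lambda^n_i\in W^{1,2}(0,1)\subset W^{1,2}([0,1],L^0(\X))$, hence belongs to $\mathcal A$ by Step 1.

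The main technical obstacle is proving $v^{\eps,n}\to v^\eps$ in $W^{1,2}_\H$ as $n\to\infty$. The derivative $\dot v^{\eps,n}_t$ equals, by \eqref{eq:charACvec}, the local average $n\int_{(i-1)/n}^{i/n}\dot v^\eps_r\,\d r$ of $\dot v^\eps$ on each dyadic interval; that is, the conditional expectation of $\dot v^\eps$ with respect to the $\sigma$-algebra generated by the uniform partition of $[0,1]$. One first checks that this averaging operator is a contraction on $L^2_\H$ (pointwise in $x$, via \eqref{eq:normint} and Jensen's inequality), then verifies convergence to the identity on curves of the form $\varphi(t)\bar v$ with $\varphi\in C([0,1])$ using the classical density of step functions in $L^2(0,1)$ together with the identity $|(\varphi^n-\varphi)\bar v|_{L^2_\H}=|\bar v|\,\|\varphi^n-\varphi\|_{L^2(0,1)}$, and finally extends to all of $L^2_\H$ by a uniform-boundedness argument in the $L^0(\X)$ sense analogous to the derivation of \eqref{eq:l2dal0}. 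Convergence $v^{\eps,n}\to v^\eps$ in $L^2_\H$ follows similarly from the continuity of $t\mapsto v^\eps_t$. Combining these facts with the closure hypothesis (iii) of $\mathcal A$ gives $(v^\eps_t)\in\mathcal A$, and letting $\eps\downarrow 0$ yields $(v_t)\in\mathcal A$, completing the proof.
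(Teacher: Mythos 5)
Your Step 1 is exactly the paper's reduction: the paper also concludes by observing that, for fixed $\bar v\in\H$, the set of scalar curves $(f_t)$ with $(t\mapsto f_t\bar v)\in\mathcal A$ satisfies the hypotheses of Proposition \ref{prop:Afunc} (your verification, including the identity $|(f_t\bar v)|_{W^{1,2}_\H}=|\bar v|\,|(f_t)|_{W^{1,2}}$, makes explicit what the paper leaves as ``a direct consequence of Proposition \ref{prop:Afunc} and our assumptions''). Where you genuinely diverge is in how the finite sums of rank-one curves are shown to be $W^{1,2}_\H$-dense. The paper takes a local Hilbert base $(e^n)$ of $\H$ (from \cite[Theorem 1.4.11]{Gigli14}), sets $v^N_t:=\sum_{n\leq N}\la v_t,e^n\ra e^n$ and $\dot v^N_t:=\sum_{n\leq N}\la \dot v_t,e^n\ra e^n$, and obtains $W^{1,2}_\H$-convergence immediately from \eqref{eq:l2dal0}; note that there the coefficients $\la v_t,e^n\ra$ genuinely depend on $x$, which is precisely why the full strength of the scalar Proposition \ref{prop:Afunc} is needed, whereas your approximants only use deterministic-in-$x$ hat coefficients. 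Your route, by contrast, discretizes in time (mollification plus piecewise-linear interpolation): it avoids invoking the existence of a local Hilbert base, but at the price of building the conditional-averaging machinery (the $\mm$-a.e.\ contraction on $L^2_\H$ via \eqref{eq:normint} and Jensen, convergence on a dense class, extension by equicontinuity) that the paper gets for free from \eqref{eq:l2dal0}; your density claim for finite sums $\varphi(t)\bar v$ is of the same nature as the paper's unproved density of piecewise-constant curves in $L^2_\H$, so the level of rigor is comparable. One caveat: the convergence $v^\eps\to v$ in $W^{1,2}_\H$ with the convention $v_s:=0$ for $s>1$ is delicate near $t=1$, where $\dot v^\eps_t=-v_t/\eps$ rather than an average of $\dot v$ (the paper's proof of the continuous-representative proposition glosses over the same point); you can sidestep this entirely by dropping the mollification and interpolating the continuous representative $(\bar v_t)$ directly at the nodes $i/n$, since $\bar v_{i/n}-\bar v_{(i-1)/n}=\int_{(i-1)/n}^{i/n}\dot v_r\,\d r$ by \eqref{eq:intvt} and \eqref{eq:contint}, after which your averaging argument applies unchanged.
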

\begin{proof}
Let $(v_t)\in W^{1,2}_\H$ be arbitrary, $(e^n)\subset\H$ be a local Hilbert base (see \cite[Theorem 1.4.11]{Gigli14}) and for every $N\in\N$ let $v^N_t:=\sum_{n\leq N}\la  v_t,e^n\ra e^n$ and $\dot v^N_t:=\sum_{n\leq N}\la  \dot v_t,e^n\ra e^n$  for a.e.\ $t\in[0,1]$. By the properties of local Hilbert bases we have that $|v^N_t|\leq |v_t|$ $\mm$-a.e.\ and $v^N_t\to v_t$ in $\H$ as $N\to\infty$ for a.e.\ $t$ and similarly for $\dot v^N_t$. By   \eqref{eq:l2dal0} this is sufficient to deduce that $(v^N_t)\stackrel{L^2_\H}\to (v_t)$ and  $(\dot v^N_t)\stackrel{L^2_\H}\to (\dot v_t)$ as $N\to\infty$. Also, from the definition of $W^{1,2}_\H$ it is clear that $(v^N_t)\in W^{1,2}_\H$ with derivative $(\dot v^N_t)$ for every $N\in\N$, thus $(v^N_t)\stackrel{W^{1,2}_\H}\to (v_t)$. 

Therefore  to conclude that $(v_t)\in\mathcal A$ it is sufficient to prove that $(v^N_t)\in \mathcal A$ for every $N\in\N$. Since we assumed $\mathcal A$ to be a vector space, to prove this it is sufficient to show that for any $v\in\H$, the collection of those $(f_t)$'s in $W^{1,2}_{L^0}$ such that $t\mapsto f_tv$ is in $\mathcal A$ coincides with the whole $W^{1,2}_{L^0}$. This is a direct consequence of  Proposition \ref{prop:Afunc} and our assumptions.
\end{proof}
Our last goal for the  section is the study of an analogue of Hille's theorem in this context. The classical proof of this fact for Bochner integral of Banach-valued maps uses the fact that if $v_t$ belongs to some convex closed set for a.e.\ $t$, then so does its integral over $[0,1]$. In our setting, this is also true and the proof is based on the possibility of obtaining integration as limit of properly chosen Riemann sums.  For real valued functions on $[0,1]$, this last property is a classical statement of Hahn \cite{hahn14} (see also \cite{Hen68}); the proof we give is closely related to the ones in  \cite[Theorem I-2.8]{Doob90} and \cite[Lemma 4.4]{MS20}. In the course of the proof we shall use properties like additivity of the integral over disjoint sets and change-of-variable formulas that can be trivially obtained from the very definition of integration.
\begin{proposition}\label{prop:riem}
Let $(v_t)\in L^1([0,1],\H)$. Then for every sequence $\tau_k\downarrow0$ there is a subsequence, not relabeled, such that for a.e.\ $t\in[0,1]$ we have
\[
\lim_{k\to\infty}\tau_k\sum_{i=0}^{{[\tau_k^{-1}]-1}}v_{\tau_k(t+i)}=\int_0^1v_s\,\d s\qquad in\ \H,
\]
where $[\cdot]$ denotes the integer part.

In particular, if ${\rm C}\subset\H$ is convex and closed and $v_t\in {\rm C}$ for a.e.\ $t$, then $\int_0^1 v_t\,\d t\in {\rm C}$ as well.
\end{proposition}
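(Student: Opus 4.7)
The plan is to establish $L^1([0,1],\H)$-convergence (in the $L^0(\X)$-sense) of the Riemann-sum operator $A_k v(t) := \tau_k \sum_{i=0}^{[\tau_k^{-1}]-1} v_{\tau_k(t+i)}$ toward $V := \int_0^1 v_s\,\d s$, and then extract an a.e.-$t$ convergent subsequence via Fubini. The $L^1_\H$-convergence will follow the classical scheme: a uniform operator bound plus convergence on a dense subclass.

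The uniform bound $\int_0^1 |A_k v(t)|\,\d t \leq \int_0^1 |v_s|\,\d s$ $\mm$-a.e. is immediate from a change of variables together with \eqref{eq:normint}. For step vector fields $v(t) = \sum_j \nchi_{I_j}(t)\, w_j$ with $I_j \subseteq [0,1]$ intervals and $w_j \in \H$, a direct counting argument gives $\tau_k\big|\{i : \tau_k(t+i) \in I_j\}\big| = \mathcal L^1(I_j) + O(\tau_k)$ uniformly in $t$, so $A_k v \to V$ uniformly in $t$ in $\H$, hence in $L^1_\H$. For density of such step functions in $L^1_\H$, I would argue in two stages: (i) picking a local Hilbert base $(e^m)$ of $\H$ from \cite[Theorem 1.4.11]{Gigli14}, the projections $P_M v_t := \sum_{m\le M}\la v_t,e^m\ra e^m$ converge to $v_t$ pointwise and satisfy $|P_M v_t| \le |v_t|$, so dominated convergence gives $|P_M v - v|_{L^1_\H} \to 0$ in $L^0(\X)$; (ii) for fixed $M$, each scalar component $f^m_t := \la v_t,e^m\ra$ belongs to $L^1([0,1], L^0(\X))$ and is approximated by its dyadic piecewise-constant averages $f^{m,N}_t$ via classical scalar Lebesgue differentiation applied $\mm$-a.e.\ in $x$; reassembling $\sum_{m\le M} f^{m,N}_t e^m$ and using the triangle inequality in $\H$ produces a step-function approximation of $P_M v$ in $L^1_\H$.

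Combining via $|A_k v - V|_{L^1_\H} \leq 2|v - \tilde v|_{L^1_\H} + |A_k \tilde v - \int \tilde v|_{L^1_\H}$ (for any step $\tilde v$) and working in the $\sfd_{L^0}$-distance gives $\sfd_{L^0}(|A_k v - V|_{L^1_\H}, 0) \to 0$. Then Fubini provides the extraction: since
\[
\int_0^1 \sfd_{L^0}\big(|A_k v(t) - V|, 0\big)\,\d t \leq \sfd_{L^0}\big(|A_k v - V|_{L^1_\H}, 0\big) \to 0,
\]
the scalar function $t \mapsto \sfd_{L^0}(|A_k v(t) - V|, 0)$ converges to $0$ in $L^1(0,1)$, so along a subsequence $(k_j)$ it tends to $0$ for a.e.\ $t$, which is precisely the desired a.e.-$t$ convergence of $S_{k_j}(t) = A_{k_j} v(t)$ to $V$ in $\H$.

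For the convex-closed corollary, set $\tilde S_k(t) := S_k(t)/(\tau_k[\tau_k^{-1}])$, which is an honest convex combination with equal weights $[\tau_k^{-1}]^{-1}$ of the vectors $v_{\tau_k(t+i)}$. Let $E\subseteq[0,1]$ be the full-measure set where $v_t \in {\rm C}$: for each $k$, the change of variables $s=\tau_k(t+i)$ gives $\mathcal L^1\big(\{t\in[0,1] : \tau_k(t+i)\notin E\text{ for some }i\}\big)\leq \tau_k^{-1}\mathcal L^1([0,\tau_k[\tau_k^{-1}]]\cap E^c)=0$. Intersecting this full-measure set over $k\in\{k_j\}$ together with the a.e.-convergence set, we obtain a full-measure set of $t$'s for which $\tilde S_{k_j}(t) \in {\rm C}$ and $\tilde S_{k_j}(t) \to V$ in $\H$ (since $\tau_k[\tau_k^{-1}] \to 1$), so $V \in {\rm C}$ by closedness. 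The main technical obstacle is the density argument in stage (ii) above, which requires carefully reducing from the $\H$-valued problem to scalar components on the local Hilbert base while handling the limit in $M$; once this is in place, the remainder is a standard bounded-operator-on-dense-subset argument.
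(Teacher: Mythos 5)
Your proof is correct, but it takes a genuinely different route from the paper's. You run the classical scheme behind Hahn's theorem: a uniform bound $\int_0^1|A_k v(t)|\,\d t\leq\int_0^1|v_s|\,\d s$ on the Riemann-sum operators, exact (uniform-in-$t$) convergence on the dense class of interval-step vector fields, and a density argument for that class obtained by projecting onto a local Hilbert base and applying scalar dyadic/Lebesgue-differentiation approximation componentwise. The paper never constructs step approximants: it compares the Riemann sum directly with the shifted integral $\int_{\tau t}^{1+\tau t}v_s\,\d s$, reduces everything by a change of variables to the continuity-of-translations estimate $\int_0^1\int_0^1|v_t-v_{t+\tau s}|\,\d t\,\d s\to 0$ in $L^0(\X)$, and proves the latter by approximating $(v_t)$ with curves in $W^{1,2}_\H$, whose density in $L^1_\H$ is already available from truncation plus \eqref{eq:w12densel2}. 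Your approach is closer in spirit to the classical one-dimensional proof but requires the extra Hilbert-base/dyadic machinery (your stage (ii), which you correctly flag as the delicate point, does go through: the dyadic averages of each scalar component $t\mapsto\la v_t,e^m\ra$ converge in $L^1(0,1)$ for $\mm$-a.e.\ $x$ by martingale convergence, and reassembling finitely many components preserves the interval-step structure since the averaged values lie in $\H$ as an $L^0$-module). The paper's approach instead leans on mollification results already established earlier in the section. The remaining ingredients coincide: the inequality $\int_0^1\sfd_{L^0}(|A_kv(t)-V|,0)\,\d t\leq\sfd_{L^0}\big(\int_0^1|A_kv(t)-V|\,\d t,0\big)$, the $L^1(0,1)$-to-a.e.\ subsequence extraction, and the normalization $\tau_k[\tau_k^{-1}]\to1$ in the convexity statement. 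For the corollary the paper simply fixes a Borel representative with $v_t\in{\rm C}$ for \emph{every} $t$, which spares your measure-zero bookkeeping over the shifted grids, but your version is equally valid.
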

\begin{proof} Fix a Borel representative of $(v_t)$ such that $v_t\in {\rm C}$ for every $t\in[0,1]$ and notice that $\frac1{[\tau^{-1}]}\sum_{i=0}^{{[\tau^{-1}]-1}}v_{\tau(t+i)}$ is a convex combination of the $v_t$'s, and thus belongs to ${\rm C}$. Since $\tau[\tau^{-1}]\to1$ as $\tau\downarrow0$, it is clear that the second statement is a consequence of the first one, so we focus on this.

Put $v_t=0$ for $t>1$ and notice that \eqref{eq:contint} gives that  $\int_{\tau t}^{1+\tau t}v_s\,\d s\to\int_0^1v_s\,\d s$ in $\H$ as $\tau\downarrow0$ for every $t\in[0,1]$. Thus to conclude it is sufficient to prove that
\begin{equation}
\label{eq:perriem}
\lim_{\tau\downarrow0}\int_0^1\sfd_{\H}(z_{\tau,t},0)\,\d t=0,\qquad\text{where}\qquad z_{\tau,t}:=\tau\sum_{i=0}^{[\tau^{-1}]-1}v_{\tau(t+i)}-\int_{\tau t}^{1+\tau t} v_s\,\d s.
\end{equation}
To this aim we start claiming that 
\begin{equation}
\label{eq:claimperriem}
\lim_{\tau\downarrow0}\int_0^1\int_0^1\big|v_{t}-v_{t+\tau s}\big|\,\d t\,\d s\quad\to\quad 0,\qquad in\ L^0(\X)\ as\ \tau\downarrow0.
\end{equation}
To prove this, we first notice that  the truncations $t\mapsto \nchi_{\{|v_t|\leq n\}}v_t=\varphi_n(|v_t|)v_t$, where $\varphi_n:\R\to\R$ is given by $\varphi_n:=\nchi_{[0,n]}$, belong to $L^\infty_\H\subset L^2_\H$ and trivially converge to $(v_t)$ in $L^1_\H$. Thus recalling \eqref{eq:w12densel2} we can find $(w^n_t)\subset W^{1,2}_\H$ converging to $(v_t)$ in $L^1_\H$. Now for every $n\in\N$ put $w_t^n=0$ for $t>1$ and notice that 
\[
\begin{split}
\int_0^1\int_0^1\big|v_{t}-v_{t+\tau s}\big|\,\d t\,\d s&\leq \int_0^1\int_0^1\big|w^n_{t}-w^n_{t+\tau s}\big|\,\d t\,\d s+\int_0^1\int_0^1\big|v_{t}-w_t^n\big|+\big|v_{t+\tau s}-w_{t+\tau s}^n\big|\,\d t\,\d s\\
&\leq \underbrace{\int_0^1\int_0^1\int_t^{t+\tau s}|\dot w_r|\,\d r\,\d t\,\d s}_{\to 0\quad\mm-a.e.\ as\ \tau\downarrow0}+2|(v_{t}-w_t^n)|_{L^1_\H}.
\end{split}
\]
Thus \eqref{eq:claimperriem} follows by first letting $\tau\downarrow0$ and then $n\to\infty$ in the above (in fact the argument easily gives that there is convergence $\mm$-a.e.\ in \eqref{eq:claimperriem}).

Now recall that $z_{\tau,t}$ is defined in \eqref{eq:perriem} and notice that
\begin{equation}
\label{eq:stimaz}
\begin{split}
\int_0^1|z_{\tau,t}|\,\d t&=\int_0^1\Big|\tau\sum_{i=0}^{[\tau^{-1}]-1}\Big(v_{\tau(t+i)}-\tau^{-1}\int_{\tau(t+i)}^{\tau(t+i+1)}v_s\,\d s\Big)\Big|\,\d t\\
\big(s=\tau(t+i+s')\big)\qquad&=\int_0^1\Big|\tau\sum_{i=0}^{[\tau^{-1}]-1}\Big(v_{\tau(t+i)}-\int_0^1v_{\tau(t+s'+i)}\,\d s'\Big)\Big|\,\d t\\
\big(\text{by \eqref{eq:normint}}\big)\qquad&\leq\int_0^1\int_0^1\tau\sum_{i=0}^{[\tau^{-1}]-1}\big|v_{\tau(t+i)}-v_{\tau(t+s'+i)}\big|\,\d t\,\d s'\\
\big(t'=\tau(t+i)\big)\qquad&=\int_0^1\int_0^1\big|v_{t'}-v_{t'+\tau s'}\big|\,\d t'\,\d s'\qquad\mm-a.e..
\end{split}
\end{equation}
Therefore
\[
\begin{split}
\int_0^1\sfd_{L^0}(|z_{\tau,t}|,0)\,\d t&=\int_0^1\int 1\wedge |z_{\tau,t}|\,\d\mm'\,\d t\leq\int 1\wedge\Big( \int_0^1 |z_{\tau,t}|\,\d t\Big)\,\d\mm'\quad\stackrel{\eqref{eq:claimperriem},\eqref{eq:stimaz}}\to\quad0,
\end{split}
\]
which is \eqref{eq:perriem}.
\end{proof}

It is now easy to establish the desired version of Hille's theorem:
\begin{theorem}[A Hille-type result]\label{thm:HilleH}
Let $\mathscr H_1,\mathscr H_2$ be separable Hilbert $L^0(\mm)$-modules, $V\subset \mathscr H_1$ a vector subspace, $\|\cdot\|_V:V\to\R^+$ a norm on $V$ and $L:V\to \mathscr H_2$ a linear map. Let $(v_t)\in L^1([0,1],\mathscr H_1)$ and assume that:
\begin{itemize}
\item[i)] $v_t\in V$ for a.e.\ $t\in[0,1]$ and $t\mapsto L(v_t)$ is in $L^1([0,1],\mathscr H_2)$,
\item[ii)] for any $C>0$ the set ${\rm Graph}(L)\cap (\{v\in V:\|v\|_V\leq C\}\times \mathscr H_2)\subset  \H_1 \times\H_2$  is closed,
\item[iii)] $\int_0^1\|v_t\|_V\,\d t<\infty$.
\end{itemize}
Then $\int_0^1v_t\,\d t\in V$ and $L(\int_0^1v_t\,\d t)=\int_0^1L(v_t)\,\d t$.
\end{theorem}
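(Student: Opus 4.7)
The plan is to realize both $\int_0^1 v_t\,\d t$ and $\int_0^1 L(v_t)\,\d t$ as limits of matching Riemann sums via Proposition \ref{prop:riem}, in such a way that the approximating vectors all lie in a common sublevel set of $\|\cdot\|_V$; the closed-graph hypothesis (ii) will then push the limit into $\Gr(L)$.

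After choosing Borel representatives of the three equivalence classes $(v_t)$, $(L(v_t))$, and the real-valued map $t\mapsto \|v_t\|_V\in L^1(0,1)$, and possibly modifying them on a Lebesgue-null set $N\subseteq[0,1]$, I may assume that for every $t\notin N$ one has $v_t\in V$ and the given representative of $L(v_t)$ is literally $L$ applied to $v_t$. Pick an arbitrary $\tau_k\downarrow 0$. Applying Proposition \ref{prop:riem} successively to $(v_t)\in L^1([0,1],\mathscr H_1)$ and $(L(v_t))\in L^1([0,1],\mathscr H_2)$, together with Hahn's classical scalar Riemann-sum theorem for $t\mapsto\|v_t\|_V$, and passing to a subsequence each time, I obtain a common subsequence (still denoted $\tau_k$) and a full-measure set $T\subseteq[0,1]$ such that: (a) $\tau_k(t+i)\notin N$ for all $k\in\mathbb N$, $0\le i\le[\tau_k^{-1}]-1$ and $t\in T$ (the exceptional set being a countable union of dilates/translates of the null set $N$), and (b) for every $t\in T$ the three Riemann sums
\[
S^1_k(t):=\tau_k\!\!\sum_{i=0}^{[\tau_k^{-1}]-1}\!\!v_{\tau_k(t+i)},\quad S^2_k(t):=\tau_k\!\!\sum_{i=0}^{[\tau_k^{-1}]-1}\!\!L(v_{\tau_k(t+i)}),\quad N_k(t):=\tau_k\!\!\sum_{i=0}^{[\tau_k^{-1}]-1}\!\!\|v_{\tau_k(t+i)}\|_V
\]
converge respectively to $\int_0^1 v_s\,\d s$ in $\mathscr H_1$, to $\int_0^1 L(v_s)\,\d s$ in $\mathscr H_2$, and to $\int_0^1\|v_s\|_V\,\d s<\infty$ in $\R$.

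Fix any $t\in T$. Since $V$ is a linear subspace, $S^1_k(t)\in V$; by linearity of $L$, $L(S^1_k(t))=S^2_k(t)$; and by the triangle inequality for $\|\cdot\|_V$, $\|S^1_k(t)\|_V\le N_k(t)\le C$ where $C:=\sup_k N_k(t)<\infty$ by the scalar convergence. Thus the pair $\bigl(S^1_k(t),S^2_k(t)\bigr)$ lies in $\Gr(L)\cap\bigl(\{v\in V:\|v\|_V\le C\}\times\mathscr H_2\bigr)$, which by assumption (ii) is closed in $\mathscr H_1\times\mathscr H_2$. Taking $k\to\infty$, the limit $\bigl(\int_0^1 v_s\,\d s,\int_0^1 L(v_s)\,\d s\bigr)$ lies in the same set, yielding $\int_0^1 v_s\,\d s\in V$ and $L\bigl(\int_0^1 v_s\,\d s\bigr)=\int_0^1 L(v_s)\,\d s$, as required. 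The only delicate point is the simultaneous extraction of a common subsequence and a common full-measure set along which all three Riemann sums converge and for which the representatives of $L(v)$ and $v$ are compatible; this is a routine diagonal/null-set argument, and everything else then follows from the closed-graph assumption on $L$.
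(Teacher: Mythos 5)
Your argument is correct and is essentially the paper's proof: Riemann sums via Proposition \ref{prop:riem}, Hahn's scalar result for $t\mapsto\|v_t\|_V$ to get a uniform bound $C$ at some good $t$, and then the closedness of ${\rm Graph}(L)$ intersected with the $\|\cdot\|_V$-ball. The only cosmetic difference is that the paper applies Proposition \ref{prop:riem} once to $t\mapsto(v_t,L(v_t))$ in the product module $\mathscr H_1\times\mathscr H_2$, which yields the simultaneous convergence in one stroke, whereas you extract a common subsequence by successive applications to each component --- both routes work.
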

\begin{proof} Start noticing that for any $C>0$ we have 
\[
\{t\ :\ \|v_t\|_V\leq C\}=\big\{t\ :\ (v_t,L(v_t))\in{\rm Graph}(L)\cap (\{v\in V:\|v\|_V\leq C\}\times \mathscr H_2) \big\},
\]
thus from $(ii)$ and the assumed Borel regularity of $t\mapsto v_t,L(v_t)$ we deduce that $t\mapsto\|v_t\|_V$ is Borel, hence assumption $(iii)$ makes sense.

Let $\H:=\H_1\times\H_2$ be equipped with the pointwise norm $|(v,z)|^2:=|v|^2+|z|^2$ $\mm$-a.e.\ for every $v\in\H_1$, $z\in\H_2$. It is clear that $\H$ is a Hilbert $L^0(\X)$-normed module, when equipped with the obvious distance and product with $L^0(\X)$ functions defined componentwise. Also, directly by definition of integral, testing with elements of the form $(v,0)$ and $(0,z)$ we see that $\int_0^1(v_t,z_t)\,\d t=(\int_0^1 v_t\,\d t,\int_0^1 z_t\,\d t)$. 

Now observe that the assumptions $(v_t)\in L^1_{\H_1}$ and $(L(v_t))\in L^1_{\H_2}$ together with the trivial bound $|(v,z)|\leq |v|+|z|$ show that $t\mapsto (v_t,L(v_t))$ is in $L^1_\H$. We can therefore apply Proposition \ref{prop:riem} above to $t\mapsto (v_t,L(v_t))\in \H$ to find $\tau_k\downarrow0$ so that for a.e.\ $t\in[0,1]$ we have
\begin{equation}
\label{eq:limH}
\lim_{k\to\infty}\tau_k\sum_{i=0}^{{[\tau_k^{-1}]-1}}\big(v_{\tau_k(t+i)},L(v_{\tau_k(t+i)})\big)=\Big(\int_0^1v_s\,\d s,\int_0^1L(v_s)\,\d s\Big)\qquad in\ \H.
\end{equation}
By the original result of Hahn (or, which is the same, by Proposition \ref{prop:riem} with $\H:=\R$, that is a $L^0$-normed module over a Dirac mass) we also know that for a.e.\ $t\in[0,1]$ we have  
\[
\lims_{k\to\infty} \Big\|\tau_k\sum_{i=0}^{{[\tau_k^{-1}]-1}}v_{\tau_k(t+i)}\Big\|_V\leq\lim_{k\to\infty}\tau_k\sum_{i=0}^{{[\tau_k^{-1}]-1}}\|v_{\tau_k(t+i)}\|_V=\int_0^1\|v_s\|_V\,\d s.
\]
Hence there is $t\in[0,1]$ such that $C:=\sup_k\Big\|\tau_k\sum_{i=0}^{{[\tau_k^{-1}]-1}}v_{\tau_k(t+i)}\Big\|_V<\infty$ and \eqref{eq:limH} holds. Thus the argument of the limit in the left-hand side of \eqref{eq:limH} is in ${\rm Graph}(L)\cap (\{v\in V:\|v\|_V\leq C\}\times \mathscr H_2)$, and since by assumption this set is closed, it contains also the limit. In particular, this means that the right-hand side is in ${\rm Graph}(L)$, which is the conclusion.
\end{proof}

\section{An existence and uniqueness theory in \texorpdfstring{${\sf ncRCD}(K,N)$}{ncRCD} spaces}

\subsection{The setting}\label{se:setting}

Let us fix once and for all the assumptions and the notations that we shall use in the next sections: unless otherwise specified, our theorems will all be based on these:
\begin{itemize}
\item[-] $(\X,\sfd,\mm)$ is a ${\sf ncRCD}(K,N)$ space, $K\in\R$, $N\in\N$.
\item[-] $(w_t) \in L^2([0,1], W^{1,2}_{C}(T\X))$ is  such that $|w_t|,|\div (w_t)|\in L^\infty([0,1]\times\X)$ and for some $\bar x\in \X$ and $R>0$ we have $\supp(w_t)\subset B_R(\bar x)$ for a.e.\ $t\in[0,1]$.
\item[-] $(F_t^s)$ is the Regular Lagrangian Flow of $(w_t)$.
\item[-] $(g_t)\in L^2([0,1],L^2(\X))$ is the function associated to $(w_t)$ as in Proposition \ref{prop:eliadanieleregularity}.
\item[-] $\mm'$ is a Borel probability measure on $\X$ such that $\mm\ll\mm'\ll\mm$ and coinciding with $c\mm$ on $B_R(\bar x)$ for some $c>0$.
\item[-] The distance $\sfd_{L^0}$ on the space $L^0(\X)$ is given by
\[
\sfd_{L^0}(f,g):=\int1\wedge|f-g|\,\d\mm',\qquad\forall f,g\in L^0(\X).
\]
Similarly the distance $\sfd_{L^0(T^*\X)}$ on $L^0(T^*\X)$ is defined as
\[
\sfd_{L^0(T^*\X)}(\omega,\eta):=\int1\wedge|\omega-\eta|\,\d\mm',\qquad\forall \omega,\eta\in L^0(T^*\X)
\]
and analogously for the  distance $\sfd_{L^0(T\X)}$ on $L^0(T\X)$.
\item[-] $G$ is a non-negative function in $L^0(\X)$ acting as `generic constant'. Its actual value may change in the various instances where it appears, but it depends only on the structural data (i.e.\ the space and the vector field $(w_t)$), but not on the specific vector fields $(v_t),(V_t)$ we shall work with later on. 

Occasionally, we will need to work with such `generic function' depending on some additional parameter (often a time variable). In this case we shall write $G_i$, $i\in I$, to emphasize such dependence. In any case, whenever we do so we tacitly (or explicitly) assume that the $G_i$'s are `uniformly small in $L^0$', i.e.\ it will always be intended that they satisfy
\begin{equation}
\label{eq:perunifcont3}
\lim_{C\to+\infty}\sup_{i\in I}\mm'\big(\{|G_i|\geq C\}\big)=0.
\end{equation}
\item[-] $\Omega:\R^+\to\R^+$ is a generic modulus of continuity, i.e.\ a non-decreasing continuous function such that $\Omega(z)\downarrow0$ as $z\downarrow0$. Much like for the function $G$ its actual value may change in the various instances where it appears, but it depends only on the structural data of the problem.
\end{itemize}
Notice that the  choice of $\mm'$, the fact that the flow $(F_t^s)$ has bounded compression, and the fact that  $F_t^s$ is the identity on the complement of ${\rm supp}(w)$, ensure that
\begin{equation}
\label{eq:ftmmp}
(F_t^s)_*\mm'\leq C\mm',\qquad\forall t,s\in[0,1],
\end{equation}
for some $C>0$. In particular, this implies
\begin{equation}
\label{eq:equil0}
C^{-1}\sfd_{L^0}(f,g)\leq\sfd_{L^0}(f\circ F_t^s,g\circ F_t^s)\leq C\sfd_{L^0}(f,g),\qquad\forall t,s\in[0,1],\ \forall f,g\in L^0(\X).
\end{equation}
More generally, the topological vector spaces we are dealing with are metrized by translation invariant distances, therefore linear and continuous operators are uniformly continuous. To see this, let $\sfd_1,\sfd_2$ be the distances on two such spaces $V_1,V_2$ and $T:V_1\to V_2$ be the linear operator, then if $\sfd_1(v_n,z_n)\to 0$ we have $\sfd_1(v_n-z_n,0)\to 0$ and thus $\sfd_2(T(v_n)-T(z_n),0)=\sfd_2(T(v_n-z_n),0)\to 0$ and therefore $\sfd_2(T(v_n),T(z_n))\to 0$ (in fact this is not really due to distances, but to the fact that topological vector spaces, much like topological groups, are uniform spaces).

We can apply  this general principle to the product by a given function $G\in L^0(\X)$, which is a linear continuous map from $L^0(\X)$ to itself, to deduce 
\begin{equation}
\label{eq:unifO}
\sfd_{L^0}(Gf,Gg)\leq\Omega\big(\sfd_{L^0}(f,g)\big),\qquad\forall f,g\in L^0(\X).
\end{equation}
The same principle applied to the linear and continuous embedding of $L^p(\X)$ into $L^0(\X)$ gives
\begin{equation}
\label{eq:lpl0}
\sfd_{L^0}(f,g)\leq\Omega\big(\|f-g\|_{L^p}\big),\qquad\forall f,g\in L^p(\X).
\end{equation}
Clearly, the map $\Omega$ appearing in these depends on the fixed function $G$ and the chosen exponent $p$, but since such dependence is not important in our discussion, to keep the notation simpler we will not emphasize this fact.

It will be useful to notice that for $(f_s)\in L^2([0,1],L^2(\X))$, from the bounded compression  property  of the flow we deduce that $(f_s\circ F_t^s)\in L^2([0,1],L^2(\X))$ as well for any $t\in[0,1]$. Thus from Fubini's theorem we deduce that 
\begin{equation}
\label{eq:intflo2}
(f_s)\in L^2([0,1],L^2(\X))\qquad\Rightarrow\qquad \int_{0}^1|f_r|^2\circ F_t^r\,\d r<\infty,\quad\mm-a.e.,\ \forall t\in[0,1].
\end{equation}
In particular, applying this to $f_s:=|\nabla w_s|$ we deduce that
\begin{equation}
\label{eq:estnw2}
\int_0^1|\nabla w_t|^2\circ F_0^t\,\d t\leq G\qquad\mm-a.e.,
\end{equation}
according to our convention on $G$ described above.  Similarly, from \eqref{eq:intflo2} applied to $(g_s)$,  the trivial inequality $e^a\leq 1+ae^b$ valid for any $0\leq a\leq b$,  and the estimate \eqref{eq:estimate_differential_eliadaniele}, we deduce the key bound
\begin{equation}
\label{eq:estdFt3}
|\d F_t^s|\circ F_{t'}^t\leq 1+G_{t'}\int_{t\wedge s}^{t\vee s}g_r\circ F_{t'}^r\,\d r\qquad\mm-a.e.,\ \forall t',t,s\in[0,1],
\end{equation}
where $G_{t'}:=e^{\int_0^1g_r\circ F_{t'}^r\,\d r}$. We claim that the functions $G_{t'}$ so defined satisfy the uniform bound \eqref{eq:perunifcont3}, so that our notation is justified according to what we declared at the beginning of the section. To see this, notice that we have the uniform estimate
\[
\int\Big|\int_0^1g_r\circ F_{t'}^r\,\d r\Big|^2\,\d\mm\leq\int_0^1\int |g_r|^2\circ F_{t'}^r\,\d\mm\,\d r\stackrel{\eqref{eq:boundcompr}}\leq C \iint_0^1|g_r|^2\,\d r\,\d\mm,\qquad\forall t'\in[0,1],
\]
where $C$ is the compressibility constant of $F_t^s$; thus Chebyshev's inequality gives for every $M>0$ 
\[
\mm(\{G_{t'}>M\})=\mm\Big(\Big\{\big|\int_0^1g_r\circ F_{t'}^r\,\d r\big|^2>|\log M|^2\Big\}\Big)\leq\frac{C\iint_0^1|g_r|^2\,\d r\,\d\mm}{|\log M|^2},\qquad\forall t'\in[0,1]
\]
and by the absolute continuity of the integral it follows that
\begin{equation}
\label{eq:unifsuperlgt}
\lim_{M\to+\infty}\sup_{t'\in [0,1]}\mm'\big(\{G_{t'}\geq M\}\big)=0.
\end{equation}
Notice that \eqref{eq:estdFt3} trivially implies the weaker, but still useful, bound
\begin{equation}
\label{eq:estdFt4}
|\d F_t^s|\circ F_{t'}^t\leq G_{t'}\qquad\mm-a.e.,\ \forall t',t,s\in[0,1],
\end{equation}
for some $G_{t'}$'s that still satisfy \eqref{eq:perunifcont3}.

We shall  often use the bound  \eqref{eq:perunifcont3} in conjunction with the following simple lemma:
\begin{lemma}\label{le:unifcont}
Let $G_i\in L^0(\X)$, $i\in I$, be such that 
\begin{equation}
\label{eq:perunifcont2}
\lim_{C\to+\infty}\sup_{i\in I}\mm'\big(\{|G_i|\geq C\}\big)=0.
\end{equation}
Then  the operators $L^0(\X)\ni f\mapsto G_if\in L^0(\X)$ are uniformly continuous, i.e.\ for some modulus of continuity $\Omega$ independent of $i\in I$ we have
\begin{equation}
\label{eq:equihi}
\sfd_{L^0}(G_if,G_ig)\leq \Omega\big(\sfd_{L^0}(f,g)\big),\qquad\forall f,g\in L^0(\X),\ i\in I.
\end{equation}
\end{lemma}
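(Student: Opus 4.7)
The plan is to reduce the claim to uniform continuity at the origin via linearity of the operators $f \mapsto G_i f$, and then to exploit the hypothesis \eqref{eq:perunifcont2} to truncate the unbounded factor $G_i$ at some large level $C$.

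First I would notice that, since $L^0(\X)$ is a topological vector space with a translation-invariant distance, continuity of a linear operator at $0$ already yields uniform continuity (exactly as recalled just before \eqref{eq:unifO} in the text). Therefore, writing $h = f-g$, it is enough to find a modulus $\Omega$ independent of $i \in I$ with
\[
\sfd_{L^0}(G_i h, 0) \le \Omega\bigl(\sfd_{L^0}(h,0)\bigr)
\quad \text{for every } h\in L^0(\X),\ i\in I.
\]

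Next I would perform the standard truncation. For a parameter $C\geq 1$ to be chosen, split the integral defining $\sfd_{L^0}(G_i h,0)=\int 1\wedge |G_i h|\,\d\mm'$ according to whether $|G_i|\le C$ or $|G_i|>C$. On $\{|G_i|\le C\}$ the integrand is bounded by $1\wedge(C|h|) \le C\,(1\wedge |h|)$, while on $\{|G_i|>C\}$ it is bounded by $1$. This gives the pointwise-uniform estimate
\[
\sfd_{L^0}(G_i h,0)\ \le\ C\,\sfd_{L^0}(h,0)\ +\ \mm'\bigl(\{|G_i|>C\}\bigr).
\]
Setting $\eta(C):=\sup_{i\in I}\mm'(\{|G_i|\ge C\})$, which tends to $0$ as $C\to+\infty$ by hypothesis \eqref{eq:perunifcont2}, we obtain
\[
\sfd_{L^0}(G_i h,0)\ \le\ C\,\sfd_{L^0}(h,0)\ +\ \eta(C)\qquad \forall i\in I,\ C\ge 1.
\]

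Finally I would define the modulus $\Omega$ by optimizing over $C$. Concretely, given $r=\sfd_{L^0}(h,0)>0$, choose $C(r)\ge 1$ so that $C(r)\to+\infty$ as $r\downarrow 0$ but $C(r)\,r\to 0$ as well (e.g.\ $C(r)=\min\{k\in\N : \eta(k)\le r^{1/2}\}\vee 1$ works, using that $\eta(C)\downarrow 0$), and set
\[
\Omega(r):=C(r)\,r+\eta\bigl(C(r)\bigr)\quad \text{for }r>0,\quad \Omega(0):=0.
\]
Then $\Omega$ is non-decreasing (up to replacing it by $r\mapsto \sup_{s\le r}\Omega(s)$), continuous at $0$ with $\Omega(r)\downarrow 0$, and by construction $\sfd_{L^0}(G_i h,0)\le \Omega(\sfd_{L^0}(h,0))$ uniformly in $i\in I$, which is exactly \eqref{eq:equihi}. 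No step here is delicate; the only thing one must be careful about is that the modulus really is constructed out of the single function $\eta$ coming from \eqref{eq:perunifcont2}, so it is indeed independent of $i$.
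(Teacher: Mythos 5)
Your argument is essentially the paper's: both proofs truncate the multiplier at a level $C$ chosen uniformly in $i$ via \eqref{eq:perunifcont2}, bound the contribution of $\{|G_i|>C\}$ by its $\mm'$-measure, and control the remaining part where $|G_i|\leq C$ (the paper does this through Cavalieri's formula and an explicit $\eps$--$\delta$ bookkeeping, you do it through the pointwise inequality $1\wedge(C|h|)\leq C(1\wedge|h|)$, which is a cleaner route to the same estimate $\sfd_{L^0}(G_ih,0)\leq C\,\sfd_{L^0}(h,0)+\eta(C)$). The one concrete slip is your parenthetical choice $C(r)=\min\{k\in\N:\eta(k)\leq r^{1/2}\}\vee 1$: this guarantees $\eta(C(r))\to0$ but not $C(r)\,r\to0$, since if $\eta$ decays very slowly (say $\eta(k)\sim 1/\log k$) then $C(r)$ blows up faster than $1/r$. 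The fix is immediate and you already stated the correct requirements: take for instance $C(r):=1\vee r^{-1/2}$, so that $C(r)\,r\leq r^{1/2}\to0$ and $\eta(C(r))\to0$ by \eqref{eq:perunifcont2}; after passing to a non-decreasing continuous majorant vanishing at $0$, this yields the desired modulus $\Omega$.
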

\begin{proof}
Fix $\eps\in(0,1)$, let $C_\eps>1$ be such that $\mm'\big(\{|G_i|\geq C_\eps\}\big)\leq\eps$ for every $i\in I$ and put $\delta:=\frac{\eps^2}{C_\eps}$. We shall prove that
\begin{equation}
\label{eq:perequicont1}
\sfd_{L^0}(f_1,f_2)\leq\delta\qquad\Rightarrow\qquad \sfd_{L^0}(G_if_1,G_if_2)\leq 3\eps,\qquad\forall i\in I,
\end{equation}
that, by the arbitrariness of $\eps$, gives the claim. Put for brevity $f:=|f_1-f_2|$ and use Cavalieri's formula to get
\begin{equation}
\label{eq:deltaeps}
\delta\geq \sfd_{L^0}(f_1,f_2)=\int_0^1\mm'(\{f>t\})\,\d t\geq \int_0^{\frac\delta\eps}\mm'(\{f>t\})\,\d t\geq \tfrac\delta\eps\mm'\big(\big\{f>\tfrac\delta\eps \big\}\big).
\end{equation}
On the other hand, the trivial bound 
\[
\int_0^1\mm'(\{|G_i|f>t\})\,\d t\leq\mm'(\{|G_i|f>\eps\})+\int_0^\eps\mm'(\{|G_i|f>t\})\,\d t\leq \mm'(\{|G_i|f>\eps\})+\eps
\] and the inclusion $\{|G_i|f>\eps\}\subset \{|G_i|>C_\eps\}\cup\{f>\tfrac\eps{C_\eps}\}$ give
\[
 \sfd_{L^0}(G_if_1,G_if_2)=\int_0^1\mm'(\{|G_i|f>t\})\,\d t\leq\mm'(\{|G_i|>C_\eps\})+\mm'(\{f>\tfrac\eps{C_\eps}\})+\eps.
\]
Since $\frac\eps{C_\eps}=\frac\delta\eps$, the choice of $C_\eps$ and  \eqref{eq:deltaeps} give the claim \eqref{eq:perequicont1} and thus \eqref{eq:equihi}.
\end{proof}

\subsection{Time dependent vector fields at `fixed' and `variable' points}

We have seen in Section \ref{sec:diff} that for every $t\in\R$ the map $F_0^t$  admits a differential $\d F_0^t:L^0(T\X)\to L^0(T\X)$. We now want to consider all these maps as $t$ varies in $[0,1]$. 

\bigskip

In dealing with composition with the flow maps, the following simple lemma will  be useful:
\begin{lemma}\label{le:contcont} $[0,1]^2\ni (t,s)\mapsto f_{t}^s\in L^0(\X)$ is continuous if and only if $[0,1]^2\ni (t,s)\mapsto f_{t}^{s}\circ F_t^s\in L^0(\X)$ is continuous.
\end{lemma}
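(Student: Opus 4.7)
The plan is a straightforward two-step argument based on two facts already in hand: the equivalence of $L^0$-distances under pre-composition with the flow, \eqref{eq:equil0}, and the $L^0$-continuity of $(t,s)\mapsto f\circ F_t^s$ for a fixed $f\in L^0(\X)$, \eqref{eq:contl0}. The key observation is that $F_t^s$ is an essential self-inverse of $F_s^t$ by the group property \eqref{eq:groupRLF}, so the two sides of the claimed equivalence are related by a single composition operation whose action on $L^0$ is bi-Lipschitz.

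For the ``only if'' direction, I would fix $(t_0,s_0)\in[0,1]^2$ and write
\[
f_t^s\circ F_t^s - f_{t_0}^{s_0}\circ F_{t_0}^{s_0} = \big(f_t^s - f_{t_0}^{s_0}\big)\circ F_t^s + \Big(f_{t_0}^{s_0}\circ F_t^s - f_{t_0}^{s_0}\circ F_{t_0}^{s_0}\Big).
\]
By \eqref{eq:equil0}, the $\sfd_{L^0}$-size of the first summand is bounded by $C\,\sfd_{L^0}(f_t^s,f_{t_0}^{s_0})$, which tends to $0$ by the assumed continuity. The second summand goes to $0$ in $L^0(\X)$ thanks to \eqref{eq:contl0} applied with $f\coloneqq f_{t_0}^{s_0}$.

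For the ``if'' direction, I would use \eqref{eq:groupRLF} to write $f_t^s = (f_t^s\circ F_t^s)\circ F_s^t$ and decompose analogously:
\[
f_t^s - f_{t_0}^{s_0} = \Big(f_t^s\circ F_t^s - f_{t_0}^{s_0}\circ F_{t_0}^{s_0}\Big)\circ F_s^t + \Big((f_{t_0}^{s_0}\circ F_{t_0}^{s_0})\circ F_s^t - (f_{t_0}^{s_0}\circ F_{t_0}^{s_0})\circ F_{s_0}^{t_0}\Big).
\]
The first term is again controlled by \eqref{eq:equil0} using the hypothesis on $(t,s)\mapsto f_t^s\circ F_t^s$, while the second term tends to $0$ by \eqref{eq:contl0} applied to $g\coloneqq f_{t_0}^{s_0}\circ F_{t_0}^{s_0}\in L^0(\X)$ and to the flow $(t,s)\mapsto F_s^t$ (which is the RLF of $(-w_{1-t})$ after an obvious reparametrization, or equivalently one just observes that \eqref{eq:contl0} is symmetric in the two time variables).

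There is no real obstacle here; the only point requiring slight care is making sure \eqref{eq:contl0} is invoked with a genuinely fixed function in $L^0(\X)$ (namely $f_{t_0}^{s_0}$ or $f_{t_0}^{s_0}\circ F_{t_0}^{s_0}$), and that \eqref{eq:equil0} provides a uniform control of the pre-composition operator independent of $(t,s)$, so the two estimates can be combined uniformly as $(t,s)\to (t_0,s_0)$.
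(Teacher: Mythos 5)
Your argument is correct and essentially identical to the paper's: the same triangle-inequality decomposition, with one term controlled by the uniform bound \eqref{eq:equil0} and the other by \eqref{eq:contl0} applied to a fixed function. The only cosmetic difference is that you carry out the ``if'' direction explicitly with the symmetric decomposition, whereas the paper reduces it to the ``only if'' direction via the essential inverse $F_s^t$ and the group property \eqref{eq:groupRLF}; both routes are equally valid.
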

\begin{proof} Since $(F_{t}^s)^{-1}=F_s^t$, it is sufficient to prove the `only if'. Thus suppose that $t,s\mapsto f_{t}^s$ is continuous in $L^0(\X)$ and notice that
\[
\begin{split}
\sfd_{L^0}\big(f_{t'}^{s'}\circ F_{t'}^{s'},f_t^s\circ F_t^s\big)
&\leq \sfd_{L^0}\big(f_{t'}^{s'}\circ F_{t'}^{s'},f_t^s\circ F_{t'}^{s'}\big)+ \sfd_{L^0}\big(f_t^s\circ F_{t'}^{s'},f_t^s\circ F_{t}^{s}\big)\\
&\leq C\,\sfd_{L^0}\big(f_{t'}^{s'},f_t^s\big)+ \sfd_{L^0}\big(f_t^s\circ F_{t'}^{s'},f_t^s\circ F_{t}^{s}\big)
\end{split}
\]
having used the uniform bound \eqref{eq:equil0}. The conclusion follows recalling \eqref{eq:contl0}.
\end{proof}
The  regularity in time of $\d F_t^s$ will be obtained by duality starting from the following  result:
\begin{lemma}\label{lem:contd}
Let $f:\X\to\R$ be Lusin--Lipschitz. Then $[0,1]^2\ni (t,s)\mapsto \d (f\circ F_t^s)\in L^0(T^*\X)$ is continuous.  
\end{lemma}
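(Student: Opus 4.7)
Plan. The approach is to combine the closure of $\d$ on bounded subsets of $H_{\phi,R}(\X)$ (Proposition \ref{prop:closure_tilde_d}) with the $L^2$-continuity of $(t,s)\mapsto f\circ F_t^s$ from \eqref{eq:contlp}, together with an integral representation of $f\circ F_t^s$ obtained via the Hille-type Theorem \ref{thm:HilleH}.

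First I would reduce to the case of bounded Lipschitz $f$. Using the Lusin--Lipschitz decomposition $f=\tilde f_i$ on $F_i$ with $\tilde f_i$ globally Lipschitz, combined with the locality of $\d$, one has
\[
\d(f\circ F_t^s)=\sum_i(\nchi_{F_i}\circ F_t^s)\,\d(\tilde f_i\circ F_t^s)\qquad\mm\text{-a.e.}
\]
The indicators $\nchi_{F_i}\circ F_t^s$ are continuous in $(t,s)$ in $L^0(\X)$ by \eqref{eq:contl0} and bounded by $1$, so provided each $(t,s)\mapsto\d(\tilde f_i\circ F_t^s)$ is continuous in $L^0(T^*\X)$ with a $(t,s)$-uniform $L^0$-bound on its pointwise norm, continuity of the product and hence of the sum follows via Lemma \ref{le:unifcont}. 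Moreover, since $F_t^s$ is the identity outside $B_R(\bar x)$, a cutoff argument further reduces to compactly supported bounded Lipschitz functions, which lie in $W^{1,2}(\X)$.

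For such $f$, the chain rule \eqref{eq:techn_comp_cl} combined with \eqref{eq:estdFt4} (taking $t'=t$) gives $|\d(f\circ F_t^s)|\le\Lip(f)\,G_t$ with $G_t$ satisfying the uniform smallness \eqref{eq:perunifcont3}, while \eqref{eq:estimate4_factor} yields that $\{f\circ F_t^s\}_{(t,s)\in[0,1]^2}$ is uniformly bounded in $H_{\phi,R}(\X)$ for a suitable $\phi\in L^0(\X)$ majorising the relevant $\bar g_t$'s and some $R>0$. Given $(t_n,s_n)\to(t_0,s_0)$, from \eqref{eq:contlp} one has $f\circ F_{t_n}^{s_n}\to f\circ F_{t_0}^{s_0}$ in $L^2(\X)$, and Proposition \ref{prop:closure_tilde_d} then identifies any $L^0(T^*\X)$-subsequential limit of $\d(f\circ F_{t_n}^{s_n})$ with $\d(f\circ F_{t_0}^{s_0})$.

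The main obstacle will be upgrading this subsequential uniqueness to actual $L^0(T^*\X)$-convergence of the full sequence. For this I would use the RLF integral identity \eqref{eq:int RLF}
\[
f\circ F_t^s-f\circ F_t^{s_0}=\int_{s_0}^s\d f(w_r)\circ F_t^r\,\d r
\]
and commute $\d$ with the integral via Theorem \ref{thm:HilleH}, taking $V=H_{\phi,R}(\X)$ and $L=\d$: Hille's closure hypothesis is exactly Proposition \ref{prop:closure_tilde_d}, and its integrability hypothesis is verified first for $f\in\test{\X}$ via \eqref{eq:leibgrad} (which gives $\d f(w_r)\in W^{1,2}(\X)$) and then extended to bounded Lipschitz $f$ by $W^{1,2}$-approximation using the uniform estimates above. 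The resulting identity
\[
\d(f\circ F_t^s)=\d(f\circ F_t^{s_0})+\int_{s_0}^s\d[\d f(w_r)\circ F_t^r]\,\d r
\]
yields continuity of $s\mapsto\d(f\circ F_t^s)$ in $L^0(T^*\X)$ at fixed $t$. Joint continuity in $(t,s)$ is then recovered by combining this $s$-continuity with the group decomposition $F_t^s=F_{t_0}^s\circ F_t^{t_0}$, exploiting that $\d F_t^{t_0}$ tends to the identity operator as $t\to t_0$ by \eqref{eq:estdFt3}, since the excess factor $G_{t'}\int_{t\wedge t_0}^{t\vee t_0}g_r\circ F_{t'}^r\,\d r$ vanishes in $L^0$ as $|t-t_0|\to 0$.
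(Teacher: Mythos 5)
Your proposal takes a genuinely different route from the paper, and as written it has two real gaps. The paper's proof of Lemma \ref{lem:contd} is self-contained: it does \emph{not} use $H_{\phi,R}(\X)$, Proposition \ref{prop:closure_tilde_d}, or Theorem \ref{thm:HilleH}. Instead it proves the two one-sided limits \eqref{eq:nuovoclaim}--\eqref{eq:nuovoclaim2} directly — the ``forward'' limit $\d(f\circ F_s^{s'})\to\d f$ by locality plus weak $L^2$-compactness of uniformly Lipschitz truncations on the sets $E_i$, and the ``backward'' limit $\d(f\circ F_{t'}^t)\to\d f$ by combining the two-sided bound on $|\d F_t^{t'}|$ coming from \eqref{eq:estdFt3} with a polarization argument that also feeds on the forward limit — and then assembles joint continuity via the group property. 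The machinery you invoke (closure of $\d$ on bounded subsets of $H_{\phi,R}$, Hille, the identity $\d(f\circ F_t^s)-\d(f\circ F_t^{s_0})=\int_{s_0}^s\d[\d f(w_r)\circ F_t^r]\,\d r$) is exactly the content of the later Proposition \ref{prop:mainreg}, whose proof \emph{uses} Lemma \ref{lem:contd}: in particular, the Borel measurability of $r\mapsto\d(\d f(w_r)\circ F_t^r)\in L^0(T^*\X)$, which is hypothesis (i) of Theorem \ref{thm:HilleH}, is derived there from the continuity statement you are trying to prove. If you want to run your argument you must supply an independent measurability proof, otherwise the reasoning is circular. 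A related technical point: your uniform $H_{\phi,R}$ bound for the family $\{f\circ F_{t_n}^{s_n}\}$ with \emph{varying} starting times $t_n$ needs a single weight $\phi$ dominating all the $\bar g_{t_n}$; \eqref{eq:estimate4_factor} gives $\bar g_t\in L^2_{loc}$ separately for each $t$ with no uniformity, and Lemma \ref{prop:fF_tbelongstoHphi} is only stated for the fixed starting time $0$, so ``a suitable $\phi$ majorising the relevant $\bar g_t$'s'' is not available as stated.

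The more serious gap is the last step. You claim joint continuity follows because ``$\d F_t^{t_0}$ tends to the identity operator as $t\to t_0$ by \eqref{eq:estdFt3}, since the excess factor vanishes in $L^0$.'' But \eqref{eq:estdFt3} controls only the pointwise \emph{norm} $|\d F_t^{t_0}|$: an operator whose pointwise norm is $\le 1+o(1)$ — even an exact pointwise isometry — need not be close to the identity (think of a rotation). Establishing $\d F_{t'}^t(v)\to v$, equivalently $\lim_{t'\to t}\d(g\circ F_{t'}^{t})=\d g$ for Lusin--Lipschitz $g$, is precisely the hard part of the paper's proof; it requires the lower bound $|\d F_t^{t'}|^{-2}\ge(1-G_t\hat g_t^{t'})^2$ in addition to the upper bound, together with the polarization identity applied to $|\d(f\circ F_{t'}^t+h)|^2$ and the already-proved forward limit. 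Your $s$-continuity at fixed $t$ (even granting the Hille identity) only gives $\lim_{s'\to s}\d(g\circ F_s^{s'})=\d g$, where the \emph{evaluation} time varies; in the joint-continuity decomposition the term $\d\big((f\circ F_t^s)\circ F_{t'}^t\big)-\d(f\circ F_t^s)$ has the \emph{starting} time varying, which is a different limit and is not covered. One could in principle also extract it from a Hille-type identity with base point $t'$ (estimating $\int_{t\wedge t'}^{t\vee t'}|\d(\d f(w_r)\circ F_{t'}^r)|\,\d r\to0$ uniformly via \eqref{eq:piuvolte} and Lemma \ref{le:unifcont}), but you would then face the measurability and uniform-weight issues above for every $t'$ simultaneously; as written, this step of your proof is missing.
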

\begin{proof}  We start claiming that for any $f$ Lusin--Lipschitz we have
\begin{equation}
\label{eq:nuovoclaim}
\lim_{s'\to s}\d (f\circ F_s^{s'})(v)=\d f(v)=\lim_{t'\to t}\d (f\circ F_{t'}^{t})(v),\qquad in\ L^0(\X),\ \forall v\in L^0(T\X),\ \forall t,s\in[0,1].
\end{equation}
We start with the first limit and notice that by definition of differential for Lusin--Lipschitz maps we have $\d((\nchi_Ef)\circ F_s^{s'})=\nchi_E\circ F_s^{s'}\,\d(f\circ F_s^{s'})$, thus taking into account \eqref{eq:contl0} and using the fact that the flow is the identity outside a bounded set, it is easy to see that the first in \eqref{eq:nuovoclaim} will follow in the general case if we prove it just for $f$ Lipschitz. Thus let this be the case and recall from \eqref{eq:unifLip2} that there is  a Borel partition $(E_i)$ of $\mm$-a.a.\ $\X$ made of bounded sets such that $\sup_{s'\in[0,1]}\Lip(F_s^{s'}\restr {E_i})<\infty$ for every $i\in\N$. Thus    $\sup_{s'\in[0,1]}\Lip((f\circ F_s^{s'})\restr {E_i})<\infty$ and by $L^0$-linearity and continuity, the first in \eqref{eq:nuovoclaim} will follow if we show that  for any $i\in\N$ we have $ \nchi_{E_i}\d(f\circ F_s^{s'})\weakto \d f$ in the weak topology of $L^2(T^*\X)$  as $s'\to s$.

Let  $(s'_n)\subset[0,1]$ be converging to $s$ and, for every $n\in\N\cup\{\infty\}$, let $g_n$ be uniformly Lipschitz with uniformly bounded support such that $g_n\restr {E_i}=(f\circ F_{s}^{s'_n})\restr {E_i}$ (it is easy to see that they can be found). Then $(g_n)$ is a bounded sequence in $W^{1,2}(\X)$, hence up to pass to a non-relabeled subsequence we can assume that it weakly converges to some $g\in W^{1,2}(\X)$. Clearly, $g=f$ on ${E_i}$, thus from the locality of the differential we see that
\[
\nchi_{E_i} \d (f\circ F_{s}^{s'_n})=\nchi_{E_i} \d g_n\stackrel{L^2(T^*\X)}\weakto \nchi_{E_i}\d g=\nchi_{E_i} \d  f.
\]
As this result does not depend on the subsequence chosen, the first in \eqref{eq:nuovoclaim} follows. 

We turn to the second and start noticing that arguing as for Lemma \ref{le:contcont} above it is sufficient to prove that
\begin{equation}
\label{eq:st11}
\d(f\circ F_{t'}^t)(v)\circ F_t^{t'}\to \d f(v)\qquad in \ L^0(\X),\ \forall v\in L^0(T\X),\ \forall t\in[0,1].
\end{equation}
Then taking into account the uniform bound
\[
|\d(f\circ F_{t'}^t)(v)\circ F_t^{t'}|\stackrel{\eqref{eq:techn_comp_cl}}\leq |v|\circ F_t^{t'}|\d f||\d F_{t'}^t|\circ F_t^{t'}\stackrel{\eqref{eq:estdFt4}}\leq  G_t |v|\circ F_t^{t'}|\d f|,
\]
 the fact that $|v|\circ F_t^{t'}\to |v|$ in $L^0(\X)$ as $t'\to t$ (recall \eqref{eq:contl0}), and the density of differentials of Lusin--Lipschitz functions in $L^0(T^*\X)$ (already noticed in the proof of Theorem \ref{thm:diff_LL}), we see that to prove \eqref{eq:st11}, and thus the second in \eqref{eq:nuovoclaim}, it is sufficient to prove that
\begin{equation}
\label{eq:st12}
\la \d(f\circ F_{t'}^t),\d h\ra \circ F_t^{t'}\to \la \d f,\d h\ra \qquad in\ L^0(\X),\ \forall h:\X\to\R\text{ Lusin--Lipschitz},\ \forall t\in[0,1].
\end{equation}
Now we put for brevity $\hat g_t^{t'}:=\int_{t\vee t'}^{t\wedge t'} g_r\circ F_t^r\,\d r\in L^0(\X)$ and notice that, rather trivially, we have
\begin{equation}
\label{eq:ghat}
\lim_{t'\to t}\hat g_t^{t'}= 0,\qquad in\ L^0(\X)
\end{equation}
(as for any sequence $t_n'\to t$ we have $\mm$-a.e.\ convergence).
The fact that $(1+a)^{-2} \geq (1-a)^2$ for any $a\in\R$ grants, together with \eqref{eq:estdFt3}, that
\begin{equation}
\label{eq:estbelow}
-|\d F_t^{t'}|^{-2} \le -(1+G_t \hat g_t^{t'})^{-2} \le -(1-G_t \hat g_t^{t'})^{2}.
\end{equation}
Also notice that  \eqref{eq:techn_comp_cl} with $f\circ F_{t'}^t$ in place of $f$ and $F_t^{t'}$ in place of $\varphi$ gives $|\d (f\circ F_{t'}^t)|\circ F_t^{t'}\geq |\d f| |\d F_t^{t'}|^{-1}$.
Similarly, one can show that $|\d h|\circ F_t^{t'}\geq |\d(h\circ F_t^{t'})| |\d F_t^{t'}|^{-1}$.
Using these bound in conjunction with \eqref{eq:techn_comp_cl} gives
\[
\begin{split}
\la \d(f\circ F_{t'}^t),\d h\ra  \circ F_t^{t'}&=\tfrac12 \left[ |\d(f\circ F_{t'}^t+h)|^2 \circ F_t^{t'}-(|\d(f\circ F_{t'}^t)|^2+|\d h|^2)\circ F_t^{t'} \right]\\
&\leq \tfrac12 \left[|\d(f+h\circ F^{t'}_t)|^2\,|\d  F_{t'}^t|^2\circ F_t^{t'}-|\d F_t^{t'}|^{-2}(|\d f |^2+|\d (h\circ F_{t}^{t'})|^2)\right]\\
\text{(by \eqref{eq:estdFt3} and \eqref{eq:estbelow})}\qquad&\leq \tfrac12\left[\big(1+G_t\hat g_t^{t'}\big)^2 |\d(f+h\circ F^{t'}_t)|^2-(1-G_t \hat g_t^{t'})^{2}(|\d f |^2+|\d (h\circ F_{t}^{t'})|^2)\right]\\
&= \big(1+G_t\hat g_t^{t'}\big)^2\la \d f,\d(h\circ F^{t'}_t)\ra+2 ( |\d f|^2+ |\d (h \circ F_t^{t'})|^2 )\,G_t \hat g_t^{t'}\\
&\leq \big(1+G_t\hat g_t^{t'}\big)^2\la \d f,\d(h\circ F^{t'}_t)\ra+2 ( |\d f|^2+ |\d h|^2 \circ F_t^{t'} G_t^2 )\,G_t \hat g_t^{t'}.
\end{split}
\]
Now notice that since we already proved the first in \eqref{eq:nuovoclaim}, as $t'\to t$ we have that  $\la \d f,\d(h\circ F^{t'}_t)\ra\to \la\d f,\d h\ra$ in $L^0(\X)$. Moreover, from \eqref{eq:contl0}, we have that $|\d h|^2 \circ F_t^{t'} \to |\d h|^2$ in $L^0(\X)$ as $t' \to t$. Thus taking \eqref{eq:ghat} into account  we just proved that
\[
\big(\la \d(f\circ F_{t'}^t),\d h\ra  \circ F_t^{t'}- \la\d f,\d h\ra\big)^+\to0\qquad in\ L^0(\X).
\]
A very similar argument gives that the negative part of the same quantity as above goes to 0 in  $ L^0(\X)$, hence the claim \eqref{eq:st12}, and thus the second in \eqref{eq:nuovoclaim}, follows.

Now we claim that
\begin{equation}
\label{eq:nuovoclaim2}
\lim_{s'\to s}\d (f\circ F_s^{s'})=\d f=\lim_{t'\to t}\d (f\circ F_{t'}^{t}),\qquad in\ L^0(T^*\X),\  \forall t,s\in[0,1].
\end{equation}
Indeed
\[
\begin{split}
|\d(f\circ F_{t'}^{t})-\d f|^2\circ F_t^{t'}&\leq |\d f|^2 |\d F_{t'}^t|^2\circ F_t^{t'}+|\d f|^2\circ F_t^{t'}-2\la \d (f\circ F_{t'}^t),\d f\ra\circ F_t^{t'}\\
\text{(by \eqref{eq:estdFt3})}\qquad&\leq  |\d f|^2 \big(1+G_t\,\hat g_t^{t'}\big)^2+|\d f|^2\circ F_t^{t'}-2\la \d (f\circ F_{t'}^t),\d f\ra\circ F_t^{t'}\\
\text{(by Lemma \ref{le:contcont}, \eqref{eq:st12}, and \eqref{eq:ghat})}\qquad&\qquad \to\quad  |\d f|^2+|\d f|^2-2|\d f|^2=0,
\end{split}
\]
the convergence being in $L^0(\X)$ as $t'\to t$. The second in \eqref{eq:nuovoclaim2} follows using again Lemma \ref{le:contcont} to conclude that $|\d(f\circ F_{t'}^{t})-\d f|^2\to 0$ in $L^0(\X)$. The first in \eqref{eq:nuovoclaim2} is proved analogously.

Now observe that from the group property \eqref{eq:groupRLF} and the chain rule \eqref{eq:chain_rule_diff_LL} we get
\[
\begin{split}
|\d(f\circ F_{t'}^{s'})-\d (f\circ F_t^s)|&\leq|\d(f\circ F_{s}^{s'}\circ F_{t'}^{s})-\d (f\circ F_{t'}^s)|+|\d(f\circ F_{t}^{s}\circ F_{t'}^{t})-\d (f\circ F_t^s)|\\
&\leq |\d(f\circ F_{s}^{s'})-\d f|\circ F_{t'}^s\,|\d F_{t'}^s|+|\d(f\circ F_{t}^{s}\circ F_{t'}^{t})-\d (f\circ F_t^s)|\\
\text{(by \eqref{eq:estdFt4})}\qquad&\leq G_{t'}\,|\d(f\circ F_{s}^{s'})-\d f|\circ F_{t'}^s+|\d(f\circ F_{t}^{s}\circ F_{t'}^{t})-\d (f\circ F_t^s)|.
\end{split}
\]
Hence
\[
\begin{split}
\sfd_{L^0(T^*\X)}(\d(f\circ F_{t'}^{s'}),\d (f\circ F_t^s))&=\sfd_{L^0}(|\d(f\circ F_{t'}^{s'})-\d (f\circ F_t^s)|,0)\\
&\leq \sfd_{L^0}( G_{t'}\,|\d(f\circ F_{s}^{s'})-\d f|\circ F_{t'}^s,0)\\
&\qquad\qquad+\sfd_{L^0}(|\d(f\circ F_{t}^{s}\circ F_{t'}^{t})-\d (f\circ F_t^s)|,0)\\
\text{(by \eqref{eq:unifsuperlgt}, Lemma \ref{le:unifcont} and \eqref{eq:equil0})}\qquad&\leq \Omega\big(\sfd_{L^0}( |\d(f\circ F_{s}^{s'})-\d f|,0)\big)\\
&\qquad\qquad+\sfd_{L^0}(|\d(f\circ F_{t}^{s}\circ F_{t'}^{t})-\d (f\circ F_t^s)|,0)\\
&=  \Omega\big(\sfd_{L^0(T^*\X)}( \d(f\circ F_{s}^{s'}),\d f)\big)\\
&\qquad\qquad+\sfd_{L^0(T^*\X)}(\d((f\circ F_{t}^{s})\circ F_{t'}^{t}),\d (f\circ F_t^s))
\end{split}
\]
and the conclusion follows from  \eqref{eq:nuovoclaim2} and the fact that $f\circ F_t^s$ is Lusin--Lipschitz.
\end{proof}
We then have the following basic regularity result:
\begin{proposition}\label{prop:dFtot2}
Let $(v_t)\in C([0,1],L^0(T\X))$. Then $t\mapsto V_t:=\d F_0^t(v_t)\in L^0(T\X)$ is also continuous. Moreover, the assignment $(v_t)\mapsto (V_t)$ from $C([0,1],L^0(T\X))$ to itself is continuous, invertible, with continuous inverse.

Similarly, if $(v_t)\in L^0([0,1],L^0(T\X))$ the map $t\mapsto V_t:=\d F_0^t(v_t)\in L^0(T\X)$ is an element of $L^0([0,1],L^0(T\X))$ and the assignment $(v_t)\mapsto (V_t)$ from $L^0([0,1],L^0(T\X))$ to itself is continuous, invertible, with continuous inverse.
\end{proposition}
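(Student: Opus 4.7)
The strategy splits joint continuity into two pieces: equicontinuity of the family $\{\d F_0^t\}_{t\in[0,1]}$ on $L^0(T\X)$, and continuity in $t$ at each fixed $v\in L^0(T\X)$. The equicontinuity is quick: from \eqref{eq:pointwise_norm_estimate} post-composed with $F_t^0$ we get $|\d F_0^t(v_1-v_2)|\leq H_t\cdot|v_1-v_2|\circ F_t^0$ with $H_t:=|\d F_0^t|\circ F_t^0$, and \eqref{eq:estdFt3} combined with the group property \eqref{eq:groupRLF} shows that $\{H_t\}_{t\in[0,1]}$ satisfies the uniform bound \eqref{eq:perunifcont3}; thus Lemma \ref{le:unifcont} together with the bounded compression \eqref{eq:equil0} produces a single modulus of continuity $\Omega$, independent of $t$, such that
\[
\sfd_{L^0(T\X)}\big(\d F_0^t(v_1),\d F_0^t(v_2)\big)\leq\Omega\big(\sfd_{L^0(T\X)}(v_1,v_2)\big)\quad\text{for all }t\in[0,1].
\]

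The core step is continuity in $t$ at fixed $v$. Since $L^0(T\X)$ is Hilbertian, I will prove weak and norm continuity separately and combine them via the polarization identity $|V-V'|^2=|V|^2+|V'|^2-2\la V,V'\ra$. For weak continuity at $v=\nabla g$ tested against $\nabla h$ (with $g,h$ Lusin--Lipschitz), \eqref{eq:char_diff_LL} yields
\[
\la\d F_0^t(\nabla g),\nabla h\ra=\d(h\circ F_0^t)(\nabla g)\circ F_t^0,
\]
and the right-hand side is continuous in $t$: $\d(h\circ F_0^t)$ is continuous in $L^0(T^*\X)$ by Lemma \ref{lem:contd}, pairing with the fixed $\nabla g$ is a continuous linear map $L^0(T^*\X)\to L^0(\X)$, and composition with $F_t^0$ preserves $L^0$-continuity thanks to \eqref{eq:contl0} and \eqref{eq:equil0}. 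Density of such gradients in $L^0(T\X)$, the equicontinuity just established, and the uniform bound $|\la\d F_0^t(v),z\ra|\leq H_t\,|v|\circ F_t^0\,|z|$ then propagate weak continuity to all $v$ and all test vectors $z$.

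For norm continuity, the chain rule \eqref{eq:chain_rule_diff_LL} combined with \eqref{eq:groupRLF} gives $\d F_0^{t'}=\d F_t^{t'}\circ\d F_0^t$, so the claim reduces to showing $|\d F_t^{t'}(V)|\to|V|$ in $L^0(\X)$ as $t'\to t$, where $V:=\d F_0^t(v)$. The upper bound follows from \eqref{eq:pointwise_norm_estimate} together with \eqref{eq:estdFt3}:
\[
|\d F_t^{t'}(V)|\leq\big(1+R_{t,t'}\big)\cdot|V|\circ F_{t'}^t,\qquad R_{t,t'}:=G_{t'}\int_{t\wedge t'}^{t\vee t'}g_r\circ F_{t'}^r\,\d r,
\]
where $R_{t,t'}\to 0$ in $L^0(\X)$ as $t'\to t$ (the integration interval shrinks and $\{G_{t'}\}$ is uniformly bounded by \eqref{eq:unifsuperlgt}), and $|V|\circ F_{t'}^t\to|V|$ by \eqref{eq:contl0}. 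The matching lower bound is obtained by applying \eqref{eq:pointwise_norm_estimate} and \eqref{eq:estdFt3} to the inverse identity $V=\d F_{t'}^t(\d F_t^{t'}(V))$, yielding $|\d F_t^{t'}(V)|\geq|V|\circ F_{t'}^t/(1+\tilde R_{t,t'})$ with $\tilde R_{t,t'}\to 0$ in $L^0(\X)$. Substituting into the polarization identity with $V:=\d F_0^t(v)$ and $V':=\d F_0^{t'}(v)$, and using the weak continuity above, then yields $|\d F_0^{t'}(v)-\d F_0^t(v)|\to 0$ in $L^0(\X)$.

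Joint continuity on $C([0,1],L^0(T\X))$ follows from
\[
\sfd_{L^0(T\X)}(V_{t'},V_t)\leq\sfd_{L^0(T\X)}\big(\d F_0^{t'}(v_{t'}),\d F_0^{t'}(v_t)\big)+\sfd_{L^0(T\X)}\big(\d F_0^{t'}(v_t),\d F_0^t(v_t)\big),
\]
with the two addends controlled by the uniform modulus $\Omega$ and by pointwise-in-$t$ continuity. The inverse is handled symmetrically: \eqref{eq:chain_rule_diff_LL} and \eqref{eq:groupRLF} give $\d F_t^0\circ\d F_0^t=\Id$, so $(V_t)\mapsto(\d F_t^0(V_t))$ is the inverse assignment and the identical argument applies. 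Finally, the $L^0([0,1],L^0(T\X))$ version reduces to the continuous case: for each fixed $\bar v\in L^0(T\X)$ the map $t\mapsto\d F_0^t(\bar v)$ is continuous hence Borel, so simple curves produce Borel $V_\cdot$, approximation yields Borel measurability in general, and continuity of the assignment follows from dominated convergence together with the uniform modulus $\Omega$. The main obstacle is the norm-continuity step, because weak continuity and equicontinuity both reduce cleanly to Lemma \ref{lem:contd} and \eqref{eq:pointwise_norm_estimate}, whereas controlling $|\d F_0^t(v)|$ in $t$ requires pairing the upper estimate from \eqref{eq:estdFt3} with the matching lower estimate obtained by applying the same reasoning to the inverse flow.
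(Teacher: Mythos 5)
Your proposal is correct and follows essentially the same route as the paper: equicontinuity of $\{\d F_0^t\}$ via \eqref{eq:pointwise_norm_estimate}, \eqref{eq:estdFt4} and Lemma \ref{le:unifcont}; weak continuity in $t$ via Lemma \ref{lem:contd}, \eqref{eq:char_diff_LL} and density; and strong continuity by the ``weak convergence plus convergence of pointwise norms'' principle, followed by the same triangle-inequality splitting, chain-rule inversion, and density extension to the $L^0$ case. The only cosmetic difference is that you establish a two-sided bound on $|\d F_t^{t'}(V)|$ and invoke polarization, whereas the paper expands $|\d F_{t'}^t(v)-v|^2$ directly and only needs the upper bound from \eqref{eq:estdFt3}.
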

\begin{proof}\ \\
\noindent{\sc Step 1.} We claim that
\begin{equation}
\label{eq:constcont}
\forall v\in L^0(T\X)\text{ the map }[0,1]^2\ni (t,s)\mapsto \d F_t^s(v)\in L^0(T\X)\text{ is continuous}
\end{equation}
and we shall prove this by arguing as in Lemma \ref{lem:contd} above.  Fix $v\in L^0(T\X)$. From Lemma \ref{lem:contd} we see that for $f$ Lusin--Lipschitz  the map $[0,1]^2 \ni(t,s)\mapsto \d (f\circ F_t^s)(v)= \d f(\d F_t^s(v))\circ F_t^s\in L^0(\X)$ is continuous. Taking into account Lemma \ref{le:contcont} we deduce that also $[0,1]^2 \ni (t,s)\mapsto \d f(\d F_t^s(v))\in L^0(\X)$ is continuous.  We now claim that
\begin{equation}
\label{eq:costwcont}
[0,1]^2\ni (t,s)\mapsto\la \d F_t^s(v),z\ra\in L^0(\X)\quad\text{ is continuous for every $z\in L^0(T\X)$}
\end{equation}
 and since differentials of Lusin--Lipschitz maps are dense in $L^0(T^*\X)$, this follows by what already established and the  uniform estimate
\[
\begin{split}
\sfd_{L^0}\big(\la \d F_t^s(v),z\ra\,,\,\la \d F_t^s(v),z'\ra\big)&\leq\sfd_{L^0}\big(|\d F_t^s(v)||z-z'|,0\big)\\
\text{(by \eqref{eq:pointwise_norm_estimate},\eqref{eq:estdFt4})}\qquad&\leq\sfd_{L^0}\big(G_t|v|\circ F_s^t|z-z'|,0\big)\\
\text{(by Lemma \ref{le:unifcont} and \eqref{eq:unifsuperlgt})}\qquad&\leq\Omega\big(\sfd_{L^0}\big(|v|\circ F_s^t|z-z'|,0\big)\big)\\
\text{(by Lemma \ref{le:unifcont})}\qquad&\leq\Omega\big(\sfd_{L^0}\big(|z-z'|,0\big)\big),
\end{split}
\]
valid for any $t,s\in[0,1]$ (the resulting modulus of continuity will depend on $|v|$, but this is not an issue to get \eqref{eq:costwcont} - notice also that in applying Lemma \ref{le:unifcont} in the last step we used the fact that $\{|v|\circ F_s^t>c\}=(F_s^t)^{-1}(\{|v|>c\})$ and \eqref{eq:ftmmp}). Now we claim that
\begin{equation}
\label{eq:contttp}
\lim_{s'\to s}\d F_{s}^{s'}(v)=\lim_{t'\to t}\d F_{t'}^t(v)=v\qquad in\ L^0(T\X).
\end{equation}
For $t,t'\in[0,1]$ we put, as in Lemma \ref{lem:contd} above, $\hat g_t^{t'}:=\int_{t\vee t'}^{t\wedge t'} g_r\circ F_t^r\,\d r\in L^0(\X)$ and recall that \eqref{eq:ghat} holds.  Thus 
\[
\begin{split}
|\d F_{t'}^t(v)-v|^2&\leq\big(|v|\,|\d F_{t'}^t|\big)^2\circ  F_t^{t'}+|v|^2-2\la v,\d F_{t'}^t(v)\ra\\
\text{(by \eqref{eq:estdFt3})}\qquad&\leq |v|^2\circ  F_t^{t'}\,(1+G_t\hat g_t^{t'})^2+|v|^2-2\la v,\d F_{t'}^t(v)\ra\\
\text{(by Lemma \ref{le:contcont} and \eqref{eq:ghat}, \eqref{eq:costwcont})}\qquad&\qquad \to\quad  |v|^2+ |v|^2-2|v|^2=0.
\end{split}
\]
This proves the second in \eqref{eq:contttp}. The first follows by similar arguments.

Now recall the chain rule \eqref{eq:chain_rule_diff_LL} to get that  
\[
\begin{split}
|\d F_{t'}^{s'}(v)-\d F_t^s(v)|&\leq|\d F_{t}^{s'}(\d F_{t'}^t(v)-v)|+|\d F_{s}^{s'}(\d F_t^s(v))-\d F_t^s(v)|\\
\text{(by \eqref{eq:pointwise_norm_estimate},\eqref{eq:estdFt4})}\qquad&\leq  \big(G_t|\d F_{t'}^t(v)-v|\big)\circ F_{s'}^t+|\d F_{s}^{s'}(\d F_t^s(v))-\d F_t^s(v)|.
\end{split}
\]
Therefore
\[
\begin{split}
\sfd_{L^0(T^*\X)}(\d F_{t'}^{s'}(v),\d F_t^s(v))&=\sfd_{L^0}(|\d F_{t'}^{s'}(v)-\d F_t^s(v)|,0)\\
&\leq \sfd_{L^0}\big(\big(G_t|\d F_{t'}^t(v)-v|\big)\circ F_{s'}^t,0\big)+\sfd_{L^0}\big(|\d F_{s}^{s'}(\d F_t^s(v))-\d F_t^s(v)|,0\big)\\
\text{(by \eqref{eq:unifsuperlgt}, Lemma \ref{le:unifcont} and \eqref{eq:equil0})}\quad&\leq \Omega\big(\sfd_{L^0}(|\d F_{t'}^t(v)-v|,0)\big)+\sfd_{L^0}\big(|\d F_{s}^{s'}(\d F_t^s(v))-\d F_t^s(v)|,0\big)\\
&= \Omega\big(\sfd_{L^0(T\X)}(\d F_{t'}^t(v),v)\big)+\sfd_{L^0(T\X)}\big(\d F_{s}^{s'}(\d F_t^s(v)),\d F_t^s(v)\big).
\end{split}
\]
Thus our claim \eqref{eq:constcont} follows from \eqref{eq:contttp}.

\noindent{\sc Step 2.} Let $(v_t)\in C([0,1],L^0(T\X))$ and notice that 
\[
\begin{split}
|\d F_0^s(v_s)-\d F_0^t(v_t)|&\leq |\d F_0^s(v_s-v_t)|+|\d F_0^s(v_t)-\d F_0^t(v_t)|\\
\text{(by  \eqref{eq:estdFt4})}\qquad&\leq (G|v_s-v_t|)\circ F_{s}^0+ |\d F_0^s(v_t)-\d F_0^t(v_t)|,\qquad\mm-a.e.
\end{split}
\]
for every $t,s\in[0,1]$. Thus, as before, \eqref{eq:equil0}, \eqref{eq:unifO}, and the continuity of $(v_t)$ yield $(G|v_s-v_t|)\circ F_{s}^0\to 0$ in $L^0(\X)$ as $s\to t$, hence recalling \eqref{eq:constcont} the continuity of $t\mapsto \d F_0^t(v_t)\in L^0(T\X)$ follows.

The continuity of the assignment $(v_t)\mapsto (\d F_0^t(v_t))$ will follow if we show that for some modulus of continuity  $\Omega$ we have
\begin{equation}
\label{eq:unifcontdF}
\sfd_{L^0(T\X)}\big(\d F_0^t(v),\d F_0^t(z)\big)\leq\Omega\big(\sfd_{L^0(T\X)}(v,z)\big),\qquad\forall t\in[0,1],\ v,z\in L^0(T\X).
\end{equation}
To see this,  notice that \eqref{eq:equil0} and \eqref{eq:estdFt4} give
\[
\begin{split}
\sfd_{L^0(T\X)}(\d F_0^t(v),\d F_0^t(z))&=\sfd_{L^0}(|\d F_0^t(v-z)|,0)\leq   \sfd_{L^0}((G|v-z|)\circ F_t^0,0)\\
\text{(by \eqref{eq:equil0},\eqref{eq:unifO})}\qquad&\leq \Omega\big( \sfd_{L^0}(|v-z|,0)\big)= \Omega\big( \sfd_{L^0(T\X)}(v,z)\big).
\end{split}
\]

Observe now that the estimate \eqref{eq:unifcontdF} also tells that the map $(v_t)\mapsto (\d F_0^t(v_t))$ is uniformly continuous as a map from $C([0,1],L^0(T\X))$ with the $L^0([0,1],L^0(T\X))$-topology to $L^0([0,1],L^0(T\X))$. By the density of $C([0,1],L^0(T\X))$ in $L^0([0,1],L^0(T\X))$ we deduce that $(v_t)\mapsto (\d F_0^t(v_t))$ can be (uniquely) extended to a linear and continuous map from $L^0([0,1],L^0(T\X))$ to itself, and again \eqref{eq:unifcontdF} also ensures that such extension is still given by $(v_t)\mapsto (\d F_0^t(v_t))$. 

Using the result \eqref{eq:constcont} obtained in Step 1, we can prove that $(V_t)\mapsto (\d F_t^0(V_t))$ has the same continuity properties we established for $(v_t)\mapsto (\d F_0^t(v_t))$ and since these two maps are one the inverse of the other (by the chain rule \eqref{eq:chain_rule_diff_LL}), the proof is finished.
\end{proof}

The source and target spaces of the map $(v_t)\mapsto (\d F_0^t(v_t))$, albeit formally equal,  have different roles in the theory and will be treated differently: we shall think at the source space as a family of vector fields defined at fixed base points, i.e.\ $(v_t)$ in the source space will be thought of as a collection of maps of the form $t\mapsto v_t(x)``\in T_x\X"$. On the other hand, we shall think at elements $(V_t)$ of the target space as a family of vector fields defined along flow lines, i.e.\ $(V_t)$ in the target space will  be thought of as a collection of maps of the form $t\mapsto V_t(F_0^t(x))``\in T_{F_0^t(x)}\X"$.

This distinction is particularly relevant in the case of the spaces $L^0$ and  to help keeping this in mind we shall denote the source space as $ L^0_{fix}([0,1],L^0(T\X))$ (in  short $L^0_{fix}$) and the target as  $ L^0_{var}([0,1],L^0(T\X))$ (in short $L^0_{var}$). The spaces $L^0_{fix}$ and $L^0_{var}$ are equipped with two different structures as modules over $L^0(\X)$:
\begin{equation}
\label{eq:l0fixprod}
\text{for $(v_t)\in L^0_{fix}$ and $f\in L^0(\X)$ we define $f(v_t)\in L^0_{fix}$ as $t\mapsto fv_t$}
\end{equation}
and
\begin{equation}
\label{eq:l0varprod}
\text{for $(V_t)\in L^0_{var}$ and $f\in L^0(\X)$ we define $f\times (V_t)\in L^0_{var}$ as $t\mapsto f\circ F_{t}^0V_t$.}
\end{equation}
Notice that by \eqref{eq:l0lindf} we see that these two products are conjugated via $\d F$ (here intended as the map sending $t\mapsto v_t$ to $t\mapsto \d F_0^t(v_t)$), i.e.
\[
 \d F\big(f(v_t)\big)=f\times \d F\big((v_t)\big).
\]
Notice also that $L^0_{fix}$ and $L^0_{var}$ are \emph{not} $L^0(\X)$-normed modules, as there is not really a $L^0(\X)$-valued pointwise norm on them (they could be endowed with the structure of $L^0([0,1]\times\X)$-normed modules, but we are not interested in doing this).

\bigskip

In what follows we shall typically use lowercase letters $(v_t)$ to denote an element of $L^0_{fix}$ and uppercase ones $(V_t)$ for elements of $L^0_{var}$. Also, typically $(v_t)$  and $(V_t)$ are related by
\begin{equation}
\label{eq:vtVt}
V_t=\d F_0^t(v_t)\quad \qquad\qquad\text{ and equivalently }\qquad\qquad v_t=\d F_{t}^0(V_t).
\end{equation}

We are now ready to discuss time integrability/regularity for vector fields in $L^0_{fix}$ and the related concept of derivative in time. In fact, given the discussion made in Section \ref{se:intmod}, and in particular Definition \ref{def:spacesv}, the following definitions are quite natural:
\begin{definition}[Some spaces of vectors at `fixed points']\label{def:fixedspace}  We shall denote by
\begin{itemize}
\item[i)] $L^p_{fix}([0,1],L^0(T\X))\subset L^0_{fix}([0,1],L^0(T\X))$ (or simply  $L^p_{fix}$), $p\in[1,\infty]$, the space $L^p_\H$ for $\H:=L^0(T\X)$.
\item[ii)] $W^{1,2}_{fix}([0,1],L^0(T\X))\subset L^0_{fix}([0,1],L^0(T\X))$ (or simply  $W^{1,2}_{fix}$) the space $W^{1,2}_\H$  for $\H:=L^0(T\X)$.
\item[iii)] $AC^2_{fix}([0,1],L^0(T\X))\subset C([0,1],L^0(T\X))$ (or simply  $AC^{2}_{fix}$) the space $AC^2_\H$ for  $\H:=L^0(T\X)$.
\end{itemize}
\end{definition}
Recall from Section \ref{se:intmod} that the elements $(v_t)$ of $W^{1,2}_{fix}$ come with a natural notion of derivative $(\dot v_t)\in L^2_{fix}$, that the space $L^p_{fix}$ can be characterized as the subspace of $L^0_{fix}$ of those $(v_t)$'s for which the pointwise norm
\[
|(v_t)|_{L^p_{fix}}(x):=|(|v_t|)|_{L^p}(x)=\||v_\cdot|(x)\|_{L^p(0,1)}
\]
is finite $\mm$-a.e., and that $W^{1,2}_{fix}$ comes with the pointwise norm
\[
|(v_t)|_{W^{1,2}_{fix}}^2:=|(v_t)|_{L^2_{fix}}^2+|(\dot v_t)|_{L^2_{fix}}^2.
\]
Also, $L^p_{fix},W^{1,2}_{fix},AC^2_{fix}$ are all complete w.r.t.\ the corresponding distances
\[
\begin{split}
\sfd_{L^p_{fix}}\big((v_t),(z_t)\big)&:=\sfd_{L^0}(|(v_t-z_t)|_{L^p_{fix}},0),\\
\sfd_{W^{1,2}_{fix}}\big((v_t),(z_t)\big)&:=\sfd_{L^0}(|(v_t-z_t)|_{W^{1,2}_{fix}},0),\\
\sfd_{AC^2_{fix}}\big((v_t),(z_t)\big)&:=\sfd_{L^0}(|(v_t-z_t)|_{W^{1,2}_{fix}},0)+\sup_{t\in[0,1]}\sfd_{L^0(T\X)}(v_t,z_t).
\end{split}
\]
Finally, it is clear that the product defined in \eqref{eq:l0fixprod} gives $L^p_{fix},W^{1,2}_{fix},AC^2_{fix}$ the structure of module over $L^0(\X)$ and that with the pointwise norms defined above the spaces $L^p_{fix}$  and $W^{1,2}_{fix}$ are $L^0(\X)$-normed modules.

\bigskip

We now turn to the corresponding notions for vector fields in $L^0_{var}$. In order to justify the definitions we are going to give in a moment, let us illustrate the situation in the case of  a smooth manifold  $M$ and flow of a smooth vector field. In this case, a vector field $(v_t)\in L^0_{fix}$ belongs to $L^p_{fix}$ if and only if for a.e.\ $x$ the curve  $t\mapsto v_t(x)\in T_xM\sim \R^d$ is in $L^p([0,1],\R^d)$. Equivalently, this is the same as to ask that $t\mapsto |v_t|(x)\in\R$ is in  $L^p(0,1)$. 

Now, we have said that we want to think of elements $(V_t)\in L^0_{var}$ as collections of vector fields defined along the flow lines, i.e.\ as the collection, for a.e.\ $x\in M$, of the vector fields $t\mapsto V_t(F_0^t(x))\in T_{F_0^t(x)}M$. Therefore, and by analogy with the `fixed' case, their natural pointwise $L^p$-norm should be given by  the $L^p$ norm of $t\mapsto |V_t|(F_t(x))$.

For the case of Sobolev vector fields, the derivative $\dot v_t(x)$, which is computed in the fixed tangent space $T_xM$, should be replaced by the covariant derivative of the vector field $t\mapsto V_t(F_0^t(x))\in T_{F_t(x)}M$ along the curve $t\mapsto F_t(x)$. By direct computation, if  $(t,x)\mapsto v_t(x)$ is smooth in $t,x$ and $V_t(x):=\d F_0^t(v_t)(x)$ it is not hard to check that such covariant derivative is given by 
\[
\nabla_{\dot F_0^t(x)}V_t(F_0^t(x))=(\d F_0^t(\dot v_t)+\nabla_{V_t}w_t)(F_0^t(x)).
\]
Notice also that when dealing with vector fields defined on the whole manifold (rather than along a single flow line) one typically speaks of `convective' derivative, rather than `covariant' one (think e.g.\ to the setting of fluid dynamics).

We now turn to the actual definitions; the subsequent discussion will make clear the link with what just said.
\begin{definition}[Some spaces of vectors at `variable points']\label{def:Dt}  We shall denote by
\begin{itemize}
\item[i)] $L^p_{var}([0,1],L^0(T\X))\subset L^0_{var}([0,1],L^0(T\X))$ (or simply  $L^p_{var}$), $p\in[1,\infty]$, the space of vector fields $(V_t)$ of the form $V_t=\d F_0^t(v_t)$ for a.e.\ $t$ for some $(v_t)\in L^p_{fix}$. We also define $\mm$-a.e.\ the quantity
\[
|(V_t)|_{L^p_{var}}:=\||V_t|\circ F_0^t\|_{L^p(0,1)}.
\]
\item[ii)] $W^{1,2}_{var}([0,1],L^0(T\X))\subset L^0_{var}([0,1],L^0(T\X))$ (or simply  $W^{1,2}_{var}$) the space of vector fields $(V_t)$ of the form $V_t=\d F_0^t(v_t)$ for a.e.\ $t$ for some $(v_t)\in W^{1,2}_{fix}$. In this case  we also define  the convective derivative $(D_tV_t)\in L^0_{var}([0,1],L^0(T\X))$ as
\begin{equation}
\label{eq:defDt}
D_tV_t:=\d F_0^t(\dot v_t)+\nabla_{V_t}w_t,\qquad a.e.\ t\in[0,1].
\end{equation}
Also, we consider the $\mm$-a.e.\ defined quantity
\[
|(V_t)|_{W^{1,2}_{var}}^2:=|(V_t)|^2_{L^2_{var}}+|(D_tV_t)|^2_{L^2_{var}}=\int_0^1|V_t|^2\circ F_0^t+|D_tV_t|^2\circ F_0^t\,\d t.
\]
\item[iii)] $AC^2_{var}([0,1],L^0(T\X))\subset C([0,1],L^0(T\X))$ (or simply  $AC^{2}_{var}$) the space of vector fields $(V_t)$ of the form $V_t=\d F_0^t(v_t)$ for every $t\in[0,1]$ for some $(v_t)\in AC^{2}_{fix}$.
\end{itemize}
\end{definition}
Notice that Proposition \ref{prop:dFtot2} above ensures that $(D_tV_t)$ is indeed an element of $L^0_{var}([0,1],L^0(T\X))$ and that if $(v_t)\in AC^2_{fix}\subset C([0,1],L^0(T\X))$, then $t\mapsto \d F_0^t(v_t)$ is also an element of $C([0,1],L^0(T\X))$, so that $AC^2_{var}$ is actually a subset of $C([0,1],L^0(T\X))$. It is also clear that the spaces defined above are vector spaces and that  $(V_t)\mapsto (D_tV_t)$ is linear.

Let us remark that by  \eqref{eq:pointwise_norm_estimate}, \eqref{eq:estdFt4} it follows that for any $v\in L^0(T\X)$ and $t\in[0,1]$, putting $V:=\d F_0^t(v)\in L^0(T\X)$, so that $v=\d F_t^0(V)$ by the group property \eqref{eq:groupRLF} and the chain rule \eqref{eq:chain_rule_diff_LL}, we have
\begin{equation}
\label{eq:normVtvt2}
|v|\leq G |V|\circ F_0^t\qquad\text{and}\qquad |V|\circ F_0^t\leq G|v|\qquad\mm-a.e.,
\end{equation}
for some non-negative function $G\in L^0(\X)$ depending only on $\X$ and $(w_t)$ (and in particular independent of $v,t$). It follows that  $(V_t)\in L^0_{var}$ belongs to $L^p_{var}$ if and only if  the quantity $|(V_t)|_{L^p_{var}}$ is finite $\mm$-a.e.. Then arguing as in the `fixed' case, it is easy to see that the distance
\[
\sfd_{L^p_{var}}\big((V_t),(Z_t)\big):=\sfd_{L^0}(|(V_t-Z_t)|_{L^p_{var}},0)
\]
is lower semicontinuous w.r.t.\ $L^0_{var}$-convergence and thus - by the completeness of $L^0_{var}$ - that it is a complete distance on $L^p_{var}$. Alternatively, completeness of $L^p_{var}$ can be established noticing that for $(v_t)$ and $(V_t)$ as in \eqref{eq:vtVt}, the uniform bound \eqref{eq:normVtvt2} gives
\begin{equation}
\label{eq:compLp}
|(v_t)|_{L^p_{fix}}\leq G|(V_t)|_{L^p_{var}}\qquad\text{ and }\qquad|(V_t)|_{L^p_{var}}\leq G|(v_t)|_{L^p_{fix}}
\end{equation}
$\mm$-a.e.. Thus for $(v_t),(z_t)\in L^p_{fix}$ and the corresponding $(V_t),(Z_t)\in L^p_{var}$ as in  \eqref{eq:vtVt} we have
\begin{equation}
\label{eq:Lpsame}
\begin{split}
\sfd_{L^p_{fix}}\big((v_t),(z_t)\big)&=\sfd_{L^0}\big(|(v_t-z_t)|_{L^p_{fix}},0\big)\\
\text{(by \eqref{eq:compLp})}\qquad&\leq \sfd_{L^0}\big(G|(V_t-Z_t)|_{L^p_{var}},0\big)\\
\text{(by \eqref{eq:unifO})}\qquad&\leq \Omega\big(\sfd_{L^0}\big(|(V_t-Z_t)|_{L^p_{var}},0\big)\big)= \Omega\big(\sfd_{L^p_{var}}\big((V_t),(Z_t)\big)\big) 
\end{split}
\end{equation}
and analogously it holds $\sfd_{L^p_{var}}\big((V_t),(Z_t)\big)\leq\Omega\big( \sfd_{L^p_{fix}}\big((v_t),(z_t)\big)\big)$. This proves that $(v^n_t)\subset L^p_{fix}$ is Cauchy if and only if the corresponding sequence $(V^n_t)\subset L^p_{var}$ is Cauchy, thus the completeness of $L^p_{var}$ follows from that of $L^p_{fix}$.

\bigskip

Analogous considerations are  in place for $W^{1,2}_{var}$, but are a bit harder to prove.  The key point is an analogue of \eqref{eq:compLp} in the Sobolev case, which is established in the following lemma:
\begin{lemma}\label{le:covtot} Let $(v_t)\in W^{1,2}_{fix}$ and $(V_t)\in W^{1,2}_{var}$ be as in \eqref{eq:vtVt}.  Then
\begin{equation}
\label{eq:covtot}
\begin{split}
|(V_t)|_{W^{1,2}_{var}}&\leq G|(v_t)|_{W^{1,2}_{fix}}\qquad\text{and}\qquad 
|(v_t)|_{W^{1,2}_{fix}}\leq G|(V_t)|_{W^{1,2}_{var}}\qquad\mm-a.e.
\end{split}
\end{equation}
and
\begin{equation}
\label{eq:Vtinfty}
|(V_t)|_{L^\infty_{var}}\leq G|(V_t)|_{W^{1,2}_{var}}\qquad\mm-a.e..
\end{equation}
\end{lemma}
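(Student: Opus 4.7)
The plan is to prove the three bounds in the order: the first inequality of \eqref{eq:covtot}, then \eqref{eq:Vtinfty}, then the second inequality of \eqref{eq:covtot}. The first inequality is a direct computation starting from \eqref{eq:defDt}: I bound pointwise $|D_t V_t|\circ F_0^t\leq |\d F_0^t(\dot v_t)|\circ F_0^t+(|V_t|\circ F_0^t)(|\nabla w_t|\circ F_0^t)\leq G|\dot v_t|+G|v_t|\,(|\nabla w_t|\circ F_0^t)$, using \eqref{eq:pointwise_norm_estimate} and \eqref{eq:estdFt4} for the first summand and \eqref{eq:normVtvt2} for the second. Squaring and integrating in $t\in[0,1]$, the cross term is bounded by $G|(v_t)|^2_{L^\infty_{fix}}\int_0^1|\nabla w_t|^2\circ F_0^t\,\d t$, which by \eqref{eq:estnw2} and the $1$D embedding \eqref{eq:perlinfty1} is $\leq G|(v_t)|^2_{W^{1,2}_{fix}}$. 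Combined with $|(V_t)|_{L^2_{var}}\leq G|(v_t)|_{L^2_{fix}}$ from \eqref{eq:compLp}, this yields the first inequality.

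The heart of the argument is \eqref{eq:Vtinfty}, which I would prove via a Leibniz rule for the convective derivative. The goal is to show that the scalar curve $(w_t)\subset L^0(\X)$ defined by $w_t:=|V_t|\circ F_0^t$ belongs to $W^{1,2}([0,1],L^0(\X))$ with $|\dot w_t|\leq |D_t V_t|\circ F_0^t$; once this is in hand, the scalar $1$D Sobolev embedding \eqref{eq:perlinfty1} immediately yields $|(V_t)|_{L^\infty_{var}}\leq \sqrt 2\,\|(w_t)\|_{W^{1,2}([0,1],L^0(\X))}\leq \sqrt 2\,|(V_t)|_{W^{1,2}_{var}}$. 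The required regularity of $(w_t)$ reduces to the identity
\[
\partial_t\bigl(|V_t|^2\circ F_0^t\bigr)=2\langle V_t, D_t V_t\rangle\circ F_0^t\qquad\text{in }W^{1,2}([0,1],L^0(\X)),
\]
since from $2 w_t \dot w_t=\partial_t w_t^2$ one immediately extracts $|\dot w_t|\leq |D_t V_t|\circ F_0^t$ on $\{w_t>0\}$ (and $\dot w_t=0$ a.e.\ on $\{w_t=0\}$). Justifying this Leibniz identity is the main obstacle: my plan is to establish it first for the time-mollified approximants $v_t^\eps:=\eps^{-1}\int_t^{t+\eps}v_s\,\d s$ from \eqref{eq:vepsreg} -- where it follows from combining the scalar Leibniz rule \eqref{eq:leibdifftime} applied to $\langle v_t^\eps, v_t^\eps\rangle$ with the continuity-in-time of $\d F_0^t$ provided by Proposition \ref{prop:dFtot2} and of composition with the flow provided by Lemma \ref{le:contcont} -- and then extend to all $(v_t)\in W^{1,2}_{fix}$ using the density criterion Proposition \ref{prop:Atutto}.

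Finally, granted \eqref{eq:Vtinfty}, the second inequality of \eqref{eq:covtot} follows symmetrically. Using $\dot v_t=\d F_t^0(D_t V_t)-\d F_t^0(\nabla_{V_t}w_t)$ together with \eqref{eq:pointwise_norm_estimate} and \eqref{eq:estdFt4}, I get $|\dot v_t|\leq G|D_t V_t|\circ F_0^t+G(|V_t|\circ F_0^t)(|\nabla w_t|\circ F_0^t)$. Squaring, integrating in $t$ and using \eqref{eq:estnw2}, the cross term is controlled by $G|(V_t)|^2_{L^\infty_{var}}$, at which point \eqref{eq:Vtinfty} absorbs it into $G|(V_t)|^2_{W^{1,2}_{var}}$ -- crucially sidestepping the potentially circular appeal to an $L^\infty$ embedding on the `fix' side (which would reintroduce the quantity $|(v_t)|_{W^{1,2}_{fix}}$ one is trying to bound). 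Combined with $|(v_t)|_{L^2_{fix}}\leq G|(V_t)|_{L^2_{var}}$ from \eqref{eq:compLp}, this completes the proof.
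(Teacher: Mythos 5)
Your treatment of the first inequality in \eqref{eq:covtot} and of the final step (deducing the second inequality from \eqref{eq:Vtinfty}) matches the paper's computation. The problem is your proof of \eqref{eq:Vtinfty}, which is where your route genuinely diverges and where there is a gap. You propose to derive \eqref{eq:Vtinfty} from the convective Leibniz identity $\partial_t(|V_t|^2\circ F_0^t)=2\la V_t,D_tV_t\ra\circ F_0^t$. That identity is exactly Theorem \ref{thm:leibconv}, which in the paper is proved \emph{after} this lemma and whose proof uses \eqref{eq:Vtinfty} and \eqref{eq:covtot} (see Step 1 there, where $|(\la V^n_t,Z_t\ra\circ F_0^t)|_{L^2}$ is bounded via \eqref{eq:Vtinfty}); so invoking it here is circular unless you supply an independent proof. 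The independent proof you sketch does not work: the scalar Leibniz rule \eqref{eq:leibdifftime} applied to $\la v^\eps_t,v^\eps_t\ra$ controls $\partial_t|v^\eps_t|^2$ in the \emph{fixed} picture, whereas the quantity you need to differentiate is $|\d F_0^t(v^\eps_t)|^2\circ F_0^t$, whose time-dependence is dominated by the distortion of the norm under $\d F_0^t$. Mere continuity of $t\mapsto\d F_0^t(v)$ (Proposition \ref{prop:dFtot2}) gives no differentiability of $t\mapsto|\d F_0^t(v)|^2\circ F_0^t$; making that differentiation rigorous is precisely the hard core of Step 2 of Theorem \ref{thm:leibconv}, which requires Proposition \ref{prop:mainreg}, the closure result in $H_{\phi,R}(\X)$, and the Hille-type theorem. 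So as written, the central step of your argument is unsupported.

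The fix is much more elementary and is what the paper does: the cross term $(|V_t|\circ F_0^t)(|\nabla w_t|\circ F_0^t)$ in the bound for $|\dot v_t|$ only needs to be integrated in time at the $L^1$ level, and Cauchy--Schwarz in $t$ gives
\[
|(\dot v_t)|_{L^1_{fix}}\leq G\Big(|(D_tV_t)|_{L^2_{var}}+|(V_t)|_{L^2_{var}}\,\big|(|\nabla w_t|\circ F_0^t)\big|_{L^2}\Big)\leq G|(V_t)|_{W^{1,2}_{var}}\qquad\mm\text{-a.e.},
\]
using only the $L^2_{var}$ norm of $(V_t)$ and \eqref{eq:estnw2} --- no $L^\infty$ bound is needed at this stage. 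Then the first inequality in \eqref{eq:perlinfty1} (which is really a $W^{1,1}\hookrightarrow L^\infty$ embedding, $|(v_t)|_{L^\infty_{fix}}\leq|(v_t)|_{L^2_{fix}}+|(\dot v_t)|_{L^1_{fix}}$) together with \eqref{eq:compLp} yields \eqref{eq:Vtinfty} directly, and plugging \eqref{eq:Vtinfty} back in and squaring gives the $L^2$ bound on $(\dot v_t)$ and hence the second inequality in \eqref{eq:covtot}, exactly as in your final paragraph.
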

\begin{proof} From the very definition \eqref{eq:defDt} we see that $\mm$-a.e.\ and for a.e.\ $t$ it holds
\[
|D_tV_t|\circ F_0^t\stackrel{\eqref{eq:estdFt4}}\leq G |\dot v_t|+|\nabla w_t|\circ F_0^t|V_t|\stackrel{\eqref{eq:normVtvt2}}\leq G \big(|\dot v_t|+|\nabla w_t|\circ F_0^t|v_t|) \stackrel{\eqref{eq:perlinfty1}}\leq G(|\dot v_t|+|\nabla w_t|\circ F_0^t |(v_s)|_{W^{1,2}_{fix}}).
\]
Squaring and integrating in $t$ we obtain
\[
|(D_tV_t)|_{L^2_{var}}^2\leq G\Big(|(\dot v_t)|^2_{L^2_{fix}}+|( v_t)|^2_{W^{1,2}_{fix}}\,|(|\nabla w_t|\circ F_0^t)|_{L^2}^2\Big)\stackrel{\eqref{eq:estnw2}}\leq G|( v_t)|^2_{W^{1,2}_{fix}},
\]
so that the first in \eqref{eq:covtot} follows taking also \eqref{eq:compLp} into account.

For the second in \eqref{eq:covtot} we notice that   \eqref{eq:defDt} gives $\dot v_t=\d F_t^0(D_tV_t-\nabla_{V_t}w_t)$, thus
\begin{equation}
\label{eq:vdotbound}
|\dot v_t|\stackrel{\eqref{eq:pointwise_norm_estimate} }\leq \Big(|\d F_{t}^0|\big(|D_tV_t|+|V_t|\,|\nabla w_t|\big)\Big)\circ F_0^t\stackrel{\eqref{eq:estdFt4}}\leq G\Big(|D_tV_t|\circ F_0^t+|V_t|\circ F_0^t\,|\nabla w_t|\circ F_0^t\Big)
\end{equation}
$\mm$-a.e.\ for a.e.\ $t\in[0,1]$. Integrating in $t$ we deduce that  $\mm$-a.e.\ we have
\[
|(\dot v_t)|_{L^1_{fix}}\leq  G\Big(|(D_tV_t)|_{L^2_{var}}+|(V_t)|_{L^2_{var}}\,|(|\nabla w_t|\circ F_0^t)|_{L^2}\,\Big)\stackrel{\eqref{eq:estnw2}}\leq G|(V_t)|_{W^{1,2}_{var}}.
\]
Using this bound in conjunction with the first inequality in \eqref{eq:perlinfty1} and  \eqref{eq:compLp} we obtain
\[
|(V_t)|_{L^\infty_{var}}\leq G|(v_t)|_{L^\infty_{fix}}\leq G\big(|(V_t)|_{L^2_{var}}+|(V_t)|_{W^{1,2}_{var}}\big)\leq G|(V_t)|_{W^{1,2}_{var}}\qquad\mm-a.e.,
\]
which is \eqref{eq:Vtinfty}. Plugging this inequality in \eqref{eq:vdotbound} we see that 
\[
|\dot v_t| \leq G\big(|D_tV_t|\circ F_0^t+|(V_s)|_{W_{var}^{1,2}}|\nabla w_t|\circ F_0^t\big)\qquad\text{$\mm$-a.e.\   for a.e.\ $t\in[0,1]$.}
\] 
Squaring, integrating in $t$  and recalling \eqref{eq:estnw2} we see that  $ |(\dot v_t)|_{L^2_{fix}}^2\leq G|(V_t)|^2_{W_{var}^{1,2}}$ holds $\mm$-a.e.. Then the second in \eqref{eq:covtot} follows taking \eqref{eq:compLp} into account.
\end{proof}
This lemma ensures in particular that $|(V_t)|_{W^{1,2}_{var}}<\infty$ $\mm$-a.e.\ for any $(V_t)\in W^{1,2}_{var}$. Hence the following is a well defined distance on $W^{1,2}_{var}$:
\[
\sfd_{W^{1,2}_{var}}\big((V_t),(Z_t)\big):=\sfd_{L^0}(|(V_t-Z_t)|_{W^{1,2}_{var}},0).
\]
Then starting from \eqref{eq:covtot}, arguing as we did in \eqref{eq:Lpsame} from \eqref{eq:compLp}, we see that for $(v_t),(z_t)\in W^{1,2}_{fix}$ and the corresponding $(V_t),(Z_t)\in W^{1,2}_{var}$ as in \eqref{eq:vtVt} we have
\[
\sfd_{W^{1,2}_{fix}}\big((v_t),(z_t)\big)\leq\Omega\big(\sfd_{W^{1,2}_{var}}\big((V_t),(Z_t)\big)\big)\qquad\text{ and }\qquad\sfd_{W^{1,2}_{var}}\big((V_t),(Z_t)\big)\leq\Omega\big(\sfd_{W^{1,2}_{fix}}\big((v_t),(z_t)\big)\big).
\]
Therefore, much like in the $L^p$ case, $(v_t^n)\subset W^{1,2}_{fix}$ is Cauchy if and only if the corresponding sequence $(V^n_t)\subset W^{1,2}_{var}$ is Cauchy. Hence  completeness of $W^{1,2}_{var}$ follows from that of $W^{1,2}_{fix}$.

It is then clear that $AC^2_{var}$ is also a complete space.

\subsection{Calculus with the convective derivative}\label{sec:reg_fcircF_t}
The main goal of this section is to establish appropriate calculus rules for the convective derivative that mimic those available in the smooth setting. 

In particular, we shall ultimately prove that  for $(V_t),(W_t)\in AC^2_{var}$ we have $\big(\la V_t,W_t\ra\circ F_0^t\big)\in AC^2_{L^0(\X)}$ with derivative $ \la D_tV_t,W_t\ra \circ F_0^t+ \la V_t,D_tW_t\ra \circ F_0^t$.
This can be read as \emph{compatibility with the metric} of our convective/covariant derivative and will be crucial to obtain uniqueness of parallel transport and preservation of norm.

The proof of this fact  will be obtained following roughly the ideas in \cite{DPG16}, but due to much lower regularity we have at disposal now, things are now more involved. The idea is to  establish regularity for $t\mapsto \la V_t,W_t\ra\circ F_0^t$ by duality, i.e.\ we   first study the regularity of $t\mapsto \d (f\circ F_0^t)$ (Proposition \ref{prop:mainreg}) and later  that of $t\mapsto \la V_t,W_t\ra\circ F_0^t$ (Theorem \ref{thm:leibconv}).

To pursue this program, and in particular in the  first step, we shall use the closure  of the differential on bounded subsets of the Haj\l asz--Sobolev space as discussed in Proposition \ref{prop:closure_tilde_d}. In turn, this will be possible thanks to the following lemma:
\begin{lemma}
\label{prop:fF_tbelongstoHphi} Let $\bar g:=\bar g_0\in L^2_{loc}(\X)$ be given by Proposition \ref{prop:eliadanieleregularity} for $t=0$, $f\in W^{1,2}(\X)$, and $R>0$. Then $f\circ F_0^t\in H_{{\bar g},R}(\X)$ for every $t\in[0,1]$ and there is $C>0$ such that
\begin{equation}
\label{eq:contemb}
\|f\circ F_0^t\|_{H_{{\bar g},R}}\leq C\|f\|_{W^{1,2}},\qquad\forall t\in[0,1].
\end{equation}
\end{lemma}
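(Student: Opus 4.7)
The strategy is to produce an explicit admissible function for $f\circ F_0^t$ in $A_{\bar g,R}(f\circ F_0^t)$ by combining the maximal-function Hajłasz-type estimate \eqref{eq:local_max_est_sobolev}, available on the PI space $\X$, with the flow-distortion bound \eqref{eq:estimate4_factor} from Proposition \ref{prop:eliadanieleregularity} (applied at $t=0$, so that $\bar g = \bar g_0$). The $L^2$ part of the $H_{\bar g,R}$-norm is taken care of by bounded compression, while the gradient-type part is handled by the maximal function.

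\textbf{Step 1.} By \eqref{eq:boundcompr}, $\|f\circ F_0^t\|_{L^2}\leq C^{1/2}\|f\|_{L^2}$, uniformly in $t$.

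\textbf{Step 2 (construction of $G_t$).} Let $\kappa>0$ be the constant appearing in \eqref{eq:local_max_est_sobolev} (at scale $R$), and define
\[
G_t(x)\;:=\;\kappa\, M_{4R}(|Df|)\bigl(F_0^t(x)\bigr)\;+\;\tfrac1R\,|f|\bigl(F_0^t(x)\bigr).
\]
Let $N\subset\X$ be the $\mm$-null set from \eqref{eq:local_max_est_sobolev} applied to $f$ at scale $R$. By bounded compression, $\tilde N:=(F_0^t)^{-1}(N)$ is still $\mm$-negligible. Pick $x,y\in\X\setminus\tilde N$ with $\sfd(x,y)\leq R$ and set $\rho:=\sfd(F_0^t(x),F_0^t(y))$. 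I split into two cases.

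\emph{Case $\rho\leq R$:} Applying \eqref{eq:local_max_est_sobolev} at the points $F_0^t(x),F_0^t(y)\notin N$, and then using \eqref{eq:estimate4_factor} to bound $\rho\leq e^{\bar g(x)+\bar g(y)}\sfd(x,y)$, gives
\[
|f(F_0^t(x))-f(F_0^t(y))|\leq \kappa\bigl(M_{4R}(|Df|)(F_0^t(x))+M_{4R}(|Df|)(F_0^t(y))\bigr)\,e^{\bar g(x)+\bar g(y)}\sfd(x,y).
\]

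\emph{Case $\rho>R$:} Using the trivial bound $|f(F_0^t(x))-f(F_0^t(y))|\leq |f(F_0^t(x))|+|f(F_0^t(y))|$ and multiplying by $\rho/R>1$ gives
\[
|f(F_0^t(x))-f(F_0^t(y))|\leq \tfrac{\rho}{R}\bigl(|f(F_0^t(x))|+|f(F_0^t(y))|\bigr)\leq \tfrac{1}{R}\bigl(|f(F_0^t(x))|+|f(F_0^t(y))|\bigr)\,e^{\bar g(x)+\bar g(y)}\sfd(x,y),
\]
again by \eqref{eq:estimate4_factor}. In either case the right-hand side is dominated by $e^{\bar g(x)+\bar g(y)}(G_t(x)+G_t(y))\sfd(x,y)$, so $G_t\in A_{\bar g,R}(f\circ F_0^t)$.

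\textbf{Step 3 ($L^2$-estimate on $G_t$).} By \eqref{eq:boundcompr} and \eqref{eq:estimate_L^p-strong_locmaxfunct},
\[
\|G_t\|_{L^2}\leq \kappa\,C^{1/2}\,\|M_{4R}(|Df|)\|_{L^2}+\tfrac{C^{1/2}}{R}\|f\|_{L^2}\leq \kappa\,C^{1/2}\,C_{2,4R}\|Df\|_{L^2}+\tfrac{C^{1/2}}{R}\|f\|_{L^2}\leq C'\|f\|_{W^{1,2}},
\]
with $C'$ depending only on $R$ and the structural constants. Combined with Step 1, this yields \eqref{eq:contemb}.

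The only real obstacle is that the factor $e^{\bar g(x)+\bar g(y)}$ in the flow-distortion estimate is unbounded, so the image pair $(F_0^t(x),F_0^t(y))$ can land outside the scale $R$ on which the PI-space pointwise estimate \eqref{eq:local_max_est_sobolev} is available; the trivial two-case split, using $|f|/R$ when $\rho>R$, bypasses this difficulty at a uniform $L^2$ cost.
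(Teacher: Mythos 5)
Your proof is correct, and it takes a slightly different route from the paper at the one point where a real idea is needed, namely the fact that $F_0^t$ can push a pair of points at distance $\leq R$ to a pair at distance $>R$, outside the range of validity of the pointwise maximal estimate \eqref{eq:local_max_est_sobolev} at scale $R$. The paper resolves this by exploiting the uniform speed bound \eqref{eq:unifspFl}: since $\sfd(F_0^t(x),F_0^t(y))\leq\sfd(x,y)+2\|w\|_{L^\infty}$, it applies the maximal estimate at the enlarged scale $R'=R+2\|w\|_{L^\infty}$ and takes $H\circ F_0^t$ with $H=M_{4R'}(|Df|)$ as the admissible function, with no case distinction. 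You instead keep the scale $R$ fixed and split according to whether $\rho=\sfd(F_0^t(x),F_0^t(y))$ exceeds $R$, absorbing the far case into the zero-order term $\tfrac1R|f|\circ F_0^t$ after multiplying the trivial bound by $\rho/R>1$ and invoking \eqref{eq:estimate4_factor}; this costs you an extra $\|f\|_{L^2}$ contribution in the admissible function, which is harmless since the target bound is in terms of the full $W^{1,2}$-norm. A minor trade-off: your argument does not use \eqref{eq:unifspFl} (hence no dependence of the maximal scale on $\|w\|_{L^\infty}$), while the paper's admissible function involves only the maximal function of $|Df|$; both yield \eqref{eq:contemb} with a constant depending only on $R$ and the structural data, and both handle the null sets correctly via bounded compression.
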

\begin{proof} Since ${\sf RCD}(K,N)$ spaces are PI spaces, we know from \eqref{eq:local_max_est_sobolev} that for every $R'>0$ there exists a $\mm$-negligible set $N$ and $C>0$ such that
\begin{equation}
\label{eq:damax}
|f(x)-f(y)| \le C \big( M_{4R'}(|D f|)(x) + M_{4R'}(|D f|)(y) \big)\sfd(x,y) 
\end{equation}
holds  for every $x,y \in \X \setminus N$ with  $\sfd(x,y)\le R'$.

Also, recalling \eqref{eq:unifspFl} we see that possibly enlarging $N$, keeping it $\mm$-negligible, for $L:=\|w_t\|_{L^\infty_{t,x}}$ we have
\[
\sfd(F_0^t(x),F_0^t(y))\leq \sfd(x,y)+2tL,\qquad\forall x,y\in \X\setminus N,\ \forall t\in[0,1].
\]
Put  $R':=R+2L$ and $H:=M_{4R'}(|D f|)$ (notice that $H\in L^2(\X,\mm)$ by \eqref{eq:estimate_L^p-strong_locmaxfunct}). Then for every $x,y\in \X\setminus (F_0^t)^{-1}(N)$ with $\sfd(x,y)\leq R$, from \eqref{eq:damax} and \eqref{eq:estimate4_factor} we have  that
\[
\begin{split}
|f(F_0^t(x))-f(F_0^t(y))|&\leq C \big(H(F_0^t(x)) +H(F_0^t(y)) \big)\sfd(F_0^t(x),F_0^t(y)) \\
&\leq C \big(H(F_0^t(x)) + H(F_0^t(y)) \big)e^{\bar g(x)+\bar g(y)}\sfd(x,y)
\end{split}
\]
holds for any $t\in[0,1]$. In other words, since $(F_0^t)^{-1}(N)$ is $\mm$-negligible, we proved that $H\circ F_0^t\in A_{\bar g,R}(f)$. Then the conclusion follows recalling that since $(F_0^t)$ has  bounded compression, we have that $\|H\circ F_0^t\|_{L^2}\leq C'\|H\|_{L^2}$ for some $C'>0$ and every $t\in[0,1]$. 
\end{proof}
We can now study the regularity of $t\mapsto \d (f\circ F_0^t)$:
\begin{proposition}\label{prop:mainreg}
Let  $f\in\test\X$.  Then for every $t\in[0,1]$ the map $s\mapsto \d(f\circ F_t^s)$ is in $AC^2([0,1],L^0(T^*\X))$ and there is a Borel negligible set $N\subset[0,1]$, independent of $f$ and $ t\in[0,1]$, such that for every $s\in[0,1]\setminus N$ we have
\begin{equation}
\label{eq:tuttit}
\lim_{h\to 0}\frac{\d(f\circ F_{ t}^{s+h})-\d (f\circ F_{ t}^s)}{h}= \d(\d f(w_t)\circ F_{ t}^s)\qquad\text{ in $L^0(T^*\X)$.}
\end{equation}
\end{proposition}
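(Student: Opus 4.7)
The strategy is to combine the integral identity \eqref{eq:int RLF} with the Hille-type result Theorem \ref{thm:HilleH} so as to commute the exterior differential $\d$ with the time integration appearing in the right-hand side, and then to read the resulting identity through the characterization \eqref{eq:charACvec} of module-valued absolutely continuous curves.

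Fix $t\in[0,1]$ and $f\in\test\X$. By \eqref{eq:leibgrad} we have $\d f(w_r)\in W^{1,2}(\X)$ for a.e.\ $r$, and the explicit formula therein, combined with the standing assumptions on $(w_r)$ (in particular $(w_r)\in L^2([0,1],W^{1,2}_C)$ and $|w_r|\in L^\infty_{t,x}$), shows that $r\mapsto\|\d f(w_r)\|_{W^{1,2}}$ belongs to $L^2(0,1)$. Applying Lemma \ref{prop:fF_tbelongstoHphi} --- whose proof is written for base time $0$ but goes through verbatim at general base time $t$ upon replacing $\bar g_0$ by $\bar g_t$ and $F_0^{\cdot}$ by $F_t^{\cdot}$, using the bound \eqref{eq:estimate4_factor} --- we obtain that $\d f(w_r)\circ F_t^r\in H_{\bar g_t,R}(\X)$ with
\[
\big\|\d f(w_r)\circ F_t^r\big\|_{H_{\bar g_t,R}}\le C\,\|\d f(w_r)\|_{W^{1,2}},
\]
the right-hand side being in $L^2(0,1)\subset L^1(0,1)$. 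We now invoke Theorem \ref{thm:HilleH} with $\H_1:=L^0(\X)$, $\H_2:=L^0(T^*\X)$, $L:=\d$, and $V:=H_{\bar g_t,R}(\X)$ equipped with the norm \eqref{eq:normhphi}: hypothesis (ii) is precisely Proposition \ref{prop:closure_tilde_d}, hypothesis (iii) is the integrability just established, and (i) follows from Proposition \ref{prop:HLL} (which bounds $|\d h|$ in $L^2$ by the optimal admissible function, so that $r\mapsto|\d(\d f(w_r)\circ F_t^r)|$ lies in $L^2([0,1],L^0(\X))\subset L^1([0,1],\H_2)$ by Fubini). Coupling the conclusion of Hille's theorem with \eqref{eq:int RLF} yields, for every $0\le s_1\le s_2\le 1$,
\[
\d(f\circ F_t^{s_2})-\d(f\circ F_t^{s_1})=\int_{s_1}^{s_2}\d\big(\d f(w_r)\circ F_t^r\big)\,\d r\qquad\text{in }L^0(T^*\X).
\]

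Reading this identity through \eqref{eq:charACvec} with $\H:=L^0(T^*\X)$ immediately places $s\mapsto\d(f\circ F_t^s)$ in $AC^2([0,1],L^0(T^*\X))$ with derivative $s\mapsto\d(\d f(w_s)\circ F_t^s)$, and \eqref{eq:charder} then delivers the pointwise limit \eqref{eq:tuttit} for a.e.\ $s$. The genuinely technical point is that the exceptional set $N$ can be chosen independent of $f\in\test\X$ and of $t\in[0,1]$: as announced after \eqref{eq:c1RLF}, this is obtained by a maximal-function argument controlling the difference quotients by the Hardy--Littlewood maximal function of $r\mapsto|\d(\d f(w_r)\circ F_t^r)|$, together with a separability reduction to a countable subfamily of $\test\X$ that is dense in the appropriate second-order topology. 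The uniform $L^2(0,1)$ bound obtained above ensures that the resulting negligible set of times does not depend on the chosen element of that family nor on $t$. This uniformization of $N$ is the only nonroutine step; everything else is a clean combination of the Hille theorem, the Haj{\l}asz--Sobolev closure, and the absolute-continuity machinery developed in Sections \ref{sec:diff}, \ref{sec:weight_hajlasz} and \ref{se:intmod}.
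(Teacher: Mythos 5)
Your first half is essentially the paper's own proof: the same application of Theorem \ref{thm:HilleH} with $\H_1=L^0(\X)$, $\H_2=L^0(T^*\X)$, $L=\d$ and $V$ a weighted Haj\l asz space, closure from Proposition \ref{prop:closure_tilde_d}, integrability of $\|\d f(w_r)\circ F_t^r\|_{H}$ from Lemma \ref{prop:fF_tbelongstoHphi}, combined with \eqref{eq:int RLF} and then read through \eqref{eq:charACvec} and \eqref{eq:charder}. Your remark that at base time $t$ one should work with $\bar g_t$ rather than $\bar g_0$ is a fair clarification of a point the paper glosses over; the only (minor) omission in this part is the Borel measurability of $r\mapsto\d(\d f(w_r)\circ F_t^r)\in L^0(T^*\X)$, which the paper settles via Lemma \ref{lem:contd} and an approximation before invoking Hille.

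The genuine gap is the uniformity of the exceptional set, which is precisely the nontrivial content of the statement beyond the $AC^2$ regularity, and which you only assert. As sketched, the argument would not work: you propose to control difference quotients by the Hardy--Littlewood maximal function of $r\mapsto|\d(\d f(w_r)\circ F_t^r)|$, but this quantity depends on $f$, so the set of times where that maximal function is infinite --- and hence the negligible set you would extract --- again depends on $f$; this only reproves a.e.\ differentiability for each fixed $f$, which \eqref{eq:charder} already gives. What is actually needed is an equicontinuity-in-$f$ estimate for the averaged operators $T_t^s(f):=\frac1{|s-t|}\int_{t\wedge s}^{t\vee s}\d(\d f(w_r)\circ F_0^r)\,\d r$, uniform in $s$, of the form $\sfd_{L^0(T^*\X)}\big(T_t^s(f),T_t^s(f')\big)\leq\Omega\big((M(t)+1)\|f-f'\|_{W^{2,2}}\big)$, where $M(t)$ is the maximal function of the \emph{$f$-independent} map $r\mapsto\||\nabla w_r|\|_{L^2}^2$ (finite for a.e.\ $t$), together with the analogous $W^{2,2}$-continuity of $f\mapsto\d(\d f(w_t)\circ F_0^t)$, and then a countable $W^{2,2}$-dense family $(f_n)\subset\test\X$ over which the bad sets are collected. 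Independence from $t$ is also not automatic from your ``uniform $L^2(0,1)$ bound'': it requires the separate observation that the bad set $N(f,t)$ for base time $t$ coincides with $N(f,0)$, which the paper obtains by transporting the difference quotients through $F_t^{t'}$ via the bound $|\d(g\circ F_t^{t'})|\leq G_t\,|\d g|\circ F_t^{t'}$ (from \eqref{eq:estdFt4}) together with Lemma \ref{le:unifcont} and \eqref{eq:equil0}. Without these two ingredients your $N$ is only negligible for each fixed $f$ and $t$, which is strictly weaker than what is claimed.
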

\begin{proof} \ \\
{\sc Step 1: basic integrability estimates.} We know from Lemma \ref{lem:contd} that $s\mapsto \d (f\circ F_t^s)\in L^0(T^*\X)$ is continuous for any $t\in[0,1]$ and since $\d f(w)\in W^{1,2}(\X)$ for any $w\in L^\infty\cap W^{1,2}_C(T\X)$ (recall \eqref{eq:leibgrad}), the same lemma ensures that for any such $w$ we have that $s\mapsto \d(\d f(w)\circ F_t^s)\in L^0(T^*\X)$ is continuous. Then a simple approximation argument shows that  $s\mapsto \d(\d f(w_s)\circ F_t^s)$ is Borel. 

Now observe that the bound $|\d f(w_t)|\leq\|\d f\|_{L^\infty}|w_t|$, the assumption $(w_t)\in L^2([0,1],L^2(T\X))$ and \eqref{eq:intflo2} ensure that $s\mapsto \d f(w_s)\circ F_t^s$ is in $L^2([0,1],L^0(\X))$. Similarly,  \eqref{eq:techn_comp_cl} and \eqref{eq:estdFt4}  give
\begin{equation}
\label{eq:piuvolte}
|\d(\d f(w_s)\circ F_t^s)|\leq G_t\,|\d (\d f(w_s))|\circ F_t^s\stackrel{\eqref{eq:leibgrad}}\leq G_t\,\big(|{\rm Hess}(f)|\|w_s\|_{L^\infty}+|\nabla w_s||\d f| \big)\circ F_t^s ,
\end{equation}
$\mm$-a.e.\  for every $t,s\in[0,1]$. It is clear from our assumptions on $(w_t)$ and $f$ that $s\mapsto (|{\rm Hess}(f)|\|w_s\|_{L^\infty}+|\nabla w_s||\d f| )$ is in $L^2([0,1],L^2(\X))$, thus from \eqref{eq:intflo2} and the above we see that $s\mapsto \d(\d f(w_s)\circ F_t^s)$ is in $L^2([0,1],L^0(T^*\X))$. 

{\sc Step 2: Sobolev regularity and computation of the derivative.} We now claim that  for any $s_1,s_2\in[0,1]$, $s_1<s_2$ we have
\begin{equation}
\label{eq:conHille}
\d (f\circ F_t^{s_2})-\d (f\circ F_t^{s_1})=\int_{s_1}^{s_2}  \d(\d f(w_r)\circ F_t^r)\,\d r.
\end{equation}
Notice that by \eqref{eq:charAC}  this proves that $s\mapsto \d(f\circ F_t^s)$ is in $AC^2([0,1],L^0(T^*\X))$ with derivative given by   $ \d(\d f(w_s)\circ F_t^s)$. Recalling \eqref{eq:int RLF}, we see that \eqref{eq:conHille} above will follow if we show that for any $s_1,s_2$ as before it holds
\begin{equation}
\label{eq:conHille2}
 \d\Big(\int_{s_1}^{s_2} \d f(w_r)\circ F_t^r\,\d r\Big)=\int_{s_1}^{s_2}  \d(\d f(w_r)\circ F_t^r)\,\d r.
 \end{equation}
To  prove this latter identity we apply Theorem \ref{thm:HilleH} with $\H_1:=L^0(\X)$, $\H_2:=L^0(T^*\X)$, $V:= H_{\bar g,1}(\X)$ equipped with its norm as defined in \eqref{eq:normhphi} (here $\bar g=\bar g_0$ is given by Proposition \ref{prop:eliadanieleregularity}), $L:=\d$, and $v_s:= \d f(w_s)\circ F_t^s$. We already observed that $(v_s)\in  L^2([0,1],\H_1)$ and $(L(v_s))\in L^2([0,1],\H_2)$.   Also,  the trivial bound $\|\d f(w_s)\|_{L^2}\leq \|w_s\|_{L^\infty_{s,x}}\|\d f\|_{L^2}$, the inequality   
\[
\|\d (\d f(w_s))\|_{L^2}\leq \|w_s\|_{L^\infty_{s,x}}\|{\rm Hess}( f)\|_{L^2}+\|\d f\|_{L^\infty}\||\nabla w_s|\|_{L^2},
\]
and again the assumption $(w_t)\in L^2([0,1],W^{1,2}_C(T\X))$ with $|w_t|\in L^\infty([0,1]\times\X)$, give that $s\mapsto \d f(w_s)$ is in $L^1([0,1],W^{1,2}(\X))$. Therefore by Lemma \ref{prop:fF_tbelongstoHphi} above we deduce that $\int_0^1\|v_s\|_{H_{\bar g,1}}\,\d s<\infty$. Finally, Proposition \ref{prop:closure_tilde_d} ensures that assumption $(ii)$ in Theorem  \ref{thm:HilleH} is satisfied, thus we can apply this proposition and deduce that \eqref{eq:conHille2}, and thus \eqref{eq:conHille}, holds.

Now for $f\in\test\X$ and $t\in[0,1]$ let $N(f,t)\subset[0,1]$ be the set of $s\in[0,1]$ for which the limiting relation
\[
\lim_{h\to 0}\frac{\d(f\circ F_t^{s+h})-\d (f\circ F_t^s)}{h}=\d(\d f(w_s)\circ F_t^s),\qquad\text{ in }L^0(T^*\X)
\]
does \emph{not} hold. We claim that $N(f,t)$ does not depend on $t$. To see this, let $t,t'\in[0,1]$ and observe that from the bound $|\d (g\circ F_t^{t'})|\leq G_t |\d  g|\circ F_t^{t'}$ (which follows from \eqref{eq:estdFt4}) and Lemma \ref{le:unifcont} we deduce
\[
\sfd_{L^0}(|\d (g\circ F_t^{t'})|,0)\leq\Omega\big(\sfd_{L^0}(|\d g|\circ F_t^{t'},0)\big)\stackrel{\eqref{eq:equil0}}\leq\Omega\big(\sfd_{L^0}(|\d g|,0)\big).
\]
Applying this to $g:=\frac{f\circ F_{t'}^{s+h}-f\circ F_{t'}^{s}-\d f(w_s)\circ F_{t'}^s}{h}$ we deduce that $N(f,t')\supset N(f,t)$ and the claim follows by the arbitrariness of $t,t'$.

We shall put $N(f):=N(f,0)=N(f,t)$ for any $t\in[0,1]$.  Notice that $N(f)$ is a Borel negligible subset of $[0,1]$ and that for every $t\notin N(f)$ we have $t\notin N(f,t)$ and thus \eqref{eq:tuttit} holds.

{\sc Step 3: the exceptional set can be chosen independent of $f$.} To conclude it is sufficient to prove that for some $N\subset [0,1]$ Borel and negligible we have $N(f)\subset N$ for every $f\in\test\X$.  To this aim we employ a standard idea based on uniform continuity of difference quotients; specifically, we  start claiming that for a.e.\ $t\in[0,1]$ the linear operators $T_{t}^s:\test\X\to L^0(T^*\X)$ defined as
\[
T_{t}^s(f):=\frac1{|s-t|}\int_{t\wedge s}^{t\vee s}\d(\d f(w_r)\circ F_0^r)\,\d r
\]
are uniformly continuous in $s\in[0,1]\setminus\{t\}$ if we equip $\test\X$ with the $W^{2,2}$-norm. To see this notice that
\begin{equation}
\label{eq:st1}
\begin{split}
|T_t^s(f)|&\leq \frac1{|s-t|}\int_{t\wedge s}^{t\vee s}|\d(\d f(w_r)\circ F_0^r)|\,\d r\\
\text{(by \eqref{eq:piuvolte})}\qquad& \leq G\,\frac1{|s-t|}\int_{t\wedge s}^{t\vee s}\,\big(|{\rm Hess}(f)|\|w_r\|_{L^\infty}+|\nabla w_r||\d f| \big)\circ F_0^r \,\d r \qquad\mm-a.e.
\end{split}
\end{equation}
and that
\[
\begin{split}
&\sfd_{L^0}\Big(\frac1{|s-t|}\int_{t\wedge s}^{t\vee s}\,\big(|{\rm Hess}(f)|\|w_r\|_{L^\infty}+|\nabla w_r||\d f| \big)\circ F_0^r \,\d r,0 \Big)\\
&\leq\sfd_{L^0}\Big(\frac{\|w_r\|_{L^\infty_{r,x}}}{|s-t|}\int_{t\wedge s}^{t\vee s}\,|{\rm Hess}(f)| \circ F_0^r \,\d r,0 \Big)+\sfd_{L^0}\Big(\frac1{|s-t|}\int_{t\wedge s}^{t\vee s}\big(|\nabla w_r||\d f| \big)\circ F_0^r \,\d r,0 \Big)\\
&\stackrel{*}\leq\Omega\Big( \Big\|\frac{\|w_r\|_{L^\infty_{r,x}}}{|s-t|}\int_{t\wedge s}^{t\vee s}\,|{\rm Hess}(f)| \circ F_0^r \,\d r \Big\|_{L^2}+\Big\|\frac1{|s-t|}\int_{t\wedge s}^{t\vee s}\big(|\nabla w_r||\d f| \big)\circ F_0^r \,\d r\Big\|_{L^1}\Big)\\
&\stackrel{**}\leq\Omega\Big( \frac{\|w_r\|_{L^\infty_{r,x}}}{|s-t|}\int_{t\wedge s}^{t\vee s}\,\big\||{\rm Hess}(f)| \big\|_{L^2} \,\d r+\frac1{|s-t|}\int_{t\wedge s}^{t\vee s}\big\||\nabla w_r||\d f| \big\|_{L^1} \,\d r\Big)\\
&\leq \Omega\Big(\|w_r\|_{L^\infty_{r,x}} \| |{\rm Hess}(f)|\|_{L^2}+\||\d f|\|_{L^2}{\sqrt{\frac1{|s-t|}\int_{t\wedge s}^{t\vee s}\||\nabla w_r|\|_{L^2}^2\,\d r}}\Big),
\end{split}
\]
having used \eqref{eq:lpl0} in $*$ and \eqref{eq:dabc} in $**$.

Now observe that since by assumption we have that $t\mapsto \||\nabla w_t|\|_{L^2}$  is in $L^2(0,1)$, the Hardy--Littlewood maximal inequality grants that $M(t):=\sup_{s\neq t}\sqrt{\frac1{|s-t|}\int_{t\wedge s}^{t\vee s}\||\nabla w_r|\|_{L^2}^2\,\d r}<\infty$ for a.e. \ $t$, therefore  for $f,f'\in\test\X$, putting for brevity $g:=f-f'$ the above gives
\begin{equation}
\label{eq:unifTts}
\begin{split}
\sfd_{L^0(T^*\X)}(T_{t}^s(f),T_{t}^s(f'))&=\sfd_{L^0}(|T_{t}^s(g)|,0)\\
\text{(by \eqref{eq:st1})}\qquad&\leq \sfd_{L^0}\Big(G\,\frac1{|s-t|}\int_{t\wedge s}^{t\vee s}\,\big(|{\rm Hess}(g)|\|w_r\|_{L^\infty}+|\nabla w_r||\d g| \big)\circ F_0^r \,\d r,0 \Big)\\
\text{(by \eqref{eq:unifO})}\qquad&\leq \Omega\Big(\sfd_{L^0}\Big(\frac1{|s-t|}\int_{t\wedge s}^{t\vee s}\,\big(|{\rm Hess}(g)|\|w_r\|_{L^\infty}+|\nabla w_r||\d g| \big)\circ F_0^r \,\d r,0 \Big)\Big)\\
&\leq \Omega\Big(\|w_r\|_{L^\infty_{r,x}} \| |{\rm Hess}(g)|\|_{L^2}+\||\d g|\|_{L^2}M(t)\Big)\\
&\leq \Omega\Big(\big(M(t)+1\big)\|f-f'\|_{W^{2,2}}\Big),
\end{split}
\end{equation}
proving the desired uniform continuity for a.e.\ $t\in[0,1]$. Similarly, for $f,f'\in\test\X$ and $t\in[0,1]$ such that $w_t\in L^\infty\cap W^{1,2}_C(T\X)$, starting from
\[
\begin{split}
|\d(\d f(w_t)\circ F_0^t)-\d(\d f'(w_t)\circ F_0^t)|&=|\d(\d (f-f')(w_t)\circ F_0^t)|\\
&\leq G\big(|{\rm Hess}(f-f')|\|w_t\|_{L^\infty}+|\nabla w_t|\,|\d(f-f')|\big),
\end{split}
\]
and arguing as in \eqref{eq:unifTts}, we get
\begin{equation}
\label{eq:unifdd}
\begin{split}
\sfd_{L^0(T^*\X)}\big(\d(\d f(w_t)\circ F_0^t)&,\d(\d f'(w_t)\circ F_0^t)\big)\leq \Omega\big(\big(\|w_t\|_{L^\infty}+\|w_t\|_{W^{1,2}_C}\big)\|f-f'\|_{W^{2,2}}\big).
\end{split}
\end{equation}
Now let $(f_{n})\subset\test\X$ be dense in $\test\X$ with respect to the separable norm of $W^{2,2}(\X)$. Define  $N\subset[0,1]$ as
\[
N:=\bigcup_n N(f_n)\cup \big\{t\ :\ M(t)=+\infty \,\text{or} \ w_t\notin L^\infty\cap W^{1,2}_C(T\X)\big\}
\]
and notice that it is  Borel and negligible. Let $t\notin N$,  $f\in\test\X$  and notice that \eqref{eq:unifTts} and \eqref{eq:unifdd} ensure that for any $n\in\N$ we have
\[
\begin{split}
&\sfd_{L^0(T^*\X)}\big(T_t^s(f),\d (\d f(w_t)\circ F_0^t)\big)\\
&\leq   \Omega\Big(\big(M(t)+1+\|w_t\|_{L^\infty}+\|w_t\|_{W^{1,2}_C}\big)\|f-f_n\|_{W^{2,2}}\Big)+\sfd_{L^0(T^*\X)}\big(T_t^s(f_n),\d (\d f_n(w_t)\circ F_0^t)\big),
\end{split}
\]
thus letting first $s\to t$ and then taking the infimum over $n\in\N$ we conclude that
\[
\lim_{s\to t}T_t^s(f)=\d (\d f(w_t)\circ F_0^t),\qquad in\ L^0(T^*\X).
\]
By the definition of $T_t^s(f)$ and \eqref{eq:conHille}, this proves that $t\notin N(f,t)=N(f)$, i.e.\ concludes the proof.
\end{proof}

Before coming to the main result of the section, we need the following statement about equi-continuity of quadratic forms on $L^0(T\X)$:
\begin{proposition}\label{prop:Qt}
Let $B_i:L^0(T\X)^2\to L^0(\X)$ be a family of $L^0(\X)$-bilinear, symmetric, and continuous maps indexed by $i\in I$, and let $Q_i:L^0(T\X)\to L^0(\X)$ be the associated quadratic forms, i.e.\ $Q_i(v):=B_i(v,v)$. Let $(G_i)_{i\in I}\subset L^0(\X) $ be a family of non-negative functions satisfying \eqref{eq:perunifcont2} and assume that  
\[
|Q_i(v)|\leq G_i|v|^2,\qquad\mm-a.e.,\ \forall v\in L^0(T\X),\ \forall i\in I.
\]
Then the $Q_i$'s are locally equicontinuous, i.e.\
\[
\lim_{v'\stackrel{L^0}\to v}\sup_{i\in I}\sfd_{L^0}(Q_i(v'),Q_i(v))=0,\qquad\forall v\in L^0(T\X).
\]
\end{proposition}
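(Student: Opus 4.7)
The plan is to control $Q_i(v')-Q_i(v)$ pointwise by $G_i$ times quantities that tend to zero in $L^0(\X)$ as $v'\to v$, and then uniformize in $i$ via Lemma~\ref{le:unifcont}. Symmetry and bilinearity give the exact identity $Q_i(v')-Q_i(v)=B_i(v'-v,\,v'+v)$, so everything reduces to a pointwise Cauchy--Schwarz-type bound for $B_i$.

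The key estimate to prove is
\begin{equation*}
|B_i(u,w)|\leq G_i\,|u|\,|w|\qquad\mm\text{-a.e.},\quad\forall u,w\in L^0(T\X).
\end{equation*}
The polarization identity $4B_i(u,w)=Q_i(u+w)-Q_i(u-w)$ together with the hypothesis on $Q_i$ yields at once the weaker bound $|B_i(u,w)|\leq\tfrac{G_i}{2}(|u|^2+|w|^2)$. To sharpen it I would use $L^0$-bilinearity: set $\lambda\in L^0(\X)$ equal to $\sqrt{|w|/|u|}$ on $\{|u|\,|w|>0\}$ and equal to $1$ elsewhere, so that $B_i(u,w)=B_i(\lambda u,w/\lambda)$, and the AM--GM equality $\lambda^2|u|^2=|w|^2/\lambda^2=|u|\,|w|$ on $\{|u|\,|w|>0\}$ delivers the product form (on the complement both sides vanish by bilinearity, since either $u=0$ or $w=0$ $\mm$-a.e.\ there). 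Plugging $u=v'-v$, $w=v'+v$ into the inequality and using $|v'+v|\leq|v'-v|+2|v|$, we obtain
\begin{equation*}
|Q_i(v')-Q_i(v)|\leq G_i\,|v'-v|^2+2\,G_i\,|v|\,|v'-v|\qquad\mm\text{-a.e.}
\end{equation*}

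To conclude, take $v_n\to v$ in $L^0(T\X)$, so $|v_n-v|\to 0$ in $L^0(\X)$. Continuity of multiplication by the fixed function $|v|\in L^0(\X)$ and a standard subsequence argument for the square show that $|v_n-v|^2$ and $|v|\,|v_n-v|$ both tend to $0$ in $L^0(\X)$. Since $(G_i)_{i\in I}$ satisfies \eqref{eq:perunifcont2}, Lemma~\ref{le:unifcont} provides a single modulus $\Omega$ with $\sfd_{L^0}(G_i f,G_i g)\leq\Omega(\sfd_{L^0}(f,g))$ for all $i\in I$. Combining with the subadditivity $\sfd_{L^0}(a+b,0)\leq\sfd_{L^0}(a,0)+\sfd_{L^0}(b,0)$ for non-negative $a,b$, one obtains $\sup_{i\in I}\sfd_{L^0}(Q_i(v_n),Q_i(v))\to 0$ as $n\to\infty$, which is the desired equicontinuity.

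The only real subtlety is the scaling step upgrading the sum bound $\tfrac{G_i}{2}(|u|^2+|w|^2)$ to the sharper product bound $G_i|u|\,|w|$; this is essential, since the weaker bound applied to $u=v'-v$ and $w=v'+v$ leaves a residual term of order $G_i|v|^2$ that does not vanish as $v'\to v$, and only the product form makes genuine equicontinuity (as opposed to local boundedness) possible.
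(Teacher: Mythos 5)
Your proof is correct, and its overall skeleton is the same as the paper's: reduce everything to a pointwise product bound of the form $|B_i(u,w)|\leq c\,G_i|u|\,|w|$ $\mm$-a.e., apply it with $u=v'-v$ (your decomposition $Q_i(v')-Q_i(v)=B_i(v'-v,v'+v)$ is just a rewriting of the paper's $Q_i(z)+2B_i(z,v)$ with $z=v'-v$), and then uniformize over $i$ via Lemma \ref{le:unifcont} together with the monotonicity and subadditivity of $\sfd_{L^0}$. Where you genuinely differ is in how the product bound is obtained. The paper passes to the shifted form $\tilde B_i(v,z):=B_i(v,z)+G_i\la v,z\ra$, notes $0\leq\tilde Q_i(v)\leq 2G_i|v|^2$, and invokes the pointwise Cauchy--Schwarz inequality for non-negative $L^0$-bilinear forms (the displayed inequality there is really meant for $\tilde B_i,\tilde Q_i$, since as written for $B_i,Q_i$ it would require positivity). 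You instead polarize to get the sum bound $\tfrac{G_i}{2}(|u|^2+|w|^2)$ and upgrade it by the $L^0$-homogeneity rescaling $u\mapsto\lambda u$, $w\mapsto\lambda^{-1}w$ with $\lambda=\sqrt{|w|/|u|}$; this is legitimate because $\lambda,\lambda^{-1}\in L^0(\X)$ are positive and finite $\mm$-a.e., and on $\{|u|\,|w|=0\}$ both sides vanish by locality of $L^0$-bilinear maps ($B_i(\nchi_E u,w)=\nchi_E B_i(u,w)$), exactly as you argue. Your route buys a clean constant and avoids having to state and justify the module-valued Cauchy--Schwarz inequality; the paper's route is the standard ``positivize, then Cauchy--Schwarz'' argument and avoids the case distinction on $\{|u|\,|w|=0\}$ and the measurable-scaling bookkeeping. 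One small remark: your polarization step as written uses the pointwise parallelogram identity $|u+w|^2+|u-w|^2=2(|u|^2+|w|^2)$, which is available here since $L^0(T\X)$ is a Hilbert module, but even the crude bound $|u\pm w|^2\leq 2(|u|^2+|w|^2)$ would do, as the constant is irrelevant.
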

\begin{proof}  Let $\tilde B_i(v,z):=B_{i}(v,z)+G_i\la v,z\ra$ and $\tilde Q_i(v):=\tilde B_i(v,v)$. Then 
\[
0\leq \tilde Q_i(v,v)\leq 2G_i|v|^2,\qquad\mm-a.e.,\ \forall v\in L^0(T\X),\ \forall i\in I.
\]
By the pointwise Cauchy--Schwarz inequality (that is easily seen to hold also in this setting), we deduce $|B_i(v,z)|\leq \sqrt{Q_i(v)Q_i(z)}$ $\mm$-a.e.\ for every $v,z\in L^0(T\X)$ and $i\in I$. Thus for arbitrary $v,v'\in L^0(T\X)$, putting $z:=v'-v$ we have
\[
|Q_i(v')-Q_i(v)|=|Q_i(z)+2B_i(z,v)|\leq 2G_i(|z|^2+|z||v|),\qquad\mm-a.e.,\ \forall i\in I
\]
and therefore, recalling that we are in a position to apply Lemma \ref{le:unifcont}, we get
\[
\begin{split}
\sfd_{L^0}(Q_i(v'),Q_i(v))&\leq \sfd_{L^0}\big(2G_i(|z|^2+|z||v|)\big)\stackrel{\eqref{eq:equihi}}\leq\Omega\big( \sfd_{L^0}\big(|z|^2+|z||v|\big)\big).
\end{split}
\]
To conclude, notice that $1\wedge |z|^2\leq 1\wedge |z|$ and thus
\[
\sfd_{L^0}\big(|z|^2+|z||v|,0\big)\leq \sfd_{L^0}(|z|^2,0)+\sfd_{L^0}(|z||v|,0)\leq  \sfd_{L^0}(|z|,0)+\Omega(\sfd_{L^0}(|z|,0))=\Omega(\sfd_{L^0(T\X)}(v',v)),
\]
where the $\Omega$ appearing here depends on $v$ (it is the one in \eqref{eq:unifO} for $G:=|v|$), but does not depend on $v',z,i$. The conclusion follows.
\end{proof}

\begin{theorem}\label{thm:leibconv} Let $(V_t),(Z_t)\in W^{1,2}_{var}([0,1],L^0(T\X))$. Then $t\mapsto \la V_t,Z_t\ra\circ F_0^t$ belongs to $W^{1,2}([0,1],L^0(\X))$ and for a.e.\ $t\in[0,1]$ we have
\begin{equation}
\label{eq:leibconv}
\frac{\d}{\d t}\big(\la V_t,Z_t\ra\circ F_0^t\big)= \la D_tV_t,Z_t\ra \circ F_0^t+ \la V_t,D_tZ_t\ra \circ F_0^t,\qquad  \mm-a.e..
\end{equation}
If moreover $(V_t),(Z_t)\in AC^{2}_{var}([0,1],L^0(T\X))$, then $t\mapsto \la V_t,Z_t\ra\circ F_0^t$ belongs to $AC^{2}([0,1],L^0(\X))$.
\end{theorem}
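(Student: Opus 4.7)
The plan is to reduce \eqref{eq:leibconv} to an explicit computation for vector fields of the form $\d F_0^t(\nabla f)$ with $f\in\test\X$, then extend by density. Applying Proposition \ref{prop:Atutto} to each of $(V_t)$ and $(Z_t)$ separately—relative to the class of those $(v_t)\in W^{1,2}_{fix}$ for which the identity \eqref{eq:leibconv} holds—reduces the problem to the case where both $v_t\equiv\bar v$ and $z_t\equiv \bar z$ are constants in $L^0(T\X)$. The four axioms of Proposition \ref{prop:Atutto} are verified as follows: the vector-space property is immediate from bilinearity; stability under restriction and $C^1$-multiplication uses the computation $D_t(\varphi(t)V_t)=\varphi'(t)V_t+\varphi(t)D_tV_t$, together with the $L^0(\X)$-linearity of $\d F_0^t$ and of the inner product; the closure axiom is the technical one and relies on Proposition \ref{prop:stabl0w12} combined with the uniform bound \eqref{eq:Vtinfty} and the continuity of the pointwise inner product from Proposition \ref{prop:Qt}.

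Once reduced to constants, $\dot v_t=\dot z_t=0$ and \eqref{eq:leibconv} becomes $\partial_t[\la V_t,Z_t\ra\circ F_0^t]=[\la\nabla_{V_t}w_t,Z_t\ra+\la V_t,\nabla_{Z_t}w_t\ra]\circ F_0^t$. A further density argument—exploiting that gradients of test functions generate $L^0(T\X)$ and that the bilinear form $B_t(v,z):=\la\d F_0^t(v),\d F_0^t(z)\ra\circ F_0^t$, together with $(v,z)\mapsto\la\nabla_{\d F_0^t(v)}w_t,\d F_0^t(z)\ra\circ F_0^t$, are jointly continuous in $(v,z)$ by Proposition \ref{prop:Qt}—reduces to $\bar v=\nabla f$ and $\bar z=\nabla g$ for $f,g\in\test\X$. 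By polarization and the identity $V_t+Z_t=\d F_0^t(\nabla(f+g))$, this further reduces to the diagonal case $\bar v=\bar z=\nabla f$, i.e., to showing
\[
\partial_t(|V_t|^2\circ F_0^t)=2\la\nabla_{V_t}w_t,V_t\ra\circ F_0^t \quad\text{for } V_t=\d F_0^t(\nabla f).
\]

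The heart of the proof is this diagonal identity, and its key input is Proposition \ref{prop:mainreg}: the map $t\mapsto\d(f\circ F_0^t)$ belongs to $AC^2([0,1],L^0(T^*\X))$ with derivative $\d(\d f(w_t)\circ F_0^t)$. Combined with the identity $\la V_t,\nabla g\ra\circ F_0^t=\d g(V_t)\circ F_0^t=\d(g\circ F_0^t)(\nabla f)$ from Theorem \ref{thm:diff_LL}, the Leibniz rule \eqref{eq:leibdifftime}, and the second-order formula \eqref{eq:leibgrad} computing $\d(\d g(w_t))(V_t)$, one obtains, for every $g\in\test\X$, the intermediate identity
\[
\partial_t[\la V_t,\nabla g\ra\circ F_0^t]=[{\rm Hess}(g)(V_t,w_t)+\la\nabla_{V_t}w_t,\nabla g\ra]\circ F_0^t.
\]
The main obstacle will be upgrading this pairing-with-$\nabla g$ identity to the full squared-norm statement. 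The approach I envisage is to approximate the cotangent $V_t^\flat\in L^0(T^*\X)$ by finite sums $\sum_i\nchi_{E_i}\d g_i$ with $g_i\in\test\X$, pass the time derivative through the approximation by invoking the closure of $\d$ on bounded subsets of $H_{\phi,R}(\X)$ (Proposition \ref{prop:closure_tilde_d}), and verify that the $\mathrm{Hess}$-terms assemble into $\la\nabla_{V_t}w_t,V_t\ra$ by symmetry of the Hessian. Once the $W^{1,2}$ statement is established, the final $AC^2$ statement follows: if $(V_t),(Z_t)\in AC^2_{var}$, the continuity of $t\mapsto\la V_t,Z_t\ra$ in $L^0(\X)$ is inherited from the continuity of $(V_t),(Z_t)$ via Proposition \ref{prop:Qt}, and continuity of $t\mapsto\la V_t,Z_t\ra\circ F_0^t$ then follows by Lemma \ref{le:contcont}.
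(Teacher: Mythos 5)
The first half of your plan coincides with the paper's: reduce via Proposition \ref{prop:Atutto} to constant $(v_t),(z_t)$ (the closure axiom being checked exactly as you indicate, with Proposition \ref{prop:stabl0w12} and \eqref{eq:Vtinfty}), and then polarize to the diagonal case. The genuine gap is in the step you yourself flag as ``the main obstacle'': computing $\partial_t\big(|V_t|^2\circ F_0^t\big)$ for $V_t=\d F_0^t(v)$. Your intermediate identity $\partial_t\big(\la V_t,\nabla g\ra\circ F_0^t\big)=\big[{\rm Hess}(g)(V_t,w_t)+\la\nabla_{V_t}w_t,\nabla g\ra\big]\circ F_0^t$ is correct (it is essentially Lemma \ref{le:dermezzo}), but it cannot be ``upgraded'' to the squared-norm statement by approximating $V_t^\flat$ with $\sum_i\nchi_{E_i}\d g_i$ and appealing to symmetry of the Hessian. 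The obstruction is structural: in the pairing identity the second factor $\nabla g$ is a \emph{fixed} field evaluated along the moving point, and its lack of parallelism produces the Hessian term; in $|V_t|^2$ both factors are the \emph{same} time-dependent $V_t$, and the contribution of the second factor must be $\la V_t,D_tV_t\ra$, not a Hessian of an approximant. Concretely, if $V_{t_0}\approx\nabla g$ at a fixed time $t_0$, your formula gives $\la\nabla_{V_t}w_t,V_t\ra+{\rm Hess}(g)(V_t,w_t)$ at $t=t_0$, and the extra term ${\rm Hess}(g)(\nabla g,w_t)=\tfrac12\,\d|\nabla g|^2(w_t)$ does not vanish; moreover a $t$-dependent approximation would contribute a time derivative of the approximants that Proposition \ref{prop:closure_tilde_d} does not control.

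The mechanism the paper uses to resolve exactly this point is a one-sided comparison: from Young's inequality, $\tfrac12|\d F_t^s(\nabla f)|^2\circ F_t^s\geq \d f(\d F_t^s(\nabla f))\circ F_t^s-\tfrac12|\d f|^2\circ F_t^s$ with \emph{equality at $s=t$}, so the difference quotient of $|V_s|^2\circ F_t^s$ is bounded from one side (for $s>t$, from the other for $s<t$) by a quantity that is \emph{linear} in the unknown, namely $\d(f\circ F_t^s)(\nabla f)$, whose derivative is supplied by Proposition \ref{prop:mainreg}, plus the known term $|\d f|^2\circ F_t^s$ handled by \eqref{eq:c1RLF}; the squeeze then yields the diagonal identity on the dense class of $v$'s of the form $\sum_i\nchi_{E_i}\nabla f_i$. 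This is the key idea missing from your argument. A second, related gap is the extension from this dense class to arbitrary $v\in L^0(T\X)$: pointwise (in $t$) continuity of the bilinear forms, which is what your appeal to Proposition \ref{prop:Qt} provides, is not enough to pass a derivative statement to the limit. One needs equicontinuity of the difference-quotient quadratic forms $Q_t^s(v)=\frac{|\d F_t^s(v)|^2\circ F_t^s-|v|^2}{2(s-t)}$ \emph{uniformly in $s$}, which the paper obtains by applying Proposition \ref{prop:Qt} with the index set being the increments $s$ and with the uniform $L^2$-control on $\hat g_t^s=\frac1{|s-t|}\int_{s\wedge t}^{s\vee t}g_r\circ F_t^r\,\d r$ coming from the Hardy--Littlewood maximal inequality, together with the one-sided structure of the convergence ($a^-\leq b^-+|a-b|$). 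Without these two ingredients your proof does not close.
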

\begin{proof}\ \\
\noindent{\sc Step 1: structure of the proof.} The case of absolutely continuous vector fields follows from the Sobolev one provided we show that for $(V_t),(Z_t)\in C([0,1],L^0(T\X))$ the map $t\mapsto \la V_t,Z_t\ra\circ F_0^t\in L^0(\X)$  is continuous. This follows noticing that, rather trivially, the map $t\mapsto \la V_t,Z_t\ra\in L^0(\X)$  is continuous, then recalling Lemma \ref{le:contcont}. We thus focus on the Sobolev case.

For $(z_t)\in W^{1,2}_{fix}$ denote by $\mathcal A_{(z_t)}\subset W^{1,2}_{fix}$ the collection of those $(v_t)$'s for which the conclusion of the theorem holds for $V_t:=\d F_0^t(v_t)$ and $Z_t:=\d F_0^t(z_t)$. Our goal is to prove that $\mathcal A_{(z_t)}= W^{1,2}_{fix}$ for any $(z_t)\in W^{1,2}_{fix}$ and we shall do so by applying Proposition \ref{prop:Atutto}.

It is clear that $\mathcal A_{(z_t)}$ is a vector space, i.e.\ $(o)$ holds, and - recalling \eqref{eq:l0lindf} - that the `restriction' property $(i)$ holds as well. Also, by direct computation we can verify that $(ii)$ holds. We pass to the stability property $(iii)$, thus let $(v^n_t)\stackrel{W^{1,2}_{fix}}\to(v^\infty_t)$,  and assume that the conclusion of the theorem  holds for $n<\infty$ and the choices $V_t=V^n_t:=\d F_0^t(v^n_t)$ and $Z_t:=\d F_0^t(z_t)$. Our goal is to prove that the conclusion also  holds for $V_t=V^\infty_t:=\d F_0^t(v^\infty_t)$. 

Since $(v^n_t)\to (v^\infty_t)$ and $(\dot v^n_t)\to (\dot v^\infty_t)$ in $L^2_{fix}\hookrightarrow L^0_{fix}$, Proposition \ref{prop:dFtot2} and the very definition \eqref{eq:defDt} tell that $(V^n_t)\to (V^\infty_t)$ and  $(D_tV^n_t)\to (D_tV^\infty_t)$ in $L^0_{var}$. It follows that
\begin{equation}
\label{eq:convinl0}
\begin{array}{ccc}
\la V^n_t,Z_t\ra \circ F_0^t\quad&\to&\quad \la V^\infty_t, Z_t\ra\circ F_0^t\\
\la D_tV^n_t, Z_t\ra\circ F_0^t+\la V^n_t,D_tZ_t\ra\circ F_0^t\quad&\to&\quad \la D_tV^\infty_t, Z_t\ra\circ F_0^t+\la V^\infty_t,D_tZ_t\ra\circ F_0^t
\end{array}
\end{equation}
in $L^0([0,1],L^0(\X))$. Now notice that 
\[
|(\la V^n_t,Z_t\ra \circ F_0^t)|_{L^2}\leq |(V^n_t)|_{L^2_{var}}|(Z_t)|_{L^\infty_{var}}\stackrel{\eqref{eq:Vtinfty}}\leq G |(V^n_t)|_{W^{1,2}_{var}}|(Z_t)|_{W^{1,2}_{var}},\qquad\mm-a.e.
\]
and 
\[
\begin{split}
|(\la D_tV^n_t, Z_t\ra\circ F_0^t+\la V^n_t,D_tZ_t\ra\circ F_0^t)|_{L^2}&\leq  |(D_tV^n_t)|_{L^2_{var}}|(Z_t)|_{L^\infty_{var}}+ |(V^n_t)|_{L^\infty_{var}}|(D_tZ_t)|_{L^2_{var}}\\
\text{(by \eqref{eq:Vtinfty})}\qquad&\leq G |(V^n_t)|_{W^{1,2}_{var}}|(Z_t)|_{W^{1,2}_{var}},\qquad\mm-a.e.,
\end{split}
\]
i.e., recalling also \eqref{eq:covtot}, we have 
\[
|(\la V^n_t,Z_t\ra \circ F_0^t)|_{W^{1,2}}\leq G |(v^n_t)|_{W^{1,2}_{fix}}|(Z_t)|_{W^{1,2}_{var}},\qquad\mm-a.e..
\]
Since $|(v^n_t)|_{W^{1,2}_{fix}}\to |(v^\infty_t)|_{W^{1,2}_{fix}}$ (as a simple consequence  of $(v^n_t)\stackrel{W^{1,2}_{fix}}\to (v^\infty_t)$), thanks to Proposition \ref{prop:stabl0w12} the conclusion follows from this latter estimate and \eqref{eq:convinl0}.

It remains to prove that $\mathcal A_{(z_t)}$ has property $(iv)$ in Proposition \ref{prop:Atutto}. Suppose that we know that this is the case  for $(z_t)$ constant. Then the argument above shows that for $(z_t)$ constant we have $\mathcal A_{(z_t)}=W^{1,2}_{fix}$. Therefore - by the symmetry in $(V_t),(Z_t)$ of the statement - we know that for $(z_t)\in W^{1,2}_{fix}$ arbitrary, the set $\mathcal A_{(z_t)}$ contains the constant vector fields. Hence again the argument above shows that $\mathcal A_{(z_t)}=W^{1,2}_{fix}$, which is  the conclusion.

We thus showed that it is sufficient to prove that for $v,z\in L^0(T\X)$, the conclusion of the theorem holds for $V_t:=\d F_0^t(v)$ and $Z_t:=\d F_0^t(z)$. By polarization, it is actually sufficient to consider the case $v=z$.

\noindent{\sc Step 2: key point.} To conclude the proof we need to   prove that for $v\in L^0(T\X)$, putting $V_t:=\d F_0^t(v)$ we have that $t\mapsto |V_t|^2\circ F_0^t$ is in $W^{1,2}([0,1],L^0(\X))$ and for a.e.\ $t\in[0,1]$ we have 
\begin{equation}
\label{eq:formulaleib}
\frac{\d}{\d t}\tfrac12|V_t|^2\circ F_0^t=\la V_t,D_t V_t\ra\circ F_0^t,\qquad\mm-a.e..
\end{equation}
{\sc Step 2a: Sobolev regularity.}
For the Sobolev regularity we start recalling from \eqref{eq:compLp} that $|(V_t)|_{L^\infty_{var}}<\infty$ $\mm$-a.e.\ and  that the group property \eqref{eq:groupRLF} and the chain rule \eqref{eq:chain_rule_diff_LL} give $V_s=\d F_t^s(V_t)$ for any $t,s\in[0,1]$. Then for any $t,s\in[0,1]$  we have
\[
\begin{split}
|V_s|^2\circ F_0^s-|V_t|^2\circ F_0^t&=(|V_s|\circ F_0^s+|V_t|\circ F_0^t)(|V_s|\circ F_0^s-|V_t|\circ F_0^t)\\
&\leq2 |(V_r)|_{L^\infty_{var}}\big(|\d F_t^s(V_t)|\circ F_0^s-|V_t|\circ F_0^t\big)\\
\text{(by \eqref{eq:pointwise_norm_estimate}, \eqref{eq:estdFt3})}\qquad&\leq 2 |(V_r)|^2_{L^\infty_{var}} G\int_{t\wedge s}^{t\vee s}g_r\circ F_0^r\,\d r
\end{split}
\]
$\mm$-a.e.. Swapping the roles of $t,s$ and recalling \eqref{eq:compLp} we get
\begin{equation}
\label{eq:persobreg2}
\big||V_s|^2\circ F_0^s-|V_t|^2\circ F_0^t\big|\leq  |v|^2   G\int_t^sg_r\circ F_0^r\,\d r,\qquad\mm-a.e.,\ \forall t,s\in[0,1],\ t\leq s.
\end{equation}
Since $(t \mapsto g_t \circ F_0^t) \in L^2([0,1],L^0(\X))$ as a consequence of the fact that $(g_t)\in L^2([0,1],L^0(\X))$ (by Proposition \ref{prop:eliadanieleregularity}) and \eqref{eq:intflo2}, this last estimate, thanks to the characterization in \eqref{eq:charw12bound}, is sufficient to deduce that $t\mapsto |V_t|^2\circ F_0^t$ is in $W^{1,2}([0,1],L^0(\X))$.

Now recall that \eqref{eq:charder}   gives that $\frac{|V_{t+h}|^2\circ F_0^{t+h}-|V_t|^2\circ F_0^t}{h}\to \frac{\d}{\d t}(|V_t|^2\circ F_0^t)$ in $L^0(\X)$ as $h\to 0$ for a.e.\ $t\in[0,1]$. Therefore to conclude that \eqref{eq:formulaleib} holds it is sufficient to prove that for a.e.\ $t\in[0,1]$ we have
\begin{equation}
\label{eq:perleibl0}
\begin{split}
\lims_{h\downarrow0}\sfd_{L^0}\Big(\Big(\frac{|V_{t+h}|^2\circ F_0^{t+h}-|V_t|^2\circ F_0^t}{2h}-\la D_t V_t,V_t\ra\circ F_0^t\Big)^-,0\Big)&=0,\\
\lims_{h\uparrow0}\sfd_{L^0}\Big(\Big(\frac{|V_{t+h}|^2\circ F_0^{t+h}-|V_t|^2\circ F_0^t}{2h}-\la D_t V_t,V_t\ra\circ F_0^t\Big)^+,0\Big)&=0,
\end{split}
\end{equation}
as these would imply that for a.e.\ $t$ we have $\sfd_{L^0}\big(\big(\frac{\d}{\d t}(\frac{1}{2}|V_t|^2\circ F_0^t)-\la D_t V_t,V_t\ra\circ F_0^t\big)^\mp,0\big)=0$, i.e.\ that  $\frac{\d}{\d t}(\frac{1}{2}|V_t|^2\circ F_0^t)=\la D_t V_t,V_t\ra\circ F_0^t$.

Now define 
\[
Q_t^s(v):=\frac{|\d F_t^s(v)|^2\circ F_t^s-|v|^2}{2(s-t)}
\]
and observe that the identity $V_s=\d F_t^s(V_t)$ that we already noticed gives $Q_t^s(V_t)=\frac{|V_{s}|^2\circ F_t^s-|V_t|^2}{2(s-t)}$. Moreover, since $D_tV_t=\nabla_{V_t}w_t$ by the very definition \eqref{eq:defDt} of convective derivative, we have $\la D_tV_t,V_t\ra=\la \nabla w_t,V_t\otimes V_t\ra$. Therefore recalling also \eqref{eq:equil0}, the claim \eqref{eq:perleibl0} will follow if we show that for a.e.\ $t\in[0,1]$ we have that for any $v\in L^0(T\X)$ it holds
\begin{equation}
\label{eq:perleibl02}
\begin{split}
\lims_{s\uparrow t}\sfd_{L^0}\big(\big(Q_t^s(v)-\la \nabla w_t,v\otimes v\ra\big)^+,0\big)=0=\lims_{s\downarrow t}\sfd_{L^0}\big(\big(Q_t^s(v)-\la \nabla w_t,v\otimes v\ra\big)^-,0\big).
\end{split}
\end{equation}
{\sc Step 2b: convergence properties on a dense set.} Let $N\subset[0,1]$ be the Borel negligible set given by Proposition \ref{prop:mainreg} and up to enlarge it, by keeping it negligible, assume that it contains the exceptional set of $s$'s for which \eqref{eq:c1RLF} does not hold. We claim that there is a  dense set $\mathcal D\subset L^0(T\X)$ for which the claim \eqref{eq:perleibl02} holds for any $v\in\mathcal D$ and $t\in[0,1]\setminus N$.

Fix $t\in[0,1]\setminus N$ and let  $v=\nabla f$ for $f\in\test\X$. Then, since  for any $s\in[0,1]$ we have
\[
\tfrac12|\d F_t^s(\nabla f)|^2\circ F_t^s\geq \d f(\d F_t^s(\nabla f))\circ F_t^s-\tfrac12|\d f|^2\circ F_t^s,\qquad\mm-a.e.
\]
with equality for $s=t$, we deduce that
\[
Q_{t}^s(\nabla f)\geq \underbrace{\frac{\d f(\d F_t^s(\nabla f))\circ F_t^s-|\d f|^2}{s-t}-\frac{\tfrac12|\d f|^2\circ F_t^s-\tfrac12|\d f|^2}{s-t}}_{=:RHS(s)},\qquad\mm-a.e.\ for\ s>t,
\]
 with opposite inequality for $s<t$. Using the identity $\d f(\d F_t^s(\nabla f))\circ F_t^s=\d(f\circ F_t^s)(\nabla f)$ and Proposition \ref{prop:mainreg} to handle the first addend, and \eqref{eq:c1RLF} and the fact that $|\d f|^2\in W^{1,2}(\X)$ (recall \eqref{eq:leibgrad}) for the second, we see that
 \[
\lim_{s\to t}RHS(s)= \d(\d f(w_t))(\nabla f)-\d(\tfrac12|\d f|^2)(w_t)=\la\nabla w_t,\nabla f\otimes\nabla f\ra,\qquad\text{in }L^0(\X).
 \]
Thus from the trivial implication $a\geq b\Rightarrow (a-c)^-\leq |b-c|$ valid for real numbers $a,b,c$, we get
\[
\lims_{s\downarrow t}\sfd_{L^0}\big(\big(Q_t^s(v)-\la \nabla w_t,v\otimes v\ra\big)^-,0\big)\leq \lims_{s\downarrow t}\sfd_{L^0}\big(\big|RHS(s)-\la \nabla w_t,v\otimes v\ra \big|,0\big)= 0,
\]
which is the second in \eqref{eq:perleibl02}. The first follows from the same arguments starting from the  implication $a\leq b\Rightarrow (a-c)^+\leq |b-c|$.

Now observe that if $(E_i)$ is a finite Borel partition of $\X$ and $(v_i)\subset L^0(T\X)$, for $v:=\sum_i\nchi_{E_i}v_i$ we have  $Q_t^s(v)=\sum_i\nchi_{E_i}Q_t^s(v_i)$ (recall \eqref{eq:l0lindf}). Similarly, $\la\nabla w_t,v\otimes v\ra=\sum_i\nchi_{E_i}\la \nabla w_t,v_i\otimes v_i\ra$. Thus if $v_i=\nabla f_i$ for $f_i\in{\rm Test}(\X)$  and every $i$, using what already established we get
\[
\begin{split}
\lims_{s\downarrow t}\sfd_{L^0}\big(\big(Q_t^s(v)-\la \nabla w_t,v\otimes v\ra\big)^-,0\big)&=\lims_{s\downarrow t}\sfd_{L^0}\Big(\sum_i\nchi_{E_i}\big(Q_t^s(\nabla f_i)-\la \nabla w_t,\nabla f_i\otimes \nabla f_i\ra\big)^-,0\Big)\\
&\leq\sum_i\lims_{s\downarrow t}\sfd_{L^0}\Big(\nchi_{E_i}\big(Q_t^s(\nabla f_i)-\la \nabla w_t,\nabla f_i\otimes \nabla f_i\ra\big)^-,0\Big)\\
&\leq\sum_i\lims_{s\downarrow t}\sfd_{L^0}\Big(\big(Q_t^s(\nabla f_i)-\la \nabla w_t,\nabla f_i\otimes \nabla f_i\ra\big)^-,0\Big)=0,
\end{split}
\]
which is the second in \eqref{eq:perleibl02}. The first follows along similar lines.

In summary, we proved that \eqref{eq:perleibl02} holds for any $v$ in the dense set $\mathcal D\subset L^0(T\X)$ made of those vectors of the form $v:=\sum_i\nchi_{E_i}\nabla f_i$, where the sum is finite and $E_i,f_i$ are as above.\\ 
{\sc Step 2c: equicontinuity and conclusion.}  Put for brevity
\[
\tilde Q_t^s(v):=Q_t^s(v)-\la\nabla w_t,v\otimes v\ra\qquad\forall v\in L^0(T\X), t,s\in[0,1],\ t\neq s.
\]
We  claim that for a.e.\ $t\in[0,1]$ we have that for any given $v\in L^0(T\X)$  the $\tilde Q_t^s$'s are equicontinuous at $v$, i.e.\ that
 \begin{equation}
\label{eq:unifcontQt}
\lim_{v'\stackrel{L^0}\to v}\sup_{s\in[0,1]\setminus\{t\}}\sfd_{L^0}\big(\tilde Q_t^s(v'),\tilde Q_t^s(v)\big)=0,
\end{equation}
 and to prove this we are going to apply Proposition \ref{prop:Qt} above. Start observing that arguing as for  \eqref{eq:persobreg2} we get
 \begin{equation}
\label{eq:qts}
 |Q_t^s(v)|\leq |v|^2 G_t\int_{t\wedge s}^{t\vee s}g_r\circ F_t^r\,\d r,\qquad\mm-a.e.,\ \forall t,s\in[0,1],\ s\neq t,\ v\in L^0(T\X),
\end{equation}
then  notice that for any  $s\in[0,1]\setminus\{t\}$,  $\tilde Q_t^s$ is a quadratic form induced by  a bilinear form that we shall denote by $B_t^s:L^0(T\X)^2\to L^0(\X)$. By \eqref{eq:l0lindf} we see that $B_t^s$ is $L^0(\X)$-bilinear and \eqref{eq:qts}  tells that
 \begin{equation}
\label{eq:perequicont}
 |\tilde Q_t^s(v)|\leq |\nabla w_t||v|^2+ G_t |v|^2\,\underbrace{\frac1{|s-t|}\int_{s\wedge t}^{s\vee t}g_r\circ F_t^r\,\d r}_{=:\hat g_t^s},\qquad\mm-a.e.,\ \forall t,s\in[0,1],\ s\neq t,\ v\in L^0(T\X).
\end{equation}
 Now put 
 \[
 M(t):=\sup_{s\in[0,1]\setminus \{t\}}\frac  1{|s-t|}\int_{s\wedge t}^{s\vee t}\|g_r^2\|_{L^2}^2\,\d r
 \]
 and notice that since $\int_0^1\|g_r^2\|_{L^2}^2\,\d r<\infty$ (by Proposition \ref{prop:eliadanieleregularity}), the Hardy--Littlewood maximal inequality ensures that $M(t)<\infty$ for a.e.\ $t\in[0,1]$. Then for every  $s\in[0,1]\setminus\{t\}$ we have
 \begin{equation}
\label{eq:boundghatts}
\|\hat g_t^s\|_{L^2}^2\leq\frac1{|s-t|}\int_{s\wedge t}^{s\vee t}\int g_r^2\circ F_t^r\,\d\mm\,\d r\stackrel{\eqref{eq:boundcompr}}\leq \frac  C{|s-t|}\int_{s\wedge t}^{s\vee t}\int g_r^2\,\d\mm\,\d r\leq CM(t),
\end{equation}
thus  Chebyshev's inequality gives the uniform control
 \begin{equation}
\label{eq:percontl0}
 \mm(\{|\hat g_t^s|\geq c\})= \mm(\{|\hat g_t^s|^2\geq c^2\})\leq\frac{\|\hat g_t^s\|_{L^2}^2}{c^2}\leq \frac{CM(t)}{c^2}\qquad\forall s\in[0,1]\setminus\{t\}.
\end{equation}
 Now notice that the set  $N':=N\cup\{t:M(t)=\infty\}\cup\{t:w_t\notin W^{1,2}_C(T\X)\}$  is Borel and negligible and fix $t\in[0,1]\setminus N'$. From \eqref{eq:percontl0}, $\mm'\ll\mm$, and the absolute continuity of the integral, we see that the functions $G_t^s:=|\nabla w_t|+G_t\hat g_t^s$, parametrized by $s\in[0,1]\setminus\{t\}$, satisfy \eqref{eq:perunifcont2} (with $i=s$). Thus the claim \eqref{eq:unifcontQt} follows from Proposition \ref{prop:Qt} and the bound \eqref{eq:perequicont}.
 
 Now let $v\in L^0(T\X)$ be arbitrary and $v'\in\mathcal D$. Using the trivial bound $a^-\leq b^-+|a-b|$ valid for any $a,b\in\R$, we get 
\[
\begin{split}
\lims_{s\downarrow t}\sfd_{L^0}\big((\tilde Q_t^s(v))^-,0\big)&\leq \lims_{s\downarrow t}\sfd_{L^0}\big((\tilde Q_t^s(v'))^-,0\big)+\lims_{s\downarrow t}\sfd_{L^0}\big(|\tilde Q_t^s(v)-\tilde Q_t^s(v')|,0\big) \\
\text{(by {\sc Step 2b} and $v'\in\mathcal D$)}\qquad&\leq\sup_{s\in[0,1]\setminus \{t\}}\sfd_{L^0}\big(\tilde Q_t^s(v),\tilde Q_t^s(v')\big).
\end{split}
\]
Taking the limit as $v'\to v$, $v'\in \mathcal D$ and using \eqref{eq:unifcontQt}, we get the second in \eqref{eq:perleibl02}. The first is proved analogously, by exploiting the fact that, for every $a,b \in \R$, $a^+ \leq b^+ + |a-b|$.
\end{proof}
\begin{remark}{\rm
In the last step of the proof we showed that the functions $\hat g_t^s$ defined in \eqref{eq:perequicont} satisfy the bound \eqref{eq:boundghatts}. Although not needed for our purposes, it is worth to point out that in fact the stronger estimate
\begin{equation}
\label{eq:strongergts}
\iint_0^1H_t^2\,\d t\,\d \mm\leq C\iint_0^1g^2_t\,\d t\,\d \mm\qquad \text{for}\qquad H_t:=\sup_{s\neq t}\hat g_t^s,  
\end{equation}
holds, for some universal constant $C$ depending only on the constant in \eqref{eq:boundcompr}. Indeed, putting $H'_t:=\sup_{s\neq t}\hat g_t^s\circ F_0^t=H_t\circ F_0^t$, from \eqref{eq:boundcompr} we have $\iint_0^1H_t^2\,\d t\,\d \mm\leq C\iint_0^1H'^2_t\,\d t\,\d \mm$. Now observe that  the Hardy--Littlewood maximal inequality gives
\[
\begin{split}
\int_0^1H_t'^2\,\d t=\int_0^1\Big|\sup_{s\neq t}\frac  1{|s-t|}\int_{s\wedge t}^{s\vee t} g_r\circ F_0^r\,\d r\Big|^2\,\d t
\leq C\int_0^1  g^2_t\circ F_0^t\,\d t
\end{split}
\]
so that the claim follows by integration in $\mm$ and using again  \eqref{eq:boundcompr}.
}\fr\end{remark}

\subsection{Existence and uniqueness of Parallel Transport}

We introduce the notion of parallel transport for what concerns our setting.

\begin{definition}[Parallel Transport] A Parallel Transport along the flow $(F_t^s)$ of the vector field $(w_t)$ is a vector field $(V_t)\in AC^2_{var}([0,1],L^0(T\X))$ such that
\[
D_tV_t=0,\qquad\mm-a.e.,\ for\ a.e.\ t\in[0,1].
\]
We say that the Parallel Transport $(V_t)$ \emph{starts} from $\bar V\in L^0(T\X)$ provided $V_0=\bar V$.
\end{definition}
The linearity of the condition of being a Parallel Transport together with the Leibniz rule \eqref{eq:leibconv} easily gives:
\begin{proposition}[Uniqueness and preservation of norm]
Let $(V_t)$ be a Parallel Transport. Then the map $t\mapsto |V_t|\circ F_0^t\in L^0(\X)$ is constant.

Moreover, for any $\bar V\in L^0(T\X)$ there exists at most one Parallel Transport starting from $\bar V$.
\end{proposition}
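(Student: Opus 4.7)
The plan is to reduce both claims to the Leibniz-type rule for the convective derivative established in Theorem \ref{thm:leibconv}. For the preservation of norm, I would apply that theorem with $Z_t:=V_t$: since $(V_t)\in AC^2_{var}$, the resulting map $t\mapsto \langle V_t,V_t\rangle\circ F_0^t=|V_t|^2\circ F_0^t$ lies in $AC^2([0,1],L^0(\X))$ with derivative
\[
\tfrac{\d}{\d t}\big(|V_t|^2\circ F_0^t\big)=2\langle D_tV_t,V_t\rangle\circ F_0^t=0\qquad\mm\text{-a.e., a.e.\ }t,
\]
by the Parallel Transport condition $D_tV_t=0$. From the definition of $AC^2([0,1],L^0(\X))$, for $\mm$-a.e.\ $x$ the scalar curve $t\mapsto(|V_t|^2\circ F_0^t)(x)$ belongs to $W^{1,2}(0,1)$ and admits an absolutely continuous representative whose classical derivative vanishes a.e., so it is constant on $[0,1]$; taking square roots yields the constancy of $t\mapsto|V_t|\circ F_0^t$ in $L^0(\X)$.

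For uniqueness, I would let $(V_t),(V'_t)$ be two Parallel Transports starting from the same $\bar V\in L^0(T\X)$. Since $AC^2_{var}$ is a vector space (as $AC^2_{fix}$ is and $\d F_0^t$ is linear) and $D_t$ is linear in its argument by definition \eqref{eq:defDt}, the difference $(V_t-V'_t)$ belongs to $AC^2_{var}$ and satisfies $D_t(V_t-V'_t)=0$ a.e.\ $t$, hence it is itself a Parallel Transport, with $V_0-V'_0=0$. Applying the norm preservation just established gives
\[
|V_t-V'_t|\circ F_0^t=|V_0-V'_0|=0\qquad\mm\text{-a.e.,}\ \forall t\in[0,1].
\]
Post-composing with $F_t^0$, using the group property \eqref{eq:groupRLF} in the form $F_0^t\circ F_t^0=\Id$ $\mm$-a.e.\ and the bounded compression bound \eqref{eq:ftmmp}, I get $|V_t-V'_t|=0$ $\mm$-a.e.\ for every $t$, i.e.\ $V_t=V'_t$ in $L^0(T\X)$.

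Both halves of the argument hinge entirely on Theorem \ref{thm:leibconv}, which has already done the heavy lifting; no further obstacle remains. The only small point to verify carefully is that the Leibniz rule here outputs an element of $AC^2$ rather than merely $W^{1,2}$, which is exactly the upgraded statement of Theorem \ref{thm:leibconv} when both inputs lie in $AC^2_{var}$, and is what justifies passing to the pointwise continuous representative and concluding constancy from a.e.-zero derivative.
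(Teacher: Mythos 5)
Your argument is correct and is essentially the paper's proof: both halves reduce to Theorem \ref{thm:leibconv} (norm preservation from $D_tV_t=0$, then uniqueness by applying it to the difference of two Parallel Transports, using linearity of $D_t$). The only cosmetic difference is that the paper concludes constancy directly from the characterization \eqref{eq:charAC} of $AC^2([0,1],L^0(\X))$ (which gives $f_s=f_t$ for \emph{every} $t,s$ at once), whereas you argue via the pointwise a.e.\ representative, and you spell out the post-composition with $F_t^0$ that the paper leaves implicit; both are fine.
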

\begin{proof}
By Theorem \ref{thm:leibconv}, if $(V_t)$ is a Parallel Transport then $t\mapsto |V_t|^2\circ F_0^t\in L^0(\X)$ is in $AC^2([0,1],L^0(\X))$ with null derivative. By \eqref{eq:charAC} such map is constant.

Now let $(V^1_t),(V^2_t)$ be two Parallel Transports starting from $\bar V$. Then $t\mapsto V_t:=V^1_t-V^2_t$ is a Parallel Transport starting from 0. By what already proved we deduce that $|V_t|\circ F_0^t=|V_0|=0$ $\mm$-a.e.\ for every $t\in[0,1]$, i.e.\ $V^1_t=V^2_t$ for every $t\in[0,1]$, as claimed.
\end{proof}
We turn to the problem of existence, that will be addressed by transforming it into an appropriate ODE-like problem in $L^2_{fix}$. Let us fix some terminology. A family $(\ell_t)_{t\in[0,1]}$ of  $L^0$-linear and continuous maps from $L^0(T\X)$ into itself is said \emph{Borel} provided $t\mapsto \ell_t(v_t)$ is in  $L^0_{fix}([0,1],L^0(T\X))$ for any $(v_t)\in  L^0_{fix}([0,1],L^0(T\X))$ (arguing as in the proof of Proposition \ref{prop:dFtot2} this is equivalent to asking that $t\mapsto\ell_t(v)$ is in  $L^0_{fix}([0,1],L^0(T\X))$ for any $v\in L^0(T\X)$).  Recalling that for any $L^0$-linear and continuous map $\ell:L^0(T\X)\to L^0(T\X)$ we have $|\ell|=\sup_n\ell(v_n)$ $\mm$-a.e.\ for an appropriate choice of the countable set $\{v_n\}_{n\in\N}\subset L^0(T\X)$, if $(\ell_t)$ is a Borel family then the map $t\mapsto |\ell_t|\in L^0(\X)$ is also Borel.

Assume now that the Borel family $(\ell_t)$ satisfies
\begin{equation}
\label{eq:elll2}
\int_0^1|\ell_t|^2\,\d t<\infty,\qquad\mm-a.e.
\end{equation}
and notice that in this case for any $(v_t)\in L^2_{fix}([0,1],L^0(T\X))$ we have 
\begin{equation}
\label{eq:perL}
\int_0^1|\ell_t(v_t)|\,\d t\leq\int_0^1|\ell_t|\,|v_t|\,\d t\leq|(v_t)|_{L^2_{fix}}\sqrt{\int_0^1|\ell_t|^2\,\d t}\qquad\mm-a.e.
\end{equation}
and thus $(\ell_t(v_t))\in L^1_{fix}$. Hence, recalling the concept of integral of elements in $L^1_{fix}$ discussed in Section \ref{se:intmod}, for any $t\in[0,1]$ we can define 
\begin{equation}
\label{eq:defL}
L_t((v_r)):=\int_0^t\ell_r(v_r)\,\d r\in L^0(T\X)
\end{equation}
and it is clear that $L_t:L^2_{fix}\to L^0(T\X)$ is $L^0(\X)$-linear and continuous. We shall write $L_t:=\int_0^t\ell_r\,\d r$ for the operator defined by the above formula. Notice that from
\[
|L_s((v_r))-L_t((v_r))|=\big|\int_t^s\ell_r(v_r)\,\d r\big|\leq \int_t^s|\ell_r(v_r)|\,\d r \qquad\mm-a.e.
\]
it follows that $t\mapsto L_t((v_r))\in L^0(T\X)$ is continuous w.r.t.\  the $L^0(T\X)$-topology. 

We call $L:L^2_{fix}\to C([0,1],L^0(T\X))\subset L^0_{fix}$ the resulting map, i.e.\ 
\begin{equation}
\label{eq:defLL}
L((v_r))_t:=L_t((v_r)),\qquad\forall t\in[0,1].
\end{equation} 
Notice that \eqref{eq:perL} ensures that $L((v_r))\in L^\infty_{fix}\subset L^2_{fix}$ for any $(v_t)\in L^2_{fix}$. Below, to simplify the notation, we shall write $L(v)$ in place of $L((v_t))$, as well as $L_t(v)$ in place of $L_t((v_r))$.
\begin{proposition}\label{prop:perPT}
Let $\ell_t:L^0(T\X)\to L^0(T\X)$ be a Borel family of $L^0$-linear and continuous maps from $L^0(T\X)$ into itself satisfying \eqref{eq:elll2} and define $L:L^2_{fix}([0,1],L^0(T\X))\to L^2_{fix}([0,1],L^0(T\X))$ as in \eqref{eq:defLL}, \eqref{eq:defL}. 
Then:
\begin{itemize}
\item[i)] for every $(z_t)\in L^2_{fix}([0,1],L^0(T\X))$ there is a unique $(v_t)\in  L^2_{fix}([0,1],L^0(T\X))$ such that
\begin{equation}
\label{eq:vdaz}
v_t=L_t(v)+z_t,\qquad a.e.\ t,\ \mm-a.e..
\end{equation}
\item[ii)] if $(z_t)\in AC^2_{fix}([0,1],L^0(T\X))$, then the unique $(v_t)\in L^2_{fix}([0,1],L^0(T\X))$ solving \eqref{eq:vdaz} admits a continuous representative in $AC^2_{fix}([0,1],L^0(T\X))$ for which \eqref{eq:vdaz} holds for every $t\in[0,1]$ and moreover
\begin{equation}
\label{eq:dervz}
\left\{\begin{array}{rll}
\dot v_t&=\ell_t(v_t)+\dot z_t,&\qquad a.e.\ t,\\
v_0&=z_0.&
\end{array}\right.
\end{equation}
\end{itemize}
\end{proposition}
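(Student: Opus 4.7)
The plan is to solve the fixed-point equation \eqref{eq:vdaz} by Picard iteration in the complete space $L^2_{fix}([0,1],L^0(T\X))$, and then to upgrade the solution to $AC^2_{fix}$ when $(z_t)\in AC^2_{fix}$.

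For (i), I would set $v^0:=z$ and $v^{n+1}:=z+L(v^n)$, so that $v^n=\sum_{k=0}^n L^k(z)$ (with $L^0:=\id$). Everything rests on the Volterra-style factorial bound
\[
|L^n(u)|_{L^2_{fix}}^2\leq\frac{G^n}{n!}|u|_{L^2_{fix}}^2\qquad\mm\text{-a.e.,}\qquad\forall u\in L^2_{fix},\ n\in\N,
\]
where $G:=\int_0^1|\ell_t|^2\,\d t\in L^0(\X)$ is $\mm$-a.e.\ finite by \eqref{eq:elll2}. The key pointwise-in-$x$ observation is that Cauchy--Schwarz, together with $|\ell_r(u_r)|\leq|\ell_r||u_r|$, yields $|L_t(u)|^2\leq G\int_0^t|u_r|^2\,\d r$; setting $a_n(t):=\int_0^t|L^n(u)_r|^2\,\d r$ one obtains $a_{n+1}(t)\leq G\int_0^t a_n(r)\,\d r$, and induction using the monotonicity of $a_0$ gives $a_n(t)\leq G^n t^n a_0(t)/n!$, whence the bound after setting $t=1$.

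From the factorial bound, $(v^n)$ is a Cauchy sequence in $L^2_{fix}$ (the pointwise $L^0(\X)$-norms of the tails are dominated by an $\mm$-a.e.\ convergent series), so by completeness of $L^2_{fix}$ it converges to some $v\in L^2_{fix}$. The continuity of $L:L^2_{fix}\to L^2_{fix}$, which follows from the case $n=1$ of the factorial bound combined with Lemma \ref{le:unifcont}, allows me to pass to the limit in $v^{n+1}=z+L(v^n)$, yielding $v=z+L(v)$. Uniqueness is then immediate: any $u\in L^2_{fix}$ with $u=L(u)$ satisfies $u=L^n(u)$ for every $n$, and the factorial bound forces $u=0$ as $n\to\infty$.

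For (ii), I would first note that both $z\in AC^2_{fix}\subset L^\infty_{fix}$ (by \eqref{eq:perlinfty2}) and $L(v)\in L^\infty_{fix}$ (since $|L(v)_t|^2\leq G\cdot|v|_{L^2_{fix}}^2$ is bounded uniformly in $t$, as a byproduct of the argument above), hence $v=z+L(v)\in L^\infty_{fix}$. This global bound promotes $\ell_\cdot(v_\cdot)$ to an element of $L^2_{fix}$, since $|\ell_t(v_t)|^2\leq|\ell_t|^2|(v_s)|_{L^\infty_{fix}}^2$ is integrable in $t$. By \eqref{eq:charACvec} applied with $\H=L^0(T\X)$, it follows that $L(v)\in AC^2_{fix}$ with derivative $t\mapsto\ell_t(v_t)$, and consequently $v=z+L(v)\in AC^2_{fix}$ satisfies $\dot v_t=\dot z_t+\ell_t(v_t)$ and $v_0=z_0+L(v)_0=z_0$, i.e.\ \eqref{eq:dervz}. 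The main technical obstacle is the Volterra-style factorial bound: all estimates must be carried out in the $L^0(\X)$-module framework (pointwise in $x$) rather than in a genuine Banach space, which requires tracking $L^0(\X)$-valued quantities throughout and invoking Lemma \ref{le:unifcont} to convert pointwise-norm bounds into $L^0$-convergence statements; once this is in place, the remainder of the argument is a direct adaptation of the classical Picard--Lindel\"of scheme.
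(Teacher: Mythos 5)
Your proposal is correct and follows essentially the same route as the paper: the Picard iteration $v^n=\sum_{k\le n}L^k(z)$ is exactly the paper's Neumann series for $({\rm Id}-L)^{-1}$, built on the same pointwise-in-$x$ Volterra factorial bound, and part (ii) is handled via \eqref{eq:charACvec} in both cases. Your extra step in (ii) — bootstrapping $v\in L^\infty_{fix}$ to get $(\ell_t(v_t))\in L^2_{fix}$ — is a small refinement that the paper leaves implicit, and is worth keeping since membership of the derivative in $L^2_{fix}$ is genuinely needed for the $AC^2_{fix}$ conclusion.
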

\begin{proof}\ \\
\noindent{\sc (i)} For $(v_t)\in L^2_{fix}$ and $s\in[0,1]$ we shall define $|(v_t)|_{L^2_{fix}([0,s])}\in L^0(\X)$ as
\[
|(v_t)|_{L^2_{fix}([0,s])}^2:=\int_0^s|v_r|^2\,\d r,
\] 
so that $|(v_t)|_{L^2_{fix}([0,1])}=|(v_t)|_{L^2_{fix}}$. We claim that for every $n\in\N$ and $(v_t)\in L^2_{fix}$ we have
\begin{equation}
\label{eq:claimL}
|L^n(v)|_{L^2_{fix}([0,t])}^2\leq \frac{t^{n}\bar{G}^n}{n!}|(v_t)|_{L^2_{fix}([0,t])}^2.
\end{equation}
where $\bar{G}:= \int_0^1 |l_t|^2\,\d t$.
The case $n=0$ is obvious. Assume we proved the claim for $n$ and let us prove it for $n+1$. We have
\[
\begin{split}
|L^{n+1}(v)|_{L^2_{fix}([0,t])}^2&=\int_0^t|L_s(L^n(v))|^2\,\d s=\int_0^t\big|\int_0^s \ell_r(L^n(v)_r)\,\d r\big|^2\,\d s\\
&\leq \int_0^t \big(\int_0^s|\ell_r|^2\,\d r\big)\,|L^{n}(v)|_{L^2_{fix}([0,s])}^2\,\d s\\
&\stackrel{*}\leq \frac{\bar G^{n+1}}{n!}\int_0^t s^n|(v_t)|_{L^2_{fix}([0,s])}^2\,\d s\\
&\leq  \frac{t^{n+1}\bar G^{n+1}}{(n+1)!}|(v_t)|_{L^2_{fix}([0,t])}^2,
\end{split}
\]
where in the starred inequality we used \eqref{eq:elll2} and the induction assumption. From \eqref{eq:claimL} it follows that $\sum_n|L^n(v)|_{L^2_{fix}}<\infty$ $\mm$-a.e., hence the series $\sum_{n\in\N}L^n(v)$ is a well defined element of $L^2_{fix}$ (meaning that the partial sums form a Cauchy sequence in $L^2_{fix}$). It is clear that the operator $(v_t)\mapsto \sum_{n\in\N}L^n(v)$ is the inverse of ${\rm Id}-L$, indeed
\[
({\rm Id}-L)\sum_{n\in\N}L^n(v)=\lim_N({\rm Id}-L)\sum_{n=0}^NL^n(v)=\lim_Nv-L^{N+1}(v)=v
\]
and similarly  $\sum_{n\in\N}L^n(({\rm Id}-L)(v))=v$. Thus $(v_t)\in L^2_{fix}$ solves \eqref{eq:vdaz} if and only if $v=\sum_{n\in\N}L^n(z)$, proving existence and uniqueness.

\noindent{\sc (ii)}   By \eqref{eq:charACvec} we know that $L(v)\in AC^2_{fix}$ for any $(v_t)\in L^2_{fix}$, thus if $(z_t)\in AC^2_{fix}$ as well, we have that the right-hand side of \eqref{eq:vdaz} is in $AC^2_{fix}$. By the previous step, such right-hand side is the required representative of $(v_t)$ in $AC^2_{fix}$ for which \eqref{eq:vdaz} holds for every $t\in[0,1]$. Then \eqref{eq:dervz} follows from the identity $L_t(v)+z_t=z_0+\int_0^t\ell_s(v_s)+\dot z_s\,\d s$ (recall \eqref{eq:intvt}) and the general property \eqref{eq:charACvec}.
\end{proof}

\begin{theorem}[Existence and uniqueness of Parallel Transport]\label{thm:exuni} Let $\bar V\in L^0(T\X)$ and $(Z_t)\in L^2_{var}$. Then there is a unique  $(V_t)\in AC^2_{var}$ such that 
\begin{equation}
\label{eq:odevar}
\left\{\begin{array}{ll}
D_tV_t=Z_t&\qquad a.e.\ t\in[0,1],\\
V_0=\bar V.&
\end{array}\right.
\end{equation}
In particular, there is a unique
Parallel Transport $(V_t)$ starting from $\bar V$.
\end{theorem}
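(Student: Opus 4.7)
The plan is to transform the equation $D_t V_t = Z_t$ for $(V_t) \in AC^2_{var}$ into a fixed-point problem in $AC^2_{fix}$, then invoke Proposition \ref{prop:perPT}. By the very definition of the convective derivative \eqref{eq:defDt}, writing $v_t := \d F_t^0(V_t) \in AC^2_{fix}$ and $z_t := \d F_t^0(Z_t)$, the condition $D_t V_t = Z_t$ is equivalent to
\[
\dot v_t = z_t + \ell_t(v_t), \qquad v_0 = \bar V,
\]
where the family of $L^0(\X)$-linear operators $\ell_t\colon L^0(T\X)\to L^0(T\X)$ is defined by
\[
\ell_t(v) := -\d F_t^0\bigl(\nabla_{\d F_0^t(v)} w_t\bigr).
\]
Observe that $(z_t) \in L^2_{fix}$ is exactly the definition of $(Z_t)\in L^2_{var}$.

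To apply Proposition \ref{prop:perPT} I need to verify that $(\ell_t)$ is a Borel family satisfying $\int_0^1 |\ell_t|^2\,\d t<\infty$ $\mm$-a.e.. Borel regularity follows from Proposition \ref{prop:dFtot2} (continuity in $t$ of $v\mapsto \d F_0^t(v)$ and of its inverse) together with the Borel regularity in $t$ of $(\nabla w_t)$. For the integrability bound, applying \eqref{eq:normVtvt2} twice and the pointwise inequality $|\nabla_V w_t|\leq |\nabla w_t|\,|V|$ gives the estimate
\[
|\ell_t(v)|\leq G\,|\nabla w_t|\circ F_0^t\,|v|\qquad\mm\text{-a.e.},\quad\forall v\in L^0(T\X),
\]
which yields $|\ell_t|\leq G\,|\nabla w_t|\circ F_0^t$ and hence
\[
\int_0^1|\ell_t|^2\,\d t\leq G\int_0^1|\nabla w_t|^2\circ F_0^t\,\d t\leq G\qquad\mm\text{-a.e.,}
\]
by the key flow estimate \eqref{eq:estnw2}.

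With the hypotheses verified, set $\tilde z_t := \bar V+\int_0^t z_r\,\d r\in AC^2_{fix}$ (so $\dot{\tilde z}_t=z_t$ and $\tilde z_0=\bar V$) and apply Proposition \ref{prop:perPT}(ii) to obtain a unique $(v_t)\in AC^2_{fix}$ with $\dot v_t=\ell_t(v_t)+z_t$ and $v_0=\bar V$. Setting $V_t:=\d F_0^t(v_t)$ yields the desired element of $AC^2_{var}$ solving \eqref{eq:odevar}, and uniqueness in $AC^2_{var}$ follows from uniqueness in $AC^2_{fix}$ via the bijection $(v_t)\leftrightarrow(V_t)$ furnished by Proposition \ref{prop:dFtot2}. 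The final parallel transport assertion is the special case $Z_t\equiv 0$. There is no serious obstacle at this stage: all the hard technical work---the regularity of the flow maps, the definition and good behaviour of $D_t$, and especially the abstract well-posedness result in Proposition \ref{prop:perPT}---has already been carried out in the previous sections, so the proof reduces to this translation plus two short estimates.
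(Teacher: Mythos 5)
Your proposal is correct and follows essentially the same route as the paper: the same reduction to the fixed-frame ODE $\dot v_t=\ell_t(v_t)+z_t$ with $\ell_t(v)=-\d F_t^0(\nabla_{\d F_0^t(v)}w_t)$, the same bound $|\ell_t|\leq G\,|\nabla w_t|\circ F_0^t$ combined with \eqref{eq:estnw2} to verify \eqref{eq:elll2}, and the same application of Proposition \ref{prop:perPT}(ii). The only cosmetic difference is that you justify $(z_t)\in L^2_{fix}$ directly from the definition of $L^2_{var}$, whereas the paper re-derives the needed integrability from \eqref{eq:estdFt4}; both are fine.
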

\begin{proof} Recalling the definition of $AC^2_{var}([0,1],L^0(T\X))$ and that of convective derivative, we see that $(V_t)$ satisfies \eqref{eq:odevar} if and only if  for $v_t:=\d F_t^0(V_t)$ we have $(v_t)\in AC^2_{fix}([0,1],L^0(T\X))$ with 
\begin{equation}
\label{eq:odefix}
\left\{\begin{array}{ll}
\dot v_t=\d F_t^0(Z_t-\nabla_{\d F_0^t(v_t)}w_t)&\qquad a.e.\ t\in[0,1],\\
v_0=\bar V.&
\end{array}\right.
\end{equation}
Thus for a.e.\ $t\in[0,1]$ we define $\ell_t:L^0(T\X)\to L^0(T\X)$ as
\[
\ell_t(v):=-\d F_t^0(\nabla_{\d F_0^t(v)}w_t)
\]
and notice that the bound
\[
|\ell_t(v)|\stackrel{\eqref{eq:pointwise_norm_estimate}}\leq \big(|\d F_t^0|\,|\nabla_{\d F_0^t(v)}w_t|\big)\circ F_0^t\leq \big(|\d F_t^0|\,|{\d F_0^t(v)}|\,|\nabla w_t|\big)\circ F_0^t\stackrel{\eqref{eq:pointwise_norm_estimate},\eqref{eq:estdFt4}}\leq  G|v|\,|\nabla w_t|\circ F_0^t,
\]
valid $\mm$-a.e.\ for every $v\in L^0(T\X)$, yields $|\ell_t|\leq G|\nabla w_t|\circ F_0^t$, so that \eqref{eq:estnw2} ensures that the bound \eqref{eq:elll2} holds. Also, we put 
\[
z_t:=\bar V+\int_0^t\d F_r^0(Z_r)\,\d r.
\]
Notice that the bound $|\d F_t^0(Z_t)|\leq (|\d F_t^0|\,|Z_t|)\circ F_0^t\leq G |Z_t|\circ F_0^t$ (having used \eqref{eq:estdFt4}) ensures that the definition is well-posed. It is then clear from \eqref{eq:charACvec} that $(z_t)\in AC^2_{fix}$.

We can therefore apply $(ii)$ of Proposition \ref{prop:perPT} above with this choice of $(\ell_t)$ and  $(z_t)$: we thus obtain the existence of  $(v_t)\in AC^2_{fix}([0,1],L^0(T\X))$ satisfying \eqref{eq:odefix}, so that   $t\mapsto V_t:=\d F_0^t(v_t)$ is the only solution of \eqref{eq:odevar}, as desired.
\end{proof}

\subsection{\texorpdfstring{$W^{1,2}_{var}([0,1],L^0(T\X))$}{W12var} as intermediate space between
\texorpdfstring{$\mathscr H^{1,2}(\ppi)$}{H12pi} and \texorpdfstring{$\mathscr W^{1,2}(\ppi)$}{W12pi}}\label{se:comparison}

In this section we compare the main definitions given in this manuscript with those provided in the earlier paper \cite{GP20}: we shall prove that in a very natural sense - see \eqref{eq:claimhw}  below - the space $W^{1,2}_{var}([0,1],L^0(T\X))$ lies between the spaces $\mathscr H^{1,2}(\ppi)$ and $\mathscr W^{1,2}(\ppi)$ for relevant $\ppi$'s. 

In order to properly formulate this, some definitions are due. Let $\mu\in\mathscr P(\X)$ be such that $\mu\leq C\mm$ for some $C>0$. Let $F$ be the regular lagrangian flow associated to $w$ satisfying the assumptions of Theorem \ref{thm:AT}. Define $\ppi_\mu:=(F_0^\cdot)_*\mu\in \mathscr P(C([0,1],\X))$, where $F_0^\cdot:\X\to C([0,1],\X)$ is the $\mm$-a.e.\ defined Borel map sending $x$ to $t\mapsto F_0^t(x)$. It is clear from the bounded compression property of the flow and \eqref{eq:unifspFl} that $\ppi_\mu$ is a test plan, whose speed is given by $ {(\ppi_\mu)}'_t=\e_t^*w_t\in\e_t^*L^2(T\X)$ for $\mathcal{L}^1$-a.e.\ $t\in[0,1]$.
Moreover, since $|w_t| \in L^\infty([0,1] \times \X)$, $\ppi_{\mu}$ is a Lipschitz test plan, so the full theory developed
in \cite{GP20} applies (indeed, the test plan $\ppi_{\mu}$ has to be Lipschitz in order to define the functional spaces $\mathscr{H}^{1,2}(\ppi_{\mu})$ and $\mathscr{W}^{1,2}(\ppi_{\mu})$). In \cite{GP20}, in order to speak of vector fields along a test plan, it has been necessary to use the concept of pullback. This was due to the fact that the evaluation maps $\e_t$ are in general not $\ppi$-essentially injective for $\ppi$ test plan. The situation is different now: from the uniqueness of Regular Lagrangian Flows  it easily follows that $\e_t$ is $\ppi_\mu$-essentially injective  for any $t\in[0,1]$, the left inverse being $F_0^\cdot\circ F_t^0$. It is clear that for any $t\in[0,1]$ we have
\begin{equation}
\label{eq:etinv}
\begin{split}
(F_0^\cdot\circ F_t^0)\circ \e_t&=\Id_{C([0,1],\X)}\quad\ppi_\mu-a.e.,\\
\e_t\circ(F_0^\cdot\circ F_t^0)&=\Id_\X\quad(\e_t)_*\ppi_\mu-a.e..
\end{split}
\end{equation}
Now notice that since $(\e_t)_*\ppi_\mu\ll\mm$, we have a well-defined pullback map $\e_t^*:L^0(T\X)\to \e_t^*L^0(T\X)$; we shall often  denote this map $\Psi_t$ rather than $\e_t^*$, i.e.
\[
\begin{array}{rccc}
\Psi_t:&L^0(T\X)&\quad\to\quad & \e_t^*L^0(T\X)\\
&V&\quad\mapsto\quad &\e_t^*(V).
\end{array}
\]
Similarly, defining up to $\mm$-negligible sets the Borel set  ${\sf A}_t:=\{\frac{\d(\e_t)_*\sppi_\mu}{\d\mm}>0\}$, we have that $(F_0^\cdot\circ F_t^0)_*\mm\restr{{\sf A}_t}\ll(F_0^\cdot\circ F_t^0)_*(\e_t)_*\ppi_\mu=\ppi_\mu$  and therefore we have a natural pullback map $(F_0^\cdot\circ F_t^0)^*:\e_t^*L^0(T\X)\to \mathscr M$, where for brevity we denoted by $\mathscr M$ the $L^0(\X,\mm\restr{{\sf A}_t})$-normed module $(F_0^\cdot\circ F_t^0)^*\e_t^*L^0(T\X,\ppi_\mu)$. Then from \eqref{eq:etinv} and the functoriality of the pullback we see that $\mathscr M$ can, and will, be canonically identified with the restriction $L^0(T\X)\restr{{\sf A}_t}$ of $L^0(T\X)$ to the set ${\sf A}_t$. In turn, such restriction can naturally be seen as the subset of $L^0(T\X)$ made of vector fields which are $\mm$-a.e.\ 0 on the complement of ${\sf A}_t$: we shall denote by $\mathscr M\ni v\mapsto \nchi_{{\sf A}_t}v\in L^0(T\X)$  the inclusion map of $\mathscr M$ in $L^0(T\X)$ (a similar construction can be made using the `extension' functor defined in \cite{GPS18}). In summary, we have a natural map $\Phi_t$ from the $L^0(\ppi_\mu)$-normed module $\e_t^*L^0(T\X)$ to the $L^0(\mm)$-normed module $L^0(T\X)$ defined as
\[
\begin{array}{rccc}
\Phi_t:&\e_t^*L^0(T\X)&\quad\to\quad & L^0(T\X)\\
&{\sf v}&\quad\mapsto\quad & \nchi_{{\sf A}_t}(F_0^\cdot\circ F_t^0)^*({\sf v}).
\end{array}
\]
 From \eqref{eq:etinv} and the functoriality of the pullback we see that
\begin{equation}
\label{eq:psiphi}
\begin{split}
\Psi_t(\Phi_t({\sf v}))&={\sf v},\qquad\qquad\forall {\sf v}\in \e_t^*L^0(T\X),\\
\Phi_t(\Psi_t(V))&=\nchi_{{\sf A}_t}V,\qquad\forall V\in L^0(T\X).
\end{split}
\end{equation}
Now recall that the space ${\rm VF}(\ppi_\mu)$ of vector fields along $\ppi$ has been defined in \cite{GP20} as 
\[
{\rm VF}(\ppi_\mu):=\prod_{t\in[0,1]}\e_t^*L^0(T\X)
\]
(actually in \cite{GP20} the pullbacks of $L^2(T\X)$ are considered, but we aligned the definition to the framework used here). Notice that the product of $({\sf v}_t)\in {\rm VF}(\ppi_\mu)$ by a function in $L^0(\ppi_\mu)$ can be defined componentwise and similarly for $({\sf v}_t),({\sf z}_t)\in{\rm VF}(\ppi_\mu)$ the function $\la({\sf v}_t),({\sf z}_t)\ra$ is defined as $t\mapsto\la {\sf v}_t,{\sf z}_t\ra\in L^0(\ppi_\mu)$. 

The maps $\Psi_t,\Phi_t$ naturally induce maps $\Psi,\Phi$ by acting componentwise:

\[
\begin{array}{rcccrccc}
\Psi:&\prod_{t\in[0,1]}L^0(T\X)&\to &{\rm VF}(\ppi_\mu)&\qquad\qquad\Phi:&{\rm VF}(\ppi_\mu)&\to &\prod_{t\in[0,1]}L^0(T\X)\\
&(t\mapsto V_t)&\mapsto &(t\mapsto\Psi_t(V_t)),&\qquad\qquad&(t\mapsto {\sf v}_t)&\mapsto &(t\mapsto\Phi_t({\sf v}_t)).
\end{array}
\]
We have already noticed that ${\rm VF}(\ppi_\mu)$ comes with a natural structure of module over $L^0(\ppi_\mu)$ (but not of $L^0(\ppi_\mu)$-normed module) given by componentwise product. We also define a structure of $L^0(\X)$ module on $\prod_{t\in[0,1]}L^0(T\X)$ by putting, in analogy with \eqref{eq:l0varprod}:
\[
f\times(V_t):=(f\circ F_t^0\,V_t)\qquad\forall f\in L^0(\X),\ (V_t)\in \prod_{t\in[0,1]}L^0(T\X).
\]
These two module structures are related, indeed for $f,(V_t)$ as above, the identity $F_t^0\circ \e_t=\e_0$ valid $\ppi_\mu$-a.e.\ gives 
\[
\e_t^*(f\circ F_t^0\,V_t)=f\circ F_t^0\circ \e_t\,\e_t^*(V_t)=f\circ\e_0\,\e_t^*(V_t)
\]
and therefore
\begin{equation}
\label{eq:modconj}
\Psi(f\times(V_t))=f\circ\e_0\,\Psi(V_\cdot).
\end{equation}

Elements of  ${\rm VF}(\ppi_\mu)$ and $\prod_{t\in[0,1]}L^0(T\X)$  come without any time-regularity. For the latter space this can easily be defined using the fact that  $\prod_{t\in[0,1]}L^0(T\X)$ is the set of maps from $[0,1]$ to $L^0(T\X)$, but in the former space  this is a bit trickier due to the fact that the factors $\e_t^*L^0(T\X)$ change with $t$. We shall  `test' regularity against a properly chosen family of `test objects' (in analogy with the typical construction for direct sums of Hilbert spaces).

We thus define the subspace ${\rm TestVF}(\ppi_\mu)\subset {\rm VF}(\ppi_\mu)$ of \emph{test vector fields} along $\ppi_\mu$ as the collection of vector fields of the form $t\mapsto\sum_{i=0}^n\varphi_i(t)\nchi_{\Gamma_i}\e_t^*(v_i)$ for generic $n\in\N$, $\Gamma_i\subset C([0,1],\X)$ Borel, $\varphi_i\in\LIP([0,1])$, and $v_i\in \testv(\X)$. Now notice that   for $\Gamma\subset C([0,1],\X)$ Borel, the set $\Gamma_0:=(F_0^\cdot)^{-1}(\Gamma)\subset \X$ satisfies $\nchi_{\Gamma_0}\circ\e_0=\nchi_{\Gamma}$ $\ppi_\mu$-a.e., therefore \eqref{eq:modconj} gives
\begin{equation}
\label{eq:perprod}
\Psi\big(\nchi_{\Gamma_0}\times(V_t)\big)=\nchi_\Gamma\,\Psi(V_\cdot)
\end{equation}
and thus
\[
{\rm TestVF}(\ppi_\mu)=\Psi\big({\rm Test}\Pi\big),
\]
where ${\rm Test}\Pi\subset\prod_{t\in[0,1]}L^0(T\X) $ is the set of maps of the form $t\mapsto\sum_{i=0}^n\nchi_{A^i}\times(\varphi_i(t)V_i)$ for $n\in\N$, $A^i\subset\X$ Borel, $V_i\in\testv(\X)$, and $\varphi_i\in\LIP([0,1])$.

In \cite{GP20}, an element $({\sf v}_t)$ of ${\rm VF}(\ppi_\mu)$ has been said Borel (we are slightly changing the terminology, but not the core point) provided $t\mapsto \la {\sf v}_t,{\sf z}_t\ra\in L^0(\ppi_\mu)$ is Borel for every $({\sf z}_t)\in {\rm TestVF}(\ppi_\mu)$. Notice that arguing as in Proposition \ref{prop:dFtot2} it is easy to check that $(V_t)\in \prod_{t\in[0,1]}L^0(T\X)$ is Borel (as a map from $[0,1]$ to $L^0(T\X)$) if and only if $t\mapsto \la V_t,Z_t\ra\in L^0(\X)$ is Borel for every $(Z_t)\in {\rm Test}\Pi$. Then keeping in mind the simple identity $\la\Psi_t(V),\Psi_t(Z)\ra=\la V,Z\ra\circ F_0^t\circ\e_0$ valid $\ppi_\mu$-a.e.\ for every $V,Z\in L^0(T\X)$ and the fact that $\e_0$ is $\ppi_\mu$-essentially injective, it is easy to see that $({\sf v}_t)\in {\rm VF}(\ppi_\mu)$ is Borel if and only if it is of the form $\Psi(V_\cdot)$ for some $(V_t)\in \prod_{t\in[0,1]}L^0(T\X) $ Borel. 

\bigskip

It is not hard to check that for $({\sf v}_t)\in {\rm VF}(\ppi_\mu)$ Borel  the map 
\[
[0,1]\ni t\mapsto \|({\sf v}_s)\|_t:=\|{\sf v}_t\|_{\e_t^*L^2(T\X)}=\||{\sf v}_t|\|_{L^2(\sppi_\mu)}=\sqrt{\int|{\sf v}_t|^2\,\d\ppi_\mu} \in [0,+\infty]
\]
is Borel. To see this, consider $\varphi_n(y) = (y \wedge n) \vee 0$ for $y \in \mathbb{R}$, notice $L^0(\ppi_{\mu}) \ni f \mapsto T_n(f) \in [0,+\infty)$ is Borel where $T_n(f) = \int \varphi_n(f)^2\,\d \ppi_\mu$ and that for every $t$ $\e_t^*L^0(T\X) \ni v \mapsto \int |v|^2\,\d \ppi_{\mu} = \sup_n T_n(|v|)$. Thus we can define $\mathscr L^2(\ppi_\mu)\subset{\rm VF}(\ppi_\mu)$  as the space of those $({\sf v}_t)$'s Borel such that
\[
\|({\sf v}_t)\|_{\mathscr L^2(\sppi_\mu)}^2:=\int_0^1\|({\sf v}_s)\|_t^2\,\d t<\infty,
\]
with the usual identification up to equality for a.e.\ $t$. It is then clear that
\[
\mathscr L^2(\ppi_\mu)=\Psi\Big(\Big\{(V_t)\in L^0_{var}\ : \ \iint_0^1|V_t|^2\circ F_0^t\,\d t\,\d\mu=\int_0^1\int |V_t|^2\circ \e_t\,\d \ppi_\mu\,\d t<\infty\Big\}\Big).
\]
To define  the spaces $\mathscr W^{1,2}(\ppi_\mu),\mathscr H^{1,2}(\ppi_\mu)$ we need first to describe differentiation of test vector fields along $\ppi_\mu$. We start defining $\tilde {\sf D}_t:{\rm Test}\Pi\to L^0_{var}$ as the linear extension of
\begin{equation}
\label{eq:dttilde}
\tilde{\sf D}_t(\nchi_A\times(\varphi(t)V)):=\nchi_A\times(\varphi'(t)V+\varphi(t)\nabla_{w_t}V),\qquad a.e.\ t\in[0,1],
\end{equation}
for $A\subset\X$ Borel, $\varphi\in \LIP([0,1])$, and $V\in{\rm TestV}(\X)$, and then defining the convective derivative ${\sf D}_t:{\rm TestVF}(\ppi_\mu)\to \mathscr L^2(\ppi_\mu) $ as  
\begin{equation}
\label{eq:dttest}
{\sf D}_t\Psi(V_\cdot):=\Psi(\tilde{\sf D}_tV_\cdot)\qquad\forall (V_t)\in{\rm Test}\Pi.
\end{equation}
From \eqref{eq:perprod}  and the  identity $(\ppi_\mu)_t'=\e_t^*(w_t)$ for a.e.\ $t$ (that is a simple consequence of the definitions, see \eqref{eq:c1RLF} and \cite[Theorem 2.3.18]{Gigli14}) this definition of  convective derivative coincides with the one given in \cite{GP20}.

Then $\mathscr W^{1,2}(\ppi_\mu)\subset \mathscr L^2(\ppi_\mu)$ is defined as the collection of $({\sf v}_t)$'s for which there is $({\sf v}'_t)$ such that
\begin{equation}
\label{eq:DtW12}
\int_0^1\int \la {\sf v}_t,{\sf D}_t{\sf z}_t\ra \,\d\ppi_\mu\,\d t=-\int_0^1\int \la {\sf v}'_t,{\sf z}_t\ra \,\d\ppi_\mu\,\d t
\end{equation}
holds for any $({\sf z}_t)\in {\rm TestVF}(\ppi_\mu)$ with `compact support', i.e.\ such that 
\begin{equation}
\label{eq:defcompsupp}
\text{${\sf z}_t=0$ for every $t$ in a neighbourhood of $0$ and $1$. }
\end{equation}
The vector field $({\sf v}'_t)$ is uniquely defined by the above, called \emph{convective derivative} of $({\sf v}_t)$ along $\ppi_\mu$ and denoted  $({\sf D}_t{\sf v}_t)$. Then $\mathscr W^{1,2}(\ppi_\mu)$ is endowed with the norm
\[
\|({\sf v}_t)\|_{\mathscr W^{1,2}}^2:=\|({\sf v}_t)\|_{\mathscr L^2}^2+\|({\sf D}_t{\sf v}_t)\|_{\mathscr L^2}^2
\]
and can - easily - be proved to be a Hilbert space.

It turns out that this latter definition of ${\sf D}_t$ is compatible with the previous one, i.e.\ ${\rm TestVF}(\ppi_\mu)\subset \mathscr W^{1,2}(\ppi_\mu)$ and for a vector field in ${\rm TestVF}(\ppi_\mu)$ the convective derivative defined by formula \eqref{eq:dttest} coincides with the one defined by \eqref{eq:DtW12}. 

In particular, it makes sense to define $\mathscr H^{1,2}(\ppi_\mu)$ as the closure of  ${\rm TestVF}(\ppi_\mu)$ in  $\mathscr W^{1,2}(\ppi_\mu)$. An important property of vector fields in $\mathscr H^{1,2}(\ppi_\mu)$ is that they admit a \emph{continuous representative}. A way to phrase this property is by saying that
\begin{equation}
\label{eq:contreprh12}
\begin{split}
&\text{for every $({\sf v}_t)\in\mathscr H^{1,2}(\ppi_\mu)$ there is  a unique $(\tilde{\sf v}_t)\in {\rm VF}(\ppi_\mu)$}\\
&\text{with ${\sf v}_t=\tilde{\sf v}_t$ for a.e.\ $t$ such that $\Phi(\tilde{\sf v}_\cdot)$ is in $C([0,1],L^0(T\X))$ }
\end{split}
\end{equation}
(from Proposition \ref{prop:dFtot2} it is easy to see that the statement \cite[Theorem 3.23]{GP20} implies this property). In what follows we shall systematically  identify elements of $\mathscr H^{1,2}(\ppi_\mu)$ with their continuous representatives. Another crucial property of elements of $\mathscr H^{1,2}(\ppi_\mu)$ is the absolute continuity of the norms:
\begin{equation}
\label{eq:ach12}
({\sf v}_t)\in \mathscr H^{1,2}(\ppi_\mu)\quad\Rightarrow\quad (t\mapsto|{\sf v}_t|^2)\in AC^2([0,1],L^0(\ppi_\mu))\quad\text{ with }\quad\frac\d{\d t} |{\sf v}_t|^2=2\la{\sf v}_t,{\sf D}_t{\sf v}_t\ra,\quad a.e.\ t.
\end{equation}

\bigskip

As mentioned, our goal in this section is to prove that the space $W^{1,2}_{var}$ is `intermediate' between $\mathscr H^{1,2}(\ppi_\mu)$ and $\mathscr W^{1,2}(\ppi_\mu)$. To make this statement rigorous we should actually consider $\Psi(W^{1,2}_{var})$  and enforce $L^2$-integrability of both the vector field into consideration and its convective derivative. We are therefore led to define the `intermediate' space
\[
{\rm Interm}(\ppi_\mu):=\big\{\Psi(V_\cdot)\ :\ (V_t)\in W^{1,2}_{var}\ \text{ and }\Psi(V_\cdot),\Psi(D_\cdot V_\cdot)\in\mathscr L^2(\ppi_\mu)\big\},
\]
so that the main result of the section can be stated as
\begin{equation}
\label{eq:claimhw}
\mathscr H^{1,2}(\ppi_\mu)\quad\subset\quad {\rm Interm}(\ppi_\mu)\quad\subset\quad \mathscr W^{1,2}(\ppi_\mu)
\end{equation}
with compatible convective derivatives, see Proposition \ref{prop:maincomp} below for the precise statement. It is worth to point out that once these inclusions are proved, from the completeness of $W^{1,2}_{var}$ it is not hard to prove that ${\rm Interm}(\ppi_\mu)$ equipped with the $\mathscr W^{1,2}(\ppi_\mu)$-norm is a Hilbert space.

The proof of \eqref{eq:claimhw}  is based on the following lemma:
\begin{lemma}\label{le:dermezzo}
Let $(V_t)\in W^{1,2}_{var}([0,1],L^0(T\X))$ and $f\in\test\X$. Then $t\mapsto \d f(V_t)\circ F_0^t$ belongs to $W^{1,2}([0,1],L^0(\X))$ and 
\begin{equation}
\label{eq:dermezzo}
\frac{\d}{\d t}\big( \d f(V_t)\circ F_0^t\big)=\d f(D_tV_t)\circ F_0^t+{\rm Hess}f(V_t,w_t)\circ F_0^t,\qquad a.e.\ t,\ \mm-a.e..
\end{equation}
If moreover $(V_t)\in AC^2_{var}([0,1],L^0(T\X))$, then we also have that $t\mapsto \d f(V_t)\circ F_0^t$ belongs to $AC^2([0,1],L^0(\X))$.
\end{lemma}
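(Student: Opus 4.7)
The natural starting point is the rewriting
\[
\d f(V_t)\circ F_0^t=\d f(\d F_0^t(v_t))\circ F_0^t=\d(f\circ F_0^t)(v_t),
\]
which follows from the defining relation \eqref{eq:char_diff_LL}. This reduces matters to differentiating in $t$ the pairing between the $AC^2$ curve $t\mapsto\omega_t:=\d(f\circ F_0^t)\in L^0(T^*\X)$ supplied by Proposition \ref{prop:mainreg} and the vector field $(v_t)\in W^{1,2}_{fix}$. In parallel to the proof of Theorem \ref{thm:leibconv}, the plan is to establish the formula by invoking Proposition \ref{prop:Atutto}: we let $\mathcal{A}\subset W^{1,2}_{fix}$ be the collection of $(v_t)$'s for which the conclusion holds with $V_t:=\d F_0^t(v_t)$ and verify the four stability properties $(o)$--$(iv)$.

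Stability under sums, under restriction $(v_t)\mapsto\nchi_E v_t$ (using \eqref{eq:l0lindf}), and under multiplication by $C^1$ time functions are straightforward from the linearity of all operators involved and the product rule on $[0,1]$. The closure property $(iii)$ in $W^{1,2}_{fix}$ relies on Lemma \ref{le:covtot}, which ensures that $(v^n_t)\to(v_t)$ in $W^{1,2}_{fix}$ translates into $L^2_{var}$-convergence of both $(V^n_t)$ and $(D_tV^n_t)$; combined with the Lipschitz bound $|\d f|_{L^\infty}<\infty$, the continuity of $\psi\mapsto\psi\circ F_0^t$ in $L^0$, and the uniform bound  $|(V^n_t)|_{L^\infty_{var}}\leq G|(v^n_t)|_{W^{1,2}_{fix}}$ from \eqref{eq:Vtinfty}--\eqref{eq:covtot}, this yields $L^0([0,1],L^0(\X))$-convergence of the three terms appearing in \eqref{eq:dermezzo}. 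The uniform $W^{1,2}$-bound needed to apply Proposition \ref{prop:stabl0w12} comes from estimates of the form
\[
|(\d f(V^n_t)\circ F_0^t)|_{W^{1,2}}\leq \|\d f\|_{L^\infty}|(V^n_t)|_{W^{1,2}_{var}}+G|(V^n_t)|_{L^\infty_{var}}\sqrt{\int_0^1|{\rm Hess}(f)|^2\circ F_0^t\,\d t},
\]
where the rightmost integral is $\mm$-a.e.\ finite thanks to \eqref{eq:intflo2}, and where the right-hand side is controlled by a quantity converging in $L^0(\X)$.

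The crucial calculation is property $(iv)$: the case of a constant vector field $v_t\equiv v\in L^0(T\X)$. Here Proposition \ref{prop:mainreg} together with the chain-rule identity $\d(g\circ F_0^t)(v)=\d g(\d F_0^t(v))\circ F_0^t$ and the bound $|\omega_s(v)-\omega_t(v)|\leq|v|\int_t^s|\d(\d f(w_r)\circ F_0^r)|\,\d r$ give that $t\mapsto \omega_t(v)$ lies in $W^{1,2}([0,1],L^0(\X))$ (and even in $AC^2$ when working with the continuous representative of $\omega_t$) with
\[
\tfrac{\d}{\d t}\omega_t(v)=\d\bigl(\d f(w_t)\bigr)(V_t)\circ F_0^t.
\]
Applying the second-order Leibniz rule \eqref{eq:leibgrad}, the right-hand side becomes ${\rm Hess}(f)(w_t,V_t)\circ F_0^t+\d f(\nabla_{V_t}w_t)\circ F_0^t$; since $\dot v_t=0$ gives $D_tV_t=\nabla_{V_t}w_t$, this matches \eqref{eq:dermezzo} by symmetry of the Hessian.

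Finally, the $AC^2$ statement under the stronger hypothesis $(V_t)\in AC^2_{var}$ follows once the $W^{1,2}$ statement is established, by arguing exactly as in the beginning of the proof of Theorem \ref{thm:leibconv}: continuity of $t\mapsto \d f(V_t)\in L^0(\X)$ is immediate from $(V_t)\in C([0,1],L^0(T\X))$ together with the $L^\infty$ bound on $\d f$, and continuity after postcomposition with $F_0^t$ is granted by Lemma \ref{le:contcont}. The main conceptual obstacle is controlling the Hessian term uniformly along the approximating sequence in the closure step; once the bound above is secured via Lemma \ref{le:covtot} and \eqref{eq:intflo2}, everything fits into the framework of Proposition \ref{prop:stabl0w12}.
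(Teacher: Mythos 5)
Your proposal is correct and follows essentially the same route as the paper: reduce to $\d(f\circ F_0^t)(v_t)$, run the density scheme of Proposition \ref{prop:Atutto} with closure handled via Lemma \ref{le:covtot}, \eqref{eq:intflo2} and Proposition \ref{prop:stabl0w12}, and settle the constant-field case $(iv)$ by combining Proposition \ref{prop:mainreg} with \eqref{eq:leibgrad} and the identity $D_tV_t=\nabla_{V_t}w_t$. The $AC^2$ addendum via Lemma \ref{le:contcont} also matches the paper's argument.
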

\begin{proof} It is clear that for $(V_t)\in C([0,1],L^0(T\X))$ the map $t\mapsto \d  f(V_t)\in L^0(\X)$ is continuous, so that by Lemma \ref{le:contcont} also $t\mapsto \d f(V_t)\circ F_0^t$ is continuous. Thus the statement for absolutely continuous vector fields follows from the  Sobolev case and in what follows we focus on the latter.

Let $\mathcal A\subset W^{1,2}_{fix}([0,1],L^0(T\X))$ be the collection of vector fields $(v_t)$ for which the conclusion holds for $V_t:=\d F_0^t(v_t)$. We shall prove that  $\mathcal A$ has the properties $(o),\cdots,(iv)$ in Proposition \ref{prop:Atutto}, so that such proposition gives the conclusion. 

By the linearity in $(V_t)$ of the claim and  of $\d F_0^t$, it is clear that $\mathcal A$ is a vector space, i.e.\ $(o)$ holds. $(i)$ follows from property \eqref{eq:l0lindf} of the differential $\d F_0^t$ and $(ii)$ by direct computation based on the identity $D_t(\varphi(t)V_t)=\varphi'(t)V_t+\varphi(t)D_tV_t$, which in turn is a direct consequence of the definition. 

For $(iii)$, let $(v^n_t)\subset \mathcal A$ be $W^{1,2}_{fix}$-converging to some  $(v^\infty_t)\in W^{1,2}_{fix}([0,1],L^0(T\X))$. Put $V^n_t:=\d F_0^t(v^n_t)$ for every $n\in\N$ and $t\in[0,1]$. We want to prove that $V^\infty_t:=\d F_0^t(v^\infty_t)$ satisfies the conclusions in the statement. Since $(v^n_t)\to (v^\infty_t)$ and $(\dot v^n_t)\to (\dot v^\infty_t)$ in $L^2_{fix}([0,1],L^0(T\X))\hookrightarrow L^0_{fix}([0,1],L^0(T\X))$, Proposition \ref{prop:dFtot2} and the very definition \eqref{eq:defDt} tell that $(V^n_t)\to (V^\infty_t)$ and  $(D_tV^n_t)\to (D_tV^\infty_t)$ in $L^0_{var}([0,1],L^0(T\X))$. It follows that
\[
\begin{array}{ccc}
\d f(V^n_t)\circ F_0^t\quad&\to&\quad \d f(V^\infty_t)\circ F_0^t\\
\d f(D_tV^n_t)\circ F_0^t+{\rm Hess}f(V^n_t,w_t)\circ F_0^t\quad&\to&\quad \d f(D_tV^\infty_t)\circ F_0^t+{\rm Hess}f(V^\infty_t,w_t)\circ F_0^t
\end{array}
\] 
in $L^0([0,1],L^0(\X))$. Since \eqref{eq:dermezzo} holds for $(V^n_t)$ and $|(v^n_t)|_{W^{1,2}_{fix}}\to |(v^\infty_t)|_{W^{1,2}_{fix}}$ in $L^0(\X)$, by Proposition \ref{prop:stabl0w12} the claim will be proved if we show that 
\begin{equation}
\label{eq:perdermezzo}
|(\d f(V^n_t)\circ F_0^t)|_{W^{1,2}}\leq G|(v^n_t)|_{W^{1,2}_{fix}},\qquad\mm-a.e.
\end{equation}
for every $n\in\N$ for some $G\in L^0(\X)$ possibly depending also on $f$. Since $|\d f|\in L^\infty(\X)$ we have
\[
|\d f(V^n_t)\circ F_0^t|_{L^2}\leq C|(V^n_t)|_{L^2_{var}}\stackrel{\eqref{eq:compLp}}\leq G|(v^n_t)|_{L^2_{fix}},\qquad\mm-a.e.
\]
and since also $|w_t|\in L^\infty(\X\times[0,1])$ we also have
\[
\begin{split}
|\d f(D_tV^n_t)\circ F_0^t+{\rm Hess}f(V^n_t,w_t)\circ F_0^t|_{L^2}&\leq C|(D_tV^n_t)|_{L^2_{var}}+C|(V^n_t)|_{L^\infty_{var}}|(|{\rm Hess}f|\circ F_0^t)|_{L^2}\\
\text{(by \eqref{eq:Vtinfty}, \eqref{eq:covtot})}\qquad &\leq G|(v^n_t)|_{W^{1,2}_{fix}}(1+|(|{\rm Hess}f|\circ F_0^t)|_{L^2})\\
\text{(by \eqref{eq:intflo2} for $|{\rm Hess}f|\in L^2(\X)$)}\qquad &\leq G|(v^n_t)|_{W^{1,2}_{fix}},\qquad\mm-a.e..
\end{split}
\]
The claim \eqref{eq:perdermezzo} follows.

To prove $(iv)$ we use Proposition \ref{prop:mainreg}: let $\bar v\in L^0(T\X)$ and put $V_t:=\d F_0^t(\bar v)$ for $t\in[0,1]$. Then it is clear  from Proposition \ref{prop:mainreg} that $t\mapsto \d f(V_t)\circ F_0^t=\d(f\circ F_0^t)(\bar v)$ belongs to $W^{1,2}([0,1],L^0(\X))$ with 
\[
\begin{split}
\frac{\d}{\d t}(\d f(V_t)\circ F_0^t)&=\d(\d f(w_t)\circ F_0^t)(\bar v)=\d(\d f(w_t))(V_t)\circ F_0^t=\big({\rm Hess}f(V_t,w_t)+\d f(D_tV_t)\big)\circ F_0^t,
\end{split}
\]
for a.e.\ $t$, having used the definition of $D_t$ in the last step. This proves $(iv)$ and gives the conclusion.
\end{proof}
From this lemma and the language  recalled in this section we easily obtain the following:
\begin{corollary}\label{cor:linkdd}
Let $({\sf z}_t)\in\mathscr H^{1,2}(\ppi_\mu)$ and $(V_t)\in W^{1,2}_{var}$. Put ${\sf v}_t:=\Psi_t(V_t)$. Then  $t\mapsto\la{\sf z}_t,{\sf v}_t\ra\in L^0(\ppi_\mu)$ belongs to $W^{1,2}([0,1],L^0(\ppi_\mu))$ with 
\[
\frac{\d}{\d t}\la{\sf z}_t,{\sf v}_t\ra=\la {\sf D}_t{\sf z}_t,{\sf v}_t\ra+\la{\sf z}_t,\Psi_t(D_tV_t)\ra,\qquad\ppi_\mu-a.e.,\ a.e.\ t\in[0,1].
\]
If $({\sf z}_t)$ is identified with its continuous representative and $(V_t)\in AC^2_{var}$, then $t\mapsto\la{\sf z}_t,{\sf v}_t\ra\in L^0(\ppi_\mu)$ belongs to $AC^2([0,1],L^0(\ppi_\mu))$. 
\end{corollary}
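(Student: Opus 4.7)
The plan is to first establish the Sobolev regularity and the derivative formula when $({\sf z}_t)$ is a test vector field, where everything can be computed by hand via Lemma~\ref{le:dermezzo}, and then to transfer the conclusion to all of $\mathscr H^{1,2}(\ppi_\mu)$ by density, using the stability statement of Proposition~\ref{prop:stabl0w12} transcribed to $L^0(\ppi_\mu)$-valued curves. The $AC^2$ claim will then drop out of the continuity of the continuous representatives of ${\sf z}$ and $V$.

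For test vector fields, by linearity and the restriction property of ${\rm TestVF}(\ppi_\mu)$ it suffices to consider ${\sf z}_t=\varphi(t)\nchi_\Gamma\,\e_t^*(f\nabla g)$ with $f,g\in\test\X$, $\varphi\in\LIP([0,1])$, and $\Gamma\subseteq C([0,1],\X)$ Borel. Setting $A=(F_0^\cdot)^{-1}(\Gamma)$ and using $\e_t=F_0^t\circ\e_0$ together with $\nchi_A\circ\e_0=\nchi_\Gamma$ (both $\ppi_\mu$-a.e.), the pullback identity $\la\e_t^*u,\e_t^*v\ra=\la u,v\ra\circ\e_t$ yields
\[
\la{\sf z}_t,{\sf v}_t\ra=\nchi_\Gamma\,\varphi(t)\,\big(f\circ F_0^t\big)\circ\e_0\cdot\big(\d g(V_t)\circ F_0^t\big)\circ\e_0\quad\text{in }L^0(\ppi_\mu).
\]
The factor $\d g(V_t)\circ F_0^t$ lies in $W^{1,2}([0,1],L^0(\X))$ with derivative $\d g(D_tV_t)\circ F_0^t+{\rm Hess}\,g(V_t,w_t)\circ F_0^t$ thanks to Lemma~\ref{le:dermezzo}; the factor $f\circ F_0^t$ lies in the same space with derivative $\d f(w_t)\circ F_0^t$ by \eqref{eq:int RLF}; and both factors together with their derivatives are $L^\infty$ in $(t,x)$ because $f,|\d f|,|w_t|\in L^\infty$. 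A pointwise $W^{1,2}(0,1)$-Leibniz rule, followed by composition with $\e_0$ (a bounded map from $L^0(\mm)$ to $L^0(\ppi_\mu)$ that preserves the pointwise-in-time Sobolev structure), produces a $W^{1,2}([0,1],L^0(\ppi_\mu))$-representative whose derivative coincides with the claimed expression. The identification of this derivative with $\la{\sf D}_t{\sf z}_t,{\sf v}_t\ra+\la{\sf z}_t,\Psi_t(D_tV_t)\ra$ is then a purely algebraic check combining the definition~\eqref{eq:dttilde} of $\tilde{\sf D}_t$, the Leibniz rule $\nabla_{w_t}(f\nabla g)=\d f(w_t)\nabla g+f\nabla_{w_t}\nabla g$, the identity $\la\nabla_{w_t}\nabla g,V_t\ra={\rm Hess}\,g(w_t,V_t)$, and the symmetry of the Hessian.

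To pass to a general $({\sf z}_t)\in\mathscr H^{1,2}(\ppi_\mu)$, I approximate by $({\sf z}^n_t)\subset{\rm TestVF}(\ppi_\mu)$ with ${\sf z}^n\to{\sf z}$ and ${\sf D}_t{\sf z}^n\to{\sf D}_t{\sf z}$ in $\mathscr L^2(\ppi_\mu)$. By Fubini plus a subsequence argument I may additionally assume $\int_0^1|{\sf z}^n_t-{\sf z}_t|^2+|{\sf D}_t{\sf z}^n_t-{\sf D}_t{\sf z}_t|^2\,\d t\to 0$ $\ppi_\mu$-a.e. Since $|{\sf v}_t|=|V_t|\circ\e_t$ and $|\Psi_t(D_tV_t)|=|D_tV_t|\circ\e_t$ are $\ppi_\mu\times\mathcal L^1$-a.e.\ finite, Cauchy--Schwarz yields both $\la{\sf z}^n_t,{\sf v}_t\ra\to\la{\sf z}_t,{\sf v}_t\ra$ and $\la{\sf D}_t{\sf z}^n_t,{\sf v}_t\ra+\la{\sf z}^n_t,\Psi_t(D_tV_t)\ra\to\la{\sf D}_t{\sf z}_t,{\sf v}_t\ra+\la{\sf z}_t,\Psi_t(D_tV_t)\ra$ in $L^0([0,1],L^0(\ppi_\mu))$. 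The same Cauchy--Schwarz, applied to the explicit formula for $\tfrac{\d}{\d t}\la{\sf z}^n_t,{\sf v}_t\ra$ from the previous step, produces an $n$-independent $L^0(\ppi_\mu)$-majorant of $|(\la{\sf z}^n_t,{\sf v}_t\ra)|_{W^{1,2}}$ of the form $C\big(\sup_t|{\sf v}_t|^2+\int_0^1|\Psi_t(D_tV_t)|^2\,\d t\big)^{1/2}\big(\int_0^1|{\sf z}^n_t|^2+|{\sf D}_t{\sf z}^n_t|^2\,\d t\big)^{1/2}$. Invoking the analogue of Proposition~\ref{prop:stabl0w12} for $L^0(\ppi_\mu)$-valued maps---whose proof is a verbatim transcription of the one given in the text---closes the argument, and the $AC^2$ statement follows as a by-product, since for $(V_t)\in AC^2_{var}$ Proposition~\ref{prop:dFtot2} makes $t\mapsto V_t$ continuous in $L^0(T\X)$ while \eqref{eq:contreprh12} identifies the continuous representative of ${\sf z}$ with $\Phi({\sf z}_\cdot)\in C([0,1],L^0(T\X))$, and an adaptation of Lemma~\ref{le:contcont} followed by composition with $\e_0$ yields continuity of $t\mapsto\la{\sf z}_t,{\sf v}_t\ra$ in $L^0(\ppi_\mu)$.

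The main obstacle I foresee is the last limit passage: one has to interpolate between the $\mathscr L^2(\ppi_\mu)$-character of the approximation, which integrates over $t$ and $\gamma$ simultaneously, and the pointwise-in-$\gamma$ character of the target norm $|(\cdot)|_{W^{1,2}}\in L^0(\ppi_\mu)$. The tentative majorant above has to be shown to satisfy the equi-integrability condition~\eqref{eq:bw1201} after a $\ppi_\mu$-a.e.\ convergent subsequence has been extracted---a point I expect to be the most delicate bookkeeping of the argument, though essentially routine once the Cauchy--Schwarz structure of the derivative is laid bare.
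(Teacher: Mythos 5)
Your overall strategy coincides with the paper's: an explicit computation for test vector fields via Lemma \ref{le:dermezzo} and the formula \eqref{eq:dttilde}, transferred from $L^0(\mm)$ to $L^0(\ppi_\mu)$ by composition with $\e_0$, followed by a density argument in $\mathscr H^{1,2}(\ppi_\mu)$ closed with (the $L^0(\ppi_\mu)$-version of) Proposition \ref{prop:stabl0w12}. Step 1 is fine. The gap is in Step 2, and it is not the equi-integrability bookkeeping you flag at the end (that part is genuinely routine: the last sentence of Proposition \ref{prop:stabl0w12} disposes of \eqref{eq:bw1201} once the majorants converge in $L^0(\ppi_\mu)$, which they do because $|({\sf z}^n_t)|_{L^2}\to|({\sf z}_t)|_{L^2}$ and $|({\sf D}_t{\sf z}^n_t)|_{L^2}\to|({\sf D}_t{\sf z}^n_t)|_{L^2}$ after passing to your a.e.-convergent subsequence).

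The real problem is that your proposed majorant
\[
C\Big(\sup_t|{\sf v}_t|^2+\int_0^1|\Psi_t(D_tV_t)|^2\,\d t\Big)^{1/2}\Big(\int_0^1|{\sf z}^n_t|^2+|{\sf D}_t{\sf z}^n_t|^2\,\d t\Big)^{1/2}
\]
does not follow from Cauchy--Schwarz alone. The terms $\la{\sf z}^n_t,{\sf v}_t\ra$ and $\la{\sf D}_t{\sf z}^n_t,{\sf v}_t\ra$ are fine, since $\sup_t|{\sf v}_t|$ sits in the first factor (and is finite $\ppi_\mu$-a.e.\ by \eqref{eq:Vtinfty}). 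But the term $\la{\sf z}^n_t,\Psi_t(D_tV_t)\ra$ is a product of two functions that are each only $L^2$ in $t$: estimating its $L^2(0,1)$-norm requires $\sup_t|{\sf z}^n_t|$, not $\|{\sf z}^n_\cdot\|_{L^2(0,1)}$, and $\|fg\|_{L^2}\leq\|f\|_{L^2}\|g\|_{L^2}$ is false. (You cannot put the $L^\infty$ on the other factor either, since $D_tV_t$ is only $L^2$ in time.) What is missing is precisely the pointwise-in-$\gamma$ Sobolev embedding
\[
\sup_t|{\sf z}^n_t|\leq 2\Big(\Big(\int_0^1|{\sf z}^n_t|^2\,\d t\Big)^{1/2}+\Big(\int_0^1|{\sf D}_t{\sf z}^n_t|^2\,\d t\Big)^{1/2}\Big)\qquad\ppi_\mu\text{-a.e.},
\]
which is inequality \eqref{eq:h12infty} in the paper and is derived there from the absolute continuity of norms \eqref{eq:ach12} by integrating $|\frac{\d}{\d t}|{\sf z}_t|^2|\leq|{\sf z}_t|^2+|{\sf D}_t{\sf z}_t|^2$ in $t$ and then in $s$. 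With this supplied, your majorant is correct and the rest of your argument, including the $AC^2$ statement via Lemma \ref{le:contcont} and \eqref{eq:contreprh12}, goes through as in the paper.
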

\begin{proof}\ \\
{\sc Step 0.} We prove the last statement assuming the first claim. Notice that
\[
\begin{split}
\la{\sf z}_t,\Psi_t(V_t)\ra\stackrel{\eqref{eq:psiphi}}=\la\Psi_t(\Phi_t({\sf z}_t)),\Psi_t(V_t)\ra=\la\Phi_t({\sf z}_t),V_t\ra\circ\e_t=\la\Phi_t({\sf z}_t),V_t\ra\circ F_0^t\circ\e_0\qquad\ppi_\mu-a.e..
\end{split}
\] 
Hence if $({\sf z}_t)$ is continuous (recall \eqref{eq:contreprh12}) and so is $(V_t)$, Lemma \ref{le:contcont} ensures that $t\mapsto\la\Phi_t({\sf z}_t),V_t\ra\circ F_0^t\in L^0(\X)$ is continuous, thus the above shows that so is $t\mapsto \la{\sf z}_t,{\sf v}_t\ra\in L^0(\ppi_\mu)$.

\noindent{\sc Step 1.} We assume that  $({\sf z}_t)\in {\rm TestVF}(\ppi_\mu)$, say $({\sf z}_t)=\Psi(Z_\cdot)$ for  $(Z_t):=\sum_{i=0}^n\nchi_{A_i}\times(\varphi_i(t)g_i\nabla f_i)\in{\rm Test}\Pi$, with $A_i$ Borel, $\varphi_i$ Lipschitz and $f_i,g_i \in \test{\X}$.  Notice that by the very definition \eqref{eq:l0varprod} we have
\begin{equation}
\label{eq:dadeftimes}
\la Z_t,V\ra\circ F_0^t=\sum_{i=0}^n\nchi_{A_i}\varphi_i(t)\, g_i \circ F_0^t\,\la \nabla f_i,V\ra\circ F_0^t,\qquad\forall t\in[0,1],\ V\in L^0(T\X).
\end{equation}
Noticing that $(t \mapsto g_i \circ F_0^t) \in W^{1,2}([0,1],L^0(\X))$ with derivative $\frac{\d}{\d t} g_i \circ F_0^t = \d g_i(w_t) \circ F_0^t$ and using Lemma \ref{le:dermezzo}  above, it follows, by the very definition of $W^{1,2}([0,1],L^0(\X))$, that $t\mapsto \la Z_t,V_t\ra\circ F_0^t\in L^0(\X)$ is in $W^{1,2}([0,1],L^0(\X))$ with
\[
\begin{aligned}
\frac{\d}{\d t}\la Z_t,V_t\ra\circ F_0^t&=\sum_{i=0}^n\nchi_{A_i}\Big(\varphi_i(t)\,g_i \,(\la \nabla f_i,D_tV_t\ra+{\rm Hess}f(V_t,w_t))\\
& \qquad \qquad +\varphi_i'(t)\,g_i\,\la \nabla f_i,V_t\ra+\varphi_i(t)\d g_i(w_t) \la \nabla f_i, V_t \ra \,\Big)\circ F_0^t\\
\text{(by \eqref{eq:dttilde})}\qquad&=\la Z_t,D_tV_t\ra\circ F_0^t+\la \tilde{\sf D}_tZ_t,V_t\ra \circ F_0^t
\end{aligned}
\]
and thus (recall also \eqref{eq:charw12}) for $(\mathcal L^1)^2$-a.e.\ $t,s\in[0,1]$ with $t<s$ we have 
\begin{equation}
\label{eq:interm}
\la Z_s,V_s\ra\circ F_0^s-\la Z_t,V_t\ra\circ F_0^t=\int_t^s\la Z_r,D_rV_r\ra\circ F_0^r+\la \tilde{\sf D}_rZ_r,V_r\ra \circ F_0^r\,\d r,\qquad\mm-a.e..
\end{equation}
Now observe that pre-composing the   defining identity $|\Psi_t(V)|=|V|\circ\e_t$ valid $\ppi_\mu$-a.e. by $F_0^\cdot$ and using the second in \eqref{eq:etinv} we get
\begin{equation}
\label{eq:scambio}
|\Psi_t(V)|\circ F_0^\cdot=|V|\circ F_0^t,\qquad\mm-a.e.\ on\ {\sf A}_0,\ \forall V\in L^0(T\X)
\end{equation}
(recall that ${\sf A}_t:=\{\frac{\d(\e_t)_*\sppi_\mu}{\d\mm}>0\}$).  We thus  multiply \eqref{eq:interm}  by $\nchi_{{\sf A}_0}$ and pre-compose it by $\e_0$, then use \eqref{eq:scambio} and the first in \eqref{eq:etinv} for $t=0$ and observe that $\nchi_{{\sf A}_0}\circ\e_0=1$ $\ppi_\mu$-a.e.\  to get that  for $(\mathcal L^1)^2$-a.e.\ $t,s\in[0,1]$ with $t<s$ we have
\begin{equation}
\label{eq:ancheh}
\la {\sf z}_s,{\sf v}_s\ra-\la {\sf z}_t,{\sf v}_t\ra=\int_t^s\la {\sf z}_r,\Psi_r(D_rV_r)\ra+\la {\sf D}_r{\sf z}_r,{\sf v}_r\ra \,\d r,\qquad\ppi_\mu-a.e.,
\end{equation}
having recalled \eqref{eq:dttest}.   By \eqref{eq:charw12bound}, this proves our claim in the case $({\sf z}_t)\in {\rm TestVF}(\ppi_\mu)$.

\noindent{\sc Step 2.} For the general case, we start claiming that for $({\sf z}_t)\in \mathscr H^{1,2}(\ppi_\mu)$ we have
\begin{equation}
\label{eq:h12infty}
|({\sf z}_t)|_{L^\infty}\leq 2\big(|({\sf z}_t)|_{L^2}+|({\sf D}_t{\sf z}_t)|_{L^2}\big),
\end{equation}
where here and below we adopt the notation $|\cdot|_{L^p}$ introduced in Section \ref{se:intmod}, referring this time to the space $L^0(\ppi_\mu)$ in place of $L^0(\mm)$. Indeed, from \eqref{eq:ach12} we see that $|\frac\d{\d t}|{\sf z}_t|^2|\leq |{\sf z}_t|^2+|{\sf D}_t{\sf z}_t|^2$ for a.e.\ $t$ and thus after integration we obtain
\[
|{\sf z}_t|^2\leq |{\sf z}_s|^2+\int_{t\wedge s}^{t\vee s}|{\sf z}_r|^2+|{\sf D}_r{\sf z}_r|^2\,\d r\leq |{\sf z}_s|^2+\int_0^1|{\sf z}_r|^2+|{\sf D}_r{\sf z}_r|^2\,\d r\qquad\ppi_\mu-a.e.,
\]
for $(\mathcal L^1)^2$-a.e.\ $t,s\in[0,1]$. Integrating in $s$ we deduce \eqref{eq:h12infty}. 

We shall use \eqref{eq:h12infty} in conjunction with Proposition \ref{prop:stabl0w12}  to conclude.  Thus let $({\sf z}_t)\in \mathscr H^{1,2}(\ppi_\mu)$ and $({\sf z}^n_t)\subset {\rm TestVF}(\ppi_\mu)$ be $\mathscr W^{1,2}(\ppi_\mu)$-converging to it. From the definition of $\mathscr W^{1,2}(\ppi_\mu)$-norm it is clear that
\[
\begin{split}
\la {\sf z}^n_t,{\sf v}_t\ra\quad&\to \quad \la {\sf z}_t,{\sf v}_t\ra\\
\la {\sf z}^n_t,\Psi_t(D_tV_t)\ra+\la {\sf D}_t{\sf z}^n_t,{\sf v}_t\ra \quad&\to\quad \la {\sf z}_t,\Psi_t(D_tV_t)\ra+\la {\sf D}_t{\sf z}_t,{\sf v}_t\ra 
\end{split}
\]
in $L^0([0,1],L^0(\ppi_\mu))$ and that
\[
\begin{split}
|({\sf z}_t^n)|_{L^2}\quad&\to\quad |({\sf z}_t)|_{L^2},\qquad\qquad\qquad 
|({\sf D}_t{\sf z}_t^n)|_{L^2}\quad\to\quad |({\sf D}_t{\sf z}_t)|_{L^2}
\end{split}
\]
in $L^0(\ppi_\mu)$. Also, from the definition of $\Psi$ we have that $|(\Psi(V_\cdot))|_{L^p}=|(V_t)|_{L^p_{var}}\circ\e_0$, thus
\[
\begin{split}
|\la {\sf z}^n_t,{\sf v}_t\ra|_{L^2}\quad&\leq\quad   |({\sf z}^n_t)|_{L^2}  |({\sf v}_t)|_{L^\infty}= |({\sf z}^n_t)|_{L^2}  |(V_t)|_{L^\infty_{var}}\circ \e_0\\
\text{(by \eqref{eq:Vtinfty})}\qquad&\leq \quad G |({\sf z}^n_t)|_{L^2}  |(V_t)|_{W^{1,2}_{var}} \circ \e_0,
\end{split}
\]
and
\[
\begin{split}
|\la {\sf z}^n_t,\Psi_t(D_tV_t)\ra+\la {\sf D}_t{\sf z}^n_t,{\sf v}_t\ra|_{L^2}\quad&\leq\quad  |({\sf z}^n_t)|_{L^\infty} |(\Psi_t(D_tV_t))|_{L^2}+| ({\sf D}_t{\sf z}^n_t)|_{L^2}|({\sf v}_t)|_{L^\infty}\\
\text{(by \eqref{eq:Vtinfty})}\qquad&\leq \quad G\, |(V_t)|_{W^{1,2}_{var}} \circ \e_0 \,\big(|({\sf z}^n_t)|_{L^2}+|({\sf D}_t{\sf z}^n_t)|_{L^2}\big).
\end{split}
\]
Hence Proposition \ref{prop:stabl0w12} gives the conclusion.
\end{proof}
The main result  of the section can now be proved rather easily:
\begin{proposition}\label{prop:maincomp}
The inclusions in \eqref{eq:claimhw} hold and the underlying notions of convective derivatives agree, i.e.\ for $(V_t)\in W^{1,2}_{var}$ such that ${\sf v}_\cdot:=\Psi(V_\cdot)\in\mathscr L^2(\ppi_\mu)$ we have $\Psi_t(D_tV_t)={\sf D}_t{\sf v}_t$ for a.e.\ $t\in[0,1]$.
\end{proposition}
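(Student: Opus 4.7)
The statement has two inclusions and a compatibility claim, and the plan is to treat them using Corollary \ref{cor:linkdd} as the main tool.

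\textbf{For ${\rm Interm}(\ppi_\mu)\subset\mathscr{W}^{1,2}(\ppi_\mu)$ with matching derivatives,} the plan is an integration-by-parts argument. Given $(V_t)\in W^{1,2}_{var}$ with $\Psi(V_\cdot),\Psi(D_\cdot V_\cdot)\in\mathscr{L}^2(\ppi_\mu)$, set ${\sf v}_t:=\Psi_t(V_t)$ and replace $(V_t)$ by its continuous $AC^2_{var}$-representative. For any test VF ${\sf z}\in{\rm TestVF}(\ppi_\mu)\subset\mathscr{H}^{1,2}(\ppi_\mu)$ with compact support in the sense of \eqref{eq:defcompsupp}, Corollary \ref{cor:linkdd} yields $t\mapsto\langle{\sf z}_t,{\sf v}_t\rangle\in AC^2([0,1],L^0(\ppi_\mu))$ with
\[
\tfrac{\d}{\d t}\langle{\sf z}_t,{\sf v}_t\rangle=\langle{\sf D}_t{\sf z}_t,{\sf v}_t\rangle+\langle{\sf z}_t,\Psi_t(D_tV_t)\rangle.
\]
Integrating in $t$ kills the boundary term by compact support of ${\sf z}$, and a further integration against $\ppi_\mu$ (Fubini justified by the $\mathscr{L}^2$ hypotheses) reproduces exactly the defining duality \eqref{eq:DtW12}, establishing simultaneously ${\sf v}\in\mathscr{W}^{1,2}(\ppi_\mu)$ and ${\sf D}_t{\sf v}_t=\Psi_t(D_tV_t)$.

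\textbf{For $\mathscr{H}^{1,2}(\ppi_\mu)\subset{\rm Interm}(\ppi_\mu)$,} the plan is a density argument: show (a) ${\rm TestVF}(\ppi_\mu)\subset{\rm Interm}(\ppi_\mu)$ and (b) ${\rm Interm}(\ppi_\mu)$ is $\mathscr{W}^{1,2}(\ppi_\mu)$-closed; these, combined with the defining density of ${\rm TestVF}$ in $\mathscr{H}^{1,2}$, finish the inclusion. For (a), on a basic summand $Z_t=\varphi(t)\nchi_A\circ F_t^0\,V$ with $V=g\nabla f$, $f,g\in\test{\X}$, the $L^0$-linearity \eqref{eq:l0lindf}, chain rule \eqref{eq:chain_rule_diff_LL} and group property \eqref{eq:groupRLF} yield $Z_t=\d F_0^t(v_t)$ for $v_t:=\varphi(t)\nchi_A\,\d F_t^0(V)$; membership of $(v_t)$ in $W^{1,2}_{fix}$ is then verified by pairing $\langle z,v_t\rangle$ against $z\in L^0(T\X)$ and reducing---via \eqref{eq:char_diff_LL} and density of test-function gradients---to Sobolev-in-$t$ regularity of $t\mapsto \d(h\circ F_t^0)$, which in turn is extracted by applying Proposition \ref{prop:mainreg} to the time-reversed flow of $\tilde w_t:=-w_{1-t}$ (whose RLF is $\tilde F_t^s=F_{1-t}^{1-s}$). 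For (b), given $\Psi(V^n_\cdot)$ Cauchy in $\mathscr{W}^{1,2}(\ppi_\mu)$, set $V_t:=\Phi_t({\sf v}_t)$ via the continuous representative \eqref{eq:contreprh12} of the $\mathscr{W}^{1,2}$-limit ${\sf v}_\cdot$, note that $\Psi(V_\cdot)={\sf v}_\cdot$ by \eqref{eq:psiphi}, and invoke completeness of $W^{1,2}_{var}$ together with the already-proven first half of the theorem to conclude $(V_t)\in W^{1,2}_{var}$.

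\textbf{The hard part will be} step (a) of the first inclusion, namely establishing the $W^{1,2}_{fix}$ regularity of $t\mapsto\d F_t^0(V)$ for a test vector $V$. Proposition \ref{prop:dFtot2} gives only continuity, while the needed Sobolev regularity requires an ODE-type analysis in $t$ of the differentials of the reverse flow---essentially a sibling of Proposition \ref{prop:mainreg} with the roles of the two temporal parameters swapped---and will force us to deploy the full technical machinery of Section \ref{sec:reg_fcircF_t} (weighted Haj\l asz--Sobolev spaces, the closure result Proposition \ref{prop:closure_tilde_d}, and the Hille-type Theorem \ref{thm:HilleH}).
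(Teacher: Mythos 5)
Your treatment of the second inclusion ${\rm Interm}(\ppi_\mu)\subset\mathscr W^{1,2}(\ppi_\mu)$ coincides with the paper's: apply Corollary \ref{cor:linkdd} to a compactly supported test vector field, integrate in $t$ and then against $\ppi_\mu$, and read off \eqref{eq:DtW12}. That part is fine. The first inclusion, however, is where your route diverges from the paper's and where it has a genuine gap, precisely at the point you flag as ``the hard part''. Your step (a) requires showing that a test vector field, i.e.\ a field \emph{constant in the variable frame}, belongs to $W^{1,2}_{var}$, which amounts to the $W^{1,2}_{fix}$-regularity of $t\mapsto \d F_t^0(V)$, equivalently to Sobolev regularity in $t$ of $t\mapsto\d(h\circ F_t^0)$. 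This is regularity in the \emph{source} (first) slot of the flow, whereas Proposition \ref{prop:mainreg} only provides regularity in the \emph{target} (second) slot, $s\mapsto\d(f\circ F_t^s)$ for fixed $t$. The time-reversal you propose does not convert one into the other: for $\tilde w_r:=-w_{1-r}$ one has $\tilde F_a^b=F_{1-a}^{1-b}$, hence $F_t^0=\tilde F_{1-t}^{1}$, and the varying parameter $1-t$ still sits in the source slot of the reversed flow; the group property $F_t^0=F_1^0\circ F_t^1$ does not help either, since composing with fixed maps cannot move the varying index. Source-slot differentiability is a genuinely different (adjoint-type) question: formally $\partial_t(h\circ F_t^0)=-\d(h\circ F_t^0)(w_t)$, so the candidate derivative of $\d(h\circ F_t^0)$ is $-\d\big(\d(h\circ F_t^0)(w_t)\big)$, where the inner function is only Lusin--Lipschitz rather than of the form $\d f(w_t)\circ F_t^s$ with $f\in\test\X$; none of the estimates of Section \ref{sec:reg_fcircF_t} apply to it, and establishing \eqref{eq:conHille}-type identities for it would require new work involving $\nabla_{w_t}V$ (a non-$L^0$-linear, differential expression in $V$) that also falls outside the scope of Proposition \ref{prop:perPT}.

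The paper avoids this issue entirely by a different mechanism: given $({\sf z}_t)\in\mathscr H^{1,2}(\ppi_\mu)$ (continuous representative), it sets $Z_t:=\Phi_t({\sf D}_t{\sf z}_t)\in L^2_{var}$ and $\bar V:=\Phi_0({\sf z}_0)$, invokes Theorem \ref{thm:exuni} to produce the unique $(V_t)\in AC^2_{var}$ with $D_tV_t=Z_t$, $V_0=\bar V$, and then shows $\Psi_t(V_t)={\sf z}_t$ by computing $\frac{\d}{\d t}|{\sf z}_t-\Psi_t(V_t)|^2=0$ via \eqref{eq:ach12}, Theorem \ref{thm:leibconv} and Corollary \ref{cor:linkdd}. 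In other words, the $W^{1,2}_{var}$-representative of ${\sf z}$ is \emph{constructed} as the solution of the transport ODE, not obtained by approximation from test vector fields; in particular the paper never proves (nor needs) your claim (a) directly -- it follows only a posteriori from ${\rm TestVF}(\ppi_\mu)\subset\mathscr H^{1,2}(\ppi_\mu)\subset{\rm Interm}(\ppi_\mu)$, which would be circular in your scheme. A secondary, fixable issue in your step (b): $\mathscr W^{1,2}(\ppi_\mu)$-convergence only controls vector fields $(\e_t)_*\ppi_\mu$-a.e., i.e.\ on the sets ${\sf A}_t$, while the $W^{1,2}_{var}$-metric is relative to $\mm'$ on all of $\X$; since $\mu$ need not be equivalent to $\mm$, the map $\Psi$ is not injective on $W^{1,2}_{var}$ and the closedness argument must first cut with $\nchi_{{\sf A}_0}\times(\cdot)$ before invoking completeness.
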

\begin{proof}\ \\
{\sc Step 1.} We prove the second inclusion in \eqref{eq:claimhw} and the identification of convective derivatives. Let $({\sf v}_t)\in{\rm Interm}(\ppi_\mu)$, say $({\sf v_t})=\Psi(V_\cdot)$ for $(V_t)\in W^{1,2}_{var}$. Also, let $({\sf z}_t)\in {\rm TestVF}(\ppi_\mu)$ be  with compact support in time in the sense of \eqref{eq:defcompsupp}. Then from Corollary \ref{cor:linkdd} above, since ${\rm TestVF}(\ppi_{\mu}) \subseteq \mathscr{H}^{1,2}(\ppi_\mu)$, we  obtain that
\[
\int_0^1\la {\sf z}_r,\Psi_r(D_rV_r)\ra+\la {\sf D}_r{\sf z}_r,{\sf v}_r\ra \,\d r=0\qquad\ppi_\mu-a.e..
\]
The integrability assumptions coming from the hypothesis $\Psi(V_\cdot)\in{\rm Interm}(\ppi_\mu)$ and $({\sf z}_t)\in {\rm TestVF}(\ppi_\mu)$  ensure that the left-hand side of the above is in $L^1(\ppi_\mu)$, thus upon integration we see that \eqref{eq:DtW12} holds with ${\sf v}'_t=\Psi_t(D_tV_t)$ for a.e.\ $t\in[0,1]$. By the arbitrariness of $({\sf z}_t)\in {\rm TestVF}(\ppi_\mu)$,  this is the claim.

\noindent{\sc Step 2.} We prove the first inclusion in \eqref{eq:claimhw}. Let $({\sf z}_t)\in \mathscr H^{1,2}(\ppi_\mu)$ be identified with its continuous representative as in \eqref{eq:contreprh12}. Put $Z_t:=\Phi_t({\sf D}_t{\sf z}_t)$ and $\bar V:=\Phi_0({\sf z}_0)$ and notice that the identity $|\Phi_t({\sf z})|=\nchi_{{\sf A}_t}|{\sf z}|\circ F_0^\cdot\circ F_t^0$ gives $|\Phi_t({\sf z})|\circ F_0^t=\nchi_{{\sf A}_0}|{\sf z}|\circ F_0^\cdot$, which in turn implies, thanks to $({\sf D}_t{\sf z}_t)\in \mathscr L^2(\ppi_\mu)$, that $(Z_t)\in L^2_{var}$. 

By Theorem \ref{thm:exuni}  there is (a unique) $(V_t)\in AC^2_{var}$ satisfying \eqref{eq:odevar} with these choices of $(Z_t),\bar V$. To conclude the proof it is therefore sufficient to show that $\Psi_t(V_t)={\sf z}_t$ for every $t\in[0,1]$. Notice that the identity $|\Psi_t(V)|=|V|\circ\e_t$ gives that $t\mapsto |\Psi_t(V_t)|^2$ is in $AC^2([0,1],L^0(\ppi_\mu))$ with $\frac12\frac\d{\d t} |\Psi_t(V_t)|^2=\la \Psi_t(V_t),\Psi_t(D_tV_t)\ra $. Thus taking into account \eqref{eq:ach12}, Corollary \ref{cor:linkdd}, and Theorem \ref{thm:leibconv}, we see that $t\mapsto |{\sf z}_t-\Psi_t(V_t)|^2=|{\sf z}_t|^2+|\Psi_t(V_t)|^2-2\la {\sf z}_t,\Psi_t(V_t)\ra$ is in $AC^{2}([0,1],L^0(\ppi_\mu))$ with derivative given by
\[
\begin{split}
\tfrac12\frac{\d}{\d t}|{\sf z}_t-\Psi_t(V_t)|^2&=\la {\sf z}_t,{\sf D}_t{\sf z}_t\ra+\la \Psi_t(V_t),\Psi_t(D_tV_t)\ra -\la {\sf z}_t,\Psi_t(D_tV_t) \ra-\la{\sf D}_t{\sf z}_t,\Psi_t(V_t)\ra=0,\quad a.e.\ t
\end{split}
\]
having used the fact that $\Psi_t(D_tV_t)=\Psi_t(\Phi_t({\sf D}_t{\sf z}_t))={\sf D}_t{\sf z}_t$. Since $\Psi_0(V_0)=\Psi_0(\Phi_0({\sf z}_0))={\sf z}_0$, the conclusion follows.
\end{proof}

\appendix
\section{Existence of the parallel transport in \texorpdfstring{$\mathscr{W}^{1,2}(\ppi)$}{W12pi}}
\label{sec:existence_W^{1,2}}
In this section, we show an existence result of the parallel transport of an initial vector field along a Lipschitz test plan in the class $\mathscr{W}^{1,2}(\ppi)$. In particular, we can show, by an abstract argument of functional analysis, that for a given initial vector field $\bar{V} \in \e_0^*L^2(T\X)$ we can find a Borel vector field $t \rightarrow V_t$ along a Lipschitz test plan $\ppi$ such that $V \in \mathscr W^{1,2}(\ppi)$, $(D_{\sppi} V)_t =0$ for a.e.\ $t$, and satisfies the initial condition in an appropriate sense.
We use in this section a vanishing viscosity approach: we approximate our problem with a sequence of problems that are coercive; on this class of problems we can apply a variant of Lax--Milgram lemma; thanks to compactness, we can pass to the limit and obtain a solution to our problem.
More precisely, the variant of Lax--Milgram lemma is the following and it is taken from \cite{Ambrosio-Trevisan14}.
\begin{lemma}[Lions] 
\label{lem:Lions}
Let $E$ and $H$ be a normed and a Hilbert space, respectively. Assume that $E$ is continuously embedded in $H$, with $\|v\|_H \le \|v\|_E$ for every $v \in E$. Let $B \colon H \times E \rightarrow \mathbb{R}$ be a bilinear form such that $B(\cdot,v)$ is continuous for every $v \in E$. If $B$ is coercive, namely there exists $c >0$ such that $B(v,v) \ge c \| v \|_E^2$ for every $v \in E$, then for all $l \in E'$ there exists $h \in H$ such that
\begin{equation*}
B(h,v) = l(v)\quad\text{ for every }v \in E    
\end{equation*}
and
\begin{equation}
\label{eq:lions_estimate}
\| h\|_H \le \frac{\| l \|_{V'}}{c}.
\end{equation}
\end{lemma}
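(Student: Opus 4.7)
The plan is to reformulate the variational equation $B(h,v)=l(v)$ as a Riesz-type identity $\langle h, T(v)\rangle_H = l(v)$ on a suitable subspace of $H$, then use Hahn--Banach to extend and Riesz representation to produce $h$.

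\textbf{Step 1: representing $B(\cdot,v)$ via Riesz.} For each fixed $v\in E$ the map $h\mapsto B(h,v)$ is linear and continuous on $H$ by assumption, so Riesz representation gives a unique $T(v)\in H$ such that
\[
B(h,v)=\langle h,T(v)\rangle_H\qquad\forall h\in H.
\]
By bilinearity of $B$, the map $T\colon E\to H$ is linear (no continuity of $T$ is needed, and we do not claim it).

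\textbf{Step 2: exploiting coercivity.} For every $v\in E$,
\[
c\|v\|_E^2\;\le\;B(v,v)=\langle v,T(v)\rangle_H\;\le\;\|v\|_H\,\|T(v)\|_H\;\le\;\|v\|_E\,\|T(v)\|_H,
\]
where in the last step we use the continuous embedding $E\hookrightarrow H$. This shows $\|T(v)\|_H\ge c\|v\|_E$; in particular $T$ is injective and, on its range $F\coloneqq T(E)\subseteq H$, we can define a linear functional
\[
\phi\colon F\to\R,\qquad \phi(T(v))\coloneqq l(v),
\]
which is well defined precisely because $T(v)=0\Rightarrow v=0\Rightarrow l(v)=0$.

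\textbf{Step 3: Hahn--Banach extension and Riesz representation.} The coercivity estimate gives the bound
\[
|\phi(T(v))|=|l(v)|\le\|l\|_{E'}\|v\|_E\le \frac{\|l\|_{E'}}{c}\,\|T(v)\|_H,
\]
so $\phi$ is continuous on $F$ with norm $\le\|l\|_{E'}/c$. By Hahn--Banach it extends to a continuous linear functional $\tilde\phi$ on $H$ with the same norm bound, and by Riesz representation there exists $h\in H$ with
\[
\tilde\phi(y)=\langle h,y\rangle_H\qquad\forall y\in H,\qquad \|h\|_H\le \frac{\|l\|_{E'}}{c}.
\]
Restricting to $y=T(v)$ and using Step 1 yields $l(v)=\phi(T(v))=\langle h,T(v)\rangle_H=B(h,v)$ for every $v\in E$, and the norm estimate \eqref{eq:lions_estimate} has already been established.

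\textbf{Main obstacle.} The argument is essentially forced once one recognizes that coercivity on $E$ (rather than on $H$) is exactly what is needed to check that the linear functional $T(v)\mapsto l(v)$ is well defined and bounded in the $H$-norm of $T(v)$; after that, Hahn--Banach and Riesz finish the proof. The only subtle point is that one should not attempt to invert $T$ on its closure, since $T$ need not have closed range in $H$, so the extension must be carried out via Hahn--Banach on the functional $\phi$ rather than on $T^{-1}$.
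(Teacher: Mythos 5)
Your proof is correct. The paper does not actually prove this lemma (it is quoted verbatim from \cite{Ambrosio-Trevisan14}), and your argument -- Riesz representation of $B(\cdot,v)$ to get $T(v)\in H$, the coercivity bound $\|T(v)\|_H\ge c\|v\|_E$ giving injectivity and boundedness of $T(v)\mapsto l(v)$, then Hahn--Banach extension and Riesz representation -- is precisely the standard proof of Lions' lemma found in that reference, including the norm estimate $\|h\|_H\le\|l\|_{E'}/c$.
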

We introduce the following class of approximations. For a given $\varepsilon$, we solve in a distributional sense the partial differential equation:
\begin{equation}
\label{eq:approx_pde}
({\rm D}_{\sppi} V)_t = \varepsilon(-V_t+  ({\rm D}_{\sppi}^2 V)_t),
\end{equation}
looking for a solution in $\mathscr H^{1,2}(\ppi)$.
\begin{definition}[Parallel transport in $\mathscr W^{1,2}(\ppi)$]
\label{def:pt_W12}
Let \((\X,\sfd,\mm)\) be an \({\sf RCD}(K,\infty)\) space and
\(\ppi\) a Lipschitz test plan on \(\X\). Given \(\bar V\in\e_0^*L^2(T\X)\) we say that $V \in \mathscr  W^{1,2}(\ppi)$ is a parallel transport in $\mathscr W^{1,2}(\ppi)$ of $\bar V$ along $\ppi$ if $D_{\sppi} V = 0$ and for every $Z \in {\rm TestVF}(\ppi)$
\begin{equation}\label{eq:initial_cond}
    R(Z)_0 = \int \langle \bar V, Z_0 \rangle\,\d\ppi,
\end{equation}
where we denote by $t \mapsto R(Z)_t$ the absolutely continuous representative of $t \mapsto \int \langle V_t,Z_t \rangle\,\d\ppi$.
\end{definition}
\begin{theorem}[Existence of PT in \(\mathscr W^{1,2}(\ppi)\)]
Let \((\X,\sfd,\mm)\) be an \({\sf RCD}(K,\infty)\) space and
\(\ppi\) a Lipschitz test plan on \(\X\). Let \(\bar V\in\e_0^*L^2(T\X)\)
be given. Then there exists \(V\in\mathscr W^{1,2}(\ppi)\) that is a parallel transport in \(\mathscr W^{1,2}(\ppi)\) of $\bar V$ along $\ppi$.
\end{theorem}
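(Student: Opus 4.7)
The plan is to apply Lions' lemma (Lemma~\ref{lem:Lions}) to the approximate equation \eqref{eq:approx_pde} for each $\varepsilon>0$, obtain a family $(V_\varepsilon)\subset\mathscr H^{1,2}(\ppi)$, establish a uniform (in $\varepsilon$) bound on $\|V_\varepsilon\|_{\mathscr L^2(\sppi)}$, and extract a weak limit $V\in\mathscr L^2(\ppi)$ which I will show is the desired parallel transport. The hard part will be the uniform $\mathscr L^2$-bound: the coercivity constant produced by the natural bilinear form is only $\varepsilon$, so estimate \eqref{eq:lions_estimate} alone gives $\|V_\varepsilon\|_{\mathscr H^{1,2}}=O(1/\varepsilon)$, which is useless for passing to the limit.

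For the Lions step I would take $H=E=\mathscr H^{1,2}(\ppi)$, identifying vector fields with their continuous representatives via \eqref{eq:contreprh12}, and consider
\[
B_\varepsilon(V,Z):=-\int_0^1\!\int\langle V,D_\ppi Z\rangle\,d\ppi\,dt+\varepsilon\int_0^1\!\int\bigl(\langle V,Z\rangle+\langle D_\ppi V,D_\ppi Z\rangle\bigr)d\ppi\,dt+\int\langle V(1),Z(1)\rangle\,d\ppi
\]
together with $\ell(Z):=\int\langle\bar V,Z(0)\rangle\,d\ppi$, which is continuous on $\mathscr H^{1,2}(\ppi)$ thanks to the trace-type bound \eqref{eq:h12infty}. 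The Leibniz rule for $t\mapsto\int\langle V_t,Z_t\rangle\,d\ppi$ on $\mathscr H^{1,2}(\ppi)$ (the polarization of \eqref{eq:ach12}) gives
\[
B_\varepsilon(Z,Z)=\tfrac12\!\int|Z(0)|^2\,d\ppi+\tfrac12\!\int|Z(1)|^2\,d\ppi+\varepsilon\|Z\|_{\mathscr H^{1,2}}^2\ge\varepsilon\|Z\|_{\mathscr H^{1,2}}^2,
\]
so Lions' lemma produces $V_\varepsilon\in\mathscr H^{1,2}(\ppi)$ with $B_\varepsilon(V_\varepsilon,Z)=\ell(Z)$ for every $Z$. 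An integration by parts, legitimate since the resulting PDE forces $D_\ppi V_\varepsilon\in\mathscr W^{1,2}(\ppi)$, shows that $V_\varepsilon$ solves \eqref{eq:approx_pde} weakly in the interior together with the natural Robin condition $V_\varepsilon(0)-\varepsilon D_\ppi V_\varepsilon(0)=\bar V$ and the Neumann condition $D_\ppi V_\varepsilon(1)=0$.

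To bypass the degeneracy of the coercivity constant I would first test against $Z=V_\varepsilon$, which yields $\|V_\varepsilon(1)\|_{L^2}\le\|\bar V\|_{L^2}$, $\|V_\varepsilon(0)\|_{L^2}\le 2\|\bar V\|_{L^2}$ and $\sqrt\varepsilon\|V_\varepsilon\|_{\mathscr H^{1,2}}\le\|\bar V\|_{L^2}$. The upgrade to a uniform $\mathscr L^2$-bound relies on the modified energy
\[
\Phi(t):=\tfrac12\int|V_\varepsilon(t)|^2\,d\ppi-\varepsilon\int\langle V_\varepsilon(t),D_\ppi V_\varepsilon(t)\rangle\,d\ppi,
\]
for which a direct computation using the PDE and the calculus rules in $\mathscr H^{1,2}(\ppi)$ gives $\Phi'(t)=-\varepsilon\int(|V_\varepsilon|^2+|D_\ppi V_\varepsilon|^2)\,d\ppi\le 0$. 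The Neumann condition yields $\Phi(1)=\tfrac12\|V_\varepsilon(1)\|_{L^2}^2\ge 0$, and the Robin condition combined with Young's inequality gives $\Phi(0)\le\tfrac12\|\bar V\|_{L^2}^2$; hence $\tfrac12\|V_\varepsilon(t)\|_{L^2}^2\le\tfrac12\|\bar V\|_{L^2}^2+\varepsilon\!\int\langle V_\varepsilon(t),D_\ppi V_\varepsilon(t)\rangle\,d\ppi$, and another Young inequality together with $\varepsilon\|D_\ppi V_\varepsilon\|_{\mathscr L^2}^2\le\tfrac12\|\bar V\|_{L^2}^2$ and integration in $t$ delivers $\|V_\varepsilon\|_{\mathscr L^2(\sppi)}\le C\|\bar V\|_{L^2}$ uniformly in $\varepsilon\le 1/2$.

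With this bound in hand I extract a weakly convergent subsequence $V_{\varepsilon_k}\rightharpoonup V$ in $\mathscr L^2(\ppi)$. Testing the variational identity against $Z\in{\rm TestVF}(\ppi)$ with compact support in $(0,1)$ gives $\int\!\int\langle V_{\varepsilon_k},D_\ppi Z\rangle=\varepsilon_k\!\int\!\int\langle V_{\varepsilon_k},Z\rangle+\varepsilon_k\!\int\!\int\langle D_\ppi V_{\varepsilon_k},D_\ppi Z\rangle$, and the right-hand side tends to zero thanks to $\|V_{\varepsilon_k}\|_{\mathscr L^2}\le C$ and $\sqrt{\varepsilon_k}\|D_\ppi V_{\varepsilon_k}\|_{\mathscr L^2}\le C$; passing to the limit on the left yields $\int\!\int\langle V,D_\ppi Z\rangle=0$ for all such $Z$, which by \eqref{eq:DtW12} means $V\in\mathscr W^{1,2}(\ppi)$ with $D_\ppi V=0$. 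For the initial condition, testing against $Z\in\mathscr H^{1,2}(\ppi)$ with $Z(1)=0$ in $B_{\varepsilon_k}(V_{\varepsilon_k},Z)=\ell(Z)$ and passing to the limit gives $-\int\!\int\langle V,D_\ppi Z\rangle=\int\langle\bar V,Z(0)\rangle\,d\ppi$; combined with the Leibniz rule for $R(Z)_t=\int\langle V_t,Z_t\rangle\,d\ppi$ and $D_\ppi V=0$ this yields $R(Z)_0=\int\langle\bar V,Z(0)\rangle\,d\ppi$, and multiplying by a cutoff $\chi\in{\rm Lip}([0,1])$ with $\chi(0)=1$, $\chi(1)=0$ extends the identity to arbitrary $Z\in{\rm TestVF}(\ppi)$.
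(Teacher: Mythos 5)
Your overall strategy is the same vanishing-viscosity/Lions scheme as the paper's, and your choice of putting the coercivity-producing boundary term $\int\la V(1),Z(1)\ra\,\d\ppi$ into the bilinear form on $H=E=\mathscr H^{1,2}(\ppi)$ (rather than the paper's choice of $E=\{Z\in{\rm TestVF}(\ppi):{\rm spt}(Z)\subseteq[0,1)\}$ with the $\|Z_0\|$-enhanced norm) is a legitimate variant; the $\varepsilon$-testing with $Z=V_\varepsilon$, the final weak-limit passage and the recovery of the initial condition via $R(Z)_0$ all match the paper's proof in substance. The genuine problem is the step where you extract the ``Robin condition $V_\varepsilon(0)-\varepsilon {\rm D}_\sppi V_\varepsilon(0)=\bar V$'' and the ``Neumann condition ${\rm D}_\sppi V_\varepsilon(1)=0$''. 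Knowing that the equation forces ${\rm D}_\sppi V_\varepsilon\in\mathscr W^{1,2}(\ppi)$ does not give you traces of ${\rm D}_\sppi V_\varepsilon$ at $t=0,1$: in this framework $\mathscr W^{1,2}(\ppi)$ is defined by duality against test fields with compact support in $(0,1)$ and carries no continuity/trace theory (this is precisely why Definition \ref{def:pt_W12} has to phrase the initial condition through the absolutely continuous representative of the scalar map $t\mapsto\int\la V_t,Z_t\ra\,\d\ppi$ rather than through $V_0$ itself). Consequently the objects ${\rm D}_\sppi V_\varepsilon(0)$, ${\rm D}_\sppi V_\varepsilon(1)$ appearing in your boundary conditions, in your modified energy $\Phi$, and in the evaluations $\Phi(0)\le\tfrac12\|\bar V\|^2$, $\Phi(1)\ge 0$, are not defined as written.

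The argument is repairable, but the repair is nontrivial and is exactly the bookkeeping your write-up skips: one must restate the boundary conditions as statements about the endpoint values of the absolutely continuous representatives of $t\mapsto\int\la {\rm D}_\sppi V_\varepsilon,Z_t\ra\,\d\ppi$ for $Z\in\mathscr H^{1,2}(\ppi)$ (this uses the mixed Leibniz rule with one field in $\mathscr H^{1,2}(\ppi)$ and one in $\mathscr W^{1,2}(\ppi)$), prove them first for $Z$ vanishing near the relevant endpoint, and then transfer them to $Z=V_\varepsilon$ itself by approximating $V_\varepsilon$ with test fields multiplied by cutoffs, before the monotonicity of $\Phi$ and the bounds $\Phi(1)\ge 0$, $\Phi(0)\le\tfrac12\|\bar V\|^2$ can be invoked; a similar (smaller) gap occurs at the end, where your identity $R(Z)_0=\int\la\bar V,Z_0\ra\,\d\ppi$ for $Z(1)=0$ tacitly uses $R(Z)_1=0$, which needs either ${\rm spt}(Z)\subseteq[0,1)$ or a short argument via the $\tfrac12$-H\"older continuity of $t\mapsto Z_t$. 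The paper's proof is structured so as to never need any trace of ${\rm D}_\sppi V^\varepsilon$: it keeps the term $\eps\int\la{\rm D}_\sppi V^\eps_s,V^\eps_s\ra\,\d\ppi$ alive at Lebesgue points $s$ (obtained by testing with the cutoffs $\varphi_n\to\nchi_{[0,s]}$ and then with approximations $Z^n\to V^\eps$ in $\mathscr W^{1,2}(\ppi)$), and absorbs it at the very end by Young's inequality together with the a priori bound $\eps\|V^\eps\|_{\mathscr H^{1,2}(\sppi)}\le\|\bar V\|_{\e_0^*L^2(T\X)}$. If you carry out the trace-free reformulation above, your boundary-term/energy-monotonicity route does close, and it yields the same uniform $\mathscr L^2$ bound; as currently written, however, the key estimate rests on undefined quantities.
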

\begin{proof}
Fix \(\eps\in(0,1/2)\). Consider the Hilbert space
\(H\coloneqq\big(\mathscr H^{1,2}(\ppi),\|\cdot\|_{\mathscr H^{1,2}(\sppi)}\big)\).
Define also
\[\begin{split}
E&\coloneqq\big\{Z\in{\rm TestVF}(\ppi)\;\big|\;{\rm spt}(Z)\subseteq[0,1)\big\}\subseteq \mathscr H^{1,2}(\ppi),\\
\|Z\|_E&\coloneqq\big(\|Z_0\|^2_{\e_0^*L^2(T\X)}+\|Z\|^2_H\big)^{1/2}
\quad\text{ for every }Z\in E.
\end{split}\]
Clearly, \(\|Z\|_H\leq\|Z\|_E\) for every \(Z\in E\). Now let us define
\(B\colon H\times E\to\R\) and \(\ell\colon E\to\R\) as
\[\begin{split}
B(V,Z)&\coloneqq\int_0^1\!\!\!\int-\la V_t,{\rm D}_\sppi Z_t\ra
+\eps\,\la V_t,Z_t\ra+\eps\,\la{\rm D}_\sppi V_t,{\rm D}_\sppi Z_t\ra\,\d\ppi\,\d t,\\
\ell(Z)&\coloneqq\int\la\bar V,Z_0\ra\,\d\ppi,
\end{split}\]
respectively. The map \(B\) is bilinear by construction. Moreover, for some
constant \(C>0\) we have that \(\big|B(V,Z)\big|\leq C\,\|V\|_H\,\|Z\|_H\) for
every \(V\in H\) and \(Z\in E\), thus in particular \(B(\cdot,Z)\) is continuous
for any \(Z\in E\). The Leibniz rule grants that
\(-2\int_0^1\!\!\int\la Z_t,{\rm D}_\sppi Z_t\ra\,\d\ppi\,\d t=\int|Z_0|^2\,\d\ppi\)
holds for every \(Z\in E\), whence coercivity of the map \(B\) follows: given any
\(Z\in E\), we have
\[
B(Z,Z)=\frac{1}{2}\int|Z_0|^2\,\d\ppi+
\eps\int_0^1\!\!\!\int|Z_t|^2+|{\rm D}_\sppi Z_t|^2\,\d\ppi\,\d t\geq\eps\,\|Z\|^2_E.
\]
Furthermore, it holds that \(\ell\in E'\) and
\(\|\ell\|_{E'}\leq\|\bar V\|_{\e_0^*L^2(T\X)}\).
Therefore, Lemma \ref{lem:Lions} yields the existence of an element \(V^\eps\in H\)
such that \(\|V^\eps\|_H\leq\|\Bar{} V\|_{\e_0^*L^2(T\X)}/\eps\) and
\(B(V^\eps,Z)=\ell(Z)\) for every \(Z\in E\), which explicitly reads as
\begin{equation}\label{eq:ex_PT_W_aux0}
\int_0^1\!\!\!\int-\la V^\eps_t,{\rm D}_\sppi Z_t\ra+\eps\,\la V^\eps_t,Z_t\ra+
\eps\,\la{\rm D}_\sppi V^\eps_t,{\rm D}_\sppi Z_t\ra\,\d\ppi\,\d t
=\int\la\bar V,Z_0\ra\,\d\ppi
\end{equation}
for every \(Z\in E\). Given any \(Z\in{\rm TestVF}(\ppi)\) and
\(\varphi\in{\rm LIP}([0,1])\) with \({\rm spt}(\varphi)\subseteq[0,1)\),
it holds that \(t\mapsto\varphi(t)Z_t\) belongs to \(E\) and
\({\rm D}_\sppi(\varphi Z)_t=\varphi'(t)Z_t+\varphi(t)\,{\rm D}_\sppi Z_t\)
for a.e.\ \(t\in[0,1]\). Then
\begin{equation}\label{eq:ex_PT_W_aux1}\begin{split}
\varphi(0)\int\la\bar V,Z_0\ra\,\d\ppi=&
\int_0^1\varphi(t)\int-\la V^\eps_t,{\rm D}_\sppi Z_t\ra+\eps\,\la V^\eps_t,Z_t\ra+
\eps\,\la{\rm D}_\sppi V^\eps_t,{\rm D}_\sppi Z_t\ra\,\d\ppi\,\d t\\
&+\int_0^1\varphi'(t)\int-\la V^\eps_t,Z_t\ra+
\eps\,\la{\rm D}_\sppi V^\eps_t,Z_t\ra\,\d\ppi\,\d t.
\end{split}\end{equation}
Fix a Lebesgue point \(s\in(0,1)\) of \(t\mapsto\int-\la V^\eps_t,Z_t\ra
+\eps\,\la{\rm D}_\sppi V^\eps_t,Z_t\ra\,\d\ppi\). Define \(\varphi_n\) as
\[
\varphi_n(t)\coloneqq\left\{\begin{array}{ll}
1\\
-n(t-s)+1\\
0
\end{array}\quad\begin{array}{ll}
\text{ if }t\in[0,s),\\
\text{ if }t\in[s,s+1/n),\\
\text{ if }t\in[s+1/n,1],
\end{array}\right.
\]
for all \(n\in\N\), \(n>1/(1-s)\). Note that
\((\varphi_n)_n\subseteq{\rm LIP}([0,1])\) is a bounded sequence in \(L^\infty(0,1)\),
\({\rm spt}(\varphi_n)\subseteq[0,1)\) for all \(n\), and
\(\varphi_n\to\nchi_{[0,s]}\) pointwise as \(n\to\infty\). Moreover, it holds that
\[\begin{split}
\int_0^1\varphi_n'(t)\int-\la V^\eps_t,Z_t\ra+
\eps\,\la{\rm D}_\sppi V^\eps_t,Z_t\ra\,\d\ppi\,\d t
=&n\int_s^{s+1/n}\int\la V^\eps_t,Z_t\ra-
\eps\,\la{\rm D}_\sppi V^\eps_t,Z_t\ra\,\d\ppi\,\d t\\
&\to\int\la V^\eps_s,Z_s\ra-\eps\,\la{\rm D}_\sppi V^\eps_s,Z_s\ra\,\d\ppi
\quad\text{ as }n\to\infty.
\end{split}\]
Therefore, by plugging \(\varphi=\varphi_n\) into \eqref{eq:ex_PT_W_aux1} and letting
\(n\to\infty\), we deduce that
\begin{equation}\label{eq:ex_PT_W_aux2}\begin{split}
\int\la\bar V,Z_0\ra\,\d\ppi=&
\int_0^s\!\!\!\int-\la V^\eps_t,{\rm D}_\sppi Z_t\ra+\eps\,\la V^\eps_t,Z_t\ra+
\eps\,\la{\rm D}_\sppi V^\eps_t,{\rm D}_\sppi Z_t\ra\,\d\ppi\,\d t\\
&+\int\la V^\eps_s,Z_s\ra-\eps\,\la{\rm D}_\sppi V^\eps_s,Z_s\ra\,\d\ppi.
\end{split}\end{equation}
Given that \(V^\eps\in\mathscr H^{1,2}(\ppi)\), we can find a sequence
\((Z^n)_n\subseteq{\rm TestVF}(\ppi)\) that \(\mathscr W^{1,2}(\ppi)\)-converges
to \(V^\eps\). 
We start noticing that from \cite[Theorem 3.23]{GP20} there exists a continuous injection \( i: \mathscr H^{1,2}(\ppi) \rightarrow \mathscr C(\ppi)\) such that \( \| V^\eps - Z^n \|_{\mathscr C(\sppi)} \le \sqrt{2} \| V^\eps -Z^n \|_{\mathscr W^{1,2}(\sppi)} \), which grants that \( \lim_{n \rightarrow \infty} \| V^\eps_0-Z^n_0 \|_{\e_0^*L^2(T\X)} = 0\). Therefore, it follows that 
\[ \lim_{n \rightarrow \infty} \int \la \bar V, Z^n_0 \ra\, \d \ppi =\int \la \bar V, V^\eps_0 \ra\, \d \ppi.
\]
By plugging \(Z=Z^n\) into \eqref{eq:ex_PT_W_aux2}, letting
\(n\to\infty\), and using the Leibniz rule in $\mathscr H^{1,2}(\ppi)$, we get
\[\begin{split}
\int\la\bar V,V^\eps_0\ra\,\d\ppi&=
\int_0^s\!\!\!\int-\la V^\eps_t,{\rm D}_\sppi V^\eps_t\ra+\eps\,|V^\eps_t|^2+
\eps\,|{\rm D}_\sppi V^\eps_t|^2\,\d\ppi\,\d t
+\int|V^\eps_s|^2-\eps\,\la{\rm D}_\sppi V^\eps_s,V^\eps_s\ra\,\d\ppi\\
&\geq\int_0^s\!\!\!\int-\la V^\eps_t,{\rm D}_\sppi V^\eps_t\ra\,\d\ppi\,\d t
+\int|V^\eps_s|^2-\eps\,\la{\rm D}_\sppi V^\eps_s,V^\eps_s\ra\,\d\ppi\\
&=-\frac{1}{2}\int|V^\eps_s|^2\,\d\ppi+\frac{1}{2}\int|V^\eps_0|^2\,\d\ppi
+\int|V^\eps_s|^2-\eps\,\la{\rm D}_\sppi V^\eps_s,V^\eps_s\ra\,\d\ppi\\
&=\frac{1}{2}\int|V^\eps_s|^2\,\d\ppi+\frac{1}{2}\int|V^\eps_0|^2\,\d\ppi
-\eps\,\la{\rm D}_\sppi V^\eps_s,V^\eps_s\ra\,\d\ppi.
\end{split}\]
Since \(\frac{1}{2}\int|V^\eps_0|^2\,\d\ppi-\int\la\bar V,V^\eps_0\ra\,\d\ppi
=\frac{1}{2}\int|\bar V-V^\eps_0|^2\,\d\ppi-\frac{1}{2}\int|\bar V|^2\,\d\ppi\),
we can rewrite the former expression as
\[ \frac{1}{2}\int|\bar V-V^\eps_0|^2\,\d\ppi-\frac{1}{2}\int|\bar V|^2\,\d\ppi
\le \eps \int \langle {\rm D}_{\sppi} V_s^\eps,V_s^\eps \rangle \,\d \ppi - \frac{1}{2} \int |V_s^\eps|^2 \,\d \ppi. \]
Therefore, we obtain that
\[
\frac{1}{2}\int|V^\eps_s|^2\,\d\ppi\leq\frac{1}{2}\int|V^\eps_s|^2\,\d\ppi
+\frac{1}{2}\int|\bar V-V^\eps_0|^2\,\d\ppi\leq\frac{1}{2}\int|\bar V|^2\,\d\ppi
+\eps\int\la{\rm D}_\sppi V^\eps_s,V^\eps_s\ra\,\d\ppi.
\]
By integrating the above inequality over the interval \([0,1]\), multiplying by \(2\), and
then applying Young's inequality \(ab\leq\eps a^2+\frac{b^2}{4\eps}\), we infer that
\[\begin{split}
\int_0^1\!\!\!\int|V^\eps_s|^2\,\d\ppi\,\d s&\leq\int|\bar V|^2\,\d\ppi+
2\eps\int_0^1\!\!\!\int\la{\rm D}_\sppi V^\eps_s,V^\eps_s\ra\,\d\ppi\,\d s\\
&\leq\int|\bar V|^2\,\d\ppi+
2\eps^2\int_0^1\!\!\!\int|{\rm D}_\sppi V^\eps_s|^2\,\d\ppi\,\d s
+\frac{1}{2}\int_0^1\!\!\!\int|V^\eps_s|^2\,\d\ppi\,\d s,
\end{split}\]
whence accordingly
\(\frac{1}{2}\int_0^1\!\!\int|V^\eps_s|^2\,\d\ppi\,\d s\leq\int|\bar V|^2\,\d\ppi
+2\eps^2\int_0^1\!\!\int|{\rm D}_\sppi V^\eps_s|^2\,\d\ppi\,\d s
\leq 3\,\|\bar V\|^2_{\e_0^*L^2(T\X)}\). Observe also that
\(\{\eps V^\eps\}_{\eps\in(0,1/2)}\) is bounded in \(H\).
Therefore, there exist \(V\in\mathscr L^2(\ppi)\), \(W\in H\),
and a sequence \(\eps_n\searrow 0\), such that \(V^{\eps_n}\weakto V\)
weakly in \(\mathscr L^2(\ppi)\) and \(\eps_n V^{\eps_n}\weakto W\)
weakly in \(H\). In particular, it must hold that \(W=0\). Hence,
by letting \(n\to\infty\) in the identity
\[
\int_0^1\!\!\!\int-\la V^{\eps_n}_t,{\rm D}_\sppi Z_t\ra+\la\eps_n V^{\eps_n}_t,Z_t\ra+
\la{\rm D}_\sppi(\eps_n V^{\eps_n})_t,{\rm D}_\sppi Z_t\ra\,\d\ppi\,\d t
=\int\la\bar V,Z_0\ra\,\d\ppi,
\]
which holds for every \(n\in\N\) and \(Z\in{\rm TestVF}_c(\ppi)\) by
\eqref{eq:ex_PT_W_aux0}, we can finally conclude that
\(\int_0^1\!\!\int\la V_t,{\rm D}_\sppi Z_t\ra\,\d\ppi\,\d t=0\)
is satisfied for every \(Z\in{\rm TestVF}_c(\ppi)\). This grants
that \(V\in\mathscr W^{1,2}(\ppi)\) and \({\rm D}_\sppi V=0\).
Finally, let us prove \eqref{eq:initial_cond}. Fix any \(Z\in{\rm TestVF}(\ppi)\) and denote by
\(t \mapsto R(Z)_t\) the absolutely continuous representative of \(t \mapsto \int \la V_t, Z_t \ra \,\d \ppi\) (that belongs to \(W^{1,1}(0,1)\)).
Write \eqref{eq:ex_PT_W_aux2} with $\varepsilon=\varepsilon_n$, integrate over $s\in[0,1]$, and then let $n\to\infty$:
by exploiting the weak convergence $V^{\varepsilon_n}\rightharpoonup V$ in $\mathscr L^2(\ppi)$ (and by using the dominated convergence theorem), we obtain that
\[\begin{split}
\int\la\bar V,Z_0\ra\,\d\ppi&=-\int_0^1\!\!\!\int_0^s\!\!\!\int\la V_t,{\rm D}_\sppi Z_t\ra\,\d\ppi\,\d t\,\d s+\int_0^1\!\!\!\int\la V_s,Z_s\ra\,\d\ppi\,\d s\\
&=-\int_0^1\!\!\!\int_0^s\bigg(\frac{\d}{\d t}R(Z)_t\bigg)\d t\,\d s+\int_0^1 R(Z)_s\,\d s\\
&=-\int_0^1 R(Z)_s-R(Z)_0\,\d s+\int_0^1 R(Z)_s\,\d s=R(Z)_0,
\end{split}\]
where we applied the Leibniz rule with one vector field in $\mathscr H^{1,2}(\ppi)$ and the other in $\mathscr W^{1,2}(\ppi)$.
Hence, the statement is achieved.
\end{proof}
\def\cprime{$'$} \def\cprime{$'$}

\end{document}